\theoremstyle{definition}
\newtheorem{example}{Example}
\newtheorem{assumption}{Assumption}
\theoremstyle{remark}
\newtheorem{construction}{Construction}
\numberwithin{equation}{section}
\numberwithin{figure}{section}
\newcommand\ol{\overline}
\newcommand\EE{{\mathbb E}}
\newcommand\RR{{\mathbb R}}
\newcommand\ZZ{{\mathbb Z}}
\newcommand\NN{{\mathbb N}}
\newcommand\PP{{\mathbb P}}
\newcommand\HH{{\mathbb H}}
\newcommand\GUE{{\mathbb {GUE}}}
\renewcommand\ell{l}
\newcommand\GT{\mathbb{G}\mathbb{T}}
\newcommand\CC{\mathbb{C}}
\newcommand\bm{\mathbf{m}}
\newcommand\SH{\mathrm{SH}}
\title
[perfect matchings on contracting square-hexagon lattices]
{Limit shape and height fluctuations of random perfect matchings on square-hexagon lattices}
\author{C\'edric Boutillier}
\address{Laboratoire de Probabilit\'es, Statistique et Mod\'elisation, Sorbonne
  Universit\'e, CNRS, 4  place
Jussieu, F-75005 Paris}
\email{cedric.boutillier@sorbonne-universite.fr}
\urladdr{\url{https://www.lpsm.paris/pageperso/boutil/}}
\author{Zhongyang Li}
\address{Department of Mathematics,
University of Connecticut,
Storrs, Connecticut 06269-3009, USA}
\email{zhongyang.li@uconn.edu}
\urladdr{\url{https://mathzhongyangli.wordpress.com}}
\thanks{The authors thank the Institut Henri Poincar\'e for hospitality. The authors would also like to express gratitude to Alexei Borodin, Alexey Bufetov, Philippe Di Francesco, Vadim Gorin,  Richard Kenyon, Alisa Knizel, Jonathan Novak for helpful discussions, and the anonymous reviewers for careful reading of the paper.
CB acknowledges support from ANR-18-CE40-0033.
ZL acknowledges support from NSF DMS 1608896 and NSF DMS 1643027. }
\keywords{dimer, perfect matching, limit shape, Gaussian free field, Schur
function}
\subjclass{82B20, 05E05, 74A50 ,60B20}
\begin{document}
\begin{abstract}
  We study perfect matchings on the contracting square-hexagon lattice, constructed row by row from a row of
  either the square grid or the hexagonal lattice. Given $1\times n$ periodic weights to edges, we consider the
  probabilities of dimers proportional to the product of edge weights. We show that the
  partition function equals a Schur function of the edge weights.
 We then prove the Law of Large Numbers
  (limit shape) and the Central Limit Theorem (convergence to the Gaussian free
  field) for the corresponding height functions. We also show that 
  certain type of dimers near the turning corner converge in distribution to
  the eigenvalues of Gaussian Unitary Ensemble, and that in the scaling limit
  when each segment of the bottom boundary grows
  linearly with respect to the dimension of the graph, the frozen boundary is a
  cloud curve with multiple tangent points (depending on the period) along each horizontal boundary segment.
\end{abstract}

\begin{altabstract}
  Nous étudions les couplages parfaits de graphes, construits
  en prenant, pour chaque ligne, une ligne soir du réseau carré, soit du réseau
  hexagonal. Étant donnés des poids sur les arêtes avec une période $1\times n$,
   la fonction de partition est une fonction de
  Schur dépendant des poids. Nous
  obtenons dans la limite des grands systèmes une loi des grands nombres
  (forme limite) et un théorème central limite (convergence vers le champ libre)
  pour la fonction de hauteur associée. La
  distribution de certains dimères près du point de contact au
  bord converge vers celle des valeurs propres de l'ensemble unitaire gaussien.
  De plus, dans la limite d'échelle de systèmes pour lesquels
  chaque segment du bord croît linéairement
  avec la taille du graphe, le bord de la zone gelée est une \emph{courbe nuage}
avec des points de contact sur chaque segment du bord inférieur
dont le nombre dépend de la période.
\end{altabstract}

\maketitle

\section{Introduction}

A \emph{perfect matching}, or a \emph{dimer configuration}, is a subset of edges
of a graph such that each vertex is incident to exactly one edge. We study the
asymptotic behavior of periodically weighted random perfect matchings on a
class of domains called the contracting square hexagon lattice. Each row of the
lattice is either obtained from a row of a square grid or that of a hexagon
lattice; see Figure~\ref{fig:SH} for an example. On such a graph we shall assign
edge weights, satisfying the condition that the edge weights are invariant under
under horizontal translations, while changing row by row. We define a
probability measure for dimer configurations on such a graph to be proportional
to the product of edge weights.

When all the edge weights are 1, the underlying probability measure is the
uniform measure. The uniform perfect matchings on a square grid or a hexagonal
lattice have been studied extensively in the past few decades;
see~\cite{LP12,VP15,DM17} for recent results about uniform perfect matchings on the
hexagonal lattice, and~\cite{bk} for recent results about uniform perfect
matchings on the square grid. These results are obtained by applying and
re-developing the recent techniques developed to study the Schur processes;
see~\cite{OR01,OR07,AB07,AB11,bg2,bg}. Dimer model on a more general graph,
called the \emph{rail-yard graph}, may also be studied by techniques of Schur
processes; see~\cite{bbccr}.

Among the problems concerning the asymptotic behavior of perfect matchings on
larger and larger graphs, two of them are of special interest: the Law of Large
Numbers and the Central Limit Theorem. More precisely, when the underlying
finite graphs on which the dimer configurations are defined become larger and
larger whose rescaled version approximate a certain domain in the plane, the
rescaled height functions (which is a random function defined on faces of the
graph associated to each random perfect matching) are expected to converge to a
deterministic function (limit shape); and the non-rescaled height function is
expected to have Gaussian fluctuation. The limit shape behavior was first
observed from the arctic circle phenomenon for dimer models on large Aztec
diamond (which is a finite subgraph of the square grid with certain boundary
conditions); see~\cite{JPS98,Joh05}. In each component outside the inscribed the
circle, with probability exponentially close to 1, all the present edges of the
dimer configuration are along the same direction. This is called the frozen
region. Inside the circle, the probability that an edge of any certain direction
appears in the dimer configuration is non-degenerate and lies in the open
interval $(0,1)$; this is called the liquid region. The limit shape of
non-uniform dimer models on square grids, with more general boundary conditions,
was studied in~\cite{ckp00}, and the technique may be generalized to obtain a
variational principle, limit shape, and equation of frozen boundary for dimer
models on general periodic bipartite graphs; see~\cite{KO07}. The Aztec diamond
with $2\times 2$ period was studied in~\cite{CJ14}, with $2\times n$ period was
studied in~\cite{PS14}.

A square-hexagon lattice may be constructed row by row from either a row of a
square grid or a row of a hexagonal lattice.
In this paper, we assign positive weights to edges of the square-hexagon lattice
in such a way that the edge weights change row by row with a fixed finite
period. We then consider a special finite subgraph of the square-hexagon
lattice, called a contracting square-hexagon lattice. With the help of the
branching formula for the Schur function, we then show that the  
partition function of dimer configurations on a contracting square hexagon lattice, can be computed by a
Schur function depending on edge weights.

Note that Markov chains for sampling those random
dimer configurations on finite square-hexagon lattices with certain boundary
conditions (i.e.\ random tilings of tower graphs) were studied in~\cite{bf15}.

We then study the limit shape of the
dimer configurations when the mesh size of the graph goes to zero, and show
that the height function converges a deterministic function with an explicit
formula. We then find the equation of the frozen boundary, and show that the
frozen boundary is again a cloud curve (similar results was obtained
in~\cite{KO07} for the hexagonal lattice,  and obtained in~\cite{bk} for the
square grid), whose number of tangent points to the bottom boundary depend not
only on the number of segments with distinct boundary conditions on the bottom
boundary, but also on the size of the period of edge weights. In particular,
given our assignments of edge weights, the liquid region is a simply-connected
domain, i.e.\ there are no ``floating bubbles'' in the liquid region. We then
study the fluctuations of non-rescaled height function, and show that after a
homeomorphism from the liquid region to the upper half plane, the law of non-rescaled
height fluctuations are given in the limit by the Gaussian free field.
This extends the framework in which such a result is available for non-flat
boundary conditions. See~\cite{BF14,MD1} for the first results of this type,
and~\cite{MD2} for another method to show that a large class of models have this
kind of fluctuations.
We also study
the distribution of present edges joining a row with odd index to a row with
even index above it, and show that near the top boundary, these edges have the
same distribution as the eigenvalues of a GUE random matrix, which was
established in~\cite{OR06} for plane partitions and in~\cite{JN06} for the Aztec
diamond. In~\cite{zl1,zl2}, the case when the periodic edge weights decay polynomially 
with respect to the size of the graph is investigated, the liquid region is proved to split to finitely
many disconnected components, and the height fluctuation in each component of the liquid region
is proved to be an independent Gaussian free field in the scaling limit. 

The organization of the paper is as follows. In Section~\ref{sec:cb}, we define
the contracting square-hexagon lattice and prove the formula to compute the
partition function of dimer configurations on such a lattice via Schur functions
depending on edge weights. In Section~\ref{sec:ls}, we prove an explicit formula
for the limit of the rescaled height function. In Section~\ref{s3}, we prove an
explicit formula for the density of the limit counting measure associated to the
dimer configurations on each row of the contracting square-hexagon lattice, and
define the frozen region to be the region whenever the density is 0 or 1. In
Section~\ref{sec:fb}, we prove an explicit formula for the frozen boundary (the
boundary of the frozen region) and show that the frozen boundary a cloud curve.
In Section~\ref{sec:gue}, we show that  the distribution of present edges
joining an row with odd index to a row with even index above it near the top
boundary is the same as that of eigenvalues of a GUE random matrix. In
Section~\ref{sec:gff}, we show that the fluctuation of the non-rescaled height
function is a homeomorphism of the Gaussian free field in the upper half plane.
In Section~\ref{sec:ex}, we give simulations of the distribution of dimer models
on the contracting square-hexagon lattice, and draw pictures of the limit shape.

\section{Combinatorics}\label{sec:cb}

In this section, we define the contracting square-hexagon lattice on which we
shall study the perfect matching, or the dimer model. By an explicit bijection
between perfect matchings on the contracting square-hexagon lattice and
sequences of certain Young diagrams, we express the probability measure on
perfect matchings, in which the probability of each configuration is
proportional to product of edge weights, in terms of Schur functions. As a
result, the partition function of dimer configurations on such contracting
square-hexagon lattice can also be expressed in term of Schur functions.
This extends known results for the dimer model on the square grid~\cite{bk} and
hexagonal
lattice~\cite{LP12,bg}, where the
underlying measure is uniform or a $q$-deformation of the uniform measure.

\subsection{Square-hexagon Lattices}
Consider a doubly-infinite binary sequence indexed by integers
$\ZZ=\{\ldots,-2,-1,0,1,2,\ldots\}$.
\begin{equation*}
  \check{a}=(\ldots,a_{-2},a_{-1},a_0,a_1,a_2,\ldots)\in\{0,1\}^{\ZZ}.
\end{equation*}

The \emph{whole-plane square-hexagon lattice} associated with the  sequence
$\check{a}$, is a bipartite plane graph $\mathrm{SH}(\check{a})$ defined as
follows.
Each vertex of $\mathrm{SH}(\check{a})$ is either black or white, and we
identify the vertices with points on the plane.
Its vertex set is a subset of $\frac{\ZZ}{2}\times \frac{\ZZ}{2} $.
For $m\in\frac{\ZZ}{2}$, the vertices with ordinate $m$ correspond to the $2m$th
row  of the graph. Vertices on even rows (for $m$ integer) are colored in black.
Vertices on odd rows (for $m$ half integer) are colored in white.
 \begin{itemize}
\item  each black vertex on the $(2m)$th row is adjacent to two white vertices in the $(2m+1)$th row; and
\item if $a_m=1$, each white vertex on the $(2m-1)$th row is adjacent to exactly one black vertex in the $(2m)$th row; 
 if $a_m=0$, each white vertex on the $(2m-1)$th row is adjacent to two black vertices in the $(2m)$th row. 
  \end{itemize}
 See Figure~\ref{lcc}.
Such a graph is also related to the
rail-yard graph; see~\cite{bbccr}.

\begin{figure}
  \subfloat[Structure of $\mathrm{SH}(\check{a})$ between the $(2m)$th row and the $(2m+1)$th row]{\includegraphics[width=.6\textwidth]{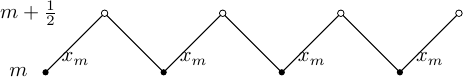}}\\
  \subfloat[Structure of $\mathrm{SH}(\check{a})$ between the $(2m-1)$th row and the $(2m)$th row when $a_m=0$]{\includegraphics[width = .6\textwidth]{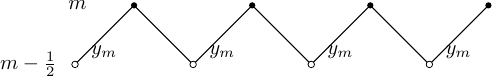}}\\
  \subfloat[Structure of $\mathrm{SH}(\check{a})$ between the $(2m-1)$th row and the $(2m)$th row when $a_m=1$]{\includegraphics[width = .55\textwidth]{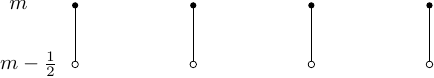}}
  \caption{Graph structures of the square-hexagon lattice on the $(2m-1)$th,
    $(2m)$th, and $(2m+1)$th rows depend on the values of $(a_m)$. Black vertices
    are along the $(2m)$th row, while white vertices are along the $(2m-1)$th and
  $(2m+1)$th row.}
  \label{lcc}
\end{figure}

We shall assign edge weights to the whole-plane square-hexagon lattice
$\SH(\check{a})$ as follows.

\begin{assumption}\label{apew}
For $m\geq 1$, we assign weight $x_m>0$ to each NE-SW edge joining the
$(2m)$th row  to the $(2m+1)$th row of $\mathrm{SH}(\check{a})$. We assign
weight $y_m>0$ to each NE-SW edge joining the $(2m-1)$th row to the $(2m)$th
 row of $\mathrm{SH}(\check{a})$, if such an edge exists. We assign weight $1$
to all the other edges. 
\end{assumption}

It is straightforward to check the following lemma describing the faces of a
whole-plane square-hexagon lattice.

\begin{lemma}Each face of $\mathrm{SH}(\check{a})$ is either a square (degree-4 face) or a hexagon (degree-6 face). Let $m\geq 1$ be a positive integer.
\begin{enumerate}
\item There exists a degree-6 face including both black vertices in the $(2m)$th row and black vertices in the $(2m+2)$th row if and only if $a_{m+1}=1$.
\item There exists a degree-4 face including both black vertices in the $(2m)$th row and black vertices in the $(2m+2)$th row if and only if $a_{m+1}=0$.
\end{enumerate}
\end{lemma}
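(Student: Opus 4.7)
The statement is a local combinatorial fact that follows directly from the row-by-row construction of $\mathrm{SH}(\check{a})$ and the associated pictures in Figure~\ref{lcc}. My plan is to split by the value of $a_{m+1}$, which controls the structure between rows $2m+1$ and $2m+2$, since the structure between rows $2m$ and $2m+1$ is always the same V-shaped connectivity (each black has two up-neighbors, and correspondingly each white on row $2m+1$ has two down-neighbors).

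For the first statement, I would fix $m\geq 1$ and argue as follows. In the case $a_{m+1}=0$, each white vertex $w$ on the $(2m+1)$th row has two edges going up to row $2m+2$ as well as two edges going down to row $2m$, so $w$ has degree $4$. Locally, a black vertex $b$ on row $2m$ and a black vertex $B$ on row $2m+2$ share two common white neighbors $w_L, w_R$ on row $2m+1$, producing a $4$-cycle $b - w_L - B - w_R - b$ bounding a square face that contains both $b$ (row $2m$) and $B$ (row $2m+2$). This exhibits the required degree-$4$ face. Conversely, when $a_{m+1}=1$, every white on row $2m+1$ has a unique up-neighbor, so there is no common white neighbor shared by a pair (row-$2m$ black, row-$2m+2$ black), and hence no $4$-cycle of the above type can exist; one must check that no other face shape of length $4$ can include both colors, which follows because the only degree-$4$ faces of $\mathrm{SH}(\check{a})$ are precisely the squares just described.

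For the second statement, in the case $a_{m+1}=1$, I would trace the boundary of a face starting at a black $b$ on row $2m$: go up to $w$ on row $2m+1$, then up along the unique edge from $w$ to some $B$ on row $2m+2$, then continue the face boundary up to a white $w'$ on row $2m+3$, down to another black $B'$ on row $2m+2$, down to $w''$ on row $2m+1$, and back down to $b$. This produces a $6$-cycle bounding a degree-$6$ hexagonal face that includes both the row-$2m$ black $b$ and the two row-$2m+2$ blacks $B, B'$. Conversely, when $a_{m+1}=0$, every face incident to both rows $2m$ and $2m+2$ is one of the squares described above, which have length $4$, so no such hexagonal face exists.

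The only real obstacle is bookkeeping the two cases unambiguously using the definition; once one writes out the degree of a white vertex on row $2m+1$ as a function of $a_{m+1}$ (either $4$ or $3$), the existence and non-existence of the claimed faces are immediate from the local combinatorics, and in particular the two statements are mutually exclusive and exhaustive for faces that touch both rows $2m$ and $2m+2$.
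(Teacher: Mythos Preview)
Your argument is essentially correct and is exactly the kind of local check the paper has in mind; the paper itself gives no proof beyond declaring the lemma ``straightforward to check,'' so there is nothing substantive to compare against.

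One genuine slip to fix: your two paragraphs are mislabeled. The paragraph you call ``For the first statement'' actually proves item~(2) (degree-$4$ face $\Leftrightarrow a_{m+1}=0$), and the paragraph you call ``For the second statement'' proves item~(1) (degree-$6$ face $\Leftrightarrow a_{m+1}=1$). Swap the labels. A second, smaller point: in the no-square argument when $a_{m+1}=1$ you write that ``there is no common white neighbor shared by a pair (row-$2m$ black, row-$2m+2$ black)''; what you mean (and need) is that such a pair cannot share \emph{two} common white neighbors on row $2m+1$, since the row-$2m+2$ black has a \emph{unique} down-neighbor there. With those two edits, the proof is clean.
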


A \emph{contracting square-hexagon lattice} is built from a whole-plane
square-hexagon lattice as follows:

\begin{definition}\label{dfr}
  Let $N\in \NN$. Let $\Omega=(\Omega_1,\ldots,\Omega_N)$ be an $N$-tuple of
  positive integers, such that  $1=\Omega_1<\Omega_2<\cdots<\Omega_{N}$. Set
  $m=\Omega_N-N$.
  The contracting square-hexagon lattice $\mathcal{R}(\Omega,\check{a})$ is a
  subgraph of $\mathrm{SH}(\check{a})$ built of $2N$ or $2N+1$ rows.
  We shall now enumerate the rows of $\mathcal{R}(\Omega,\check{a})$
  inductively, starting from the bottom as follows:
  \begin{itemize}
    \item The first row consists of vertices $(i,j)$ with $i=\Omega_1-\frac{1}{2},\ldots,\Omega_N-\frac{1}{2}$ and $j=\frac{1}{2}$. We call this row the boundary row of $\mathcal{R}(\Omega,\check{a})$.
    \item When $k=2s$, for $s=1,\ldots N$,  the $k$th row consists of vertices $(i,j)$ with $j=\frac{k}{2}$ and incident to at least one vertex in the $(2s-1)th$ row of the whole-plane square-hexagon lattice $\mathrm{SH}(\check{a})$ lying between the leftmost vertex and rightmost vertex of the $(2s-1)$th row of $\mathcal{R}(\Omega,\check{a})$
    \item When $k=2s+1$, for $s=1,\ldots N$,  the $k$th row consists of vertices $(i,j)$ with $j=\frac{k}{2}$ and incident to two vertices in the $(2s)$th row of  of $\mathcal{R}(\Omega,\check{a})$.
  \end{itemize}
\end{definition}

  The transition from an odd row to the next even row in a contracting
  square-hexagon lattice can be of two kinds
  depending on whether vertices are connected to one or two vertices of the row
  above them. See Figures \ref{fig:raz}, \ref{fig:HD}, and \ref{fig:SH} for examples of contracting square-hexagon lattices.

  \begin{definition}
    \label{defI1I2}
  Let $I_1$ (resp.\@ $I_2$) be the set of indices $j$ such that vertices
  of the $(2j-1)$th row are connected to one vertex (resp.\@ two vertices) of
  the $(2j)$th row.
  In terms of the sequence $\check{a}$,

  \begin{equation*}
    I_1=\{k\in \{1,\dots,N\}\ |\ a_k=1\},\quad
    I_2=\{k\in \{1,\dots,N\}\ |\ a_k=0\}.
  \end{equation*}
\end{definition}

  The sets $I_1$ and $I_2$ form a partition of $\{1,\dots,N\}$, and we have
  $|I_1|=N-|I_2|$.


\subsection{Perfect Matching}

\begin{definition}\label{dfvl}
  A \emph{dimer configuration}, or a \emph{perfect matching} $M$ of a
  contracting square-hexagon lattice $\mathcal{R}(\Omega,\check{a})$ is a set of
  edges $((i_1,j_1),(i_2,j_2))$, such that each vertex of
  $\mathcal{R}(\Omega,\check{a})$ belongs to a unique edge in $M$.

  The set of perfect matchings of $\mathcal{R}(\Omega,\check{a})$ is denoted by
  $\mathcal{M}(\Omega,\check{a})$.
\end{definition}

\begin{definition}
  Let $M\in \mathcal{M}(\Omega,\check{a})$ be a perfect matching of
  $\mathcal{R}(\Omega,\check{a})$. We call an edge $e=((i_1,j_1),(i_2,j_2))\in
  M$ a \emph{$V$-edge} if $\max\{j_1,j_2\}\in\NN$ (i.e.\@ if its higher
  extremity is black) and we call it a \emph{$\Lambda$-edge}
  otherwise. In other words, the edges going upwards starting from an odd row
  are $V$-edges and those ones starting from an even row are $\Lambda$-edges. We
  also call the corresponding vertices-$(i_1,j_1)$ and $(i_2,j_2)$ $V$-vertices
  and $\Lambda$-vertices accordingly.
\end{definition}

\begin{lemma}
  Let $M\in \mathcal{M}(\Omega,\check{a})$ be a perfect matching of
  $\mathcal{R}(\Omega,\check{a})$. For each $1\leq i\leq N$, the number of
  $V$-edges joining the $(2i-1)$th row and the $(2i)$th row is one more than the
  number of $V$ edges joining the $(2i)$th row and the $(2i+1)$th row.
\end{lemma}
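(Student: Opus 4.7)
The plan is a simple double-counting of matched edges at the rows immediately above and below row $(2i)$. Let $V_i$ denote the number of $V$-edges of $M$ joining row $(2i-1)$ to row $(2i)$, let $L_i$ denote the number of $\Lambda$-edges joining row $(2i)$ to row $(2i+1)$, and let $B_i$, $W_i$ denote the number of vertices in rows $(2i)$ and $(2i+1)$ of $\mathcal{R}(\Omega,\check{a})$, respectively. (The second quantity in the statement of the lemma, the $V$-edges between the black row $(2i)$ and the next white row above it, is to be read as $V_{i+1}$, since an edge whose upper endpoint is the white row $(2i+1)$ is by definition a $\Lambda$-edge, not a $V$-edge.)

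Each black vertex on row $(2i)$ is matched to exactly one white neighbor, which must lie either on row $(2i-1)$ (contributing to $V_i$) or on row $(2i+1)$ (contributing to $L_i$); hence $V_i + L_i = B_i$. By the analogous bookkeeping at each white vertex of row $(2i+1)$, matched either to a black on row $(2i)$ (contributing to $L_i$) or to a black on row $(2i+2)$ (contributing to $V_{i+1}$), we obtain $L_i + V_{i+1} = W_i$. Subtracting eliminates $L_i$ and gives $V_i - V_{i+1} = B_i - W_i$, so the lemma reduces to the purely geometric identity $B_i - W_i = 1$.

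To establish $B_i - W_i = 1$, I would prove by induction on $i$ that each row $(2i)$ of $\mathcal{R}(\Omega,\check{a})$ consists of a single block of consecutive black vertices at integer abscissae. Once this is granted, row $(2i+1)$ --- the white vertices of $\SH(\check{a})$ incident to two vertices of row $(2i)$ of $\mathcal{R}$ --- is precisely the set of whites sitting in the $B_i-1$ gaps between adjacent blacks of row $(2i)$, yielding $W_i = B_i - 1$. The base case (row $2$) is the only slightly subtle point: although row $1$ of $\mathcal{R}$ may have gaps coming from the strictly increasing sequence $\Omega$, the construction in Definition~\ref{dfr} takes row $2$ to be all blacks of $\SH(\check{a})$ incident to at least one white of $\SH(\check{a})$ lying between the leftmost and rightmost vertex of row $1$, and this operation fills in those gaps (in both cases $a_1=0$ and $a_1=1$). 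The inductive step then propagates consecutivity from row $(2i)$ to row $(2i+1)$ and then to row $(2(i+1))$, treating the adjacency patterns $a_{i+1}=0$ and $a_{i+1}=1$ separately.

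The main obstacle is this geometric verification of row consecutivity under Definition~\ref{dfr}; once it is in hand, the lemma follows from the one-line subtraction of the two matching-count identities above.
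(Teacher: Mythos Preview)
Your proof is correct and follows essentially the same double-counting argument as the paper: count matched edges at row $(2i)$ and at row $(2i+1)$, subtract, and reduce to the geometric fact that row $(2i+1)$ has one fewer vertex than row $(2i)$. The only difference is that the paper asserts $t_{2i+1}=t_{2i}-1$ in one line ``from the construction in Definition~\ref{dfr}'' without spelling out the consecutivity induction, whereas you (correctly, and more carefully) identify that this relies on each even row being a single block of consecutive vertices and sketch why the construction guarantees it.
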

\begin{proof}

For $j\in\{2i,2i+1\}$, let $t_j$ be the number of vertices in the $j$th row of
$\mathcal{R}(\Omega,\check{a})$. From the construction of
$\mathcal{R}(\Omega,\check{a})$ in Definition~\ref{dfr}, we have
\begin{equation}
t_{2i+1}=t_{2i}-1.
\label{dcv}
\end{equation}
 Let $s$ be the number of $V$-edges joining the $(2i-1)$th row and $(2i)$th row.
 Then there exists $(t_{2i}-s)$ $\Lambda$-edges joining the $(2i)$th row to the
 $(2i+1)$th row. Hence there are $(t_{2i+1}-t_{2i}+s)$ $V$-edges joining the
 $(2i+1)$th row and $(2i+2)$th row. Then the lemma follows from \eqref{dcv}.
\end{proof}


\begin{example}\mbox{}
\begin{enumerate}
\item If for each $i\geq 1$, each vertex on the $i$th row of
  $\mathrm{SH}(\check{a})$ is adjacent to two vertices on the $(i+1)$th row of
  $\mathrm{SH}(\check{a})$, then the construction in Definition~\ref{dfr} gives
  us the rectangular Aztec diamond studied in~\cite{bk}.
\item For each $i\geq 1$, each vertex on the $(2i-1)$th row of
  $\mathrm{SH}(\check{a})$ is adjacent to one vertex on the $(2i)$th row of
  $\mathrm{SH}(\check{a})$, then the construction in Definition~\ref{dfr} gives
  us the contracting hexagonal lattice studied in~\cite{LP12,bg2}.
\end{enumerate}
\end{example}

\subsection{Partitions and Young Diagrams}
\label{sec:pyd}

Following~\cite{bk}, we will use \emph{signatures} to encode the perfect matchings of 
of the contracting square-hexagons.

\begin{definition}
  A \emph{signature} of length $N$ is a sequence of nonincreasing integers
  $\mu=(\mu_1\geq \mu_2\geq \ldots \geq\mu_N)$. Each $\mu_k$ is a
  \emph{part} of the signature $\mu$. The \emph{length} $N$ of the signature $\mu$
  is denoted by $\ell(\mu)$. We say that $\mu$ is \emph{non-negative} if
  $\mu_N\geq 0$. The \emph{size} of a non-negative signature $\mu$ is
  \begin{equation*}
    |\mu| = \sum_{i=1}^N \mu_i.
  \end{equation*}
  $\GT_N$ denotes the set of signatures of length $N$, and $\GT^+_N$ is the
  subset of non-negative signatures.
\end{definition}

To the boundary row $\Omega=(\Omega_1<\cdots<\Omega_N)$ of a contracting
square-hexagon lattice is naturally associated a non-negative signature $\omega$
of length $N$ by:
\begin{equation*}
  \omega=(\Omega_N-N,\dotsc,\Omega_1-1).
\end{equation*}

Non-negative signatures are the convenient objects to talk about integer
partitions with a given number of zero parts. Most of the objects 
constructed from partitions are available for non-negative signatures, in
particular Young diagrams, and interlacement relations which we recall now.

A graphic way to represent a non-negative signature $\mu$ is through its
\emph{Young diagram} $Y_\mu$, a collection of $|\mu|$ boxes arranged on
non-increasing rows aligned on the left: with
$\mu_1$ boxes on the first row, $\mu_2$ boxes on the second row,\dots $\mu_N$
boxes on the $N$th row. Some rows may be empty if the corresponding $\mu_k$ is
equal to 0. The correspondence between non-negative signatures of length $N$ and
Young diagrams with $N$ (possibly empty) rows is a bijection.

If all the parts of a non-negative signature $\mu$ are equal (say $N$ parts equal to
$m$), the Young diagram $Y_\mu$ has a rectangular shape. We then say that $\mu$
is \emph{rectangular}, and note $\mu=N\times m$, and $Y_{N\times m}$ for its Young
diagram.

Young diagrams included in $Y_{N\times m}$ are those corresponding to
non-negative signatures of length $N$ and parts bounded by $m$.

\begin{definition}
 Let $Y,W$ be two Young diagrams. We say that $Y\subset W$ \emph{differ by a
 horizontal strip} if the
  collection of boxes in $Z=W\setminus Y$ contains at most one box in every
  column. We say that they \emph{differ by a vertical strip} if $Z$ contains at
  most one box in every row.

  We say that two non-negative signatures $\lambda$ and $\mu$ \emph{interlace}, and
  write $\lambda \prec \mu$ if $Y_\lambda\subset Y_\mu$ differ by a horizontal
  strip. We say they \emph{co-interlace} and write $\lambda\prec'\mu$ if
  $Y_\lambda\subset Y_\mu$ differ by a vertical strip.
\end{definition}

Another way to graphically represent signatures is to use \emph{Maya diagrams}, which
usually represent a collection of white and black particles
(here squares $\square$, $\blacksquare$)
on the 1-dimensional
lattice~$\mathbb{Z}$. For our purposes, since we will work with non-negative
signatures with Young diagram included in a rectangle of a given size,
we will need finite version of Maya diagrams, defined below.

\begin{definition}\label{dmd}
  A \emph{finite Maya diagram} $\mathbf{m}$ of length $n$ is an element of
  $\{\square,\blacksquare\}^n$. The \emph{origin} of the Maya diagram is a
  position between two successive elements of the sequence, such that the number
  of elements on the left (resp.\@ on the right) of this position is equal to
  the number of $\blacksquare$ (resp.\@ $\square$) particles.
\end{definition}

Non-negative signatures $\mu$ of length $N$ with parts bounded by $m$
corresponds bijectively to finite Maya diagrams $\emph{m}_\mu$ of length
$N+m$ and exactly $N$ black particles, by the following coding of the
non trivial part of the
boundary of
$Y_\mu$ seen as a lattice path of length $N+m$ connecting two opposite corners
of $Y_{N\times m}$. A vertical step corresponds to a $\blacksquare$ and a horizontal
step corresponds to a $\square$. See Figure~\ref{fig:young_maya}.

This way, the signature of length $N$ with all
parts equal to $0$ (resp.\@ equal to $m$) corresponds to the Maya diagram where
the $N$ black particles are on the left (resp.\@ on the right) of the $m$ white
particles.

\begin{figure}
  \centering
  \includegraphics[width=10cm]{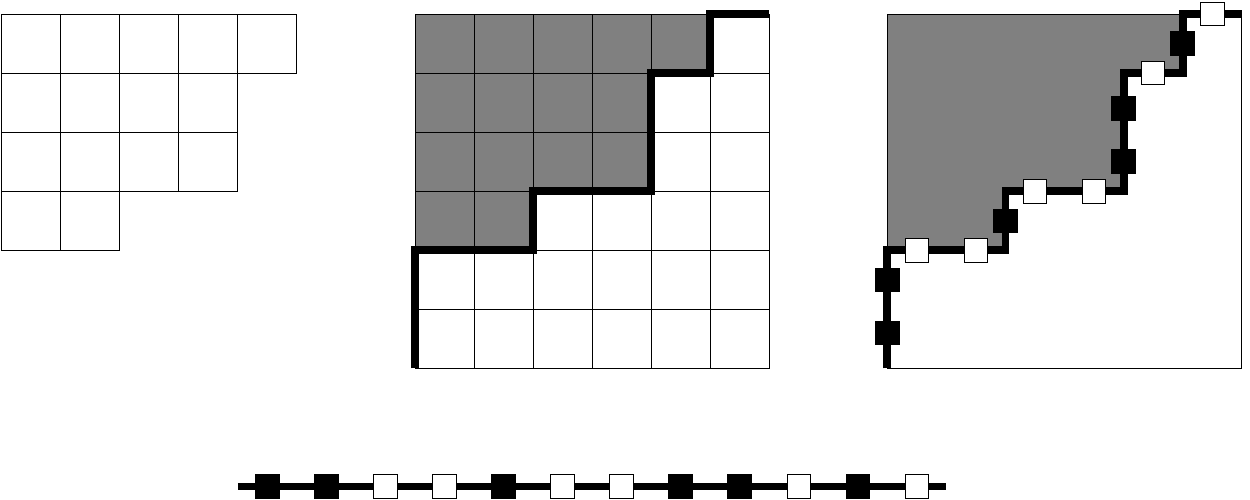}
  \caption{Top: the Young diagram of $(5,4,4,4,2,0,0)$, seen as a non-negative
    signature with 6 parts, all bounded by 6 (left); the same Young diagram
    included in the rectangle $6\times 6$ and the path representing the boundary
    of the Young diagram from the lower left to the upper right corner (middle),
    the encoding of the steps of path with black and particles. Bottom, the
    actual finite Maya diagram of size $6+6$.}
    \label{fig:young_maya}
\end{figure}

We shall associate to each perfect matching in $\mathcal{M}(\Omega,\check{a})$
a sequence of non-negative signatures, one for each row of the graph.

\begin{construction}\label{ct}
  Let $j\in\{1,\dots,2N+1\}$. Assume that the $j$th row of
  $\mathcal{R}(\Omega,\check{a})$ has $n_j$ V-vertices and $m_j$
  $\Lambda$-vertices. Then we first associate a finite Maya diagram of length
  $n_j+m_j$ with $n_j$ black particles: every $\Lambda$-vertex (resp.\@ $V$-vertex)
  is mapped to a white (resp.\@ black) particle. This Maya diagram corresponds
  then to a Young diagram of a non-negative signature of length $n_j$ and parts
  bounded by $m_j$, which in turn has a Young diagram $Y_j$ fitting in a
  $n_j\times m_j$ rectangle.

  Note that to perform this construction for the
  boundary row (i.e.\@ $j=1$), vertices with coordinates between $\Omega_1$ and
  $\Omega_N$, which are not present in the graph are considered a (virtual)
  $\Lambda$-vertices, and should be taken into account to compute $n_1$ and $m_1$.
\end{construction}

The encoding of dimer configurations of finite contracting square-hexagon graphs
with Maya diagrams allows then for a bijective correspondence with sequences of
interlaces signatures. More precisely:
\begin{theorem}[\cite{bk} Theorem 2.9,~\cite{bbccr}]
  For given $\Omega$, $\check{a}$, let $\omega$ be the signature associated to
  $\Omega$. Then the construction~\ref{ct} defines a
  bijection between the set of perfect matchings
  $\mathcal{M}(\Omega,\check{a})$ and the set $S(\omega,\check{a})$ of
  sequences of non-negative signatures
  \begin{equation*}
    \{(\mu^{(N)},\nu^{(N)},\dots,\mu^{(1)}, \nu^{(1)}, \mu^{(0)}\}
  \end{equation*}
  where the signatures satisfy the following properties:
  \begin{itemize}
    \item All the parts of $\mu^{(0)}$ are equal to 0;
    \item The signature $\mu^{(N)}$ is equal to $\omega$;
    \item The signatures satisfy the following (co)interlacement relations:
      \begin{equation*}
        \mu^{(N)} \prec' \nu^{(N)} \succ \mu^{(N-1)} \prec' \cdots
        \mu^{(1)} \prec' \nu^{(1)} \succ \mu^{(0)}.
      \end{equation*}
  \end{itemize}
  Moreover, if $a_m=1$, then $\mu^{(N+1-k)}=\nu^{(N+1-k)}$.
  \label{myb}
\end{theorem}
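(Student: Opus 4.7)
The plan is to verify that Construction~\ref{ct}, applied row by row, produces a sequence of signatures with the stated boundary data and (co)interlacement pattern, and then to exhibit a local inverse which assembles any such chain into a unique perfect matching.

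First I would check the two boundary signatures. At the boundary (bottom) row, the actual graph vertices occupy the positions $\Omega_1-\tfrac{1}{2},\dots,\Omega_N-\tfrac{1}{2}$ while the remaining virtual positions in between are, by convention, $\Lambda$-vertices; reading this as a finite Maya diagram (black particles at the $\Omega_i-\tfrac{1}{2}$, white elsewhere) and converting to a signature via the lattice-path encoding yields exactly $\omega=(\Omega_N-N,\dots,\Omega_1-1)$. At the top row every vertex must be matched downward, and the higher endpoint of such an edge sits on an odd row, so by definition every top-row vertex is a $\Lambda$-vertex; the corresponding Maya diagram has no black particles, producing $\mu^{(0)}=(0,\dots,0)$.

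Next I would analyze each of the three local transitions pictured in Figure~\ref{lcc}. For the black-to-white transition (Figure~\ref{lcc}(a)), the matching restricted to this two-row strip is a bipartite matching in which each black vertex on the even row selects one of its two white neighbors above; translating the resulting V/$\Lambda$ labeling of the two rows into Maya diagrams shows that each black particle of the upper diagram is obtained from one of the lower diagram by shifting at most one position without crossing its left neighbor, which is precisely the horizontal-strip condition $\nu^{(k)}\succ\mu^{(k-1)}$. For the white-to-black transition with $a_k=0$ (Figure~\ref{lcc}(b)), each white vertex may select either of two black neighbors above, and the analogous analysis yields the vertical-strip relation $\mu^{(N+1-k)}\prec'\nu^{(N+1-k)}$. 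When $a_k=1$ (Figure~\ref{lcc}(c)), each white vertex has a unique black neighbor above, so the matching status of that vertex is forced, the Maya diagrams on the two rows coincide, and hence $\mu^{(N+1-k)}=\nu^{(N+1-k)}$.

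To invert the construction, I would note that once the V/$\Lambda$ assignments on two consecutive rows are fixed by their Maya diagrams, each of the three local transition types uniquely reconstructs the matching inside the corresponding two-row strip (in each case the upper-row label determines whether the edge goes up and to which neighbor); the boundary rows are handled by the computation above. Concatenating these forced matchings across all $2N$ strips produces a perfect matching in $\mathcal{M}(\Omega,\check{a})$ whose image under Construction~\ref{ct} is the given chain, establishing the bijection. The main obstacle is the careful bookkeeping of the local dictionary between Maya-diagram transitions and allowed two-row matchings — particularly aligning which transition forces $\prec$ versus $\prec'$, handling the virtual $\Lambda$-vertices at the boundary row correctly, and verifying that the forced matchings glue consistently across successive strips; once this dictionary is in place, largely along the lines of~\cite{bk} and~\cite{bbccr}, the global bijection is essentially automatic.
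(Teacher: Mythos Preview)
The paper does not supply its own proof of this theorem; it simply cites it from~\cite{bk} and~\cite{bbccr}. Your sketch follows exactly the standard route of those references --- row-by-row Maya-diagram encoding, a local two-row analysis for each of the three strip types in Figure~\ref{lcc}, and a local inverse glued across strips --- so in that sense there is nothing to compare.

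One point does need tightening. In your analysis of the black-to-white strip (rows $2i$ and $2i+1$) you write that ``each black vertex on the even row selects one of its two white neighbors above.'' That is not what happens: only the $\Lambda$-labeled black vertices of row $2i$ are matched upward into this strip; the $V$-labeled ones are matched downward into the strip below. The correct local picture, spelled out in the paper's proof of Lemma~\ref{la116}, is that the edges present between rows $2i$ and $2i+1$ are exactly the $\Lambda$-edges, and they pair the $\Lambda$-vertices (the $\square$-particles) of the two rows, each $\square$ in $M_{\nu,i}$ either staying put or shifting one step right in $M_{\mu,i}$. This is what gives the horizontal-strip relation $\nu^{(N-i+1)}\succ\mu^{(N-i)}$. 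Your stated conclusion is right, but the mechanism should be phrased in terms of the $\Lambda$-vertices (white particles), not ``each black vertex.'' The same care applies to the cointerlacing strip. Once that dictionary is stated precisely, the uniqueness of the inverse matching in each strip (and hence the bijection) follows exactly as you outline.
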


\begin{remark}
  The interlacing relations have the following implications on the signatures
  and their Young diagrams:
  \begin{itemize}
    \item for all $i$, $\ell(\mu^{(i)}) = \ell(\nu^{(i)})=i$;
    \item for all $i$, parts of $\mu^{(i)}$ (resp.\@ $\nu^{(i)}$) are all
      bounded by $\Omega_N+i-t(i)$ (resp.\@ $\Omega_N-t(i+1)+i+1$).
  \end{itemize}
  where
  \begin{equation*}
    t(i)=\#(I_1\cap\{1,\dots,i-1\})
  \end{equation*}
  is the number of odd rows below with ordinate less than $i$, where vertices
  are connected to a single vertex of the row above them.
\end{remark}

The following lemma relates the size of the signatures associated to rows of the
graph with the number of NE-SW dimers connecting these rows:
\begin{lemma}\label{la116}
  \begin{enumerate}
      Let $1\leq i\leq N$.
    \item If in the $(2i)$th row of $\mathcal{R}(\Omega,\check{a})$, the dimer
      configuration is given by the signature $\nu^{(N-i+1)}$; and in the
      $(2i+1)$th row, the dimer  configuration is given by the signature
      $\mu^{(N-i)}$, then the number of present NE-SW edges joining the $(2i)$th
      row to the $(2i+1)$th row is $|\nu^{(N-i+1)}|-|\mu^{(N-i)}|$.
    \item Assume that each vertex in the $(2i-1)$th row of
      $\mathrm{RH}(\check{a})$ is adjacent to two vertices in the $(2i)$th row.
      If in the $(2i-1)$th row of $\mathcal{R}(\Omega,\check{a})$, the dimer
      configuration is given by the signature $\mu^{(N-i+1)}$; and in the
      $(2i)$th row, the dimer  configuration is given by the signature
      $\nu^{(N-i+1)}$, then the number of present NE-SW edges joining the
      $(2i-1)$th row to the $(2i)$th row is $|\nu^{(N-i+1)}|-|\mu^{(N-i+1)}|$.
\end{enumerate}
\end{lemma}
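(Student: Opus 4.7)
The plan is to use the Maya-diagram encoding of rows from Construction~\ref{ct} together with planarity of $\mathcal{R}(\Omega,\check a)$. In both parts, the partial matching between the two given rows is forced by planarity to be the unique non-crossing matching on the relevant bipartite adjacency graph, which reduces counting NE-SW edges to an arithmetic comparison of positions that I can match to the size difference of signatures through the standard Maya-to-signature identity $|\mu|=\sum p_k-\binom{n}{2}$ (where $p_1<\dots<p_n$ are the 0-indexed positions of the black particles in the Maya diagram of $\mu$).

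For part~(1), I would identify the partial matching between rows $2i$ and $2i+1$ as the bijection between the black $\Lambda$-vertices of row $2i$ (matched upward) and the white $\Lambda$-vertices of row $2i+1$ (matched downward). Choose shifted horizontal coordinates so that the two rows have vertices at positions $0,1,\dots,L-1$ and $0,1,\dots,L-2$ respectively (with $L=L_{2i}$), arranged so that the white vertex at position $j$ in row $2i+1$ is connected by an NE-SW edge (weight $x_i$) to the black at position $j$ in row $2i$, and by an NW-SE edge to the black at position $j+1$. Writing $A=\{a_1<\dots<a_r\}$ and $B=\{b_1<\dots<b_r\}$ for the two $\Lambda$-position sets (with $r=m_{2i}=m_{2i+1}$), planarity forces the pairing $a_k\leftrightarrow b_k$ with $b_k\in\{a_k-1,a_k\}$, and the edge is NE-SW precisely when $b_k=a_k$, so the NE-SW count equals $|\{k:a_k=b_k\}|$. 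Writing $p_1<\dots<p_n$ and $q_1<\dots<q_{n-1}$ (with $n=N-i+1$) for the complementary $V$-positions, the Maya-to-signature formula gives $|\nu^{(N-i+1)}|=\sum p_k-\binom{n}{2}$ and $|\mu^{(N-i)}|=\sum q_k-\binom{n-1}{2}$. Substituting $\sum p_k=\binom{L}{2}-\sum a_k$ and $\sum q_k=\binom{L-1}{2}-\sum b_k$, using $L-n=r$, and the constraint $a_k-b_k\in\{0,1\}$, direct simplification collapses the difference to
\begin{equation*}
|\nu^{(N-i+1)}|-|\mu^{(N-i)}|=r-\sum_k(a_k-b_k)=|\{k:a_k=b_k\}|.
\end{equation*}

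For part~(2) with $a_i=0$, the partial matching between rows $2i-1$ and $2i$ now pairs $V$-vertices across: a white $V$-vertex of row $2i-1$ with a black $V$-vertex of row $2i$. Under the analogous shifted coordinates (chosen so that NE-SW edges connect white at position $j$ to black at position $j+1$), planarity forces the non-crossing pairing $q_k\leftrightarrow p_k$ with $p_k-q_k\in\{0,1\}$, and this edge is NE-SW exactly when $p_k=q_k+1$. Since $\mu^{(N-i+1)}$ and $\nu^{(N-i+1)}$ have the common length $n=N-i+1$, the Maya-to-signature formula delivers the clean identity $|\nu^{(N-i+1)}|-|\mu^{(N-i+1)}|=\sum_k(p_k-q_k)$, which by the previous sentence equals the NE-SW count. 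The main obstacle throughout is just bookkeeping: carefully fixing the coordinate shifts so that the geometric (NE-SW) and algebraic ($|\nu|-|\mu|$) quantities are expressed in the same frame, and tracking the different constants $\binom{n}{2}$ versus $\binom{n-1}{2}$ that appear when the two Maya diagrams have different lengths (as in part~(1)). Once the conventions are pinned down, both claims reduce to a short arithmetic identity.
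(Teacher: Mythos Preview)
Your proof is correct and follows the same Maya-diagram approach as the paper. The paper's argument is phrased slightly more conceptually---tracking $\square$-particles with Maya-diagram origins aligned and using that a rightward unit shift of a $\square$ removes one box from the Young diagram---whereas you carry out the equivalent count via the explicit position-sum identity $|\mu|=\sum_k p_k-\binom{n}{2}$; these are the same computation in dual notation.
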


\begin{proof}
  Edges present in a dimer configuration between rows $2i$ and $2i+1$ are $\Lambda$-edges, connecting
  $\Lambda$-vertices which in terms of Maya diagram are $\square$-particles.

  Let $M_{\nu,i}$ (resp.\ $M_{\mu,i}$) be Maya diagrams corresponding to $\nu^{(N+1-i)}$ and $\mu^{(N-i)}$.
  Note that $M_{\nu,i}$ (resp.\ $M_{\mu,i}$) has exactly $N+1-i$ (resp.\@ $N-i$) boxes to the left of its origin, and
  both $M_{\nu,i}$ and $M_{\mu,i}$ have the same number of boxes to the right or their origins.
  This follows from the fact that $M_{\nu,i}$ (resp.\ $M_{\mu,i}$) has $N+1-i$
  (resp.\@ $N-i$) black squares, while 
  both $M_{\nu,i}$ and $M_{\mu,i}$ have the same number of white squares; see Definition~\ref{dmd}.
  If we look at the $M_{\nu,i}$ and $M_{\mu,i}$ with their origin aligned, the
  presence of a NE-SW edge corresponds to a $\square$-particle in $M_{\nu,i}$ jumping to the
  right by one step in $M_{\mu,i}$ (whereas a NW-SE edge would correspond to a
  $\square$-particle
  staying at the same place in both $M_{\nu,i}$ and $M_{\mu,i}$; see
  Figure~\ref{fig:bwd}.

  The number of NE-SW edges between these two rows is thus the total
  displacement of $\square$-particles. Since in a Maya diagram,
  moving a $\square$-particle to the right corresponds to removing a box in the Young diagram, thus
  decreasing the size of the partition by 1, it follows that the total
  displacement of the $\square$-particles is equal to
  $|\nu^{(N+1-i)}|-|\mu^{(N-i)}|$.

  The second part is proved analogously.
\end{proof}
\begin{figure}
  \centering
  \includegraphics[width=8cm]{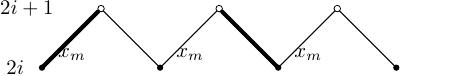}\\
  \bigskip
  \bigskip
  \includegraphics[width=2cm]{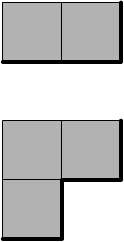}\qquad\qquad   \includegraphics[width=4cm]{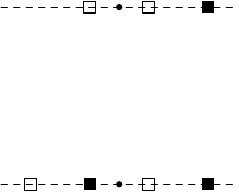}
  \caption{%
    The top graph represents part of a dimer configuration of a square-hexagon
    lattice, more precisely, the configuration of edges between the $2i$th row
    and the $(2i+1)$th row. Present edges in the figure are $\Lambda$-edges.
    Endpoints of present edges in the figure are $\Lambda$-vertices; all the
    other vertices are $V$-vertices. The mid-left graph is the Young diagram
    associated to the $(2i+1)$th row; and the bottom-left graph is the Young
    diagram associated to the $2i$th row. The mid-right graph is the Maya
    diagram corresponding to the $(2i+1)$th row; and the bottom-right graph is
    the Maya diagram associated to the $2i$th row.
  }\label{fig:bwd}
\end{figure}

A simple and direct consequence of Lemma~\ref{la116} is the following:

\begin{corollary}
  \label{cor:nb_ne_sw}
  The total number of NE-SW edges in a perfect matching of
  $\mathcal{R}(\Omega,\check{a})$ is equal to $|\omega|$, the size of the
  non-negative signature corresponding to the boundary row $\Omega$.
\end{corollary}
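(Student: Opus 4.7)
The plan is to decompose the total count of NE-SW edges into contributions from each pair of consecutive rows of $\mathcal{R}(\Omega,\check{a})$, apply Lemma~\ref{la116} to each pair, and then telescope. By Theorem~\ref{myb}, any perfect matching corresponds to an interlacing sequence $\mu^{(N)}=\omega,\,\nu^{(N)},\,\mu^{(N-1)},\dots,\nu^{(1)},\,\mu^{(0)}=\bzero$ of non-negative signatures indexed by the rows of the graph, and each NE-SW edge registers, via the Maya-diagram bijection used in the proof of Lemma~\ref{la116}, as a unit shift of a particle between two adjacent signatures.

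Concretely, for each $i\in\{1,\dots,N\}$, Lemma~\ref{la116}(1) gives the number of NE-SW edges joining row $(2i)$ to row $(2i+1)$ as $|\nu^{(N-i+1)}|-|\mu^{(N-i)}|$. For the pair of rows $(2i-1)$ and $(2i)$, when $i\in I_2$ Lemma~\ref{la116}(2) provides the count $|\nu^{(N-i+1)}|-|\mu^{(N-i+1)}|$; when $i\in I_1$, each white vertex on row $(2i-1)$ is joined by a single edge to row $(2i)$ that is not of NE-SW type, and by Theorem~\ref{myb} one has $\mu^{(N-i+1)}=\nu^{(N-i+1)}$, so the same formula is trivially valid (with value zero) and can be applied uniformly over all $i\in\{1,\dots,N\}$. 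This uniformization is what makes the subsequent summation symmetric.

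Summing these two families of contributions and collecting terms by signature, the intermediate $|\nu^{(j)}|$'s cancel against each other and what remains is a telescoping expression in the $|\mu^{(j)}|$'s that collapses to $|\mu^{(N)}|-|\mu^{(0)}|$. Since $\mu^{(N)}=\omega$ and $\mu^{(0)}$ consists of all zero parts, this equals $|\omega|$, as desired.

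The principal obstacle is the sign and index bookkeeping in the two parts of Lemma~\ref{la116}: with the orientation convention for NE-SW edges used there, one must check that the two families of contributions combine so that the intermediate $|\nu^{(j)}|$ terms cancel rather than reinforce, leaving only the boundary data $|\mu^{(N)}|$ and $|\mu^{(0)}|$. The special status of $i\in I_1$, made compatible via Theorem~\ref{myb}, is essential for the clean form of the telescope.
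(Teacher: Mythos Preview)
Your approach matches the paper's intent, but the telescoping step is wrong as written. With the signs from Lemma~\ref{la116}, each $|\nu^{(j)}|$ enters twice with the \emph{same} positive sign---once from part~(1) as $|\nu^{(N-i+1)}|-|\mu^{(N-i)}|$ and once from part~(2) as $|\nu^{(N-i+1)}|-|\mu^{(N-i+1)}|$---so the $|\nu^{(j)}|$'s add rather than cancel:
\[
\sum_{i=1}^{N}\Bigl[(|\nu^{(N-i+1)}|-|\mu^{(N-i+1)}|)+(|\nu^{(N-i+1)}|-|\mu^{(N-i)}|)\Bigr]
=2\sum_{j=1}^{N}|\nu^{(j)}|-\sum_{j=1}^{N}|\mu^{(j)}|-\sum_{j=0}^{N-1}|\mu^{(j)}|,
\]
which does not reduce to $|\mu^{(N)}|-|\mu^{(0)}|$ in general.

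This is not merely a bookkeeping slip that can be repaired. Take $N=1$, $\Omega=(1)$, $a_1=0$: then $\omega=(0)$, $|\omega|=0$, yet the two perfect matchings of the resulting square face have $0$ and $2$ NE-SW edges respectively (consistent with the partition function $Z=1+x_1y_1$). So the assertion, read as counting \emph{all} NE-SW edges (both the $x$-weighted ones between rows $2i,2i{+}1$ and the $y$-weighted ones between rows $2i{-}1,2i$), is false, and no telescoping can rescue it. What \emph{does} collapse to $|\omega|$ via Lemma~\ref{la116} is the difference of the two families,
\[
\sum_{i=1}^{N}(|\nu^{(N-i+1)}|-|\mu^{(N-i)}|)-\sum_{i=1}^{N}(|\nu^{(N-i+1)}|-|\mu^{(N-i+1)}|)=|\mu^{(N)}|-|\mu^{(0)}|=|\omega|;
\]
in the purely hexagonal case ($I_2=\emptyset$) the second sum vanishes identically and the corollary holds as stated. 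You should flag this sign issue rather than assert a cancellation that does not occur.
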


\subsection{Schur Functions and Partition Function of Perfect Matchings}

Recall that the \emph{partition function} of the dimer model of a finite graph
$G$ with edge weights $(w_e)_{e\in E(G)}$ is given by
\begin{equation*}
Z=\sum_{M\in \mathcal{M}}\prod_{e\in M}w_e,
\end{equation*}
where $\mathcal{M}$ is the set of all perfect matchings of $G$. The
\emph{Boltzmann dimer probability measure} on $M$ induced by the weights $w$ is
thus defined by declaring that probability of a perfect matching is equal to
\begin{equation*}
  \frac{1}{Z}\prod_{e\in M} w_e.
\end{equation*}

In this section, we prove a formula which express the partition function of
perfect matchings on a contracting square-hexagon lattice
$\mathcal{R}(\Omega,\check{a})$ as a Schur function depending on the boundary
configuration $\Omega$ and the edge weights.


\begin{definition}
  Let $\lambda\in \GT_N$. The \emph{rational Schur function} $s_\lambda$
  associated to $\lambda$ is the homogeneous symmetric function of degree
  $|\lambda|$ in $N$ variables defined by:
\begin{equation*}
  s_{\lambda}(u_1,\ldots,u_N)=
  \frac{\det_{i,j=1,\ldots,N}(u_i^{\lambda_j+N-j})}{\prod_{1\leq
  i<j\leq N}(u_i-u_j)}.
\end{equation*}
\end{definition}

Let $\mu,\nu\in\GT_n^+$ be two non-negative signature of length $n$. It
is well-known that Schur functions form a basis for the algebra of symmetric
functions. Let $\beta=(\beta_1,\ldots,\beta_n)\in \CC^n$. We define
as in~\cite{bk} the coefficients
$\mathrm{st}_{\beta}\left(\mu\rightarrow\nu\right)$ and
$\mathrm{pr}_{\beta}\left(\lambda^{(i)}\rightarrow \mu^{(i-1)}\right)$
as follows:
\begin{equation}
\mathrm{pr}_{\beta}(\nu\rightarrow\lambda)=
\begin{cases}
\beta_1^{|\nu|-|\lambda|}
\frac{{s_{\lambda}(\beta_2,\ldots,\beta_n)}}{s_{\nu}(\beta_1,\ldots,\beta_n)}
&\text{if $\lambda\prec\nu$}
\\
0 & \text{otherwise}
\end{cases},
\label{d3}
\end{equation}
and 
\begin{equation}
\mathrm{st}_{\beta}(\mu\rightarrow\lambda)=
\begin{cases}
  \frac{1}{\prod_{j=1}^{n}(1+\beta_j)}
  \frac{{s_{\lambda}(\beta_1,\ldots,\beta_n)}}{s_{\mu}(\beta_1,\ldots,\beta_n)}
  &\text{if $\mu\prec'\lambda$}
  \\
  0 &\text{otherwise}
  \label{d4}
\end{cases}.
\end{equation}

By the branching formula for Schur polynomials, and the same argument
as~\cite[Lemma 2.12]{bk} we have the following identities
\begin{align}
  \frac{s_{\mu}(u_1,\ldots,u_n)}{s_{\mu}(\beta_1,\ldots,\beta_n)}
  \prod_{j=1}^{n}\frac{(1+u_j)}{(1+\beta_j)}
  &=\sum_{\lambda\in \GT_n}\mathrm{st}_{\beta}\left(\mu\rightarrow\lambda\right)
  \frac{s_{\lambda}(u_1,\ldots,u_n)}{s_{\lambda}(\beta_1,\ldots,\beta_n)},
  \label{d1}
  \\
  \frac{s_{\nu}(\beta_1,u_2,\ldots,u_n)}{s_{\nu}(\beta_1,\beta_2\ldots,\beta_n)}
  &=\sum_{\lambda\in \GT_{n-1}}\mathrm{pr}_{\beta}\left(\nu\rightarrow\lambda\right)
  \frac{s_{\lambda}(u_2,\ldots,u_n)}{s_{\lambda}(\beta_2,\ldots,\beta_n)}
  \label{d2}.
\end{align}
from which we deduce that the following holds:
\begin{equation*}
\sum_{\lambda\prec \nu}
\mathrm{pr}_{\beta}(\nu\rightarrow\lambda)=1,
\qquad
\sum_{\lambda\succ'\mu}
\mathrm{st}_{\beta}(\mu\rightarrow\nu)=1.
\end{equation*}

For $i\in \{1,2,\ldots,i\}$, define 
\begin{equation}
C_i=\left(x_i,x_{i+1},\ldots,x_N\right)\in \RR^{N-i+1},
\label{dci}
\end{equation}
and for $i\in I_2$,  define
\begin{equation}
B_i=y_i C_i=\left(y_{i}x_{i},y_{i}x_{i+1}\ldots,y_{i}x_{N}\right)\in \RR^{N-i+1}
\label{dbi}
\end{equation}

By homogeneity of the Schur functions, for each $i\in I_2$, and $\lambda\in \GT_{N-i}^+$, we have
\begin{eqnarray}
s_{\lambda}(B_{i})=\left(y_{i}\right)^{|\lambda|}s_{\lambda}(C_{i}).\label{slbc}
\end{eqnarray}

For $i\in I_2$, define
\begin{eqnarray}
\Gamma_i=\prod_{t=i+1}^{N}\left(1+y_{i}x_{t}\right).\label{gi}
\end{eqnarray}

Recall that $\mathcal{S}^N_{\omega}(\check{a})$, as defined in Theorem~\ref{myb}, is the set of all the sequences of partitions in bijection with the set $\mathcal{M}(\Omega,\check{a})$, which consists of all the perfect matchings on the contracting square-hexagon lattice with bottom boundary condition $\Omega$ and structures on rows given by $\check{a}$.  We now define a probability measure on $\mathcal{S}^N_{\omega}(\check{a})$ as follows:
\begin{multline}
\mathbb{P}_{\omega}^N\left(\mu^{(N)}, \nu^{(N)},\ldots,\mu^{(1)},
\nu^{(1)},\mu^{(0)}\right)=\\
1_{\{\mu^{(N)}=\omega\}}\times
\prod_{j\in
I_2}\mathrm{st}_{B_{j}}\left(\mu^{(N-j+1)}\rightarrow\nu^{(N-j+1)}\right)\prod_{i=1}^{N}\mathrm{pr}_{C_{i}}\left(\nu^{(N-i+1)}\rightarrow\mu^{(N-i)}\right).
\label{pb}
\end{multline}

The following proposition connects this measure with the Boltzmann measure on
dimers configurations of the associated contracting square-hexagon graph:
\begin{proposition}
  \label{p16}
  The bijection described in Theorem~\ref{myb} transports the probability
  measure~\eqref{pb} on $\mathcal{S}^N_{\omega}(\check{a})$ to a Boltzmann dimer
  measure on the perfect matchings of $\mathcal{R}(\Omega,\check{a})$, with
  the following weights
  \begin{itemize}
    \item each  NE-SW edge joining the $(2i)$th row to the $(2i+1)$th row has weight $x_i$; and
    \item each  NE-SW edge joining the $(2i-1)$th row to the $2i$th row has weight $y_i$, if such an edge exists;
    \item All the other edges have weight 1.
  \end{itemize}

  Moreover, the dimer partition function on $\mathcal{R}(\Omega,\check{a})$ for
  these weights is given by
  \begin{equation*}
    Z=\left[\prod_{i\in I_2}\Gamma_i\right] s_{\omega}(x_{1},\ldots,x_{N})
  \end{equation*}
  where $\omega$ is the $N$-tuple corresponding to the boundary row of
  $\mathcal{R}(\Omega,\check{a})$, and $\Gamma_i$ is defined as in \eqref{gi}.
\end{proposition}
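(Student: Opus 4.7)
The strategy is to compute, for a fixed perfect matching $M\in\mathcal{M}(\Omega,\check{a})$ with associated signature sequence $\sigma_M=(\mu^{(N)},\nu^{(N)},\dots,\mu^{(0)})$ under the bijection of Theorem~\ref{myb}, an explicit formula for $\mathbb{P}_\omega^N(\sigma_M)$ from~\eqref{pb}, and to show that it factorizes as $W(M)/Z$, where $W(M)=\prod_{e\in M}w_e$ is the Boltzmann weight and $Z$ is the claimed partition function.

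The first step is to express $W(M)$ in terms of signature sizes. By Assumption~\ref{apew} the only edges carrying non-trivial weights are the NE-SW ones, so by the two cases of Lemma~\ref{la116},
\[
  W(M) = \prod_{i=1}^N x_i^{\,|\nu^{(N-i+1)}|-|\mu^{(N-i)}|}\;\prod_{j\in I_2} y_j^{\,|\nu^{(N-j+1)}|-|\mu^{(N-j+1)}|}.
\]
Next, I would substitute the closed forms~\eqref{d3} and~\eqref{d4} into the definition~\eqref{pb}. For each $j\in I_2$, the homogeneity identity~\eqref{slbc} $s_\lambda(B_j)=y_j^{|\lambda|}s_\lambda(C_j)$ allows me to pull a factor $y_j^{\,|\nu^{(N-j+1)}|-|\mu^{(N-j+1)}|}$ out of the Schur-function ratio appearing in $\mathrm{st}_{B_j}$; similarly, each $\mathrm{pr}_{C_i}$ carries a manifest factor $x_i^{\,|\nu^{(N-i+1)}|-|\mu^{(N-i)}|}$. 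The products of these monomials in the $x_i,y_j$ reproduce $W(M)$ exactly.

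What remains in the ratio $\mathbb{P}_\omega^N(\sigma_M)/W(M)$ is a chain of Schur-function ratios in the variables $C_i=(x_i,\dots,x_N)$, multiplied by the combinatorial constant $\prod_{j\in I_2}\prod_t(1+y_j x_t)^{-1}$ coming from the denominator of~\eqref{d4}. I expect these Schur ratios to telescope: for $j\in I_2$, the factor $s_{\nu^{(N-j+1)}}(C_j)$ appearing in the numerator of $\mathrm{st}_{B_j}$ (after~\eqref{slbc}) cancels the identical factor in the denominator of $\mathrm{pr}_{C_j}$; for $i\in I_1$, the last clause of Theorem~\ref{myb} forces $\nu^{(N-i+1)}=\mu^{(N-i+1)}$, collapsing the corresponding ratio. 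After these cancellations the chain telescopes to its two endpoints, leaving $s_{\mu^{(0)}}(\varnothing)=1$ in the numerator and $s_{\mu^{(N)}}(C_1)=s_\omega(x_1,\dots,x_N)$ in the denominator, while the surviving $(1+y_jx_t)$ factors assemble into $\prod_{j\in I_2}\Gamma_j$ as defined in~\eqref{gi}. Thus $\mathbb{P}_\omega^N(\sigma_M) = W(M)/Z$ with $Z = \big[\prod_{j\in I_2}\Gamma_j\big]\,s_\omega(x_1,\dots,x_N)$.

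The partition-function identity then follows for free: because $\mathbb{P}_\omega^N$ is a probability measure (which one checks by iteratively applying~\eqref{l18}, first summing $\mathrm{st}_{B_N}$ over $\nu^{(N)}$, then $\mathrm{pr}_{C_N}$ over $\mu^{(N-1)}$, and so on down the chain), summing the identity $\mathbb{P}_\omega^N(\sigma_M)=W(M)/Z$ over all $M\in\mathcal{M}(\Omega,\check{a})$ yields $\sum_M W(M)=Z$. I expect the main obstacle to lie in the bookkeeping of step three: correctly matching the number of variables in each Schur specialization, pairing the $\mathrm{pr}$- and $\mathrm{st}$-factors along the chain for $i\in I_1$ versus $i\in I_2$, and verifying that the leftover $(1+y_jx_t)$ factors reassemble precisely into the $\Gamma_j$ of~\eqref{gi} rather than some variant.
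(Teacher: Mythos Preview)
Your proposal is correct and follows essentially the same approach as the paper: substitute the closed forms~\eqref{d3}, \eqref{d4} and the homogeneity~\eqref{slbc} into~\eqref{pb}, obtain the explicit formula (what the paper records as~\eqref{pb1}), and identify the numerator with the Boltzmann weight via Lemma~\ref{la116}. The paper's proof simply states~\eqref{pb1} as the outcome of this substitution without writing out the telescoping of Schur ratios that you describe in step three; your outline makes that cancellation explicit, and your final remark about recovering $Z$ by summing and invoking~\eqref{l18} is the natural way to finish, though the paper leaves it implicit.
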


\begin{proof}
  By \eqref{pb}, \eqref{d3}, \eqref{d4} and \eqref{slbc}, we have
  \begin{multline}
    \mathbb{P}_{\omega}^N\left(\mu^{(N)}, \nu^{(N)},\ldots,\mu^{(1)},
    \nu^{(1)},\mu^{(0)}\right)=\\
    1_{\{\mu^{(N)}=\omega\}}
    \frac{%
      \prod_{i\in I_2}\left[%
        \left(y_{i}\right)^{|\nu^{(N-i+1)}|-|\mu^{(N-i+1)}|}
      \right]
      \prod_{j=1}^{N}\left[%
        \left(x_{j}\right)^{|\nu^{(N-j+1)}|-|\mu^{(N-j)}|}
      \right]
    }{%
      \left[\prod_{i\in I_2}\Gamma_i\right]  s_{\omega}(x_{1},\ldots,x_{N})
    }
    \label{pb1}.
  \end{multline}
  When $\mu^{(N)}=\omega$, the numerator of \eqref{pb1} is exactly  by
  Lemma~\ref{la116} the product of
  weights of present edges in the perfect matching corresponding to the sequence
  of non-negative signatures
  \begin{equation*}
    \left(\mu^{(N)}, \nu^{(N)},\ldots,\mu^{(1)}, \nu^{(1)},\mu^{(0)}\right)
  \end{equation*}
 Then the proposition follows.
\end{proof}

\subsection{Examples}

In this section, we provide a few examples of contracting square-hexagon
lattices, compute the partition functions of dimer configurations on these
graphs explicitly, and verify that these partition functions are equal to the
formula given by Proposition~\ref{p16}.

\subsubsection{Square Grid}
Consider perfect matchings on a square grid with edge weights assigned as in the
Figure~\ref{fig:raz}.

\begin{figure}
  \includegraphics[width=8cm]{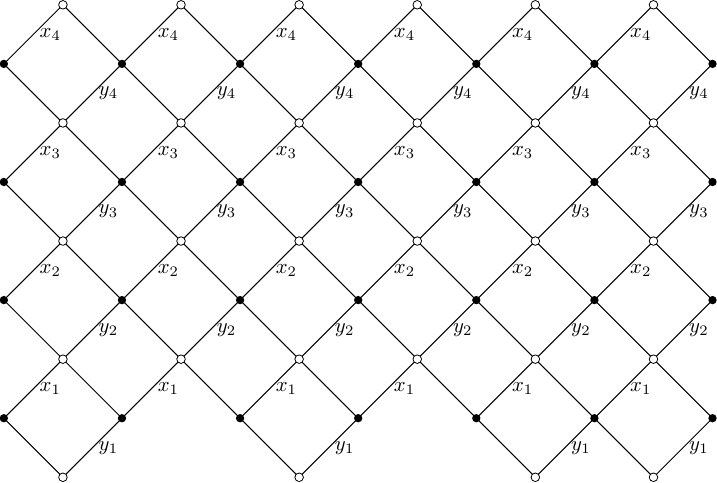}
\caption{Rectangular Aztec diamond with $N=4$, $m=2$, $\Omega=(1,3,5,6)$, and $a_i=0$.}
\label{fig:raz}
\end{figure}

\begin{corollary}
  \label{c19}
  Let $\mathcal{R}(\Omega,\check{a})$ be a contracting square grid, with edge
  weights $x_1,x_2,\ldots,$ on NE-SW edges; $a_i=0$ for all $i\geq 1$; and
  $\Omega$ is an $N$-tuple of integers. Then the partition function for perfect
  matchings on $\mathcal{R}(\Omega,\mathbf{0})$ is given by
  \begin{equation*}
    Z=\left[\prod_{i=1}^{N}\prod_{j=i}^{N}(1+y_ix_{j})\right]
    s_{\omega}(x_{1},\ldots,x_{N})
  \end{equation*}
  where $\omega$ is the $N$-tuple corresponding to the boundary row of
  $\mathcal{R}(\Omega,\mathbf{0})$, and $\Gamma_i$ is defined as in \eqref{gi}.
\end{corollary}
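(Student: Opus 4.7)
The plan is to deduce Corollary~\ref{c19} as an immediate specialization of Proposition~\ref{p16} to the case $\check{a}=\mathbf{0}$. The first observation is that the hypothesis $a_i=0$ for every $i\in\{1,\ldots,N\}$ gives, via Definition~\ref{defI1I2}, the partition $I_1=\varnothing$ and $I_2=\{1,2,\ldots,N\}$. Substituting this into the general formula of Proposition~\ref{p16} immediately produces
\[
Z=\left[\prod_{i=1}^{N}\Gamma_i\right]s_{\omega}(x_{1},\ldots,x_{N}).
\]

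The second step is to expand each $\Gamma_i$ using the definition~\eqref{gi}, converting the single product $\prod_{i=1}^{N}\Gamma_i$ into a double product of factors of the form $(1+y_ix_j)$ with $(i,j)$ ranging over the appropriate triangle $\{1\le i\le j\le N\}$. This matches the explicit expression appearing in the statement of the corollary and completes the argument.

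There is no substantive mathematical obstacle: the entire proof is a one-line specialization of Proposition~\ref{p16} followed by routine bookkeeping of indices. The only care required is the index check on the inner product, which amounts to reading off~\eqref{gi} and recording that, since $\check{a}=\mathbf{0}$, every odd row contributes the NE-SW weights $y_i$ so that each pair $(i,j)$ in the stated range contributes exactly one factor $(1+y_ix_j)$ to the prefactor.
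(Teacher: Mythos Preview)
Your proof is correct and follows exactly the same route as the paper: observe that $\check{a}=\mathbf{0}$ forces $I_2=\{1,\ldots,N\}$, then apply Proposition~\ref{p16}. One small bookkeeping point worth flagging: the definition~\eqref{gi} reads $\Gamma_i=\prod_{t=i+1}^{N}(1+y_ix_t)$, so $\prod_{i\in I_2}\Gamma_i$ actually yields $\prod_{i=1}^{N}\prod_{j=i+1}^{N}(1+y_ix_j)$, i.e.\ the range $1\le i<j\le N$ rather than $1\le i\le j\le N$ as printed in the corollary --- this is a typo in the paper that its own proof does not address either, so your ``routine bookkeeping'' claim that the ranges match should be read modulo this discrepancy.
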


\begin{proof}
  Note that when $\check{a}=\mathbf{0}$, the graph is a square grid. When
  $\Omega$ is an $N$-tuple of integers, we have $I_2=\{1,2,\ldots,N\}$.
  Corollary~\ref{c19} follows from Proposition~\ref{p16}.
\end{proof}

The case when all $x_i$ and $y_i$ are 1 is the one covered by~\cite{bk}.

\subsubsection{Hexagon Lattice}

\begin{corollary}
  \label{ch}
  Let $\mathcal{R}(\Omega,\check{a})$ be a contracting hexagonal lattice such
  that $a_i=1$ for all $i\geq 1$ and $\Omega$ is an $N$-tuple of integers, with
  edge weights $x_1,x_2,\ldots,$ on NE-SW edges. Then  the partition function
  for perfect matchings on $\mathcal{R}(\Omega,\mathbf{1})$ is given by
  \begin{equation*}
    Z= s_{\omega}(x_{1},\ldots,x_{N})
  \end{equation*}
  where $\omega$ is the $N$-tuple corresponding to the boundary row of
  $\mathcal{R}(N,\Omega,m)$, and $\Gamma_i$ is defined as in \eqref{gi}.
\end{corollary}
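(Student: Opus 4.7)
The plan is to derive Corollary~\ref{ch} as a direct specialization of Proposition~\ref{p16}. The hypothesis $a_i=1$ for all $i\geq 1$ means that every index in $\{1,\dots,N\}$ lies in $I_1$, so by Definition~\ref{defI1I2} we have $I_2=\varnothing$. The product $\prod_{i\in I_2}\Gamma_i$ in the partition-function formula of Proposition~\ref{p16} is therefore an empty product, equal to $1$.

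What remains is simply to verify that the Boltzmann weighting described in Proposition~\ref{p16} matches the weighting in the statement of Corollary~\ref{ch}. Under $a_i=1$, each white vertex in the $(2i-1)$th row is adjacent to exactly one black vertex in the $(2i)$th row (Figure~\ref{lcc}(c)); this forced edge is not of NE-SW type, so the $y_i$ weights never appear in the list of weighted NE-SW edges between odd and even rows, consistent with $I_2=\varnothing$. The only weighted edges are the NE-SW edges from row $2i$ to row $2i+1$, each carrying weight $x_i$, which matches the statement of the corollary.

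Substituting $\prod_{i\in I_2}\Gamma_i=1$ into the formula of Proposition~\ref{p16} yields
\begin{equation*}
  Z = s_\omega(x_1,\ldots,x_N),
\end{equation*}
which is exactly the claim. I do not anticipate any obstacles here — the only subtlety is confirming that the hexagonal specialization genuinely forces $I_2=\varnothing$ and that the remaining edge weighting coincides with the one given in Corollary~\ref{ch}, and both are immediate from Definitions~\ref{dfr} and~\ref{defI1I2} together with Assumption~\ref{apew}.
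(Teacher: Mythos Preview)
Your proof is correct and takes essentially the same approach as the paper: both observe that $\check{a}=\mathbf{1}$ forces $I_2=\varnothing$, so the product $\prod_{i\in I_2}\Gamma_i$ is empty and Proposition~\ref{p16} immediately gives $Z=s_\omega(x_1,\ldots,x_N)$. Your write-up is in fact more detailed than the paper's, which simply states that $I_2=\emptyset$ and invokes Proposition~\ref{p16}.
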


\begin{proof}
  Note that when $\check{a}=\mathbf{1}$, the graph is a hexagon lattice. When
  $\Omega$ is an $N$-tuple of integers, we have $I_2=\emptyset$.
  The Corollary~\ref{ch} follows from Proposition~\ref{p16}.
\end{proof}

The case when all the weights $x_i$ are equal to 1 is the context
of~\cite{LP12}, although the results there were obtained through a
$q$-deformation of the measure by setting $x_i=q^{-i}$ and taking the limit
$q\to 1$.

\begin{figure}
  \includegraphics[width=8cm]{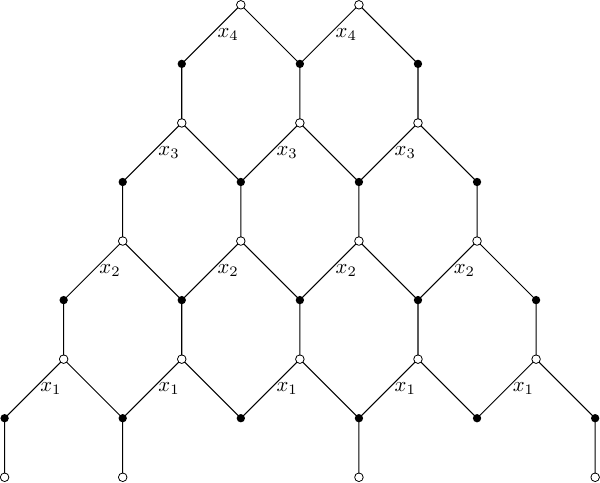}
\caption{Contracting hexagon lattice with $N=4$, $m=2$, $\Omega=(1,2,4,6)$, and
$a_i=1$.}
\label{fig:HD}
\end{figure}

\subsubsection{A Square-Hexagon Lattice}

\begin{example}
  The partition function of dimer configurations on a square-hexagon lattice as
  illustrated in Figure~\ref{fig:SH} is
\begin{multline}
  \label{zp}
  Z=(1+y_2x_2)(1+y_2x_3)\\
  \times[x_1^3x_2+x_1^3x_3+x_1x_2^3+x_1x_3^3+x_2^3x_3+x_2x_3^3\\
  +x_1^2x_2^2+x_1^2x_3^2+x_2^2x_3^2+2x_1x_2x_3(x_1+x_2+x_3)].
\end{multline}
  Indeed, in the graph shown in Figure~\ref{fig:SH}, we have $I_2=\{2\}$
  and the boundary signature is $\omega=(3,1,0)$.
  Then the partition function can be computed by applying Proposition~\ref{p16}. More precisely
  \begin{equation*}
    Z=(1+y_2x_2)(1+y_2x_3)s_{\omega}(x_1,x_2,x_3).
  \end{equation*}
  Expanding $s_{\omega}(x_1,x_2,x_3)$, we obtain exactly~\eqref{zp}.
\end{example}

For this case, as well as the previous cases, an alternative way to derive the
partition function would be to apply Kasteleyn--Percus
theory~\cite{PWK61,Percus} and write it as the determinant of a sign twisted,
weighted, bipartite adjacency matrix of the graph, and get the same polynomials.
But for this class of graphs, the machinery of symmetric functions
gives a shorter derivation of the partition function.

\begin{figure}
  \includegraphics{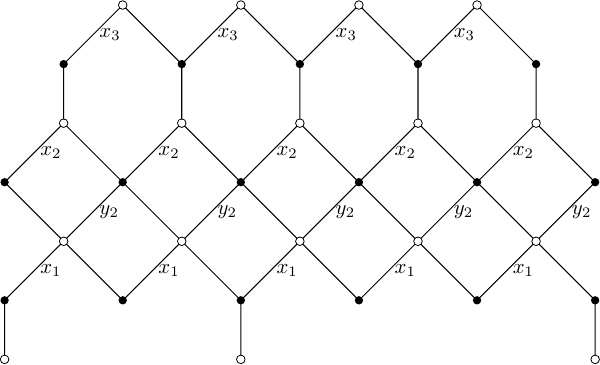}
\caption{Contracting square-hexagon lattice with $N=3$, $m=3$, $\Omega=(1,3,6), (a_1,a_2,a_3)=(1,0,1)$.}
\label{fig:SH}
\end{figure}

\subsection{Convergence of the free energy}

We state now a result about the asymptotic behavior of the partition function Z
of the dimer model on contracting square-hexagon graphs (defined in
Proposition~\ref{p16}), subject to some regularity
for the sequence of signatures describing the boundary of the graph.
This partition function then grows exponentially with $N^2$, where $N$ is the
size of the graph, and the exponential
growth rate
\begin{equation*}
  \lim_{N\to\infty} \frac{1}{N^2} \log Z
\end{equation*}
is called the \emph{free energy}.

Let us introduce first some definition to state the hypotheses for the
convergence result:

Let $\lambda\in\GT_N$ be a non-negative signature. We define the
\emph{counting measure} $m(\lambda)$ corresponding to $\lambda$ as follows:
\begin{equation*}
m(\lambda)=\frac{1}{N}\sum_{i=1}^{N}\delta\left(\frac{\lambda_i+N-i}{N}\right).
\end{equation*}

Let $\rho$ be a probability measure on the set $\GT_N$ of all signatures. The
push-forward of $\rho$ with respect to the map $\lambda\mapsto m(\lambda)$
defines a random probability measure on $\RR$ denoted by $m(\rho)$.

One natural setting for which one can prove that the free energy exists is when
the corresponding sequence of signatures describing the boundary of our sequence
of contracting square-hexagon graphs is \emph{regular}~\cite{VP15}, in the
following sense:

\begin{definition}[\cite{VP15}]
  \label{df23}
  A sequence of signatures $\lambda(N)\in\GT_N$ is called \emph{regular}, if there
  exists a piecewise continuous function $f(t)$ and a constant $C>0$ such that
  \begin{equation*}
    \lim_{N\rightarrow\infty} \frac{1}{N}
    \sum_{j=1}^{N}
    \left|\frac{\lambda_j(N)}{N}-f\left(\frac{j}{N}\right)\right| = 0,
    \quad
    \text{and}
    \quad
    \sup_{1\leq j\leq N}
    \left|\frac{\lambda_j(N)}{N}-f\left(\frac{j}{N}\right)\right|<C\ \text{for
    all $N\geq1$}.
  \end{equation*}
\end{definition}

Since the renormalized logarithm of the $\Gamma_i$ factors have a simple
limit, the existence and the value of the free energy is determined by the
existence of the logarithm of the renormalized Schur function.

For any positive integer $j\in \NN$, let $\ol{j}=j\mod n$.

\begin{proposition}[Existence of the normalized free energy in the periodic
  case]
  \mbox{}\\
  \label{lmo} 
  Suppose that the following two conditions hold: 
  \begin{itemize}
    \item $\{\lambda(N)\}_{N\in \NN}$ is a regular sequence of signatures,
    \item as $N\rightarrow\infty$, $m(\lambda(N))$ converges weakly to a
      probability measure $\bm$ on $\RR$.
  \end{itemize}
  Then: 
  \begin{enumerate}
    \item For each $N$, $s_{\lambda(N)}(1,\ldots,1)\geq 1$.
    \item For any $\beta=(\beta_1,\dotsc,\beta_n)\in\mathbb{R}^n$, and any
      sequence $\beta^{(N)}=\left((\beta_1^{(N)},\ldots,\beta_n^{(N)})\right)_N$
      converging to $\beta$,
      the limit
      \begin{equation}
        \lim_{N\rightarrow\infty}\frac{1}{N^2}\log\frac{s_{\lambda(N)}\left(\beta_{\ol{1}}^{(N)},\ldots,\beta_{\ol{N}}^{(N)}\right)}{s_{\lambda(N)}(1,\ldots,1)}
	\label{n2l}
      \end{equation}
      exists, and depends only on the limit $\beta$. In
      particular,
      \begin{equation}
        \lim_{N\rightarrow\infty}\frac{1}{N^2}\log\frac{s_{\lambda(N)}\left(\beta_{\ol{1}}^{(N)},\ldots,\beta_{\ol{N}}^{(N)}\right)}{s_{\lambda(N)}(1,\ldots,1)}=\lim_{N\rightarrow\infty}\frac{1}{N^2}\log\frac{s_{\lambda(N)}(\beta_{\ol{1}},\ldots,\beta_{\ol{N}})}{s_{\lambda(N)}(1,\ldots,1)}
        \label{bn}.
      \end{equation}
  \end{enumerate}
\end{proposition}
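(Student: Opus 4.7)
For part (1), I would use the combinatorial (tableau) interpretation of the Schur polynomial at the all-ones specialization: $s_{\lambda(N)}(1,\dots,1)$ counts the semistandard Young tableaux of shape $Y_{\lambda(N)}$ with entries in $\{1,\dots,N\}$, equivalently it equals the dimension of the polynomial $GL_N$-representation indexed by $\lambda(N)$. Since $\lambda(N)\in\GT_N^+$ has $N$ rows, the tableau whose $i$-th row consists entirely of the entry $i$ is always admissible, so $s_{\lambda(N)}(1,\dots,1)\geq 1$.

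For part (2), the plan is to prove first the existence of the right-hand side of \eqref{bn} (the limit at a fixed $\beta$), and then to promote it to the perturbed sequence $\beta^{(N)}\to \beta$. Existence at fixed $\beta=(\beta_1,\dots,\beta_n)$ is handled by exploiting the symmetry of $s_{\lambda(N)}$ to regroup the variables $(\beta_{\ol 1},\dots,\beta_{\ol N})$ so that each $\beta_j$ appears $\lfloor N/n\rfloor$ or $\lceil N/n\rceil$ times consecutively. This recasts the normalized log-Schur ratio as the evaluation of $s_{\lambda(N)}/s_{\lambda(N)}(1,\dots,1)$ at a discretization of the piecewise-constant profile $u:(0,1]\to(0,\infty)$ with $u(t)=\beta_j$ on $((j-1)/n,j/n]$. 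Under the regularity of $\lambda(N)$ and the assumption $m(\lambda(N))\Rightarrow \bm$, the asymptotic theory of~\cite{VP15} for Schur functions (based on the Weyl/bialternant contour-integral representation and iterated saddle-point analysis, combined with the Pieri/branching rule) yields an explicit limit depending only on $\bm$ and $u$, hence only on $\bm$ and $\beta$. In particular the limit is insensitive to how the residues mod $n$ are distributed as $N\to\infty$.

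To promote the existence from fixed $\beta$ to arbitrary $\beta^{(N)}\to\beta$, I would establish a Lipschitz estimate for the normalized log-Schur function with constant uniform in $N$. Using the bialternant representation and the regularity bound $\lambda_j(N)/N\leq C$, one controls $\partial_{u_i}\log s_{\lambda(N)}$ on any compact subset of $(0,\infty)^n$ by a quantity of order $O(N^2)$. Dividing by $N^2$ yields an $N$-independent Lipschitz constant for the map $\beta\mapsto \frac{1}{N^2}\log\bigl(s_{\lambda(N)}(\beta_{\ol 1},\dots,\beta_{\ol N})/s_{\lambda(N)}(1,\dots,1)\bigr)$ on compacts, and the identity \eqref{bn} follows from the triangle inequality combined with $\beta^{(N)}\to\beta$.

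The main obstacle is the existence of the limit at fixed $\beta$: its proof reduces to the saddle-point analysis of the Weyl-type contour integral for $s_{\lambda(N)}$ with piecewise-constant specializations on regular sequences, which is the content of~\cite{VP15}. Once this is invoked as a black box, the symmetry reduction and the Lipschitz-continuity upgrade are essentially bookkeeping; the only care needed is to check that the $O(1)$ discrepancy (per residue class) in the number of repetitions of each $\beta_j$ in the periodic specialization relative to the idealized profile $u$ contributes only $O(N)$ to the log, hence vanishes after division by $N^2$.
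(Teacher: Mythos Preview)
Your argument for part~(1) is correct and is essentially the paper's argument phrased combinatorially: the paper observes that $s_{\lambda(N)}(1,\dots,1)$ is the partition function of dimers on a contracting hexagonal lattice (Corollary~\ref{ch}), which is the same count of SSYT you invoke, and is at least~$1$.

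For part~(2) your route is genuinely different from the paper's, and it has one soft spot. The paper does \emph{not} attack the existence of the limit at fixed $\beta$ by saddle-point analysis of the bialternant; instead it rewrites the Schur ratio via the Harish--Chandra--Itzykson--Zuber identity (Lemma~\ref{hciz}) as a prefactor times $\int_{U(N)} e^{N\,\mathrm{tr}(U D_N U^* E_N)}\,dU$, and then applies the Guionnet--Zeitouni large-deviation result (Lemma~\ref{lgz}, from~\cite{GZ02}). That theorem gives the existence of $\lim N^{-2}\log$ of the spherical integral \emph{and} shows that the limit depends only on the weak limits of the empirical spectral measures of $D_N$ and $E_N$. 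Since the empirical measures of $(\log\beta^{(N)}_{\ol i})_{i\le N}$ and of $(\log\beta_{\ol i})_{i\le N}$ converge to the same measure $\tfrac1n\sum_j \delta_{\log\beta_j}$, both the existence and the insensitivity to the perturbation $\beta^{(N)}\to\beta$ come out in one stroke. No separate Lipschitz step is needed.

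Your Lipschitz upgrade is correct and is a nice self-contained observation: for positive arguments, $u_i\,\partial_{u_i}\log s_\lambda$ is a weighted average of the content $m_i(T)\le \lambda_1=O(N)$ over tableaux $T$, and summing over the $\sim N/n$ occurrences of each $\beta_j$ gives $\partial_{\beta_j}\bigl(N^{-2}\log s_{\lambda(N)}\bigr)=O(1)$ uniformly on compacts in $(0,\infty)^n$. So step~(b) of your plan is fine.

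The soft spot is step~(a). The reference~\cite{VP15} defines regular sequences and develops asymptotics for Schur functions at specializations of the form $(1,\dots,1)$ or geometric progressions, but it does not contain a ready-made theorem giving $\lim N^{-2}\log s_{\lambda(N)}$ at an arbitrary periodic positive specialization via contour-integral saddle-point analysis. That analysis can presumably be carried out, but it is substantial work that you have not sketched, and it is exactly what the HCIZ$+$\cite{GZ02} route in the paper replaces with a clean black box. If you want to keep your two-step strategy, the cleanest fix is to swap your black box: use HCIZ and~\cite{GZ02} to get existence at fixed $\beta$ (this already gives the dependence only on the limiting empirical measure), after which your Lipschitz argument becomes an alternative, more elementary way to deduce~\eqref{bn}.
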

\begin{proof}
  By Corollary~\ref{ch}, $s_{\lambda(N)}(1,\ldots,1)$ is the total
  number of dimer configurations on a contracting hexagonal lattice, in which
  the boundary configuration is given by $\lambda(N)$. Since there exists at
  least one dimer configuration on each such lattice, we obtain the first part.

  The existence of the limit is a consequence of the successive application of
  two lemmas stated below: first
  Lemma~\ref{hciz} expressing the Schur function as a matrix integral, then
  Lemma~\ref{lgz} about limit of normalized logarithms of these integrals.

    The convergence condition (2) implies that the empirical measures
    \begin{equation*}
      \frac{1}{N}\sum_{i=1}^{N}\delta_{\beta_{\bar{i}}^{(N)}}\quad
    \text{and}
      \frac{1}{N}\sum_{i=1}^{N}\delta_{\beta_{\bar{i}}}
    \end{equation*}
    converge to the same measure. Thus by Lemma~\ref{lgz}, the renormalized
    logarithms of matrix integrals, and hence Schur functions, converge and have
    the same limit.
\end{proof}

Here are the two lemmas needed to conclude the proof of the previous
proposition.
The first one represents the Schur function as a matrix integral over the
unitary group, the so-called Harish--Chandra--Itzykson--Zuber integral:
\begin{lemma}[\cite{hc,iz}]
  \label{hciz}
  Let $\lambda\in \GT_N^{+}$ be a non-negative signature, and let $B$ be an
  $N\times N$ diagonal matrix given by
\begin{equation*}
B=\operatorname{diag}[\lambda_1+N-1,\ldots,\lambda_j+N-j,\ldots,\lambda_N+N-N]
\end{equation*}
Let $(a_1,a_2,\ldots,a_N)\in \CC^N$, and let $A$ be an $N\times N$ diagonal matrix given by 
\begin{equation*}
A=\operatorname{diag}[a_1,\ldots,a_N].
\end{equation*}
Then,
\begin{equation}
\frac{s_{\lambda}(e^{a_1},\ldots,e^{a_N})}{s_{\lambda}(1,\ldots,1)}=\prod_{1\leq i<j\leq N}\frac{a_i-a_j}{e^{a_i}-e^{a_j}}\int_{U(N)}e^{\mathrm{Tr}(U^*AUB)}dU,
\label{shc}
\end{equation}
where $dU$ is the Haar probability measure on the unitary group $U(N)$.
\end{lemma}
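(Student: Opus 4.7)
The plan is to derive the identity by recognizing it as a rewrite of the Harish-Chandra-Itzykson-Zuber (HCIZ) integral in the language of Schur functions. First I would apply the Weyl character formula with $b_j := \lambda_j + N - j$ and $u_i := e^{a_i}$, which gives
\begin{equation*}
s_\lambda(e^{a_1},\ldots,e^{a_N}) = \frac{\det_{i,j}(e^{a_i b_j})}{\prod_{i<j}(e^{a_i} - e^{a_j})},
\end{equation*}
and, via the Weyl dimension formula (or by taking the appropriate limit of the above),
\begin{equation*}
s_\lambda(1,\ldots,1) = \frac{\prod_{i<j}(b_i - b_j)}{\prod_{k=1}^{N-1} k!}.
\end{equation*}

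Next I would invoke the HCIZ integral: for $A$, $B$ diagonal with distinct entries $(a_i)$ and $(b_j)$,
\begin{equation*}
\int_{U(N)} e^{\mathrm{Tr}(U^* A U B)} dU = \left(\prod_{k=1}^{N-1} k!\right) \frac{\det_{i,j}(e^{a_i b_j})}{\prod_{i<j}(a_j - a_i)(b_j - b_i)}.
\end{equation*}
Dividing the Schur expression by the dimension formula and comparing with HCIZ, the factorials $\prod k!$ and the Vandermonde products in the variables $b_j$ cancel, leaving exactly the factor $\prod_{i<j}(a_i-a_j)/(e^{a_i}-e^{a_j})$ appearing in the statement. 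One minor point is verifying the sign conventions on the Vandermonde products are consistent, but this is bookkeeping and does not affect the structure of the argument. The distinctness of the $b_j$ (which is automatic since $\lambda$ is a signature and the shift $N-j$ is strictly decreasing) is what allows the ratio to be finite.

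The genuine work, and the main obstacle, is supplying a proof of the HCIZ formula itself. The classical approach, due to Harish-Chandra, is to observe that $M \mapsto e^{\mathrm{Tr}(MB)}$ is an eigenfunction of the Laplacian on Hermitian matrices, compute the radial part of this operator under the diagonalization $M = U\,\mathrm{diag}(\mu)\,U^*$, and deduce that $F(\mu,b) := \int_{U(N)} e^{\mathrm{Tr}(U\,\mathrm{diag}(\mu)\,U^* B)}\,dU$ is itself an eigenfunction of a second-order differential operator in $\mu$; antisymmetry of $F$ in the $(\mu_i)$ and in the $(b_j)$, together with the asymptotics as the $\mu_i$ separate, pins down the determinantal form up to an overall constant, which is identified by evaluating at $\mu=0$. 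A more modern alternative is Duistermaat-Heckman exact stationary phase on the $U(N)$ coadjoint orbit through $A$ with torus action generated by $B$: the $N!$ critical points are the permutation matrices, and the Hessian contributions at each critical point produce precisely the Vandermonde denominators. Both routes are standard, so in the present paper it suffices to cite \cite{hc,iz} rather than reproduce the derivation.
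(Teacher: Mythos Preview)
Your proposal is correct and in fact supplies more detail than the paper does. The paper treats this lemma as a cited result from \cite{hc,iz} and does not give a proof; the only argument it adds is a one-paragraph remark that the classical HCIZ identity is stated for Hermitian $A$ (hence real $a_i$), and that the extension to complex $(a_1,\ldots,a_N)$ follows because both sides of \eqref{shc} are entire functions of the $a_i$. You omit this analytic-continuation step, which is worth including since the lemma is stated for $(a_1,\ldots,a_N)\in\CC^N$ while the HCIZ formula you quote is a priori for real diagonal matrices. Otherwise your derivation of the Schur-function form from the determinantal HCIZ integral via the Weyl character and dimension formulas is exactly the intended bridge, and your sketches of the Harish-Chandra and Duistermaat--Heckman proofs of HCIZ itself go well beyond what the paper attempts.
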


Note that Lemma~\ref{hciz} was originally proved when $A$ is a Hermitian matrix
with eigenvalues $a_1,a_2,\ldots,a_N$, hence $(a_1,a_2,\ldots,a_N)\in\RR^N$.
Since the right hand side of \eqref{shc} depends only on the
eigenvalues of $A$ when $A$ is a Hermitian matrix, the identity \eqref{shc} is
then true for $A=\mathrm{diag}[a_1,\ldots,a_N]$ with complex entries as well
since both the left hand side and the right hand side in \eqref{shc} are entire
functions in $a_1,\ldots,a_N$.

The second lemma is about the convergence of the normalized logarithms of these
integrals:

\begin{lemma}[\cite{GZ02}, Theorem~1.1]
  \label{lgz}
  For an $N\times N$ Hermitian matrix $A$ with eigenvalues $(a_1,\ldots,a_N)$, we denote by 
\begin{eqnarray*}
\mathbf{m}_A^N=\frac{1}{N}\sum_{i=1}^{N}\delta_{a_i}.
\end{eqnarray*}
the spectral measure for $A$. Let $\{D_N\}_{N\in \NN}$, $\{E_N\}_{N\in \NN}$ be
two sequences of diagonal, real-entry matrices, such that the three following
conditions are satisfied:
\begin{itemize}
\item there exists a compact subset $F\subset \RR$ such that $\operatorname{supp} \mathbf{m}_{D_N}^N\subseteq F$ for all $N\in \NN$;
\item $\int x^2 d\mathbf{m}_{E_N}^N$ is uniformly bounded with a bound independent of $N$;
\item $\mathbf{m}_{E_N}^N$ and $\mathbf{m}_{D_N}^N$ converge weakly towards $\mu_E$ and $\mu_D$, respectively.
\end{itemize}
Then as $N\rightarrow\infty$, 
\begin{equation*}
\frac{1}{N^2}\log\int e^{N\mathrm{tr}(UD_NU^*E_N)}dU,
\end{equation*}
has a limit depending only on $\mu_E$ and $\mu_D$.
\end{lemma}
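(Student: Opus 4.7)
The plan is to establish existence of the limit following the strategy of \cite{GZ02}: combine concentration of measure on the unitary group, a Hoffman--Wielandt stability estimate for the spectra, and a fine asymptotic analysis of the spherical integral via coupling to Dyson Brownian motion.

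First I would establish concentration. By the log-Sobolev inequality for Haar measure on $U(N)$ (Gromov--Milman), the function $U\mapsto N^{-1}\operatorname{tr}(UD_NU^*E_N)$ is Lipschitz in the Hilbert--Schmidt metric with a constant controlled by $N^{-1}\|D_N\|_{\mathrm{op}}\|E_N\|_2$, bounded thanks to the compact support of $\mathbf{m}_{D_N}^N$ and the uniform second-moment bound on $\mathbf{m}_{E_N}^N$. This gives concentration of the renormalized exponent at scale $O(N^{-1})$, so that $N^{-2}\log\int e^{N\operatorname{tr}(UD_NU^*E_N)}\,dU$ is determined to leading order by a Laplace-type maximum and fluctuations around it do not contribute. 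Second, I would use stability under perturbations of spectra: Hoffman--Wielandt gives that $N^{-1}\operatorname{tr}(UDU^*E)$ depends jointly on the two empirical spectra Lipschitz-continuously in the $W_2$-Wasserstein metric, uniformly in $U$, and hence so does the normalized log-integral. This lets me reduce to a dense class of approximating sequences---for instance those whose eigenvalues are $N$-quantiles of smooth compactly supported densities approximating $\mu_D$ and $\mu_E$---without changing the limit if it exists.

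Third, on this dense class I would invoke the analytical core of \cite{GZ02}. Via an It\^o-type coupling of $UD_NU^*$ to a Hermitian Brownian motion started at $E_N$, the renormalized log-integral is related to the exponential decay rate of the transition density of Dyson Brownian motion interpolating between spectra drawn from $\mu_D$ and $\mu_E$ in unit time. The hydrodynamic limit of this process is governed by a complex Burgers equation, and Matytsin's variational formula then identifies the limit as an explicit functional $\mathcal{I}(\mu_D,\mu_E)$ depending only on the two input measures. Combined with the stability from step two, this gives the same limit along the original sequence satisfying only the weaker regularity hypotheses.

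The main obstacle is the matching lower bound inside step three. While the upper bound follows cleanly from exponential tightness and the upper semicontinuity of the Burgers-equation rate function, the lower bound requires explicitly constructing, for each candidate density profile, a family of unitaries that realizes the prescribed spectral interpolation asymptotically. In \cite{GZ02} this is achieved through a careful subadditivity argument on tensor-product unitaries together with a trajectory analysis on $U(N)$ tracking Dyson Brownian motion paths; this is the genuinely delicate analytic step. Once complete, the dependence of the limit only on $\mu_D$ and $\mu_E$ is automatic from the variational characterization.
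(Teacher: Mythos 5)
The paper does not prove this lemma: it is imported verbatim as Theorem~1.1 of Guionnet--Zeitouni \cite{GZ02}, so there is no internal argument to compare yours against. As a reconstruction of the proof in that reference, your outline is broadly faithful: the reduction to nice approximating sequences via Lipschitz continuity of $N^{-2}\log I_N$ in the spectra (Wasserstein-type stability), the identification of the spherical integral with the transition density of the Hermitian/Dyson Brownian motion through the Harish-Chandra--Itzykson--Zuber formula, the large-deviation analysis of that process with rate function tied to Matytsin's variational problem and the complex Burgers equation, and the observation that the lower bound is the delicate half. Two cautions. First, concentration of the exponent $U\mapsto N\,\mathrm{tr}(UD_NU^*E_N)$ at scale $N^2$ does not by itself localize $N^{-2}\log\int e^{N\mathrm{tr}(\cdot)}dU$ at a ``Laplace maximum''; in \cite{GZ02} concentration is an auxiliary tool for continuity and error control, while the leading order comes entirely from the path-space large deviation principle. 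Second, the lower bound there is obtained by a Girsanov change of drift forcing the Hermitian Brownian motion to follow a prescribed measure-valued path, not by a subadditivity argument on tensor-product unitaries; that part of your sketch does not match the actual mechanism, though it does not affect the statement you are asked to justify, since for the purposes of this paper only the existence of the limit and its dependence on $(\mu_D,\mu_E)$ are used.
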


By Weyl's formula,
\begin{equation*}
s_{\lambda(N)}(1,\ldots,1)=\prod_{1\leq i<j\leq N}\frac{(\lambda_i(N)-i)-(\lambda_j(N)-j)}{j-i}.
\end{equation*}
From Proposition~\ref{lmo} we obtain that the existence of the free energy
\begin{equation*}
\lim_{N\rightarrow\infty}\frac{1}{N^2}\log s_{\lambda(N)}\left(\beta_{\ol{1}}^{(N)},\ldots,\beta_{\ol{N}}^{(N)}\right)
\end{equation*}
 under the assumption of Proposition~\ref{lmo} depends on the existence of 
 \begin{equation*}
 \lim_{N\rightarrow\infty}\frac{1}{N^2}\log \prod_{1\leq i<j\leq
 N}\frac{(\lambda_i(N)-i)-(\lambda_j(N)-j)}{j-i},
 \end{equation*}
 which exists when the sequence of signatures is regular, and is given by the
 following integral
 \begin{equation*}
   \iint_{0<x<y<1} \log\left(1-\frac{f(y)-f(x)}{y-x}\right).
 \end{equation*}

\section{Existence of Limit Shape}\label{sec:ls}

In this section, we study the convergence of the counting measure for the signature
corresponding to the random dimer configuration on each row of the contracting
square-hexagon lattice.
We prove that the (random) moments of the counting measure converge to deterministic
quantities, for which we give an explicit formula.
This implies that the rescaled
height function associated to the random perfect matching satisfies certain
\emph{law of large numbers}, and converges to a deterministic shape in the limit.
This limit shape is also the solution of a variational problem, i.e., the unique
deterministic function that maximizes the entropy; see~\cite{ckp00}.

We will need a more refined convergence, at order $\frac{1}{N}$ instead of
$\frac{1}{N^2}$ for the free energy, and where a finite number of arguments of
the Schur function~\eqref{n2l} are allowed to vary.

If $(\lambda(N))$ is a regular sequence of signatures, then the sequence of
counting measures $m(\lambda(N))$ converges weakly to a measure $\bm$ with
compact support.
When the $\beta_i$s are equal to 1, we have by
Theorem~3.6 of~\cite{bk} that there exists an explicit function
$H_{\bm}$, analytic in a neighborhood of 1,  depending on the weak
limit $\bm$ such that
\begin{equation}
  \lim_{N\rightarrow\infty}
  \frac{1}{N} \log\left(%
  \frac{s_{\lambda(N)}(u_1,\ldots,u_k,1,\ldots,1)}{s_{\lambda(N)}(1,\ldots,1)}
  \right) = H_{\bm}(u_1)+\cdots+H_{\bm}(u_k),
  \label{nlc}
\end{equation}
and the convergence is uniform when $(u_1,\dotsc,u_k)$ is in a neighborhood
of $(1,\dots,1)$. 
Precisely, $H_{\bm}$ is constructed as follows: let
$S_{\bm}(z)=z+\sum_{k=1}^\infty M_k(\bm) z^{k+1}$ be the moment generating
function of the measure $\bm$, where $M_k(\bm)=\int x^k d\bm(x)$, and
$S_{\bm}^{(-1)}$ be its inverse for the composition. Let $R_{\bm}(z)$ be the
\emph{Voiculescu R-transform} of $\bm$ defined as
\begin{equation*}
  R_{\bm}(z) = \frac{1}{S_\bm^{(-1)}(z)} - \frac{1}{z}.
\end{equation*}
Then
\begin{equation}
  \label{hmz}
  H_{\bm}(u) = \int_{0}^{\ln u} R_\bm(t)dt+ \ln\left( \frac{\ln u}{u-1} \right).
\end{equation}
In particular, $H_{\bm}(1)=0$, and
\begin{equation*}
  H'_\bm(u) = \frac{1}{u S_\bm^{(-1)}(\ln u)} - \frac{1}{u-1}.
\end{equation*}

\begin{proposition}
  \label{p25}
  Assume that $(\lambda(N)\in \GT_N$, $N=1,2,\ldots)$ is a regular sequence of signatures, such that
\begin{equation*}
\lim_{N\rightarrow\infty}m(\lambda(N))=\bm.
\end{equation*}

Let
$\mathbf{B}^{(N)}=\left(\beta_1^{(N)},\beta_2^{(N)},\ldots,\beta_{n}^{(N)}\right)\in
\RR^{n}$, such one of the two following conditions holds:
\begin{enumerate}
\item there exists a positive constant $\alpha>0$ satisfying
\begin{equation}
\lim_{N\rightarrow\infty}\max_{1\leq i\leq N}\{\left|\beta_i^{(N)}-1\right|\}e^{N\alpha}=0,\label{bes}
\end{equation}
\item there exists a constant $C>0$ such that 
\begin{equation}
  \label{bes2}|\lambda_1(N)|\leq C;\ \text{and}\ \lim_{N\rightarrow\infty}\max_{1\leq i\leq N}\{\left|\beta_i^{(N)}-1\right|\}N=0
\end{equation}
for each $1\leq i\leq n$.
\end{enumerate}

Then for each fixed $k\in\NN$, there is a small open complex neighborhood of
$(1,\ldots,1)\in\mathbb{C}^k$, such that for $N$ large enough,
$s_{\lambda(N)}\left(u_1,\ldots,u_k,\beta_{\ol{k+1}}^{(N)},\ldots,\beta_{\ol{N}}^{(N)}\right)$
is non-zero for $(u_1,\dots,u_k)$ in this neighborhood, and the following
convergence occurs uniformly in this neighborhood:
\begin{equation*}
\lim_{N\rightarrow\infty}
\frac{1}{N}
\log\left(%
\frac{%
  s_{\lambda(N)}\left(u_1,\ldots,u_k,\beta_{\ol{k+1}}^{(N)},\ldots,\beta_{\ol{N}}^{(N)}\right)
}{%
  s_{\lambda(N)}\left(\beta_{\ol{1}}^{(N)},\ldots,\beta_{\ol{N}}^{(N)} \right)
}
\right)
=H_{\bm}(u_1)+\cdots+H_{\bm}(u_k).
\end{equation*}
\end{proposition}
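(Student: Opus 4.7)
The plan is to reduce the claim to the convergence~\eqref{nlc} (Theorem~3.6 of~\cite{bk}), which handles exactly the case $\beta^{(N)}=(1,\ldots,1)$, and to show that the perturbation introduced by the $\beta^{(N)}_{\ol{j}}$ is asymptotically negligible at the $\frac{1}{N}\log$ scale. Specifically, I would decompose
\begin{equation*}
  \log\frac{s_{\lambda(N)}(u_1,\ldots,u_k,\beta^{(N)}_{\ol{k+1}},\ldots,\beta^{(N)}_{\ol{N}})}{s_{\lambda(N)}(\beta^{(N)}_{\ol{1}},\ldots,\beta^{(N)}_{\ol{N}})}=A_N(u)+B_N(u)-C_N,
\end{equation*}
where $B_N(u)=\log[s_{\lambda(N)}(u_1,\ldots,u_k,1,\ldots,1)/s_{\lambda(N)}(1,\ldots,1)]$ is covered by~\eqref{nlc} (giving $\frac{1}{N}B_N(u)\to H_{\bm}(u_1)+\cdots+H_{\bm}(u_k)$, uniformly in a complex polydisk around $(1,\ldots,1)\in\CC^k$), and $A_N(u)$ and $C_N$ record the replacement of $(1,\ldots,1)$ by the $\beta^{(N)}_{\ol{j}}$ in the numerator and denominator respectively. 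The Schur functions appearing in the ratio are non-vanishing at the required arguments because $s_{\lambda(N)}(1,\ldots,1)$ is a positive integer, and by continuity the function stays nonzero on a small polydisk around $(1,\ldots,1)\in\CC^N$, which includes the required arguments once $N$ is large enough. It then remains to show $\frac{1}{N}A_N(u)\to 0$ and $\frac{1}{N}C_N\to 0$ uniformly in $u$.

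Both $A_N$ and $C_N$ are treated by the same device: interpolating along a straight segment from $(1,\ldots,1)$ to the perturbed argument and applying the fundamental theorem of calculus. For $C_N$, with $\gamma(t)_j=1+t(\beta^{(N)}_{\ol{j}}-1)$, this gives
\begin{equation*}
  C_N=\int_0^1\sum_{j=1}^N(\beta^{(N)}_{\ol{j}}-1)\,\partial_j\log s_{\lambda(N)}(\gamma(t))\,dt.
\end{equation*}
Symmetry of $s_{\lambda(N)}$ combined with Euler's identity $\sum_j u_j\partial_j s_{\lambda(N)}=|\lambda(N)|\,s_{\lambda(N)}$ gives $\partial_j\log s_{\lambda(N)}(1,\ldots,1)=|\lambda(N)|/N$, and Cauchy estimates applied to the entire function $s_{\lambda(N)}/s_{\lambda(N)}(1,\ldots,1)$ on a fixed polydisk around $(1,\ldots,1)\in\CC^N$ show that $\partial_j\log s_{\lambda(N)}(\gamma(t))$ remains a bounded multiple of $|\lambda(N)|/N$ uniformly in $t\in[0,1]$, once $\epsilon_N:=\max_j|\beta^{(N)}_{\ol{j}}-1|$ is sufficiently small. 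Therefore $|C_N|\leq K\,|\lambda(N)|\,\epsilon_N$ for some absolute constant $K$. Under~\eqref{bes}, regularity of $(\lambda(N))$ yields $|\lambda(N)|=O(N^2)$, and the exponential hypothesis $\epsilon_N=o(e^{-\alpha N})$ gives $|C_N|/N=O(Ne^{-\alpha N})\to 0$. Under~\eqref{bes2}, the boundedness of $|\lambda_1(N)|$ together with the non-negativity of the boundary signatures (or a global shift argument in general) gives $|\lambda(N)|=O(N)$, and $\epsilon_N=o(1/N)$ then yields $|C_N|/N=o(1/N)\to 0$.

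The argument for $A_N(u)$ is identical, now integrating along the segment from $(u_1,\ldots,u_k,1,\ldots,1)$ to $(u_1,\ldots,u_k,\beta^{(N)}_{\ol{k+1}},\ldots,\beta^{(N)}_{\ol{N}})$. Symmetry in the last $N-k$ coordinates and Euler's identity at the base point give, for $j>k$,
\begin{equation*}
  \partial_j\log s_{\lambda(N)}(u_1,\ldots,u_k,1,\ldots,1)=\frac{|\lambda(N)|-\sum_{i=1}^k u_i\partial_i\log s_{\lambda(N)}(u_1,\ldots,u_k,1,\ldots,1)}{N-k}.
\end{equation*}
Differentiating~\eqref{nlc} (Montel applied to the uniform convergence of analytic functions yields uniform convergence of derivatives), the correction term is uniformly $O(N)$ in $u$, so $\partial_j\log s_{\lambda(N)}$ for $j>k$ stays $O(|\lambda(N)|/N)$ uniformly in $u$ on a fixed neighborhood of $(1,\ldots,1)$, and the same bound as above yields $|A_N(u)|/N\to 0$ uniformly. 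The main technical obstacle is the Cauchy-type control of the derivatives of $\log s_{\lambda(N)}$ along the interpolating segment, uniformly in $N$ and $u$; the two separate hypotheses~\eqref{bes} and~\eqref{bes2} correspond exactly to the two regimes in which $|\lambda(N)|\,\epsilon_N=o(N)$, namely exponential smallness of $\epsilon_N$ absorbing the generic $O(N^2)$ bound on $|\lambda(N)|$ under~\eqref{bes}, versus $|\lambda(N)|=O(N)$ under~\eqref{bes2} allowing the weaker polynomial smallness $\epsilon_N=o(1/N)$.
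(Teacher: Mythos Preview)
Your three–term decomposition $A_N+B_N-C_N$ is exactly the paper's $D_{N,1}+D_{N,2}+D_{N,3}$, and your treatment of $B_N$ and $C_N$ is essentially the same as the paper's (the paper bounds $|s_\lambda(\boldsymbol\beta)-s_\lambda(\mathbf 1)|$ directly from the monomial expansion rather than via a path integral, but the estimate is the same). Calling the $C_N$ bound a ``Cauchy estimate on a fixed polydisk in $\CC^N$'' is misleading, though: the sup of $s_\lambda/s_\lambda(\mathbf 1)$ on a polydisk of fixed radius $r$ is $(1+r)^{|\lambda|}$, which explodes. What actually makes the bound work is positivity of the Schur coefficients together with the fact that the interpolating segment lies in a polydisk of shrinking radius $\epsilon_N$.

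The genuine gap is in $A_N$. Your Euler–identity computation only gives $\partial_j\log s_{\lambda(N)}(u_1,\dots,u_k,1,\dots,1)=O(|\lambda|/N)$ at the \emph{base point}, where the last $N-k$ coordinates are all equal to $1$ and symmetry applies. Along the interpolating segment the last $N-k$ coordinates are the distinct numbers $1+t(\beta^{(N)}_{\ol j}-1)$, so symmetry is lost, and ``the same bound as above'' does not follow. More seriously, because $u$ is complex you need a \emph{lower} bound on $|s_{\lambda(N)}(u,\cdot)|$ to control $\partial_j\log s_{\lambda(N)}$; continuity alone does not give one that is uniform in $N$. The paper handles $D_{N,1}$ differently: it separates $u$ from $\boldsymbol\beta$ via the branching expansion $s_\lambda(\mathbf u,\boldsymbol\beta)=\sum_\mu s_\mu(\mathbf u)\,s_{\lambda\setminus\mu}(\boldsymbol\beta)$, so that the perturbation estimate only involves the real positive arguments $\boldsymbol\beta$, and then invokes from~\cite[Theorem~3.6]{bk} the quantitative lower bound $|s_\lambda(\mathbf u,\mathbf 1)/s_\lambda(|\mathbf u|,\mathbf 1)|\ge e^{-AN}$ with $A\to 0$ as the radius shrinks. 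Under hypothesis~\eqref{bes} one then chooses the neighborhood small enough that $A<\alpha$, and this is precisely why the statement speaks of a ``small'' neighborhood. Your write-up misses this step; without it the argument for $A_N$ is incomplete.
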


\begin{proof}
  To simplify notation, we will simply write
  $s_\lambda(\mathbf{u},\mathbf{1})$ or $s_\lambda(\mathbf{u},\boldsymbol\beta)$
  for the following quantities
  $s_{\lambda(N)}(u_1,\ldots,u_k,1,\ldots,1)$ or
  $s_{\lambda(N)}(u_1,\ldots,u_k,\beta_{k+1}^{(N)},\ldots,\beta_N^{(N)})$.

  Note that
\begin{equation*}
\log\left(%
\frac{%
  s_{\lambda}(\mathbf{u},\boldsymbol\beta)
}{%
  s_{\lambda}(\boldsymbol\beta,\boldsymbol\beta)
}\right)=
D_{N,1}+D_{N,2}+D_{N,3},
\end{equation*}
where
\begin{equation}
  D_{N,1}=
  \log\left(%
  \frac{%
    s_{\lambda}(\mathbf{u},\boldsymbol\beta)
  }{%
    s_{\lambda}(\mathbf{u},\mathbf{1})
  }
  \right)\label{DN1},
  \quad
  D_{N,2}=
  \log\left(%
  \frac{s_{\lambda}(\mathbf{u},\mathbf{1})}{s_{\lambda}(\mathbf{1})}
  \right),\quad
  D_{N,3}=
  \log\left(%
  \frac{%
    s_{\lambda}(\mathbf{1},\mathbf{1})
  }{%
    s_{\lambda}(\boldsymbol\beta,\boldsymbol\beta)
  }\right).
\end{equation}
By Theorem~3.6 of~\cite{bk}, we have
\begin{eqnarray*}
  \lim_{N\rightarrow\infty} \frac{1}{N} D_{N,2}=H_{\bm}(u_1)+\cdots+H_{\bm}(u_k),
\end{eqnarray*}
and the convergence is uniform in an open complex neighborhood of $(1,\ldots,1)$.
Let us first consider the term $D_{N,3}$, where there is no dependency in
$\mathbf{u}$.
Writing that the Schur function $s_\lambda$ is a sum of
$s_\lambda(\mathbf{1})$ homogeneous monomials of
degree $|\lambda|$, we obtain that
\begin{equation*}
  \left|s_\lambda(\boldsymbol\beta)-s_\lambda(\mathbf{1})\right| \leq C
  |\lambda| \left(\sup|\beta_j^{(N)}|\right)^{|\lambda|} \max|\beta_j^{(N)}-1|.
\end{equation*}
When Hypothesis~\eqref{bes} is satisfied, one has $|\lambda|=O(N^2)$ and
$|\beta_i^{(N)}-1|=O(e^{-N\alpha})=o(N^{-2})$
Therefore, the ratio
$s_{\lambda}(\boldsymbol\beta)/s_{\lambda}(\mathbf{1})$
goes to 1 if $\beta_j^{(N)}=1+o(N^{-2})$ (uniformly
in $j$) and $|\lambda|=O(N^{-2})$, which is the case under
Hypothesis~\eqref{bes}. Therefore its logarithm converges to 0.
The conclusion can be checked in a similar fashion for the second hypothesis~\eqref{bes2}.

The convergence for $D_{N,1}$ requires a better control of the
$\beta_j^{(N)}$. Let us show that uniformly for $\mathbf{u}=(u_1,\ldots,u_k)$
in a small enough open neighborhood of 1, the ratio
\begin{equation*}
  \frac{%
    s_{\lambda}(\mathbf{u},\boldsymbol\beta)
    -
    s_{\lambda}(\mathbf{u},\mathbf{1})
  }{%
    s_{\lambda}(\mathbf{1},\mathbf{1})
  }
\end{equation*}
converges to 0 as $N$ goes to infinity under the hypothesis~\eqref{bes}.
The case~\eqref{bes2} is similar.

We use the following formula for Schur functions:
\begin{equation*}
  s_\lambda(\mathbf{u},\boldsymbol\beta)=\sum_{\mu\preceq \lambda}
  s_{\mu}(\mathbf{u})
  s_{\lambda\setminus\mu}(\boldsymbol\beta)
\end{equation*}
to write
\begin{equation*}
  s_{\lambda}(\mathbf{u},\boldsymbol\beta)
-
s_{\lambda}(\mathbf{u},\mathbf{1})
=
\sum_{\mu}
s_{\mu}(\mathbf{u})
\left(%
s_{\lambda\setminus\mu}(\boldsymbol\beta)
-
s_{\lambda\setminus\mu}(\mathbf{1})\right)
\end{equation*}

Since $s_\mu$ has only $k$ parameters, the only signatures $\mu$ contributing to
the sum have at most $k$ parts, which are at most equal to $\lambda(N)_1 \leq
cN$ for some constant $c>0$.

Fix such a signature $\mu$. The skew Schur function $s_{\lambda\setminus\mu}$ is
the sum of monomials indexed by skew semi-standard Young tableaux of shape
$\lambda\setminus\mu$. There are precisely $s_{\lambda\setminus\mu}(\mathbf{1})$ of
them. And each such monomial has degree $|\lambda\setminus\mu|$.
Since all the
$\beta^{(N)}_j$ are at distance at most $O(e^{-\alpha N})$ of 1, then the
difference of each such monomial between its evaluation at $\mathbf{1}$ and at
$(\boldsymbol\beta$ can be bounded in absolute value, for some
positive constant $C$ uniform in $N$, by
\begin{equation*}
  C|\lambda\setminus\mu| \exp{\left(C |\lambda\setminus\mu| e^{-\alpha N}\right)}
  e^{-\alpha N} = O(N^{2}e^{-\alpha N})
\end{equation*}
uniformly in $N$ and $\mu$.
Moreover, 
  $|s_{\mu}(\mathbf{u})| \leq s_{\mu}(|\mathbf{u}|)$.
Putting these pieces together, we have
\begin{equation*}
\left|
\frac{%
  s_{\lambda}(\mathbf{u},\boldsymbol\beta)
-
s_{\lambda}(\mathbf{u},\mathbf{1})
}{%
  s_{\lambda}(|\mathbf{u}|,\mathbf{1})
}
\right|
\leq \sum_{\mu} r^{cN} s_{\mu}(\mathbf{1})
\frac{s_{\lambda\setminus\mu}(\mathbf{1})}{s_{\lambda}(|\mathbf{u}|,\mathbf{1})}
C N^{2}e^{-\alpha N} \leq CN^{2}e^{-\alpha N}.
\end{equation*}
By \cite[Theorem~3.6]{bk}, the module of the ratio
$s_\lambda(\mathbf{u},1)/s_\lambda(\mathbf{|u|},\mathbf{1})$ is bounded from
below uniformly in $N$ by $e^{-AN}$ for some $A>0$, which tends to 0 as the
radius $r$ of the neighborhood around 1 goes to 0.
Therefore, this radius $r$ can be chosen close enough to 1 so that
\begin{equation*}
  \frac{%
    s_{\lambda}(\mathbf{u},\boldsymbol\beta)
    -
    s_{\lambda}(\mathbf{u},\mathbf{1})
  }{%
    s_{\lambda}(\mathbf{u},\mathbf{1})
  }
\end{equation*}
tends to his quantity goes to 0. Adding 1 and taking the logarithm gives exactly
the quantity $D_{N,1}$ which thus also tends to 0 as $N$ goes to $\infty$.
\end{proof}

\subsection{The Schur generating function and moments of counting measures}

Let 
\begin{equation*}
V(u_1,\ldots, u_N)=\prod_{1\leq i<j\leq N}(u_i-u_j)
\end{equation*}
be the Vandermonde determinant with respect to variables $u_1,\ldots,u_N$.
Introduce the family $(\mathcal{D}_k)$ of differential operators acting on
symmetric functions $f$ with variables $u_1,\ldots, u_N$ as follows:
\begin{equation}
\mathcal{D}_k f=\frac{1}{V}\left(\sum_{i=1}^{N}\left(u_i\frac{\partial}{\partial
u_i}\right)^k\right)(V\cdot f)\label{dk}.
\end{equation}

For every $\lambda\in\GT_N$,
the Schur function $s_\lambda(u_1,\dots,u_N)$ is
an eigenfunction of $\mathcal{D}_k$, associated with the
eigenvalue $\sum_i (\lambda_i+N-i)^k$, see~\cite[Proposition~4.3]{bg}.

Therefore, one can use, in analogy with monomials for integer-valued random
variables, a generating function which would give, by application of these
differential operators, information about the moments of our random distribution
of dimers on a given row of the graph.
We thus adapt slightly the definition of \emph{Schur generating functions},
introduced by Bufetov and Gorin~\cite{bg} to fit our needs:

\begin{definition}
  \label{df21}
  Let $\mathbf{B}=(\beta_1,\ldots,\beta_N)\in\CC^{N}$. Let $\rho$ be a
  probability measure on $\GT_N$. The \emph{Schur generating function}
  $\mathcal{S}_{\rho,\mathbf{B}}(u_1,\ldots,u_N)$ with respect to parameters
  $\mathbf{B}$ is the symmetric Laurent series in $(u_1,\ldots,u_N)$ given by
  \begin{equation*}
    \mathcal{S}_{\rho,\mathbf{B}}(u_1,\ldots,u_N)=
    \sum_{\lambda\in\GT_N} \rho(\lambda)
    \frac{s_{\lambda}(u_1,\ldots,u_N)}{s_{\lambda}(\beta_1,\ldots,\beta_N)}.
  \end{equation*}
\end{definition}

When $\mathbf{B}=(1,\ldots,1)$, this definition is the one found in 
Definition~4.4 of~\cite{bg} and Definition~3.1 of~\cite{bk}.

The following result states that asymptotic behavior of the Schur generating
function for random signatures implies the convergence for the associated random
counting measures, in the same line as~\cite{bk}.

\begin{lemma}[\cite{bg}, Theorem~5.1]
  \label{l28}
  Suppose that
  $\mathbf{B}^{(N)}=\left(\beta_1^{(N)},\ldots,\beta_{N}^{(N)}\right)$ satisfies
  the condition described in \eqref{bes}.
  Let $(\rho_N)_{N\geq 1}$ be a sequence of measures such that for each
  $N$, $\rho_N$ is a probability measure on $\GT_N$, and 
 for every $j$, the following convergence holds uniformly in a complex
 neighborhood of $(1,\ldots,1)\in\mathbb{C}^j$
  \begin{equation}
    \lim_{N\rightarrow\infty} \frac{1}{N}
    \log\mathcal{S}_{\rho_N,\mathbf{B}^{(N)}} \left(u_1,\ldots,u_j,\beta_{\ol{j+1}}^{(N)},\ldots,\beta_{\ol{N}}^{(N)}\right)
    = Q(u_1)+\cdots +Q(u_j),
    \label{lm}
\end{equation}
 with $Q$ an analytic function in a neighborhood of $1$.
 Then the sequence of random measures $(m(\rho_N))_{N\geq 1}$ converges as
 $N\rightarrow\infty$ in
probability in the sense of moments to a deterministic measure $\mathbf{m}$ on
$\RR$, whose moments are given by
\begin{equation*}
\int_{\RR}x^j\mathbf{m}(dx)=\sum_{l=0}^{j}\frac{j!}{l!(l+1)!(j-l)!}\left.\frac{\partial^l}{\partial
u^l}\left(u^jQ'(u)^{j-l}\right)\right|_{u=1}.
\end{equation*}
\end{lemma}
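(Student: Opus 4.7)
The strategy is to exploit the eigenvalue property of the differential operators $\mathcal{D}_k$ from~\eqref{dk}: applying $\mathcal{D}_k$ to $s_\lambda$ multiplies it by $p_k(\lambda):=\sum_{i=1}^N(\lambda_i+N-i)^k$, which is exactly $N^{k+1}$ times the $k$-th moment of the counting measure $m(\lambda)$. Consequently, linearity of expectation gives
\begin{equation*}
  \mathbb{E}_{\rho_N}\!\left[\int x^k\,dm(\lambda)\right]
  =\frac{1}{N^{k+1}}\,\bigl[\mathcal{D}_k\mathcal{S}_{\rho_N,\mathbf{B}^{(N)}}\bigr](\mathbf{B}^{(N)}),
\end{equation*}
and, because $\mathcal{D}_{k_1}\cdots\mathcal{D}_{k_r}$ all act diagonally on Schur functions, mixed moments $\mathbb{E}[p_{k_1}(\lambda)\cdots p_{k_r}(\lambda)]$ are obtained by iterating these operators before evaluating at $\mathbf{u}=\mathbf{B}^{(N)}$.

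The plan is then to write $\mathcal{S}_{\rho_N,\mathbf{B}^{(N)}}\bigl(u_1,\dots,u_j,\beta^{(N)}_{\ol{j+1}},\dots,\beta^{(N)}_{\ol{N}}\bigr)=\exp\bigl(N\,F_{N,j}(u_1,\dots,u_j)\bigr)$, where by hypothesis $F_{N,j}\to Q(u_1)+\cdots+Q(u_j)$ uniformly near $\mathbf{1}$. Inserting this into $\mathcal{D}_k f=V^{-1}\sum_i(u_i\partial_i)^k(Vf)$ and expanding by the Leibniz rule, each derivative falling on $\exp(NF_N)$ produces a factor $N$ times a derivative of $F_N$, while derivatives falling on $V$ produce rational factors of the form $u_i/(u_i-u_\ell)$. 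After grouping by the number of derivatives hitting the exponential, the leading contribution of order $N^{k+1}$ isolates terms where $k+1-l$ derivatives act on the exponential (giving factors of $Q'$) and $l$ derivatives act on the Vandermonde/monomial structure. The apparent singularities at coinciding variables cancel by symmetry and can be resolved using a contour-integral (Cauchy residue) representation, yielding
\begin{equation*}
  \lim_{N\to\infty}\mathbb{E}\!\left[\int x^k\,dm(\rho_N)\right]
  =\sum_{l=0}^{k}\frac{k!}{l!(l+1)!(k-l)!}\,\frac{\partial^l}{\partial u^l}\Bigl(u^k Q'(u)^{k-l}\Bigr)\Bigr|_{u=1},
\end{equation*}
which matches the announced formula.

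To upgrade convergence in expectation to convergence in probability, the next step is to compute the variance of $\int x^k\,dm(\rho_N)$ by applying $\mathcal{D}_k\mathcal{D}_k$ to $\mathcal{S}_{\rho_N,\mathbf{B}^{(N)}}$ and subtracting the square of the first moment. The point is that when $\mathcal{D}_k$ is applied twice, the leading $N^{2k+2}$ contribution exactly factorizes as the product of two independent leading terms because the exponential $\exp(NF_N)$ splits multiplicatively over the variables; the mismatch comes only from cross-terms where the two copies of $(u_i\partial_i)^k$ share a variable or interact through the Vandermonde, and these contribute at most $N^{2k+1}$. Hence $\mathrm{Var}(\int x^k\,dm(\rho_N))=O(1/N)$, and Chebyshev's inequality gives convergence in probability.

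The main obstacle will be the bookkeeping in the second paragraph: tracking which of the $k$ derivatives hit $\exp(NF_N)$ versus $V$, showing that the evaluation at $\mathbf{u}=\mathbf{B}^{(N)}$ (rather than at $\mathbf{1}$) introduces only vanishing corrections under the hypothesis~\eqref{bes} (this is where the bound $|\beta_j^{(N)}-1|\,e^{N\alpha}\to 0$ is used, to control the analytic continuation of $F_N$ and its derivatives away from $\mathbf{1}$), and finally recognizing the resulting residue expression as the combinatorial sum in the statement. This calculation is essentially the one carried out in~\cite{bg} Theorem~5.1, which we invoke directly; the only adjustment here is that the reference evaluation point is the periodic sequence $\mathbf{B}^{(N)}$ rather than $\mathbf{1}$, but hypothesis~\eqref{bes} makes the two asymptotically interchangeable in the style of Proposition~\ref{p25}.
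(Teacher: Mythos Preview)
Your proposal is correct and follows essentially the same approach as the paper: both invoke the eigenfunction property of the operators $\mathcal{D}_k$ to express (products of) moments of the counting measure as iterated applications of $\mathcal{D}_k$ to the Schur generating function evaluated at $\mathbf{B}^{(N)}$, and both defer the actual asymptotic computation to Theorem~5.1 of~\cite{bg}. The paper's write-up is terser and singles out the observation $Q(1)=0$ (forced by $\mathcal{S}_{\rho_N,\mathbf{B}^{(N)}}(\mathbf{B}^{(N)})=1$ and uniform convergence) as the key to adapting the argument from the reference point $\mathbf{1}$ to $\mathbf{B}^{(N)}$; your proposal handles this implicitly via the hypothesis~\eqref{bes} and the reasoning of Proposition~\ref{p25}, which amounts to the same thing.
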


\begin{proof}
  The proof is a direct adaption of the proof of Theorem~5.1 of~\cite{bg}.
  The fact that Schur functions are eigenfunctions of the differential operators
  $\mathcal{D}_k$ allows one to rewrite the moments of the (random) moments of
  the counting measure associated to a random signature with distribution $\rho$
  on $\GT_N$ with Schur generating function
  $\mathcal{S}_{\rho,\mathbf{B}}$ as follows:
  \begin{equation*}
    \EE\left(\int_{\RR}x^km(\rho)(dx)\right)^m=
    \frac{1}{N^{m(k+1)}}
    (\mathcal{D}_k)^m\mathcal{S}_{\rho,\mathbf{B}}(u_1,\ldots,u_N)|_{(u_1,\ldots,u_N)=(\beta_1,\ldots,\beta_N)}.
  \end{equation*}

  The key is then to notice that since the convergence is uniform and $Q$ is
  analytic, then necessarily, $Q(1)=0$, as one can readily check from~\eqref{lm}
  for $j=1$.
\end{proof}

The Boltzmann probability measure from Proposition~\ref{p16} on the set of
perfect matchings of a contracting
square-hexagon lattice $\mathcal{R}(\Omega,\check{a})$ induces a measure on the
set of all possible configurations of $N-\lfloor\frac{k-1}{2}\rfloor$ V-edges in
the $k$-th row, for $k=1,\ldots 2N$, counting from the bottom. We can also think
of it as a measure $\rho^k$ on the signatures $\lambda\in
\GT_{N-\lfloor\frac{k-1}{2}\rfloor}$.

\begin{lemma}\label{l212}
  We have the following expressions for the measures $\rho^k$, depending
    on the parity of $k$:
  
  \begin{enumerate}
    \item Assume that $k=2t+1$, for $t=0,1,\ldots,N-1$, then for $\lambda\in \GT_{N-t}$
      \begin{multline*}
        \rho^k(\lambda)=
        \sum_{%
          \substack{%
            \nu^{(a)}\in \GT_a, (N-t+1\leq a\leq N);\\
            \mu^{(b)}\in\GT_b, (N-t+1\leq b\leq N-1)
          }
        }
        \prod_{i\in I_2\cap\{1,2,\ldots,t\}}\mathrm{st}_{B_i}(\mu^{(N-i+1)}\rightarrow\nu^{(N-i+1)})\\
        \times\prod_{j=1}^{t}\left[\mathrm{pr}_{C_j}(\nu^{(N-j+1)}\rightarrow\mu^{(N-j)})\right]
      \end{multline*}
      where $\mu^{(N)}=\omega$, and $\mu^{(N-t)}=\lambda$.
    \item Assume that $k=2t+2$, for $t=0,1,\ldots,N-1$, then for $\lambda\in \GT_{N-t}$
      \begin{enumerate}
        \item If $t+1\in I_2$, then
          \begin{equation*}
            \rho^k(\lambda)=
            \sum_{\mu^{(N-t)}\in \GT_{N-t}}
            \rho^{k-1}(\mu^{(N-t)}) \mathrm{st}_{B_{t+1}}(\mu^{(N-t)}\rightarrow\lambda).
          \end{equation*}
        \item If $t+1\notin I_2$, then
          $\rho^k(\lambda)=\rho^{k-1}(\lambda)$.
      \end{enumerate}
  \end{enumerate}
\end{lemma}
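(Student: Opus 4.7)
The plan is to obtain $\rho^k$ as the marginal of the joint probability measure $\mathbb{P}^N_\omega$ from \eqref{pb} over all signatures in the chain except the one attached to the $k$-th row. By Construction~\ref{ct} combined with Theorem~\ref{myb}, the $k$-th row corresponds to $\mu^{(N-t)}$ when $k=2t+1$ and to $\nu^{(N-t)}$ when $k=2t+2$. The product in \eqref{pb} naturally splits at the $k$-th row into a \emph{lower} part comprising the transitions among $\mu^{(N)},\nu^{(N)},\mu^{(N-1)},\ldots$ up to the signature sitting at row $k$, and an \emph{upper} part containing the remaining transitions up to $\mu^{(0)}$. The core claim is that, for any fixed signature at row $k$, the upper part sums to $1$ when its free signatures are marginalized out.

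To establish this, the plan is to sum out the upper signatures inductively, starting from the very top of the chain. Since $\ell(\mu^{(0)})=0$, there is only one admissible $\mu^{(0)}=\emptyset$, and a direct application of \eqref{d3} yields $\mathrm{pr}_{C_N}(\nu^{(1)}\to\emptyset)=1$. Having eliminated $\mu^{(0)}$, the only remaining dependence on $\nu^{(1)}$ comes from $\mathrm{st}_{B_N}(\mu^{(1)}\to\nu^{(1)})$ when $N\in I_2$, or from the forced identification $\nu^{(1)}=\mu^{(1)}$ when $N\in I_1$ (via the Moreover clause of Theorem~\ref{myb}); either way the sum over $\nu^{(1)}$ equals $1$ by \eqref{l18}. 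Iterating this telescoping, alternating between $\mathrm{pr}$ and $\mathrm{st}$ layers while respecting the $I_1/I_2$ dichotomy, cancels all free upper signatures and leaves the lower product intact.

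Part (1) then follows at once: fixing $\mu^{(N-t)}=\lambda$ and summing out only the intermediate signatures in the lower product reproduces the claimed formula, with the interlacing constraints being automatically encoded in the supports of $\mathrm{pr}_{C_j}$ and $\mathrm{st}_{B_i}$. For part (2), fix $\nu^{(N-t)}=\lambda$. If $t+1\in I_2$, the lower product carries the additional factor $\mathrm{st}_{B_{t+1}}(\mu^{(N-t)}\to\lambda)$, and summing over the signatures strictly below $\mu^{(N-t)}$ reconstructs $\rho^{k-1}(\mu^{(N-t)})$ by part~(1); this yields the recursion (2a). If $t+1\in I_1$, the Moreover clause forces $\mu^{(N-t)}=\nu^{(N-t)}=\lambda$ deterministically, so the events $\{\nu^{(N-t)}=\lambda\}$ and $\{\mu^{(N-t)}=\lambda\}$ coincide and $\rho^k(\lambda)=\rho^{k-1}(\lambda)$, giving (2b). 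The main care needed is the bookkeeping of which $\mathrm{st}_{B_i}$ and $\mathrm{pr}_{C_j}$ factors are present versus trivialized under the $I_1/I_2$ split; this is routine but must be verified term by term so that the telescoping really sweeps through every index without leaving an orphan factor.
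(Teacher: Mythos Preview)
Your proposal is correct and follows essentially the same approach as the paper: marginalize \eqref{pb} by splitting the product at the $k$-th row, observe that the upper block sums to $1$, and read off the claimed formulas. The only stylistic difference is that the paper dispatches the ``upper sum equals $1$'' step in one line by recognizing the upper block as the full probability measure \eqref{pb} on a smaller contracting square-hexagon lattice with boundary signature $\lambda$, whereas you unwind this explicitly via the stochasticity identities \eqref{l18}; both arguments are equivalent and your version is a bit more self-contained.
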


\begin{proof}
  In Part~(1), the expression for $\rho^k$ is obtained by taking the
  formula for the probability of a configuration~\eqref{pb} and summing over all the
  intermediate signatures corresponding to rows below or above the $k$th one.
  The Markovian structure of the probability measure implies that when fixing
  the $k$th signature, the parts below and above are independent.
  But, when we fix the $k$th row to correspond to a signature $\lambda$, all the
  signatures above correspond to a contracting square hexagon lattice with a
  boundary row given by $\lambda$. Thus the sum over the signatures above level
  $k$ is equal to 1.
  Part~(2) follows from the definition of $\mathrm{st}$ and $\rho^k$.
\end{proof}

In order to study the limit shape, we make the following assumption of
periodicity for the graph:
\begin{assumption}
  \label{ap6p}
  The square-hexagonal lattice $\mathrm{SH}(\check{a})$ is periodic with period
  $2n$. More precisely, for any integer $i,j\in \NN$, the $j$th row and the
  $(j+2ni)$th row in $\mathrm{SH}(\check{a})$ coincides and have the same edge
  weights.
\end{assumption}

Under the periodic assumption~\ref{ap6p},
the sets $I_1$ and $I_2$ defined in Definition~\ref{defI1I2} are periodic, so
$m\in I_2$ if and only if $\bar{m}(:= m \mod n)\in I_2$.
Moreover, all the independent edge weights are
the $x_i$'s for $i=1,\dots,n$, for the NE-SW edges joining the $2i$th and
$(2i+1)$th row (mod $2n$) and the $y_m$, for $m\in I_2\cap\{1,2,\ldots,n\}$, for
the NE-SW edges, joining the $(2i-1)$th and $(2i)$ rows. See
Figure~\ref{fig:SH}.
Obviously $m\in I_2$ if and only if $\bar{m}\in I_2$.

\begin{lemma}
  \label{lm212}
  For any $k$ between 0 and $2N-1$, define $t=\lfloor k/2\rfloor$, and
  let 
  \begin{equation*}
    X^{(N-t)}=(x_{\ol{t+1}},\ldots,x_{\bar{N}}),
    \quad
    \text{and}
    \quad
    Y^{(t)}=(x_{\bar{1}},\ldots,x_{\bar{t}}).
  \end{equation*}
  Then the generating Schur function $\mathcal{S}_{\rho^k,X^{(N-t)}}$ is given by:
\begin{equation*}
\mathcal{S}_{\rho^k,X^{(N-t)}}(u_1,\ldots,u_{N-t})=
\frac{s_{\omega}\left(u_1,\ldots,u_{N-t},Y^{(t)}\right)}{s_{\omega}(X^{(N)})}
\prod_{i\in\{1,\ldots,t\}\cap I_2}
\prod_{j=1}^{N-t}\left(\frac{1+y_{\bar{i}}u_j}{1+y_{\bar{i}}x_{\ol{t+j}}}\right).
\end{equation*}
\end{lemma}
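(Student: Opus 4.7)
The plan is to compute $\mathcal{S}_{\rho^k,X^{(N-t)}}$ directly from its definition by substituting the explicit expression for $\rho^k$ given in Lemma~\ref{l212} and then collapsing the resulting multiple sum over intermediate signatures by iteratively applying the branching formulas~\eqref{d1} and~\eqref{d2}. I describe the argument for the odd case $k=2t+1$; the even case reduces to it via Lemma~\ref{l212}(2) together with a single further application of~\eqref{d1}.

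Substituting Lemma~\ref{l212}(1) into
\[
\mathcal{S}_{\rho^k,X^{(N-t)}}(\mathbf u)=\sum_{\lambda}\rho^k(\lambda)\frac{s_\lambda(\mathbf u)}{s_\lambda(X^{(N-t)})}
\]
produces a sum over chains $\omega=\mu^{(N)}\succ'\nu^{(N)}\succ\mu^{(N-1)}\prec'\cdots\succ\mu^{(N-t)}$ of products of $\mathrm{pr}_{C_j}$- and $\mathrm{st}_{B_i}$-factors, weighted by $s_{\mu^{(N-t)}}(\mathbf u)/s_{\mu^{(N-t)}}(X^{(N-t)})$. The first step is to sum over $\mu^{(N-t)}$: since $C_t=(x_{\ol t})\cup X^{(N-t)}$, formula~\eqref{d2} applied with $\boldsymbol\beta=C_t$ gives
\[
\sum_{\mu^{(N-t)}}\mathrm{pr}_{C_t}(\nu^{(N-t+1)}\to\mu^{(N-t)})\frac{s_{\mu^{(N-t)}}(\mathbf u)}{s_{\mu^{(N-t)}}(X^{(N-t)})}=\frac{s_{\nu^{(N-t+1)}}(x_{\ol t},\mathbf u)}{s_{\nu^{(N-t+1)}}(C_t)}.
\]

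Next, if $t\in I_2$, I would sum over $\nu^{(N-t+1)}$ against $\mathrm{st}_{B_t}(\mu^{(N-t+1)}\to\nu^{(N-t+1)})$. To bring the above ratio into the $B_t$-basis required by~\eqref{d1}, I rescale all arguments by $y_{\ol t}$ using the homogeneity identity~\eqref{slbc}. Formula~\eqref{d1} then produces
\[
\frac{s_{\mu^{(N-t+1)}}(x_{\ol t},\mathbf u)}{s_{\mu^{(N-t+1)}}(C_t)}\cdot\prod_{j=1}^{N-t}\frac{1+y_{\ol t}u_j}{1+y_{\ol t}x_{\ol{t+j}}},
\]
the factor $(1+y_{\ol t}x_{\ol t})$ in the numerator coming from the absorbed $x_{\ol t}$-argument canceling against the $k=t$ term of $\prod_{k=t}^N(1+y_{\ol t}x_{\ol k})$ that sits in $\mathrm{st}_{B_t}$. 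If $t\notin I_2$, no $\mathrm{st}$-factor is present and $\nu^{(N-t+1)}=\mu^{(N-t+1)}$, so no correction is accumulated.

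Iterating this pair of operations for $j=t,t-1,\ldots,1$ telescopes the entire chain. At each $\mathrm{pr}_{C_j}$-step, one more $x_{\ol j}$ is absorbed as an argument of the remaining Schur function on top; at each $\mathrm{st}_{B_i}$-step with $i\in I_2$, a factor $\prod_{j=1}^{N-t}(1+y_{\ol i}u_j)/(1+y_{\ol i}x_{\ol{t+j}})$ is accumulated, while the $x_{\ol i},\ldots,x_{\ol t}$-terms in $\prod_{k=i}^N(1+y_{\ol i}x_{\ol k})$ cancel against the arguments already absorbed into the top Schur function. After all $t$ steps, the top Schur function is $s_\omega(x_{\ol 1},\ldots,x_{\ol t},\mathbf u)=s_\omega(\mathbf u,Y^{(t)})$ by symmetry, and the denominator is $s_\omega(C_1)=s_\omega(X^{(N)})$, giving exactly the stated formula. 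The main obstacle is notational rather than conceptual: one must carefully verify at each $\mathrm{st}_{B_i}$-step that the $(1+y_{\ol i}x_{\ol k})$-terms for $k=i,\ldots,t$ cancel precisely against the $x$-arguments absorbed by the previous $\mathrm{pr}$-steps, leaving exactly the stated product over $j\in\{1,\ldots,N-t\}$.
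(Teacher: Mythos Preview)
Your proposal is correct and follows essentially the same route as the paper's own proof: substitute the expression for $\rho^k$ from Lemma~\ref{l212} into the definition of the Schur generating function, then iteratively collapse the sum using~\eqref{d1} and~\eqref{d2}. The paper's proof is terser and phrases the key homogeneity step slightly differently---it notes that $\mathrm{pr}_{C_i}=\mathrm{pr}_{B_i}$ (a consequence of~\eqref{d3} and~\eqref{slbc}) rather than rescaling the Schur ratio from the $C_t$-basis to the $B_t$-basis---but this is the same cancellation viewed from the other side, and your more explicit tracking of the $(1+y_{\ol i}x_{\ol k})$ factors for $k=i,\ldots,t$ is exactly the verification the paper leaves implicit.
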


\begin{proof}
  We prove the case when $k=2t+1$ is odd here; the case when $k$ is even can be
  proved similarly.

  Notice that for $1\leq i\leq N$,
$\mathrm{pr}_{C_i}(\cdot)=\mathrm{pr}_{B_i}(\cdot)$,
by the expression \eqref{d3} of $\mathrm{pr}$, and the expression of
\eqref{dbi}, \eqref{dci} for $B_i$, $C_i$, respectively.

By Definition~\ref{df21}, we have
\begin{equation}
\label{srx}
\mathcal{S}_{\rho^k,X^{(N-t)}}(u_1,\ldots,u_{N-t})=\sum_{\lambda\in\GT_{N-t}}\rho^{k}(\lambda)\frac{s_{\lambda}(u_1,\ldots,u_{N-t})}{s_{\lambda}(x_{\ol{t+1}},\ldots,x_{\bar{N}})}.
\end{equation}
Plugging the expression of $\rho^k(\lambda)$ in Lemma~\ref{l212} (1) into
\eqref{srx}, and applying \eqref{d1}, \eqref{d2}, and the previous remark about
$\mathrm{pr}_{C_i}=\mathrm{pr}_{B_i}$ sequentially, we obtain the lemma.
\end{proof}

\begin{proposition}
  \label{p213}
  Assume that the sequence of signatures $(\omega(N))_N$ corresponding the first
  row is regular, and
  $\lim_{N\rightarrow\infty}m[\omega(N)]=\mathbf{m}_{\omega}$. Assume that the
  edge weights $y_i$ for $i\in\{1,\ldots,n\}\cap I_2$ are independent of $N$,
  while the edge weights $x_1^{(N)},x_2^{(N)},\ldots, x_{n}^{(N)}$ satisfy
  \eqref{bes} or \eqref{bes2}.
  Let $(k_N)$ be a sequence of nonnegative integers such that
  $\lim_{N\to\infty}\frac{k_N}{2N} =\kappa\in[0,1]$.
  Then, the sequence of random measures $m(\rho^{k_N}(N))$ converges as
  $N\rightarrow\infty$ in probability, in the sense of moments to a
  deterministic measure $\mathbf{m}^{\kappa}$ in $\RR$, whose moments are given
  by 
  \begin{equation*}
    \int_{\RR}x^j \mathbf{m}^{\kappa}(dx)
    =\frac{1}{2(j+1)\pi\mathbf{i} }\oint_{1}\frac{dz}{z}\left(zQ'(z)+\frac{z}{z-1}\right)^{j+1},
  \end{equation*}
  where
  \begin{equation}
    Q(u)=\frac{1}{1-\kappa}H_{\mathbf{m}_{\omega}}(u)+
    \frac{\kappa}{(1-\kappa)n}
    \sum_{l=\{1,2,\ldots,n\}\cap I_2}\log\left(\frac{1+y_{l} u}{1+y_{l}}\right)
    \label{eq:defQ}
\end{equation}
and the integration goes over a small positively oriented contour around 1.
\end{proposition}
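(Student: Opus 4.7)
The plan is to apply Lemma 2.8 to the sequence of measures $\rho^{k_N}$, after showing that the (normalized logarithm of the) Schur generating function has the desired asymptotic behavior with $Q$ given by (2.21). The final moment formula is then obtained by a residue computation.

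\emph{Step 1: Asymptotics of the Schur generating function.}
Write $N_* = N - \lfloor k_N/2\rfloor$ and $t = \lfloor k_N/2\rfloor$, so that $N_*/N \to 1-\kappa$ and $t/N\to\kappa$. By Lemma~\ref{lm212}, the evaluation
\begin{equation*}
  \mathcal{S}_{\rho^{k_N},X^{(N_*)}}\!\bigl(u_1,\ldots,u_r,\beta^{(N_*)}_{\overline{r+1}},\ldots,\beta^{(N_*)}_{\overline{N_*}}\bigr)
\end{equation*}
factors (using $X^{(N_*)}=(x_{\overline{t+1}},\dots,x_{\bar N})$ and the fact that the factors of the product with index $l>r$ collapse to $1$) as
\begin{equation*}
  \frac{s_{\omega}\!\bigl(u_1,\ldots,u_r,\ \text{rest}\bigr)}{s_{\omega}(X^{(N)})}\cdot
  \prod_{i\in\{1,\ldots,t\}\cap I_2}\prod_{l=1}^{r}\frac{1+y_{\bar i}u_l}{1+y_{\bar i}x_{\overline{t+l}}},
\end{equation*}
where by symmetry of $s_{\omega}$ the ``rest'' is the multiset $X^{(N)}\setminus\{x_{\overline{t+1}},\ldots,x_{\overline{t+r}}\}$.

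\emph{Step 2: Limit of the Schur part.}
By Proposition~\ref{p25} applied to $\lambda(N)=\omega(N)$ with the $\beta^{(N)}$ chosen so that the remaining $N-r$ arguments of $s_{\omega}$ have the prescribed multiset, and using that removing or swapping an $O(1)$ number of arguments (each close to $1$ under \eqref{bes} or \eqref{bes2}) perturbs the ratio by a factor tending to $1$ on the $1/N$ log scale (the same kind of skew-Schur bound used in the proof of Proposition~\ref{p25} applies),
\begin{equation*}
  \frac{1}{N}\log\frac{s_{\omega}(u_1,\ldots,u_r,\text{rest})}{s_{\omega}(X^{(N)})}\ \longrightarrow\ H_{\mathbf{m}_{\omega}}(u_1)+\cdots+H_{\mathbf{m}_{\omega}}(u_r),
\end{equation*}
uniformly near $(1,\ldots,1)$. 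Since $N/N_*\to 1/(1-\kappa)$, dividing by $N_*$ instead gives the factor $1/(1-\kappa)$ in front of the $H_{\mathbf m_\omega}$ terms.

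\emph{Step 3: Limit of the product.}
For each fixed $l$, Assumption~\ref{ap6p} gives that $\#\{i\in\{1,\ldots,t\}:\bar i=m\}=t/n+O(1)$, and $x_{\overline{t+l}}^{(N)}\to 1$ under \eqref{bes} or \eqref{bes2}. Hence
\begin{equation*}
  \sum_{i\in\{1,\ldots,t\}\cap I_2}\log\frac{1+y_{\bar i}u_l}{1+y_{\bar i}x_{\overline{t+l}}}
  \ =\ \frac{t}{n}\sum_{m\in I_2\cap\{1,\ldots,n\}}\log\frac{1+y_m u_l}{1+y_m}\ +\ o(N).
\end{equation*}
Dividing by $N_*$ and summing over $l=1,\ldots,r$ yields $\frac{\kappa}{(1-\kappa)n}\sum_l\sum_m\log\frac{1+y_m u_l}{1+y_m}$. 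Adding this to the contribution from Step~2 shows that
\begin{equation*}
  \frac{1}{N_*}\log\mathcal{S}_{\rho^{k_N},X^{(N_*)}}\!\bigl(u_1,\ldots,u_r,\beta^{(N_*)}_{\overline{r+1}},\ldots,\beta^{(N_*)}_{\overline{N_*}}\bigr)\ \longrightarrow\ Q(u_1)+\cdots+Q(u_r)
\end{equation*}
uniformly in a neighborhood of $(1,\ldots,1)$, with $Q$ exactly as in \eqref{eq:defQ}.

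\emph{Step 4: Conclusion.}
Lemma~\ref{l28} applied to $\rho_N = \rho^{k_N}$ (of size $N_*$) with $\mathbf B^{(N_*)}=X^{(N_*)}$ (which inherits \eqref{bes}/\eqref{bes2} from the hypothesis on the $x_i^{(N)}$) gives convergence in probability in the sense of moments of $m(\rho^{k_N})$ to a deterministic measure $\mathbf m^{\kappa}$ whose moments are
$\sum_{l=0}^{j}\frac{j!}{l!(l+1)!(j-l)!}\left.\partial_u^l(u^jQ'(u)^{j-l})\right|_{u=1}$. Expanding $(zQ'(z)+z/(z-1))^{j+1}$ with the binomial theorem and computing the residue at $z=1$ of each term $\frac{z^{j}Q'(z)^{j+1-l}}{(z-1)^l}$ via Cauchy's formula recovers exactly the sum above (the factor $1/(j+1)$ combines with $\binom{j+1}{l}/(l-1)!$ to give $j!/\bigl((l-1)!\,l!\,(j+1-l)!\bigr)$, matching the stated coefficients after the shift $l\mapsto l+1$), yielding the contour-integral expression in the statement.

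\emph{Main obstacle.}
The delicate step is the symmetrization in Step~2: the Schur function is symmetric in its arguments, but Proposition~\ref{p25} is stated for the natural periodic ordering, so one must justify that replacing $\{x_{\bar 1},\ldots,x_{\bar j}\}$ by $\{x_{\overline{t+1}},\ldots,x_{\overline{t+j}}\}$ in the last $N-j$ slots does not affect the $1/N$-log limit. This follows by the same skew-Schur estimates used to handle $D_{N,1}$ and $D_{N,3}$ in the proof of Proposition~\ref{p25}, but must be carried out carefully since both the ordering and the normalization change.
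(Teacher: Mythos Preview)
Your proof is correct and follows essentially the same route as the paper: apply Lemma~\ref{lm212} to factor the Schur generating function, use Proposition~\ref{p25} for the Schur ratio and a direct count for the product over $I_2$, then invoke Lemma~\ref{l28}. The paper cites the contour-integral identity as (3.14) of~\cite{bk} rather than redoing the residue computation, and it does not pause on the symmetrization issue you flag in Step~2 (it simply writes the numerator as $s_{\omega}(u_1,\ldots,u_j,x_{\bar 1}^{(N)},\ldots,x_{\overline{N-j}}^{(N)})$, implicitly using that swapping $O(1)$ arguments, each at distance $o(1)$ from $1$, is harmless on the $1/N$-log scale); your explicit acknowledgment of this point and your residue check are welcome additions rather than a different argument.
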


\begin{proof}From Proposition~\ref{p25} it follows that
  \begin{equation*}
    \lim_{N\rightarrow\infty}
    \frac{1}{N}\log\left(\frac{%
      s_{\omega(N)}\left(u_1,\ldots,u_j,x_{\ol{1}}^{(N)},\ldots,x_{\ol{N-j}}^{(N)}\right)
    }{%
      s_{\omega(N)}\left(x_{\ol{1}}^{(N)},\ldots,x_{\ol{N}}^{(N)}\right)}
    \right)
    = H_{\mathbf{m}_{\omega}}(u_1)+\ldots +H_{\mathbf{m}_{\omega}}(u_j).
  \end{equation*}

  Let us look at what is happening in the $k$th row, for
  $k=k(N)=2N(\kappa+o(1))$.
  Let $\rho_N^k$ be the probability measure of perfect matchings along
  the $k$th row, given that the first row has a configuration $\omega(N)$. The
  $k$th row has $(N-t)=\left(N-\lfloor\frac{k-1}{2}\rfloor\right)$ V-squares. By
  Lemma~\ref{lm212}, we have
  \begin{multline}
    \lim_{N\rightarrow\infty}\frac{1}{(1-\kappa) N} 
    \log\left(\mathcal{S}_{\rho_N^k}\left(u_1,\ldots,u_j,x_{\ol{t+1}}^{(N)},\ldots,x_{\ol{N-j}}^{(N)}\right)\right)
    = \\
\lim_{N\rightarrow\infty}\frac{1}{(1-\kappa) N}
\log\left(\frac{%
  s_{\omega(N)}\left(u_1,\ldots,u_j,x_{\ol{1}}^{(N)},\ldots,x_{\ol{N-j}}^{(N)}\right)
}{%
  s_{\omega(N)}(x_{\ol{1}}^{(N)},\ldots,x_{\ol{N}}^{(N)})}
  \!\!\!\!
\prod_{l\in\{1,2,\ldots,t/t+1\}\cap I_2}
\prod_{i=1}^{j}
\frac{1+y_{\ol{l}}u_i}{1+y_{\ol{l}}x_{\ol{N-j+i}}^{(N)}}
\right)\\
=\sum_{i=1}^{j}\left(\frac{1}{1-\kappa}H_{\mathbf{m}_{\omega}}(u_i)+\frac{\kappa}{(1-\kappa)n}\sum_{l=\{1,2,\ldots,n\}\cap
I_2}\log\left(\frac{1+y_{l} u_i}{1+y_{l}}\right)\right)
= \sum_{i=1}^j Q(u_i).
\label{cls}
\end{multline}
where in the last identity we use the assumption \eqref{bes} or \eqref{bes2},
and $n$ is a fixed finite positive integer relating to the size of the
fundamental domain.

Then the proposition follows from Lemma~\ref{l28} and (3.14) of~\cite{bk}.
The link between the expression of moments in Lemma~\ref{l28} and this
proposition is obtained by an explicit evaluation of the integral by residues.
\end{proof}

\subsection{Height function}
\label{subsec:height}

Let $\mathcal{R}(\Omega(N),\check{a})$ be a contracting square-hexagon lattice.

As for any bipartite planar graph, dimer configurations can be encoded by a
height function with values on the faces of the graph. A convenient way to construct a height function (which will help also for
the matter of discussing the scaling limit for these graphs), is to see a
contracting
square-hexagon lattice, as in fact a subgraph of the square lattice, by drawing
the \emph{vertical} edges of the hexagonal rows as diagonal NW-SE (the missing
diagonal NE-SW diagonal on these rows correspond to the fusion of two unit
squares to make hexagonal faces). The vertices of the contracting square-hexagon
graph are thus on the sublattice
$\frac{1}{2}\mathbb{Z}\times\frac{1}{2}\mathbb{Z}$ with coordinates $(i,j)$
satisfying $i+j\in\mathbb{Z}$.

The height function we will consider is then defined on the sublattice where
$i+j\in\frac{1}{2}+\mathbb{Z}$, applying the same rule as for domino
tilings~\cite{Thurston}: the height increases (resp.\@ decreases) by 1 when going
counterclockwise around a white vertex (resp.\@ black vertex), i.e., for $j$
half-integer (resp.\@ integer) as long as a dimer
is not crossed.
For definiteness, we fix the height to be 0 at the square face
$(\frac{1}{2},0)$.
When keeping $i=\frac{1}{2}$ and increasing $j$ by one, we jump over a black
vertex with two edges, with only one of them being a dimer. Then the height
increases by 2. See Figure~\ref{fig:sq_height}. Note that the height function under this definition
is a constant multiple of the height function under the classical definition in \cite{KOS06}.

\begin{figure}[htb]
  \centering
  \includegraphics[width=12cm]{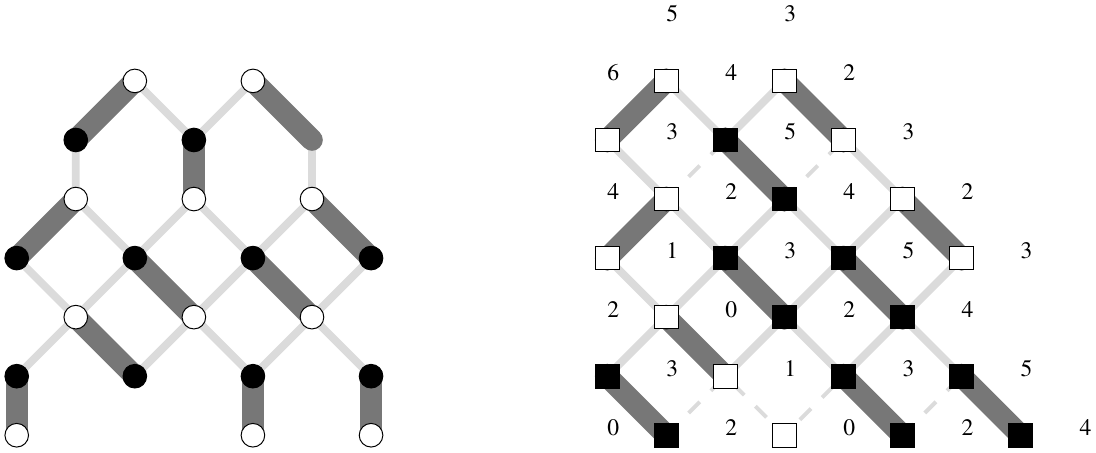}
  \caption{Two representations of a dimer configuration on a contracting square
    hexagon graph. Left: usual embedding, with the bipartite coloring of
    vertices. Right: as a subgraph of the square
  lattice, with the associated height function, and particles associated to Maya
diagrams encoding each row.}
  \label{fig:sq_height}
\end{figure}

The value of the height function can be related directly to the counting
measures of the signatures associated to rows of the graph: the height at
position $(i,j)$ is given by the following formula:
\begin{equation*}
  h(i,j)=2j-1 + 2 \operatorname{Card}\{\blacksquare\ \text{at the left of $i$}\}
- 2 \operatorname{Card}\{\square\ \text{at the left of $i$}\}.
\end{equation*}

If $\lambda$ is the signature of the $2j$th row (with ordinate $j$), the numbers
$\lambda_k-k+N-2j+1$ are the positions of the $\blacksquare$, which is (up to a
small shift), $N-2\lfloor j\rfloor$ times the atoms of the counting measure for
this signature.

Therefore, if we define $N_j=N-\lfloor j\rfloor$, for $j$-half integer, the
height along the $2j$th row, encoded by the signature $\mu^{(N_j)}$, is given by
the following formula:
\begin{equation*}
  h(i,j) = 2j-1+2 N_j
  \int_{[0,\frac{i}{2N_j})}(2\mathrm{d}m(\mu^{(N_j)})-\mathrm{dx}).
\end{equation*}

The convergence of the counting measures from Proposition~\ref{p213} implies
directly the following theorem about the convergence in probability of the
rescaled height:

\begin{theorem}[Law of large numbers for the height function]
  \label{thm:limitshape}
  Consider $N\rightarrow\infty$ asymptotics such that all the dimensions of a
  contracting square-hexagon lattice $\mathcal{R}(\Omega(N),\check{a})$ linearly
  grow with $N$.  Assume that 
 \begin{itemize}
   \item the sequence of signatures $(\omega(N))$ corresponding to the first row is regular, and
     \begin{equation*}
       \lim_{N\rightarrow\infty}m[\omega(N)]=\mathbf{m}_{\omega}
     \end{equation*}
     in the weak sense; and
   \item the edge weights are assigned as in Assumption~\ref{apew} (see
     Figure~\ref{fig:SH} for an example) and satisfy the Assumption~\ref{ap6p}
     of periodicity, such that for each $1\leq i\leq n$ and $i\in I_2$, $y_{i}>0$
     are fixed and independent of $N$, while for $1\leq i\leq n$ $x_{i}^{(N)}>0$
     satisfies \eqref{bes} or \eqref{bes2}.
 \end{itemize}

 Let $\rho_N^k$ be the measure on the configurations of the $k$th row, and let
 $\kappa\in (0,1)$, such that $j=[\kappa N]+\frac{1}{2}$, Then $m(\rho_N^k)$ converges to
 $\mathbf{m}^{\kappa}$ in probability as $N\rightarrow\infty$, and the moments
 of $\mathbf{m}^{\kappa}$ is given by Proposition~\ref{p213}.

 Define
 \begin{equation*}
   \mathbf{h}(\chi,\kappa):=2\kappa-2\chi+4(1-\kappa)\int_0^{\frac{\chi}{1-\kappa}} d\mathbf{m}^{\kappa}.
 \end{equation*}
 Then the random height function $h_M$ associated to a random perfect matching
 $M$, satisfies the following law of large  numbers: as $N$ goes to infinity,
 its scaling by a factor $N^{-1}$
 \begin{equation*}
   (\chi,\kappa)\mapsto\frac{h_{M}([\chi N],[\kappa N]+\frac{1}{2})}{N}
\end{equation*}
 converges uniformly, in probability, to the deterministic function
 $\mathbf{h}(\chi,\kappa)$,
 where $\chi, \kappa$ are new continuous parameters of the domain.
\end{theorem}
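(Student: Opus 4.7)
The plan is to derive the theorem in two steps: first apply Proposition~\ref{p213} to obtain convergence in probability of the counting measure on a typical row, and then translate this into the convergence of the rescaled height function via the explicit relationship between the height function and Maya diagrams described in Subsection~\ref{subsec:height}. The hypotheses of Proposition~\ref{p213} match the assumptions of the theorem verbatim, so for any sequence $k_N$ with $\frac{k_N}{2N}\to \kappa \in (0,1)$ we obtain directly the convergence in probability, in the sense of moments, of $m(\rho_N^{k_N})$ to $\mathbf{m}^{\kappa}$. Since the regularity of $\omega(N)$ together with the explicit form of $Q$ in \eqref{eq:defQ} confines the supports of both $m(\rho_N^{k_N})$ and $\mathbf{m}^{\kappa}$ to a fixed compact interval (uniformly in $N$), convergence of moments in probability upgrades to weak convergence in probability, and therefore at every continuity point $a$ of the cumulative distribution $F^{\kappa}(a):=\mathbf{m}^{\kappa}([0,a))$ one has $m(\rho_N^{k_N})([0,a))\to F^{\kappa}(a)$ in probability.

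The second step is the identification of the rescaled height with an integral against the counting measure. Setting $N_j=N-\lfloor j\rfloor$ with $j=[\kappa N]+\tfrac12$, the formula of Subsection~\ref{subsec:height} together with the trivial relation $\operatorname{Card}\{\blacksquare\}+\operatorname{Card}\{\square\}=i$ yields
\begin{equation*}
h_M(i,j)=2j-1+4\,\operatorname{Card}\{\blacksquare\ \text{at the left of } i\}-2i.
\end{equation*}
The number of $\blacksquare$'s to the left of $i$ in the Maya diagram of the $2j$-th row equals $N_j\cdot m(\mu^{(N_j)})([0,i/N_j))$. Substituting $i=[\chi N]$ and using $N_j/N\to 1-\kappa$ and $i/N_j\to \chi/(1-\kappa)$, we get
\begin{equation*}
\frac{h_M([\chi N],[\kappa N]+\tfrac12)}{N}=2\kappa-2\chi+4\frac{N_j}{N}\int_{0}^{\chi N/N_j} dm(\mu^{(N_j)}) + o(1),
\end{equation*}
which, by the first step, converges in probability to $\mathbf{h}(\chi,\kappa)$ at every $(\chi,\kappa)$ such that $\chi/(1-\kappa)$ is a continuity point of $F^{\kappa}$.

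The main obstacle is to promote this pointwise convergence in probability into \emph{uniform} convergence in probability on the compact domain where $(\chi,\kappa)$ ranges. I would handle this by combining two ingredients. First, the discrete height function changes by a bounded amount when one coordinate is shifted by one lattice step, so the family of rescaled height functions $(\chi,\kappa)\mapsto h_M([\chi N],[\kappa N]+\tfrac12)/N$ is equi-Lipschitz, hence equicontinuous, uniformly in $N$ and in the random configuration $M$. Second, the Voiculescu-transform formula~\eqref{hmz} and the explicit expression~\eqref{eq:defQ} ensure that $F^{\kappa}$ has no atoms and that $\mathbf{h}(\chi,\kappa)$ is continuous in both variables, so continuity points of $F^{\kappa}$ form a dense set. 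A standard Arzel\`a--Ascoli type argument, applied inside the probabilistic setting by selecting a dense countable set of $(\chi,\kappa)$ where pointwise convergence in probability holds and then invoking equicontinuity, upgrades the pointwise statement to uniform convergence in probability on the compact domain, which is the desired law of large numbers.
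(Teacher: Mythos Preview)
Your proof is correct and follows the same route as the paper: the paper simply states that the theorem follows directly from Proposition~\ref{p213} together with the height-function/counting-measure relation of Subsection~\ref{subsec:height}, and invokes the Lipschitz property of the discrete height function (mentioned in the Remark following the theorem) for the uniformity. You have supplied the details the paper omits---the passage from moment convergence to weak convergence via compact support, and the equicontinuity/Arzel\`a--Ascoli upgrade to uniform convergence---so your argument is a fleshed-out version of the paper's one-line justification.
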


\begin{remark}
  The continuous variables $(\chi,\kappa)$ lie in a
  region $R$ of the plane obtained from $\mathcal{R}(\Omega(N),\check{a})$ by
  translating each row from the second to the left such that the leftmost vertex
  of each row are on the same vertical line; then rescaled by $\frac{1}{N}$, and
  taking the limit as $N\rightarrow\infty$. When
  $\mathcal{R}(\Omega(N),\check{a})$ is a square grid, $R$ is a rectangle;
  otherwise $R$ is a trapezoid with two right angles on the left.

  The convergence also occurs on the other sublattice, when $j$ is integer, and
  $i$ half integer, due to the fact that the discrete height function is
  Lipschitz.
\end{remark}

\section{Density of the Limit Measure}\label{s3}

We work in this section with the same hypotheses as in
Theorem~\ref{thm:limitshape}.
We obtain an explicit formula to compute the density of the
limit of the random counting measure corresponding to the random signatures for
dimer configurations on a row of the contracting square-hexagon lattice. This
formula for the density of limit measure will be used in the next section to
obtain the \emph{frozen boundary}, i.e., the frontier between \emph{frozen}
regions where the density of the limit measure is 0 or 1, and a \emph{temperate}
region where the density lies strictly between 0 and 1.

Under the assumptions above, it is not hard to check that
all the measures $\bm^\kappa$, $\kappa\in(0,1)$
have compact support, and that they is absolutely continuous with respect
to the Lebesgue measure on $\RR$ with a density taking values in $[0,1]$.

Recall that the \emph{Stieltjes transform} of a compactly supported measure $\bm$ is defined by
\begin{equation}
\mathrm{St}_{\bm}(t)=\int_{\RR}\frac{\bm(ds)}{t-s},\label{stmt}
\end{equation}
for $t\in \mathbb{C}\setminus \mathrm{Support}(\bm)$, which has an expansion as
a series in $t$ in a neighborhood of infinity whose coefficients are the moments
of the measure $\bm$: if the $M_j(\bm)$ is $j$th moment of $\bm$,
then for $t$ large enough:
\begin{equation*}
  \mathrm{St}_{\bm}(t) = \sum_{j=0}^\infty M_j(\bm) t^{-j-1}.
\end{equation*}

By Proposition~\ref{p213}, we know in principle for any $\kappa\in(0,1)$, all the moments
of the limiting measure $\bm^\kappa$, expressed in terms of the function $Q$ from
Equation~\eqref{eq:defQ}, so we can have an expression of the Stieltjes
transform of $\bm^\kappa$.
However, $Q$ depends on $H_{\mathbf{m}_\omega}$ which is itself expressed
in terms of the Stieltjes transform $\bm_\omega$. Indeed, for any measure $\bm$,
the function $H_\bm$ is is related to the Stieltjes transform of $\bm$ by the
following relation:
\begin{equation*}
  H_{\bm}'(z)
  =\frac{1}{z \mathrm{St}_{\bm}^{(-1)}(\log(z))}-\frac{1}{z-1}.
\end{equation*}
See also Equation~\eqref{hmz}.

Introducing an additional variable $t$ such that $\mathrm{St}_{\bm}(t)=\log(z)$,
one can then write for $\kappa\in(0,1)$:
\begin{equation}
F_{\kappa}(z,t):=
zQ'(z)+\frac{z}{z-1} =
\frac{z}{1-\kappa}\left(\frac{t}{z}-\frac{1}{z-1}+\frac{\kappa}{n}\sum_{i\in I_2\cap \{1,2,\ldots,n\}}\frac{y_{i}}{1+y_{i}z}\right)+\frac{z}{z-1}.
\label{fk}
\end{equation}

As a consequence, injecting the expression of the moments of the limiting
measure into the definition of the Stieltjes transform, one gets an implicit
equation to be solved:
for any $x\in\mathbb{C}$, finding $(z,t)\in
(\mathbb{C}\setminus\mathbb{R}_-)\times
(\mathbb{C}\setminus\operatorname{Support}(\bm_\omega))$ such that
\begin{equation}
  \begin{cases}
    F_{\kappa}(z,t)=x\\
    \mathrm{St}_{\mathbf{m}_{\omega}}(t)=\log(z)
  \end{cases},
  \label{es}
\end{equation}
allows one to express
$\operatorname{St}_{\bm^\kappa}$:
let $x\mapsto z^{\kappa}(x)$ be the composite inverse of 
\begin{equation*}
  u:z\mapsto
F_{\kappa}\left(z,\mathrm{St}^{(-1)}_{\mathbf{m}_{\omega}}(\log z)\right).
\end{equation*}
Note
that $z^{\kappa}(x)$ is a uniformly convergent Laurent series in $x$ when $x$ is
in a neighborhood of infinity, and
\begin{equation*}
z^{\kappa}\left(F_{\kappa}\left(z,\mathrm{St}^{(-1)}_{\mathbf{m}_{\omega}}(\log
z)\right)\right)=z.
\end{equation*}
See Section~4.1 of~\cite{bk}.

The following identity holds when $x$ is in a neighborhood of infinity
\begin{equation}
\mathrm{St}_{\mathbf{m}^{\kappa}}(x)=\log(z^{\kappa}(x)).
\label{eq:solveSt}
\end{equation}
Indeed, by Proposition~\ref{p213},
the $j$-th moment of
$\mathbf{m}^{\kappa}$ is given by
\begin{equation*}
M_j(\mathbf{m}^{\kappa})=\frac{1}{2(j+1)\pi\mathbf{i}}\oint_1\frac{dz}{z}[F_{\kappa}(z,\mathrm{St}^{(-1)}_{\mathbf{m}_{\omega}}(\log z))]^{j+1},
\end{equation*}
where the integral is along a small counterclockwise contour winding once around $1$.
Then the identity~\eqref{eq:solveSt} follows from the same computation as in the
proof of Lemma 4.1 in~\cite{bk}, by performing an integration by parts and a
change of variable from $z$ to $u$.

The first equation of the system~\eqref{es} with $F_\kappa$ from \eqref{fk} is
linear in $t$ for given $x$ and $z$, which gives
with $c_i=\frac{1}{y_i}$
the value of $t$, as a function of $z$, $\kappa$, and $x$:
\begin{multline}
t=t(z,\kappa,x)=
x(1-\kappa)+\frac{\kappa z}{z-1}-\frac{\kappa z}{n}\sum_{i\in
  I_2\cap\{1,2,\ldots,n\}}\frac{1}{c_i+z}\\
=x(1-\kappa)+\kappa\frac{r}{n}+\kappa\left(
\frac{1}{z-1}+\frac{1}{n}\sum_{i\in I_2\cap\{1,\dots,n\}}\frac{c_i}{z+c_i}
\right),
\label{tzkx}
\end{multline}
where 
\begin{equation}
  r=n-\operatorname{Card}(I_2)=\operatorname{Card}\{1,\dotsc,n\}\cap I_1.
  \label{eq:defr}
\end{equation}
For a given value $y\in\mathbb{R}$, and fixed $x$ (and $\kappa$), we investigate
properties of the complex numbers $z$ such that $t(z,\kappa,x)=y$. In
particular, we have the following:
\begin{lemma}
  \label{l32}
  Let $c_i>0$, for $i\in I_2\cap\{1,2,\ldots,n\}$. Let $\kappa\in (0,1)$, and
  $x,y\in\RR$.
  Then the following equation in $z$
\begin{equation}
  t(z,\kappa,x)=y
  \label{fe}
\end{equation}
has $m+1$ roots on the Riemann sphere $\mathbb{C}\cup\{\infty\}$, where $m$ is
the number of distinct values of $c_i$,
and all these roots are real and simple.
\end{lemma}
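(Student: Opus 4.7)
The plan is to analyze \eqref{fe} as a rational equation in $z$: after subtracting constants, bringing $y$ to the left, and dividing by $\kappa\neq 0$, it is equivalent to
\begin{equation*}
g(z):=\frac{1}{z-1}+\frac{1}{n}\sum_{j=1}^{m}\frac{n_j c_{i_j}}{z+c_{i_j}}=A,
\end{equation*}
where $c_{i_1}<c_{i_2}<\cdots<c_{i_m}$ are the distinct values among the $\{c_i\}$ and $n_j\geq 1$ is their multiplicity, and $A=\bigl(y-x(1-\kappa)-\kappa r/n\bigr)/\kappa\in\RR$. Since $g$ is a rational function with exactly $m+1$ simple poles (located at $z=1$ and $z=-c_{i_1},\ldots,-c_{i_m}$) and $g(z)\to 0$ as $z\to\infty$, the equation $g(z)=A$ is equivalent, after clearing denominators, to a polynomial equation of degree $m+1$ (or $m$ plus the root at infinity, when $A=0$). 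Hence there are exactly $m+1$ roots on $\mathbb{C}\cup\{\infty\}$ counted with multiplicity.

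The second step is to prove reality and simplicity by monotonicity. I would compute
\begin{equation*}
g'(z)=-\frac{1}{(z-1)^2}-\frac{1}{n}\sum_{j=1}^{m}\frac{n_j c_{i_j}}{(z+c_{i_j})^2}<0
\end{equation*}
at every real point of the domain of $g$, so $g$ is strictly decreasing on each of the $m+2$ open intervals into which the poles $-c_{i_m}<\cdots<-c_{i_1}<1$ divide $\RR$. Since each pole is simple with positive residue, $g$ tends to $+\infty$ from the right and to $-\infty$ from the left at every pole; together with $g(z)\to 0^-$ as $z\to-\infty$ and $g(z)\to 0^+$ as $z\to+\infty$, this gives the table of limiting behaviors on each interval.

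The third step is simply to apply the intermediate value theorem on each interval. For $A>0$, exactly one real root lies in each of $(1,+\infty),(-c_{i_1},1),\ldots,(-c_{i_m},-c_{i_{m-1}})$, giving $m+1$ distinct real roots; for $A<0$, we get one root in each of $(-c_{i_1},1),\ldots,(-c_{i_m},-c_{i_{m-1}}),(-\infty,-c_{i_m})$; and for $A=0$ we get $m$ real roots in the bounded intervals plus the root at $z=\infty$ coming from the degree drop. In each case we account for all $m+1$ roots on the Riemann sphere, they are real, and strict monotonicity of $g$ on each interval prevents any multiple root.

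The only mildly delicate point is the bookkeeping at $\infty$ and at $A=0$, to make sure the Riemann-sphere count matches the interval-by-interval count; the rest is routine once $g'<0$ and the residue signs are in hand.
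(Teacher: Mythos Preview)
Your proof is correct and follows essentially the same approach as the paper: rewrite the equation as a rational function with $m+1$ simple poles and positive residues, observe it is strictly decreasing on each of the $m+2$ real intervals, and use the intermediate value theorem together with the behavior at the poles and at infinity to locate exactly $m+1$ simple real (or infinite) roots. The only cosmetic difference is that you divide through by $\kappa$ to isolate $g(z)=A$, whereas the paper keeps the constant on the same side as $H(z;x,y)$; the case split $A>0$, $A<0$, $A=0$ in your version corresponds exactly to the paper's trichotomy on whether the extra root lies in $(-\infty,-\gamma_m)$, in $(1,+\infty)$, or at $\infty$.
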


\begin{proof}
  Let $0<\gamma_1<\cdots <\gamma_m$ be all the possible distinct values for the $c_i$, and
  $n_1,\dotsc,n_m$ be their respective multiplicities among the $c_i$'s.
  Define
  \begin{equation}
    H(z;x,y) = t(z,\kappa,x)-y =
    K+\kappa\left(%
    \frac{1}{z-1} + \frac{1}{n}\sum_{j=1}^m \frac{n_j \gamma_j}{z+\gamma_j}
    \right)
    \label{eq:defHzxy}
  \end{equation}
  where $K=x(1-\kappa)-y+\kappa\frac{r}{n}$, with $r$ as in
  Equation~\eqref{eq:defr}.
  When writing $H(z;x,y)$ as a single rational fraction by bringing all the terms
  onto the same polynomial denominator (of degree $m+1$), the polynomial on the numerator has
  degree at most $m+1$. So there are at most $m+1$ roots in $\mathbb{C}$ (and
  exactly $m+1$ if we add roots at infinity). Notice that the denominator does
  not depend on $x$ and $y$, but just on the $\gamma_j$'s.

  Moreover, each factor of the form $\frac{1}{z-b}$ with $b=-\gamma_j$ or $1$ is
  a decreasing function of $z$ on any interval where it is defined. As a
  consequence, on each of the intervals $(-\gamma_{j+1},-\gamma_j)$,
  $j=1,\dots,m-1$ and $(-\gamma_1,1)$, $H$ realizes a bijection with
  $\mathbb{R}$. In particular, the equation $H(z;x,y)$ has a unique solution in
  every such interval. It is also decreasing on $(\infty,-\gamma_m)$  and
  $(1,+\infty)$. Since the limits of $H(z;x,y)$ when $z$ goes to $\pm\infty$
  coincide, and
  \begin{equation*}
    \lim_{z\to 1+} H(z;x,y) = +\infty,
    \qquad
    \lim_{z\to -\gamma_m^-} H(z;x,y) = -\infty,
  \end{equation*}
  the equation $H(z;x,y)=0$ has a unique solution in
  $(-\infty,-\gamma_m)\cup(1,+\infty)\cup\{\infty\}$, which is in fact infinite if and
  only if the limits of $H$ at infinity is zero, that is, when $K=0$.
  This gives thus $m+1$ real roots (with possibly one at infinity).
\end{proof}

\begin{remark}
  \label{rem:interlaceH}
  Increasing the value of $y$ translates downward the graph of the function
  $z\in\mathbb{R}\mapsto H(z;x,y)$. Since $H(z;x,y)$ is decreasing in any
  interval of definition, the roots present in the bounded intervals decrease.
  The one in $(-\infty,-\gamma_m)\cup(1,+\infty)\cup\{\infty\}$ moves also to
  the left, and if it started in $\mathbb{R}_-$, when it reaches $-\infty$, it
  jumps to the right part of $(1,+\infty)$ and then continues to decrease.
  In particular, it means that if $y<y'$, the respective roots
  $z_1<\cdots<z_{m+1}$ and $z'_1 < \cdots < z'_{m+1}$ are interlaced:
  \begin{itemize}
    \item if $y<y'<x(1-\kappa)+\frac{\kappa r}{n}$,
      \begin{equation*}
	z'_1<z_1 < -\gamma_m < z'_2< z_2 <-\gamma_{m-1} <\cdots < -\gamma_1
	<z'_{m+1} < z_{m+1} < 1,
      \end{equation*}
    \item if $y<x(1-\kappa)+\frac{\kappa r}{n}<y'$,
      \begin{equation*}
	z_1 < -\gamma_m < z'_1< z_2 <-\gamma_{m-1} <\cdots < -\gamma_1
	<z'_m < z_{m+1} < 1 < z'_{m+1},
      \end{equation*}
    \item if $<x(1-\kappa)+\frac{\kappa r}{n}<y<y'$,
      \begin{equation*}
	-\gamma_m < z'_1<z_1 < -\gamma_{m-1} < z'_2< z_2 <-\gamma_{m-1} <\cdots
	< 1 <z'_{m+1}< z_{m+1},
      \end{equation*}
  \end{itemize}
  The limiting case when $y$ or $y'$ is equal to $x(1-\kappa)+\frac{\kappa
  r}{n}$ is obtained by sending the corresponding root in
  $(-\infty,-\gamma_m)\cup(1,+\infty)$ to $\infty$.
  \end{remark}

Rational fractions where zeros of the numerator and denominator interlace have
interesting monotonicity properties, already used for example in~\cite{OR07},
which are straightforwardly checked by induction using the 
decomposition of $R(z)$ into the sum of simple fractions:
\begin{lemma}
  \label{l33}
  Let
  \begin{equation*}
    R(z)=\frac{(z-u_1)(z-u_2)\cdots(z-u_s)}{(z-v_1)(z-v_2)\cdots(z-v_s)},
  \end{equation*}
  where $\{u_i\}$ and $\{v_i\}$ are two sets of real numbers.
  \begin{itemize}
    \item If $\{u_i\}$ and $\{v_i\}$ satisfy
      \begin{equation*}
	v_1<u_1<v_2<u_2<\cdots<v_s<u_s.
      \end{equation*}
      Then $R(z)$ is monotone increasing in each one of the following intervals
      \begin{equation*}
	(-\infty,v_1), (v_1,v_2),\ldots, (v_{s-1},v_s),(v_s,\infty).
      \end{equation*}
    \item If $\{u_i\}$ and $\{v_i\}$ satisfy
      \begin{equation*}
	u_1<v_1<u_2<v_2<\cdots<u_s<v_s.
      \end{equation*}
      Then $R(z)$ is monotone decreasing in each one of the following intervals
      \begin{equation*}
	(-\infty,v_1), (v_1,v_2),\ldots, (v_{s-1},v_s),(v_s,\infty).
      \end{equation*}
  \end{itemize}
  The monotonicity on each interval of definition is still true if $R$ has the
  form:
  \begin{equation*}
    R(z)=\frac{(z-u_1)\cdots(z-u_{s-1})}{(z-v_1)\cdots(z-v_s)}
    \text{with}\ v_1<u_1<\cdots<u_{s-1}<v_s
  \end{equation*}
  or
  \begin{equation*}
    R(z)=\frac{(z-u_1)\cdots(z-u_{s+1})}{(z-v_1)\cdots(z-v_s)}
    \text{with}\ u_1<v_1<\cdots<v_{s}<u_{s+1}.
  \end{equation*}
\end{lemma}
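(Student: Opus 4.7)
The plan is to use the partial fraction decomposition hinted at in the statement and read off the signs of the residues from the interlacing hypothesis. Since in each case the numerator and denominator of $R(z)$ have degrees differing by at most $1$, polynomial long division gives
\begin{equation*}
R(z) = P(z) + \sum_{i=1}^{s} \frac{c_i}{z - v_i},
\end{equation*}
with $P$ equal to the constant $1$ in the first two cases, identically zero in the third case, and of the form $z + d$ for some constant $d$ in the fourth case. Each residue is then
\begin{equation*}
c_i = \frac{\prod_{j}(v_i - u_j)}{\prod_{j \neq i}(v_i - v_j)}.
\end{equation*}

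The key step is the sign analysis of the $c_i$'s. I would illustrate the bookkeeping in the first case: if $v_1 < u_1 < v_2 < u_2 < \cdots < v_s < u_s$, then $(v_i - u_j) < 0$ precisely when $j \geq i$, contributing $s - i + 1$ negative factors, and $(v_i - v_j) < 0$ precisely when $j > i$, contributing $s - i$ more negative factors. Thus $c_i$ has sign $(-1)^{(s-i+1)+(s-i)} = -1$. The same elementary counting argument shows that in the second case all $c_i$ are positive, in the third case all $c_i$ are positive, and in the fourth case all $c_i$ are again negative. In every case, the residues share a common sign.

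Now I differentiate termwise:
\begin{equation*}
R'(z) = P'(z) - \sum_{i=1}^{s} \frac{c_i}{(z - v_i)^2}.
\end{equation*}
The rational contribution $-\sum_{i} c_i/(z-v_i)^2$ has a constant sign on all of $\RR \setminus \{v_1,\ldots,v_s\}$, namely the opposite of the common sign of the $c_i$'s. In the first three cases $P'(z) = 0$, so $R'$ has constant sign on each maximal interval of definition, and the claimed monotonicity (increasing or decreasing as asserted) follows. In the fourth case $P'(z) = 1$, which must be checked to agree in sign with the rational part; the sign analysis above shows $c_i < 0$, so $P'$ and the rational contribution have the same sign, and $R'(z) > 0$ throughout.

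I do not anticipate any serious obstacle. The only real work is the combinatorics of counting negative factors in the numerator and denominator of each $c_i$ under the four different interlacing patterns, and the only mild subtlety is confirming compatibility of signs between the polynomial contribution $P'(z) = 1$ and the rational part in the fourth case, which the residue computation handles automatically.
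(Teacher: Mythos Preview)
Your proposal is correct and follows exactly the approach the paper indicates: the paper does not give a detailed proof but only remarks that the lemma is ``straightforwardly checked by induction using the decomposition of $R(z)$ into the sum of simple fractions.'' Your argument via partial fractions, the residue sign count from the interlacing, and termwise differentiation is precisely the intended route, and you have filled in the details the paper leaves to the reader.
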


This is helpful to determine the number of solutions of Equation~\eqref{es}, as
shown in the following lemma:

\begin{lemma}
  \label{l34}
  Suppose that $\mathbf{m}_{\omega}$ is a measure with a density with respect to the
  Lebesgue measure equal to the indicator of a union of intervals
  $\bigcup_{i=1}^s[a_i,b_i]$, with
\begin{equation*}
a_1<b_1<a_2<b_2<\cdots<a_s<b_s\quad \text{and}\quad \sum_{i=1}^{s}(b_i-a_i)=1.
\end{equation*}
Then the system of equations \eqref{es} has at most one pair of complex
(non real) conjugate solutions. Moreover,
\begin{itemize}
  \item if $b_i \neq x(1-\kappa)+\frac{\kappa r}{n}$, for all $1\leq i\leq s$,
    then for each fixed $x\in \RR$, \eqref{es} has at least $(m+1)s-1$ distinct
    real roots;
  \item if $b_i = x(1-\kappa)+\frac{\kappa r}{n}$ for some $i$ in
    $\{1,2,\ldots,s\}$, then for each fixed $x\in \RR$, \eqref{es} has at least
    $(m+1)s-2$ distinct real roots.
\end{itemize}
where $m$ is the number of distinct $c_i$s.
\end{lemma}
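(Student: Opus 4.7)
My plan is to reduce the system~\eqref{es} to a single polynomial equation in $z$ and count its real roots by combining the interlacement of Lemma~\ref{l32} and Remark~\ref{rem:interlaceH} with the monotonicity of Lemma~\ref{l33}. Since $\mathbf{m}_\omega$ is the indicator of a disjoint union of intervals, its Stieltjes transform is the elementary
\begin{equation*}
  \mathrm{St}_{\mathbf{m}_\omega}(t)=\sum_{i=1}^s\log\frac{t-a_i}{t-b_i}.
\end{equation*}
Exponentiating and substituting $t=t(z,\kappa,x)$ from~\eqref{tzkx} turns~\eqref{es} into the single equation $R(z)=z$, where
\begin{equation*}
  R(z):=\prod_{i=1}^s\frac{t(z,\kappa,x)-a_i}{t(z,\kappa,x)-b_i}.
\end{equation*}
Each factor $t(z,\kappa,x)-c$ shares the common denominator $D(z)=(z-1)\prod_j(z+\gamma_j)$ of degree $m+1$, so clearing denominators rewrites $R(z)=z$ as a polynomial equation $P(z)=0$ of degree $s(m+1)+1$ when no $b_i$ equals $x(1-\kappa)+\kappa r/n$, and of degree $s(m+1)$ otherwise (the leading coefficient of the corresponding numerator vanishes).

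Next I analyze the real zeros and poles of $R$. Lemma~\ref{l32} applied at each $c\in\{a_i,b_i\}$ produces $m+1$ distinct real preimages under $t$, one per branch of monotonicity of $t$. Remark~\ref{rem:interlaceH} combined with $a_1<b_1<\cdots<a_s<b_s$ forces these preimages within a single branch to alternate as pole, zero, pole, zero, $\ldots$, pole, zero. Because $R\to 1$ at every pole of $t$ (at $z=1$ and $z=-\gamma_j$), the alternation transitions correctly across branches, so the global pattern of zeros and poles of $R$ on the Riemann sphere is strictly alternating. Lemma~\ref{l33} then implies that $R$ is monotone increasing on each connected component of its real domain, running from $-\infty$ immediately to the right of any finite pole to $+\infty$ immediately to the left of the next one.

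The main counting step is a direct intermediate value argument. In the first case of the lemma, $R$ has $s(m+1)$ finite real poles which partition $\RR$ into $s(m+1)+1$ intervals, of which $s(m+1)-1$ are bounded between two consecutive finite poles; on each such bounded interval $R(z)-z$ runs from $-\infty$ to $+\infty$ and therefore vanishes at least once, producing $s(m+1)-1$ distinct real roots of $P$. In the second case, one pole of $R$ escapes to infinity, leaving $s(m+1)-1$ finite poles and $s(m+1)-2$ bounded middle intervals, hence $s(m+1)-2$ real roots by the same argument. Comparing with the total degree of $P$ leaves at most two non-real roots, which form at most a single complex conjugate pair since $P$ has real coefficients.

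The step I expect to require the most care is making the global alternation rigorous. One must check that at each crossing of a pole of $t$ (at $z=1$, $z=-\gamma_j$, and $\infty$ in the outer branch), the last critical point of $R$ before the crossing is a zero and the first after is a pole, so that the alternation is preserved. This reduces to a short sign analysis using that $t$ tends to $\pm\infty$ from opposite sides of each of its poles and that the rational function $y\mapsto\prod_i(y-a_i)/(y-b_i)$ is itself monotone decreasing on each component of its domain, by the second case of Lemma~\ref{l33}.
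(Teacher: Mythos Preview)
Your proposal is correct and follows essentially the same route as the paper: compute the Stieltjes transform explicitly, reduce \eqref{es} to the single equation $z=\prod_i H(z;x,a_i)/H(z;x,b_i)$, use Lemma~\ref{l32} and Remark~\ref{rem:interlaceH} to see that the zeros and poles of the right-hand side interlace, invoke Lemma~\ref{l33} for monotonicity on each interval between consecutive poles, and finish by an intermediate-value count compared against the polynomial degree. Your concern in the last paragraph about the alternation across the branches of $t$ is in fact resolved more cheaply than you sketch: the poles of $t$ at $z=1$ and $z=-\gamma_j$ appear with equal order $s$ in numerator and denominator and cancel in $R$, so the only zeros and poles of $R$ are the preimages of the $a_i$'s and $b_i$'s, whose global interlacement is exactly what Remark~\ref{rem:interlaceH} gives.
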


\begin{proof}
  The Stieltjes transform can be computed explicitly from the definition:
  \begin{equation*}
    \mathrm{St}_{\bm_{\omega}}(t)=\log\prod_{i=1}^{s}\frac{t-a_i}{t-b_j}.
  \end{equation*}
  We use the second expression from \eqref{tzkx} to substitute $t(z,\kappa,x)$ in
  the second equation of \eqref{es}, to get (after exponentiation)
  \begin{equation}
    z=\prod_{i=1}^s \frac{H(z;x,a_i)}{H(z;x,b_i)}.
    \label{gzx}
  \end{equation}
Let us suppose that none of the $a_i$'s or $b_i$'s is equal to
$x(1-\kappa)+\frac{\kappa r}{n}$.
The rational fractions $\prod H(z;x,a_i)$ and $\prod H(z;x,b_i)$ have the same
poles $m+1$ poles (of same order $s$) and according to Lemma~\ref{l32} have
$s(m+1)$ distinct real roots, which by Remark~\ref{rem:interlaceH}, interlace.
Therefore, the ratio:
\begin{equation*}
  \prod_{i=1}^s \frac{H(z;x,a_i)}{H(z;x,b_i)}
\end{equation*}
is a rational fraction of the form described in the hypotheses of
Lemma~\ref{l33}. Therefore, on each bounded interval between two consecutive
poles, by monotonicity, the graph of the rational fraction will cross the first
diagonal exactly once and there are $(m+1)s-1$ such intervals.

If (no $b_i$, and exactly) one $a_i$ is equal to $x(1-\kappa)+\frac{\kappa
r}{n}$, the same argument is
applicable. The only difference is that the rational fraction on the right hand
side of Equation~\ref{gzx} has only $(s-1)(m+1)+m=s(m+1)-1$ zeros, but still
$s(m+1)$ poles. Therefore we still get the same number $s(m+1)-1$ of
intersection with the first diagonal, one on each finite interval between two
consecutive poles.

If (no $a_i$ and exactly) one $b_i$ is equal to $x(1-\kappa)+\frac{\kappa
r}{n}$, then this time the rational fraction has $s(m+1)-1$ finite real poles. Therefore,
there is only $s(m+1)-2$ roots found by this approach between two successive
poles.
\end{proof}

\begin{remark}
  Note that when rewriting Equation~\ref{gzx} as a polynomial equation in $z$,
  it has degree
  \begin{equation*}
    \begin{cases}
      s(m+1)+1 & \text{when no $b_i$ equals $x(1-\kappa)+\frac{\kappa
      r}{n}$},\\
      s(m+1) & \text{when a $b_i$ is equal to $x(1-\kappa)+\frac{\kappa r}{n}$}.
    \end{cases}
  \end{equation*}
  Indeed, in the last case, the leading coefficients of the numerator and
  denominator of the rational fraction are distinct, thus there is no
  cancellation of the monomials of higher degree when multiplying both sides by
  the denominator. In both case, it is exactly the number of real roots we found
  plus 2. Which means that Equation~\ref{gzx}, and thus~Equation~\ref{es} has at
  most a pair of complex conjugated roots.
\end{remark}

When there are complex conjugated roots, the density of the counting measure is
given by their the normalized argument as stated in the Theorem~\ref{tm38}
below. In order to prove it, we need two additional lemmas, which are minor
adaptations of the ones of~\cite{bk} adapted to our context:

\begin{lemma}
  \label{l35}
  Let $x_0\in \RR$ be such that Equation~\eqref{es} has a pair of complex roots. Let
  $z_j(x_0)$ be the $j$th smallest real root of \eqref{es}. 
Let $\mathcal{U}$ be a small complex neighborhood of $x_0$. Let $z_j(x)$ be the
root of \eqref{es} approximating $z_j(x_0)$, when $x\rightarrow x_0$ (which is
well defined if $\mathcal{U}$ is small enough). 
Then the derivative of $z_j(x)$ with respect to $x$ at $x_0$ is non-negative.
Moreover, it is equal to 0 if and only if $z_j(x_0)=1$.
\end{lemma}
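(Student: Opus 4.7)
The plan is to differentiate the single algebraic equation defining a real root $z_j(x)$ implicitly in $x$, and then analyse the sign of the resulting expression case by case. After eliminating $t$ via~\eqref{tzkx} and using that $\bm_\omega$ has density equal to the indicator of $\bigcup_{i=1}^s[a_i,b_i]$, the system~\eqref{es} reduces to
\[
F(z,x):=z\prod_{i=1}^{s}\bigl(t(z,\kappa,x)-b_i\bigr)-\prod_{i=1}^{s}\bigl(t(z,\kappa,x)-a_i\bigr)=0.
\]
A direct implicit differentiation at a real root yields
\[
\frac{dz_j}{dx}(x_0)=\frac{(1-\kappa)\,P\,z_j}{z_j\,P\,Q-1},
\]
where $P:=\sum_i(b_i-a_i)/[(t-a_i)(t-b_i)]=-\mathrm{St}'_{\bm_\omega}(t)$ and $Q:=-\partial_z t=\kappa\bigl((z-1)^{-2}+n^{-1}\sum_j c_j(z+c_j)^{-2}\bigr)$. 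Writing $R(z):=\prod_i H(z;x,a_i)/\prod_i H(z;x,b_i)$, so that real roots of \eqref{es} are fixed points of $R$, a direct computation gives the useful identity $R'(z_j)=z_j P Q$ at a fixed point. Since $Q>0$ whenever $z_j\notin\{1,-c_1,\ldots,-c_m\}$, this reduces the lemma to showing that at every real root $z_j$, either $R'(z_j)\ge 1$ or $R'(z_j)\le 0$, with equality iff $z_j=1$.

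The degenerate point $z_j=1$ is handled directly. Using $H(z;x,y)(z-1)\to\kappa$ as $z\to 1$ together with $\sum_i(b_i-a_i)=1$ (the total mass of $\bm_\omega$), a Taylor expansion gives $R(z)=1+(z-1)/\kappa+O((z-1)^2)$, hence $R'(1)=1/\kappa>1$. Simultaneously $t(z,\kappa,x)\to\pm\infty$ as $z\to 1$, so $P\to 0$ and the numerator in the formula for $dz_j/dx$ vanishes; hence $dz_j/dx=0$ at $z_j=1$, in agreement with the fact that $z=1$ is always a root of \eqref{es} independent of $x$. Moreover, $R'(z_j)=0$ at a real root forces $z_j=0$ (impossible) or $P=0$, the latter implying $t=\pm\infty$ and hence $z_j=1$; thus $R'(z_j)\neq 0$ at any real $z_j\neq 1$.

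The substantive step, and the main obstacle, is excluding the ``bad'' interval $R'(z_j)\in(0,1)$ at real roots $z_j\neq 1$; this cannot be done by purely local reasoning and requires the hypothesis that \eqref{es} admits a pair of complex conjugate roots $z_\pm$. My plan is to factorise $F(z,x)=c(x)(z-z_+)(z-z_-)\prod_{k}(z-z_k(x))$ and observe that on the real line $(z-z_+)(z-z_-)=|z-z_+|^2>0$, so the sign pattern of $F$ on $\RR$ is controlled entirely by the real-root factors and the sign of $c(x)$. Combining this with the interlacement of zeros and poles of $R$ from Remark~\ref{rem:interlaceH} and the monotonicity of $R$ on each interval of definition from Lemma~\ref{l33}, a careful tracking of the sign of $R(z)-z$ across consecutive bounded intervals between poles of $R$ shows that at each real crossing $R(z_j)=z_j$ with $z_j\neq 1$, the slope $R'(z_j)$ must lie strictly outside $[0,1]$, yielding $dz_j/dx>0$ as desired.
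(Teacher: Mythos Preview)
Your setup is essentially the paper's: the paper writes $G(z,x)=\prod_i H(z;x,a_i)/\prod_i H(z;x,b_i)$ (your $R$), and deduces the lemma from the two inequalities $G'_x(z_j,x_0)\le 0$ and $G'_z(z_j,x_0)>1$, referring to \cite[Lemma~4.5]{bk}. In your notation these are exactly $z_jP\ge 0$ and $R'(z_j)>1$, and the implicit-differentiation formula $dz_j/dx=(1-\kappa)z_jP/(R'(z_j)-1)$ you derive is correct and equivalent to theirs. So the reduction is the same; what differs is how you justify $R'(z_j)\notin(0,1)$.

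There are, however, two genuine soft spots. First, the bookkeeping around $z_j=1$ is slightly off. At $z_j=1$ you correctly compute $R'(1)=1/\kappa>1$, so neither $R'=0$ nor $R'=1$ occurs there; the vanishing of $dz_j/dx$ comes from $Q\to\infty$ (equivalently the numerator $z_jP\to 0$), not from ``equality'' in your dichotomy. Also, the claim ``$P=0$ implies $t=\pm\infty$'' is false: $P(t)=-\mathrm{St}'_{\bm_\omega}(t)$ is a rational function of $t$ with many finite zeros when $t$ lies inside the support of $\bm_\omega$. These slips are harmless once you know $R'(z_j)>1$ strictly, but as written your ``$R'(z_j)=0\Rightarrow z_j=1$'' step is not justified.

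Second, and more important, your last paragraph is only a plan. The factorisation of $F$ is awkward (your $F$ still has poles at $z=1$ and $z=-\gamma_j$, so is not the polynomial you want to factor), and ``tracking signs of $R(z)-z$'' is asserted but not done. The clean way to finish, which is what underlies the argument in \cite{bk}, is this: by Remark~\ref{rem:interlaceH} the zeros and poles of $R$ interlace globally in the pattern $v_1<u_1<v_2<u_2<\cdots$ (pole, zero, pole, zero, \ldots), so Lemma~\ref{l33} gives that $R$ is increasing from $-\infty$ to $+\infty$ on every bounded interval between consecutive poles. There are $(m{+}1)s-1$ such intervals, and the complex-root hypothesis forces exactly $(m{+}1)s-1$ real roots; hence exactly one, simple, per interval. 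On each such interval $R(z)-z$ then changes sign exactly once, from negative to positive, which gives $R'(z_j)>1$. This simultaneously yields $G'_x<0$ at every $z_j\neq 1$ (since $z_jP=R'(z_j)/Q>0$ with $Q$ finite there), and $dz_j/dx=0$ only at $z_j=1$. Replacing your final paragraph by this argument closes the gap and makes your proof coincide with the paper's.
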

\begin{proof}Write
\begin{eqnarray*}
  G(z,x)=\prod_{i=1}^s \frac{H(z;x,a_i)}{H(z;x,b_i)}=\frac{\prod_{i=1}^s(x(1-\kappa)+\frac{\kappa z}{z-1}-\frac{\kappa z}{n}\sum_{i=1}^{m}\frac{1}{z+c_i}-a_i)}{\prod_{i=1}^s(x(1-\kappa)+\frac{\kappa z}{z-1}-\frac{\kappa z}{n}\sum_{i=1}^{m}\frac{1}{z+c_i}-b_i)},
\end{eqnarray*}
then follow the same argument as in the proof of Lemma 4.5 of~\cite{bk} to show $G'_x(z(x_0),x_0)\leq 0$ and $G'_z(z(x_0),x_0)> 1$. Then the lemma follows.
\end{proof}

\begin{lemma}
  \label{l37}
  Let $\mathbf{m}_{\omega}$ be a measure as described in Lemma~\ref{l34}.
Let
\begin{equation*}
\mathbf{Z}^{\kappa}(x)=\mathrm{exp}(\mathrm{St}_{\mathbf{m}^{\kappa}}(x)),
\end{equation*}
where $x\in \mathbb{C}\setminus \mathrm{Support}(\mathbf{m}^{\kappa})$.
Then $\mathbf{Z}^{\kappa}(x)$ is a solution of \eqref{es}.
Let $x_0$ be such that \eqref{es} has exactly one pair of non-real solutions.
Then $\lim_{\epsilon\rightarrow 0+}\mathbf{Z}^{\kappa}(x_0+i\epsilon)$ coincides
with the unique non-real root with negative imaginary part of \eqref{es} when
$x=x_0$.
\end{lemma}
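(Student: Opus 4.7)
The plan has two parts. For the first assertion, that $\mathbf{Z}^{\kappa}(x)$ solves \eqref{es}, I would anchor the argument at infinity and propagate by the identity principle. Near $\infty$, $\mathrm{St}_{\mathbf{m}^\kappa}$ admits a convergent Laurent expansion whose coefficients are the moments of $\mathbf{m}^\kappa$ from Proposition~\ref{p213}; identity~\eqref{eq:solveSt} identifies $\mathbf{Z}^\kappa$ with the composite inverse $z^\kappa$, so by construction of $z^\kappa$, the pair $\bigl(z^\kappa(x),\mathrm{St}^{(-1)}_{\mathbf{m}_\omega}(\log z^\kappa(x))\bigr)$ satisfies \eqref{es} on a neighborhood of $\infty$. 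Clearing denominators turns \eqref{es} into a polynomial identity in $z,t$ with coefficients analytic in $x$; since both sides are analytic on the connected domain $\mathbb{C}\setminus\mathrm{Support}(\mathbf{m}^\kappa)$, the identity principle extends the identity throughout.

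For the boundary behavior, I would invoke the Stieltjes--Perron inversion formula: since $\mathbf{m}^\kappa$ is absolutely continuous with density $\rho^\kappa\in[0,1]$,
\begin{equation*}
  \lim_{\epsilon\to 0^+}\mathrm{Im}\,\mathrm{St}_{\mathbf{m}^\kappa}(x_0+i\epsilon)=-\pi\rho^\kappa(x_0).
\end{equation*}
Together with the bound $|\mathrm{Im}\,\mathrm{St}_{\mathbf{m}^\kappa}(x+iy)|\le\pi$ (which follows from $\rho^\kappa\le 1$), this shows that the boundary limit $z_+:=\lim_{\epsilon\to 0^+}\mathbf{Z}^\kappa(x_0+i\epsilon)$ exists and has argument in $[-\pi,0]$, hence $\mathrm{Im}(z_+)\le 0$. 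By continuity of the polynomial form of \eqref{es} in $x$, $z_+$ solves \eqref{es} at $x=x_0$. The Schwarz reflection $\mathrm{St}_{\mathbf{m}^\kappa}(\bar x)=\overline{\mathrm{St}_{\mathbf{m}^\kappa}(x)}$ then implies that the boundary limit from the lower half plane equals $\overline{z_+}$, so the two boundary limits form a complex-conjugate pair of roots of \eqref{es}.

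Under the lemma's hypothesis that \eqref{es} has exactly one conjugate pair of non-real roots at $x_0$, the root $z_+$ is either real or the non-real root in the lower half plane. Ruling out the real case is the main obstacle of the proof. I would proceed by tracking $\mathbf{Z}^\kappa(x+i0)$ continuously in $x$ along the real line from $+\infty$ (where $\mathbf{Z}^\kappa\to 1$, the unique real root of \eqref{es} near $1$) down to $x_0$, using the monotonicity of real roots established in Lemmas~\ref{l33} and~\ref{l35} to enumerate which real root the branch would track if it stayed real. The key geometric fact, underlying the liquid/frozen dichotomy implicit in Lemma~\ref{l34}, is that the non-real pair is born from a real double root at the edge of the liquid region, and the analytic branch selected by $\mathbf{Z}^\kappa(x+i0)$ is precisely the one that bifurcates into the lower half plane. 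Combined with $\mathrm{Im}(z_+)\le 0$, this forces $z_+$ to coincide with the non-real root having negative imaginary part whenever the conjugate pair is present.
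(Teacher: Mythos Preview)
Your proposal is correct and follows essentially the same route as the paper. The paper's own proof is extremely brief: it cites \eqref{eq:solveSt} for the first assertion and defers to ``the analysis on Page 24 of~\cite{bk}, by applying Lemma~\ref{l35}'' for the second; your outline of the Stieltjes--Perron bound together with the branch-tracking argument via the monotonicity of real roots in Lemma~\ref{l35} is precisely a fleshed-out version of what that citation contains.
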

\begin{proof}
  The fact that $\mathbf{Z}^{\kappa}(x)$ solves \eqref{es} follows
  Equation~\eqref{eq:solveSt}. The limit $\lim_{\epsilon\rightarrow
  0+}\mathbf{Z}^{\kappa}(x_0+i\epsilon)$ has strictly negative imaginary part
  follows from the analysis on Page 24 of~\cite{bk}, by applying Lemma~\ref{l35}.
\end{proof}

Here is the main theorem to be proved in this section. 

\begin{theorem}\label{tm38}The density of $\mathbf{m}^k$ is given by
\begin{eqnarray*}
d\mathbf{m}^{\kappa}(x)=\frac{1}{\pi}\mathrm{Arg}(\mathbf{z}_+^{\kappa}(x)),
\end{eqnarray*}
where $\mathbf{z}_+^{\kappa}(x)$ is the unique complex root of the system of equations \eqref{es}
which lies in the upper half plane. If such a complex root does not exist, the density is equal to 0 or 1.
\end{theorem}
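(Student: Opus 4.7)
The plan is to derive the density from the boundary values of the Stieltjes transform via the Stieltjes inversion formula, using the already-established link between $\mathrm{St}_{\bm^\kappa}$ and $\mathbf{Z}^\kappa(x)$.

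First I would recall the classical Stieltjes inversion formula: if $\bm$ is a compactly supported measure on $\RR$ with Stieltjes transform $\mathrm{St}_{\bm}$ as defined in~\eqref{stmt}, and $\bm$ is absolutely continuous in a neighborhood of $x_0$, then the density at $x_0$ is
\begin{equation*}
  \frac{d\bm}{dx}(x_0)= -\frac{1}{\pi}\lim_{\epsilon\to 0^+}\mathrm{Im}\bigl(\mathrm{St}_{\bm}(x_0+i\epsilon)\bigr).
\end{equation*}
Applied to $\bm^\kappa$ and combined with the identity $\mathrm{St}_{\mathbf{m}^\kappa}(x)=\log \mathbf{Z}^\kappa(x)$ from~\eqref{eq:solveSt}, this yields
\begin{equation*}
  \frac{d\bm^\kappa}{dx}(x_0)=-\frac{1}{\pi}\lim_{\epsilon\to 0^+}\mathrm{Arg}\bigl(\mathbf{Z}^\kappa(x_0+i\epsilon)\bigr),
\end{equation*}
using $\mathrm{Im}\log w=\mathrm{Arg}(w)$ for the principal branch.

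Next I would invoke Lemma~\ref{l37}: at any $x_0$ for which the system~\eqref{es} has a pair of complex conjugate roots, $\lim_{\epsilon\to 0^+}\mathbf{Z}^\kappa(x_0+i\epsilon)$ coincides with the unique root of \eqref{es} lying in the \emph{lower} half plane, call it $\mathbf{z}_-^\kappa(x_0)$. By Lemma~\ref{l34} and the remark following it, the two non-real roots are complex conjugates, so $\mathbf{z}_-^\kappa(x_0)=\overline{\mathbf{z}_+^\kappa(x_0)}$, whence $\mathrm{Arg}\bigl(\mathbf{z}_-^\kappa(x_0)\bigr)=-\mathrm{Arg}\bigl(\mathbf{z}_+^\kappa(x_0)\bigr)$. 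Substituting gives
\begin{equation*}
  \frac{d\bm^\kappa}{dx}(x_0)=\frac{1}{\pi}\mathrm{Arg}\bigl(\mathbf{z}_+^\kappa(x_0)\bigr),
\end{equation*}
which is the asserted formula.

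It remains to address points $x_0$ where \eqref{es} has no complex root, i.e., all roots are real. On any open interval where this happens, $\mathbf{Z}^\kappa(x)$ extends continuously to a real-valued function from the upper half plane, so $\mathrm{Im}\bigl(\mathrm{St}_{\bm^\kappa}(x_0+i\epsilon)\bigr)\to 0$; since $\mathrm{Arg}$ of a positive real is $0$ and of a negative real is $\pi$, the density is then $0$ or $1$, which matches the frozen/fully-packed regions. The most delicate point is justifying that the limit in Lemma~\ref{l37} \emph{does} have strictly negative imaginary part exactly when \eqref{es} admits a complex pair, and is real otherwise; this follows from the monotonicity analysis (Lemma~\ref{l35}) together with the polynomial degree count in the remark after Lemma~\ref{l34}, ruling out any other possibility for the limit. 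Once this dichotomy is in place, the two cases combine to give the full statement of the theorem.
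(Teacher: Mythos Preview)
Your argument is essentially the same as the paper's for the special case where $\bm_\omega$ has density equal to the indicator of a finite union of intervals, and the chain of reasoning (Stieltjes inversion $\to$ identity~\eqref{eq:solveSt} $\to$ Lemma~\ref{l37} $\to$ conjugation to pass from $\mathbf{z}_-^\kappa$ to $\mathbf{z}_+^\kappa$) is exactly right there.

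However, there is a gap: both Lemma~\ref{l34} and Lemma~\ref{l37}, which you invoke, are stated and proved only under the hypothesis that $\bm_\omega$ is of this special ``union of intervals'' form. The theorem, on the other hand, is asserted for general $\bm_\omega$ arising as the weak limit of counting measures of a regular sequence of signatures (the standing hypotheses of Section~\ref{s3}). Your proposal does not address this general case; in particular, without Lemma~\ref{l34} you do not even know that the system~\eqref{es} has at most one pair of complex conjugate roots, so the phrase ``the unique complex root'' in the theorem statement requires justification. The paper closes this gap by an approximation argument: any admissible $\bm_\omega$ is a weak limit of measures of the special form, and one passes to the limit in the density formula. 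You should add this step (and check that the relevant quantities---the roots of~\eqref{es} and hence the density---behave well under this weak approximation).
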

\begin{proof}
  First we assume that $\mathbf{m}_{\omega}$ is a uniform measure with density
  one on a sequence of intervals, as described in Lemma~\ref{l34}. In this case,
  the theorem follows from Lemma~\ref{l37}, and from the classical fact about
  Stieltjes transform that if a measure $\mu$ has a continuous density $f$ with
  respect to the Lebesgue measure then one can reconstruct $f$ by the following
  identity (See e.g.~\cite[Lemma~4.2]{bk}):
  \begin{equation}
    \label{l36}
    f(x)=-\lim_{\epsilon\rightarrow 0+}\frac{1}{\pi}\Im(\mathrm{St}_{\mu}(x+i\epsilon)),
  \end{equation}
  where $\Im$ denote the imaginary part of a complex number.

In the general case of a measure $\mathbf{m}_{\omega}$, there exists a sequence
of measures $\{\mu_i\}$ converging weakly to $\mathbf{m}_{\omega}$, where each
$\mu_i$ is a measure with the form as described in Lemma~\ref{l34}. Passing to
the limit we obtain the theorem.
\end{proof}

\section{Frozen Boundary}\label{sec:fb}

In this section, we study the \emph{frozen boundary}, which is the curve
separating the ``liquid region'' and the
``frozen region'' in the scaling limit of dimer models on a
contracting square-hexagon lattice. We prove an explicit formula of the frozen
boundary under the assumption that each segment of  the boundary row of the
square-hexagon lattice grows linearly with respect the dimension of the graph.
We then prove that the frozen boundary is a curve of a certain type, more
precisely, a cloud curve whose class depends on the size of the fundamental
domain and the number of segments of the boundary row. Similar results for dimer
configurations on the square grid or the hexagonal lattice with uniform measure
or a $q$-deformation of the uniform measure was obtained in~\cite{KO07,bk}.

We consider special sequences of contracting square-hexagon lattices 
$\mathcal{R}(\Omega,\check{a})$ with
\begin{multline}
\label{cc1}
\Omega=
(A_1,A_1+1,\ldots,B_1-1,B_1,A_2,A_2+1,\ldots,
B_2,\ldots,A_s,A_s+1,\ldots,
B_s),
\end{multline}
where
\begin{eqnarray*}
\sum_{i=1}^{s}(B_i-A_i+1)=N.
\end{eqnarray*}
In other words, $\Omega$ is an $N$-tuple of integers whose entries take values
of all the integers in $\cup_{i=1}^s[A_i,B_i]$.

We shall consider $\Omega(N)$
changing with $N$, and discuss the asymptotics of the frozen boundary as
$N\rightarrow\infty$. 

Suppose that for each $N$, $\Omega(N)$ has corresponding $A_i(N)$, $B_i(N)$, for
a fixed $s$.
Assume also that $A_i(N),B_i(N),\Omega(N)_N-N$ have the following asymptotic
growth:
\begin{equation}
\label{cc2}
A_i(N)=a_i N+o(N),\ \ B_i(N)=b_iN+o(N),\  \Omega(N)_N-N=\mu N+o(N)
\end{equation}
where $a_1<b_1<\ldots<a_s<b_s$ are new parameters such that $\sum_{i=1}^{s}(b_i-a_i)=1$. 

\begin{definition}
  \label{df41}
  Let $\mathcal{L}$ be the set of $(\chi,\kappa)$ inside $\mathcal{R}$ such that
  the density $d\mathbf{m}^{\kappa}\left(\frac{\chi}{1-\kappa}\right)$ is not
  equal to 0 or 1. Then $\mathcal{L}$ is called the \emph{liquid region}. Its boundary
  $\partial \mathcal{L}$ is called the \emph{frozen boundary}.
\end{definition}

\begin{theorem}
  \label{fb}
  The frozen boundary $\partial\mathcal{L}$ of the limit of a contracting square-hexagon lattice
  satisfying \eqref{cc1}, \eqref{cc2} is a rational algebraic curve $C$ with an
  explicit parametrization $(\chi(t),\kappa(t))$ defined as follows:
  \begin{equation*}
    \chi(t)=t-\frac{J(t)}{J'(t)},\quad
    \kappa(t)=\frac{1}{J'(t)},
  \end{equation*}
  where
  \begin{multline}
    J(t)=\Phi_s(t)\left[\frac{1}{\Phi_s(t)-1}-\frac{1}{n}\sum_{i\in
    I_2\cap\{1,2,\ldots,n\}}\frac{1}{\Phi_s(t)+c_i}\right]=\\
    \frac{r}{n}+\frac{1}{\Phi_s(t)-1} + \frac{1}{n}\sum_{j=1}^m \frac{n_j
    \gamma_j}{\Phi_s(t)+\gamma_j}
    \label{djt}
  \end{multline}
  and
  \begin{equation*}
    \Phi_s(t)=\frac{(t-a_1)(t-a_2)\cdots(t-a_s)}{(t-b_1)(t-b_2)\cdots(t-b_s)}
  \end{equation*}
\end{theorem}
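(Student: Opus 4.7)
My plan is to combine the explicit Stieltjes transform of $\mathbf{m}_{\omega}$ (available because of the special form~\eqref{cc1}, \eqref{cc2}) with the characterisation of the density of $\mathbf{m}^{\kappa}$ from Theorem~\ref{tm38}: the frozen boundary is exactly the locus where the unique non-real root $\mathbf{z}_+^{\kappa}(x)$ of the system~\eqref{es} collides with the real axis, which is a tangency condition, i.e.\ the algebraic equation governing the roots acquires a double real root.

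First, under the assumptions~\eqref{cc1}--\eqref{cc2}, the limiting measure $\mathbf{m}_{\omega}$ is the uniform measure on $\bigcup_{i=1}^{s}[a_i,b_i]$, so a direct computation from definition~\eqref{stmt} gives
\begin{equation*}
\mathrm{St}_{\mathbf{m}_{\omega}}(t)=\log\Phi_s(t),\qquad \Phi_s(t)=\prod_{i=1}^{s}\frac{t-a_i}{t-b_i}.
\end{equation*}
Consequently the second equation of~\eqref{es} is equivalent to $z=\Phi_s(t)$. Substituting this into the first equation and using the second form in~\eqref{tzkx}, together with the identity $x=\chi/(1-\kappa)$ coming from the rescaling in Section~\ref{subsec:height}, one checks that the system~\eqref{es} reduces to the single equation
\begin{equation*}
\chi=t-\kappa J(t),
\end{equation*}
where $J(t)$ is precisely the rational function defined in~\eqref{djt}. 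Here the equivalence of the two expressions for $J$ will follow from a short algebraic manipulation (writing $\frac{z}{z-1}=1+\frac{1}{z-1}$ and $\frac{z}{z+c_i}=1-\frac{c_i}{z+c_i}$, so that the constant $1-\frac{|I_2|}{n}=\frac{r}{n}$ appears).

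Second, I will translate the frozen boundary condition into a double root condition on this equation. By Theorem~\ref{tm38} and Lemma~\ref{l37}, the liquid region $\mathcal{L}$ corresponds to points $(\chi,\kappa)$ for which the equation $\chi=t-\kappa J(t)$ has a pair of complex conjugate solutions $t=t_\pm\in\mathbb{C}\setminus\mathbb{R}$; conversely, $\partial\mathcal{L}$ is the boundary of that open set, where the two complex conjugate roots collide on the real axis. At such a point the real function $t\mapsto t-\kappa J(t)-\chi$ has a real double root, i.e.\
\begin{equation*}
1-\kappa J'(t)=0.
\end{equation*}
Solving this for $\kappa$ and re-injecting into $\chi=t-\kappa J(t)$ yields exactly the parametrisation
\begin{equation*}
\kappa(t)=\frac{1}{J'(t)},\qquad \chi(t)=t-\frac{J(t)}{J'(t)},
\end{equation*}
asserted in the theorem. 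The curve is rational because $J$ and $\Phi_s$ are rational in $t$; eliminating $t$ between these two rational expressions produces a polynomial relation in $(\chi,\kappa)$, so the curve $C$ is algebraic.

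The main technical point I expect to grind on is the justification that the tangency condition above really characterises the whole frozen boundary. Concretely, I need to check: (i) that on $\mathcal{L}$ the pair of complex conjugate roots supplied by Lemma~\ref{l34} is indeed the one whose argument computes the density via Theorem~\ref{tm38}, so that the boundary is reached precisely when these two roots merge; (ii) that as $(\chi,\kappa)$ varies, no other mechanism (e.g.\ roots escaping to infinity, or coincidences with the poles $1$ or $-\gamma_j$ of $H$) produces additional boundary pieces, which is why Remark~\ref{rem:interlaceH} and the interlacing/monotonicity content of Lemmas~\ref{l32}--\ref{l33} are essential. Once these points are addressed, the parametrisation $(\chi(t),\kappa(t))$ traces out $\partial\mathcal{L}$ as $t$ ranges over the appropriate real interval, completing the proof.
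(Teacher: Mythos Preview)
Your proposal is correct and follows essentially the same route as the paper's own proof: compute $\mathrm{St}_{\mathbf{m}_\omega}(t)=\log\Phi_s(t)$, substitute $z=\Phi_s(t)$ to collapse~\eqref{es} to the single equation $\chi=t-\kappa J(t)$, and then impose the double-root condition $1=\kappa J'(t)$ to obtain the parametrisation. The paper's proof is terser---it simply says ``according to the discussions in Section~\ref{s3}'' for the double-root characterisation---whereas you spell out the justification via Theorem~\ref{tm38} and Lemma~\ref{l37} and flag the interlacing lemmas needed to rule out spurious boundary pieces, but the argument is the same.
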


\begin{proof}
  According to the discussions in Section~\ref{s3}, the frozen boundary is given
  by the condition that the following equation in the unknown $z$ has a double root:
  \begin{equation}
    G\left(z,\frac{\chi}{1-\kappa}\right)=z;\label{gzck}
  \end{equation}
  where 
  \begin{equation}
    G\left(z,\frac{\chi}{1-\kappa}\right)
    =\prod_{i=1}^s
    \frac{H(z;\frac{\chi}{1-\kappa},a_i)}{H(z;\frac{\chi}{1-\kappa},b_i)},
  \end{equation}
  and $H(z;x,y)$ is defined by Equation~\eqref{eq:defHzxy}.

  We can also rewrite the system of equations~\eqref{es} as follows:
  \begin{equation*}
      \begin{cases}
	\Phi_s(t)&=z;\\
	(1-\kappa)F_\kappa(z) &=
	t-\kappa\Bigl(
	\underbrace{%
	  \frac{r}{n}+\frac{1}{z-1}+\sum_{j=1}^m
      \frac{n_j\gamma_j}{z+\gamma_j}}_{J(t)}\Bigr) 
		 =\chi.
      \end{cases}
    \end{equation*}
    We plug the expression of $z$ from the first equation into the second
    equation, and note that the condition that the resulting equation has a
    double root is equivalent to the following system of equations
    \begin{equation*}
      \begin{cases}
	\chi=t-\kappa J(t),\\
	1=\kappa J'(t).
      \end{cases}
    \end{equation*}
    where $J(t)$ is defined by \eqref{djt}. Then the parametrization of the
    frozen boundary follows.
  \end{proof}

  The algebraic curve we obtain for the frozen boundary has special properties,
  that can be read from its dual curve, 
  as described in the definition and the proposition below:

\begin{definition}[\cite{KO07}]
  \label{df43}
  A degree $d$ real algebraic curve $C\subset \RR P^2$ is \emph{winding} if the
  following two conditions hold:
  \begin{enumerate}
    \item it intersects every line $L\subset \RR P^2$ in at least $d-2$ points
      counting multiplicity,
    \item there exists a point $p_0\in \RR P^2$ called center, such that every
      line through $p_0$ intersects $C$ in $d$ points.
  \end{enumerate}
  The dual curve of a winding curve is called a \emph{cloud curve}.
\end{definition}

\begin{proposition}
  \label{prop:cloud1}
  The frozen boundary $C$ is a cloud curve of class $(m+1)s$, where $s$ is the
  number of segments, and $m$
  is the number of distinct values of $c_i=\frac{1}{y_i}$ in one period.
  Moreover, the curve $C$ is tangent to the following lines in the
  $(\chi,\kappa)$ coordinates:
\begin{equation*}
  \mathcal{L}=\{\chi=a_i|i=1,\ldots,s\}\cup\left\{\chi+ r\kappa-b_i=0|1\leq i\leq s\right\}\cup\{\kappa=0\}\cup\{\kappa=1\}.
\end{equation*}
\end{proposition}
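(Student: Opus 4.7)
The plan is to dualize the parametric curve $C=(\chi(t),\kappa(t))$ and verify the winding-curve criterion of Definition~\ref{df43}, following the strategy of~\cite{KO07,bk}. The first step is a direct differentiation: from $\chi(t)=t-J(t)/J'(t)$ and $\kappa(t)=1/J'(t)$ one gets $(\chi'(t),\kappa'(t))\propto(J(t),-1)$, so the tangent line to $C$ at parameter $t$ has the strikingly simple equation
\begin{equation*}
\chi+J(t)\,\kappa=t,
\end{equation*}
as is readily verified by substituting $\chi(t)$ and $\kappa(t)$. Thus the dual curve $C^\vee$ admits the rational parametrization $t\mapsto(J(t),t)$ in the affine chart where a line $\chi+p\kappa=q$ is encoded by $(p,q)$.

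The second step computes the class of $C$, i.e., the degree of $C^\vee$. A generic point $(\chi_0,\kappa_0)$ lies on a tangent line iff $J(t)=(t-\chi_0)/\kappa_0$. Using the two representations of $J(t)$ in~\eqref{djt} and composing the rational function $J(u)$ of degree $m+1$ in $u$ with $u=\Phi_s(t)$ of degree $s$ in $t$, a standard degree count shows that $J(t)=N_J(t)/D_J(t)$ with $\deg N_J=(m+1)s$ and $\deg D_J=(m+1)s-1$. Clearing denominators in the tangency equation yields a polynomial of degree $(m+1)s$ in $t$, so generically $(m+1)s$ tangent lines pass through a point, confirming the claimed class.

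The third step checks that $C^\vee$ is winding in the sense of Definition~\ref{df43}. For the intersection property, I fix a generic dual line and rewrite $J(t)-(t-\chi_0)/\kappa_0$ as a single rational fraction; the interlacement of poles $1,-\gamma_1,\dots,-\gamma_m$ of $J(u)$ (cf.\ Lemma~\ref{l32} and Remark~\ref{rem:interlaceH}), combined with the monotonicity of $\Phi_s$ between its poles $b_i$, allows the application of Lemma~\ref{l33} on each bounded interval between consecutive real poles of the resulting rational function in $t$. This produces at least $(m+1)s-2$ real solutions of the tangency equation, hence at least that many real intersections of the generic dual line with $C^\vee$. For the existence of a center, I take the pencil of lines through an appropriate point at infinity: the corresponding tangency equation degenerates to one whose roots are precisely the $(m+1)s$ real poles of $J$, giving $(m+1)s$ real intersections.

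Finally, the tangent lines at the distinguished parameter values are read off by computing $J$ at the relevant limits. At $t=a_i$, $\Phi_s(a_i)=0$, and using the second expression for $J$ in~\eqref{djt} together with $r=n-|I_2|$ and $\sum_j n_j=|I_2|$, one gets $J(a_i)=0$, so the tangent line becomes $\chi=a_i$. At $t=b_i$, $\Phi_s(b_i)=\infty$, so $J(b_i)=r/n$, giving the claimed line (the factor $r$ vs.~$r/n$ being absorbed into the scaling of $\kappa$). The horizontal tangents $\kappa=0$ and $\kappa=1$ correspond to $J'(t)\to\infty$ and $J'(t)=1$, occurring at the real zeros of $\Phi_s(t)-1$ and at isolated interior values, which complete the list. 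The principal obstacle is the verification of the winding property, since it requires careful bookkeeping of the real interlacement of zeros and poles of $J(t)$ across all intervals of definition and handling of the intersections at infinity, with Lemmas~\ref{l32} and~\ref{l33} serving as the main technical tools.
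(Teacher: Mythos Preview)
Your approach is essentially the paper's: dualize the parametrized curve, compute the degree of the dual, and verify the winding criterion of Definition~\ref{df43} via the interlacement structure coming from Lemmas~\ref{l32}--\ref{l33}. Your parametrization of $C^\vee$ as $t\mapsto(J(t),t)$, obtained from the tangent-line equation $\chi+J(t)\kappa=t$, is in fact a bit cleaner than the paper's $t\mapsto(-1/t,-J(t)/t)$; the two are projectively equivalent and give the same intersection equations. The degree count, the interlacement argument for condition~(1), and the identification of the tangent lines at $t=a_i$ and $t=b_i$ are all correct and match the paper (you also correctly spot that the stated line $\chi+r\kappa=b_i$ should read $\chi+(r/n)\kappa=b_i$).

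There is one genuine slip, in your center argument for condition~(2). You write that for the pencil through a point at infinity ``the corresponding tangency equation degenerates to one whose roots are precisely the $(m+1)s$ real poles of $J$''. This is not right: $J$ has only $(m+1)s-1$ finite poles, and the lines through the vertical point at infinity $[0{:}1{:}0]$ in your $(p,q)$-chart are the lines $p=c$, whose intersection with $(J(t),t)$ is governed by $J(t)=c$, not by the poles of $J$. What you need (and what does work) is that for generic $c$ the equation $J(t)=c$ has $(m+1)s$ real roots: first solve $g(u)=c$ for $u=\Phi_s(t)$, which by the argument of Lemma~\ref{l32} has $m+1$ real solutions in $u$, then pull each back through $\Phi_s$, which by the interlacement $a_1<b_1<\cdots<a_s<b_s$ and Lemma~\ref{l33} is an $s$-fold real cover of $\RR$. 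The paper instead exhibits a finite center by taking a point whose first coordinate corresponds to a parameter $t_0$ lying outside the convex hull of the zeros of $J$; either route closes the gap.

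A smaller imprecision: the tangency to $\kappa=1$ does not occur at ``isolated interior values'' where $J'(t)=1$, but at $t=\infty$. Since $\sum_i(b_i-a_i)=1$ forces $\Phi_s(t)-1\sim 1/t$, one gets $J(t)\sim t$ at infinity, and the projective limit of the tangent line $\chi+J(t)\kappa=t$ is $\kappa=1$. The tangencies to $\kappa=0$ are at the $(m+1)s-1$ finite poles of $J$, as the paper also records.
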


So the proposition states that the dual curve is winding of degree $(m+1)s$, and
passes through the points $(-\frac{1}{a_i},0)$ and
$(-\frac{1}{b_i},-\frac{r}{nb_i})$.

The result about the frozen boundary being a cloud curve extends the result of
Kenyon and Okounkov~\cite{KO07} for the uniform measure of rhombus tilings of
polygonal domains, and Bufetov and Knizel~\cite{bk} for Aztec rectangles.

\begin{proof}
  We recall that the class of a curve is the degree of its dual curve. So we
  need to show that the dual curve $C^{\vee}$ has degree $(m+1)s$ and is
  winding.

  We apply the classical formula to obtain from a parametrization $(x(t),
  y(t))$ of the curve $C$ defining the frozen boundary, another
  one for its dual $C^\vee$, 
    $(x^\vee(t),y^\vee(t))$:
    \begin{equation*}
      x^{\vee}=\frac{y'}{yx'-xy'},\quad
      y^{\vee}=-\frac{x'}{yx'-xy'}.
    \end{equation*}
    and obtain that 
    the dual curve $C^{\vee}$ is
  given in the following parametric form:
  \begin{equation}
    C^{\vee}=\left\{\left(-\frac{1}{t},-\frac{J(t)}{t}\right)\ ;\
    t\in\mathbb{C}\cup\{\infty\}\right\}.
    \label{dual}
  \end{equation}
  from which we can read that its degree is $(m+1)s$. To show that $C^{\vee}$ is
  winding, we need to look at real intersections with straight lines.

  First, from Equation~\eqref{dual}, one sees that the first coordinate $x^\vee$ of
  the dual curve $C^\vee$ and the parameter $t$ are linked by the simple
  relation $x^\vee t=-1$.
  
  Using this relation to eliminate $t$ from the expression of the second
  coordinate, we obtain that the points $(x^\vee,t)$ on the dual curve satisfy the
  following implicit equation:
  \begin{equation*}
    y^{\vee}=x^{\vee} J(\frac{-1}{x^\vee}).
  \end{equation*}

  The points of intersection $(x^\vee(t),y^\vee(t))$ of the dual curve with a straight line
  of the form $y^\vee=cx^\vee+d$ have a parameter $t$ satisfying:
  \begin{equation}
    (c-dt)=J(t)
    \label{eq:intersect_dual}
  \end{equation}
  but since $J$ is the composition of two rational fractions $\Phi_s$ and an
  affine transformation of $H$, with interlacing poles and zeros, with degrees
  $s$ and $m+1$ respectively, the exact same argument as in Lemma~\ref{l35}
  (but with the role of $s$ and $(m+1)$ exchanged)
  shows that Equation~\ref{eq:intersect_dual} has at least $(m+1)s-2$
  distinct real solutions, yielding $(m+1)s-2$ points of intersections for the
  dual curve and the line $y^\vee=cx^\vee+d$.
  Moreover, if $t_0$ doesn't lie in a compact interval containing all the zeros of
  $J$, then any non vertical straight line passing through $(t_0,0)$ will have
  $(m+1)s-1$ intersections with the graph of $J$. See Figure~\ref{fig:plotJ} for
  the graph of $J(t)$.  
  This means that for $x_0^\vee$ in some
  closed interval, there are at least
  $(m+1)s-1$ real intersections of the dual curve with a line $y^\vee=cx^\vee+d$ passing
  through $(x_0^\vee,y_0^\vee)$, thus exactly $(m+1)s$ real intersections, since there
  cannot be a single complex one.
  Such points $(x_0^\vee,y_0^\vee)$ are candidates to be the center of the dual curve.

  \begin{figure}[h]
    \centering
    \includegraphics[width=8cm]{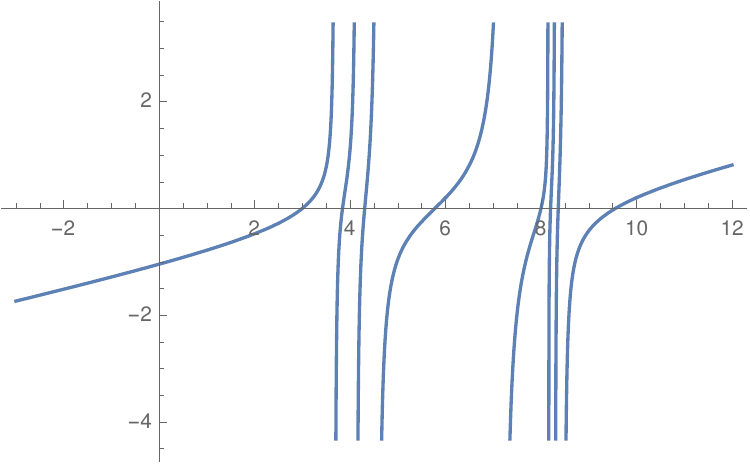}
    \caption{A plot of the graph of the function $J$, for the parameters $r=1$,
      $n=5$, $s=2$, $m=3$, $(n_1,n_2,n_3)=(2,1,1)$,
      $(\gamma_1,\gamma_2,\gamma_3)=(0.7,0.4,0.2)$ and $(a_1,a_2)=(3,8)$,
    $(b_1,b_2)=(6,10)$.}
    \label{fig:plotJ}
  \end{figure}

  To consider the vertical lines $x^\vee=d$, we rewrite the equations in homogeneous
  coordinates $[x^\vee:y^\vee:z^\vee]$ and get that the line $x^\vee=dz^\vee$ intersects the curve at the
  point $[0:1:0]$ with multiplicity $(m+1)s-1$, so again, by the same argument
  as above, $(m+1)s$ real intersections. The case of the line $z^\vee=0$ is similar.

Recall that each point on the dual curve $C^{\vee}$ corresponds to a tangent
line of $C$. The points $(x,y)=\left(-\frac{1}{a_i},0\right)$ belong to
$C^\vee$. Indeed, they correspond to $t=a_i$ which is a zero of $\Phi_s$, and
thus of $J$, by \eqref{djt}.
Similarly, $(x,y)=\left(-\frac{1}{b_i},-\frac{r}{n b_i}\right)$
are also points of $C^{\vee}$, since they correspond to $t=b_i$ which is a pole
of $\Phi_s$, and thus $J(b_i)=\frac{r}{n}$, again by \eqref{djt}.

These families of points correspond to the families of tangent lines
$\left(\{\chi=a_i\}\right)_{1\leq i\leq s}$ and
$\left(\{\chi+\frac{r}{n}\kappa-b_i=0\}\right)_{1\leq i\leq s}$.
The former are vertical lines passing through $(a_i,0)$, and the latter are lines
passing through $(b_i,0)$ with slope $-\frac{n}{r}$.

The point
$(x^\vee,y^\vee)=(0,-1)\in C^{\vee}$ corresponds to the tangent line $\kappa=1$ of $C$.
By
the parametrization of $C$ given in Theorem~\ref{fb}, the roots of 
\begin{equation*}
\left(\Phi_s\left(-\frac{1}{x^\vee}\right)-1\right)\prod_{i=1}^{m}\left(\Phi_s\left(-\frac{1}{x^\vee}\right)+c_i\right)
\end{equation*}
correspond to $(m+1)s-1$ points of tangency of $C$ with the line $\kappa=0$.
\end{proof}

\section{Positions of V-edges in a row and eigenvalues of GUE random matrix}
\label{sec:gue}

In this section, we prove that near a turning point of the frozen boundary, the
present $V$-edges in a random perfect matching of the contracting square-hexagon
lattice are distributed like the eigenvalues of a
random Hermitian matrix from the Gaussian Unitary Ensemble~(GUE).

A matrix of the GUE of size $k$ is diagonalizable with real eigenvalues $\epsilon_1\geq
\epsilon_2\geq\ldots\geq\epsilon_k$, whose distribution $\PP_{\GUE_k}$ on $\RR^k$
has a density with respect to the Lebesgue measure on $\RR^k$ proportional to:
\begin{equation*}
\prod_{1\leq i<j\leq
k}(\epsilon_i-\epsilon_j)^2\exp\left(-\sum_{i=1}^{k}\epsilon_i^2\right),
\end{equation*}
See~\cite[Theorem~3.3.1]{MM04}.
Here is the main theorem we prove in this section.

\begin{theorem}
  \label{tgue}
  Let $\mathcal{R}(\Omega(N),\check{a})$ be a contracting square-hexagon lattice, such that
  \begin{itemize}
    \item $\Omega_N=(\Omega_1(N),\ldots,\Omega_N(N))$ is an $N$-tuple of
      integers denoting the location of vertices on the first row,
    \item the edge weights are assigned as in Assumptions~\ref{apew}
      and~\ref{ap6p},
    \item for every $i$, $x_i=1+o(N^{-2})$,
  \end{itemize}
  Let $\lambda^{k}(N)$ be the signature corresponding to the dimer configuration
  incident to the $(N-k+1)$th row of white vertices in
  $\mathcal{R}(\Omega(N),\check{a})$, and for $1\leq l \leq k$,
  \begin{equation*}
    b_{kl}^{N}=\lambda^{k}_l(N)+N-l.
  \end{equation*}
  Let 
  \begin{equation*}
    \psi_1=\int_{\RR} xd\mathbf{m}^1,\quad
    \psi_2=\int_{\RR} x^2d\mathbf{m}^1,
  \end{equation*}
  where $\mathbf{m}^1$ is the limit counting measure of signatures on the top of
  $\mathcal{R}(\Omega(N),\check{a})$.
  Let
  \begin{equation*}
    \tilde{b}_{kl}^{(N)}=
    \frac{%
      \frac{b_{kl}^{(N)}}{\sqrt{N}}
      -\sqrt{N}\left(%
      \psi_1-\frac{1}{2}+\frac{1}{n}\sum_{i\in I_2\cap\{1,\ldots,n\}}\frac{y_i}{1+y_i}
    \right)
  }{%
    \psi_2-\psi_1^2-\frac{1}{12}+
    \frac{1}{n}\sum_{i\in I_2\cap\{1,2,\ldots,n\}}\frac{y_{i}}{(1+y_{i})^2}},
    \ \ 1\leq l\leq k.
  \end{equation*}
  Then, for any fixed $k$, the distribution of
  $\left(\tilde{b}_{kl}^{(N)}\right)_{l=1}^k$ converges weakly to
  $\PP_{\GUE_k}$ as $N\rightarrow\infty$.
\end{theorem}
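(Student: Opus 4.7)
The strategy is to apply the moment method for the Schur generating function of the law $\rho^{2k-1}$ of the signature $\lambda^{k}(N)$ on the $(N-k+1)$th row of white vertices, combined with a saddle-point analysis at the scale $u = 1+\zeta/\sqrt{N}$, following the blueprint of~\cite[Section~6]{bk} for the Aztec diamond and adapting it to accommodate the factors $\mathrm{st}_{B_i}$ coming from the hexagonal rows $i \in I_2$.

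First, I would write $\mathcal{S}_{\rho^{2k-1},(1,\ldots,1)}(u_1,\ldots,u_k)$ using Lemma~\ref{lm212}. Since $k$ is fixed, the row under consideration sits at the top of the graph; the hypothesis $x_i = 1 + o(N^{-2})$ ensures that one may replace every $x_i$ by $1$ in the resulting expression, up to a multiplicative error of the form $1 + o(1)$ uniformly on a complex neighborhood of $(1,\ldots,1)$, exactly as in the analysis of $D_{N,1}$ and $D_{N,3}$ in the proof of Proposition~\ref{p25}. One is then led to
\begin{equation*}
\mathcal{S}_{\rho^{2k-1},(1,\ldots,1)}(u_1,\ldots,u_k)
= (1+o(1))\,
\frac{s_{\omega(N)}(u_1,\ldots,u_k,1,\ldots,1)}{s_{\omega(N)}(1,\ldots,1)}
\prod_{i \in \{1,\ldots,N-k\}\cap I_2}\prod_{j=1}^{k}\frac{1+y_{\ol{i}}u_j}{1+y_{\ol{i}}}.
\end{equation*}

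Second, by~\eqref{nlc} and the periodicity Assumption~\ref{ap6p}, the logarithm of this Schur generating function admits the asymptotic
\begin{equation*}
\frac{1}{N}\log\mathcal{S}_{\rho^{2k-1},(1,\ldots,1)}(u_1,\ldots,u_k)
\longrightarrow \sum_{j=1}^{k}\Psi(u_j),
\qquad
\Psi(u) := H_{\bm^1}(u) + \frac{1}{n}\sum_{i \in I_2 \cap \{1,\ldots,n\}} \log\frac{1+y_{i}u}{1+y_{i}},
\end{equation*}
uniformly on a complex neighborhood of $(1,\ldots,1)$. Since $H_{\bm^1}(1)=0$, a Taylor expansion of the right-hand side around $u=1$ using the formula~\eqref{hmz} and the definition of the moments $\psi_1, \psi_2$ of $\bm^1$ expresses $\Psi'(1)$ and $\Psi''(1)$ in terms of $\psi_1$, $\psi_2$ and the $y_i$'s; after collecting the $-\tfrac{1}{2}$ and $-\tfrac{1}{12}$ corrections produced by the logarithmic factor $\log\tfrac{\log u}{u-1}$ in~\eqref{hmz}, the values of $\Psi'(1)$ and $\Psi''(1)$ reproduce precisely the centering and the normalization appearing in the definition of $\tilde b_{kl}^{(N)}$.

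Third, the joint distribution of the shifted positions $(b_{kl}^{(N)})_{l=1}^{k}$ is recovered from the Schur generating function via the multivariate contour integral representation of Bufetov--Gorin~\cite{bg}, where an alternation--Vandermonde kernel acts on $\mathcal{S}$ and selects the coefficients of the monomials $u_1^{b_{k1}}\cdots u_k^{b_{kk}}$. Substituting $u_j = 1+\zeta_j/\sqrt{N}$, the factor $e^{N\Psi(u_j)}$ becomes asymptotically the Gaussian $\exp\!\bigl(\Psi'(1)\sqrt{N}\zeta_j + \tfrac{1}{2}\Psi''(1)\zeta_j^2\bigr)$, while the Vandermonde prefactor in the integral representation becomes a Vandermonde in the $\zeta_j$'s. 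A standard saddle-point analysis then produces the joint density of the rescaled variables as proportional to $\prod_{i<j}(\epsilon_i-\epsilon_j)^2\exp(-\sum_l\epsilon_l^2)$, which is exactly the GUE$_k$ density. The main obstacle is to upgrade the $N^{-1}$-precise asymptotic of $\log\mathcal{S}$ provided by Proposition~\ref{p25} into an $N^{-1/2}$-precise one that is uniform in a $1/\sqrt{N}$-neighborhood of $(1,\ldots,1)$, so that the saddle-point analysis can indeed be carried out and the cubic and higher remainder terms vanish in the limit; the hypothesis $x_i = 1 + o(N^{-2})$ is exactly the strengthening of~\eqref{bes} needed to control the error from replacing the $x_i$'s by $1$ at this finer scale.
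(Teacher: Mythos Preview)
Your first two steps are essentially what the paper does: express the Schur generating function of $\rho^{2k-1}$ via Lemma~\ref{lm212}, reduce the $x_i$'s to $1$ using $x_i=1+o(N^{-2})$, and identify the limiting exponent $\Psi$. Your Taylor computation is slightly off: with the substitution $u_j=1+\zeta_j/\sqrt{N}$ the coefficient of $\zeta_j^2$ is $\tfrac12\Psi''(1)$, which does \emph{not} equal the normalizing constant $B$; one must use $u_j=e^{\zeta_j/\sqrt{N}}$ (equivalently, expand $\Psi(e^v)$ in $v$), and then the quadratic coefficient is $\tfrac12(\Psi'(1)+\Psi''(1))=\tfrac12 B$. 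This is a bookkeeping issue, not a conceptual one.

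Where you diverge from the paper is in Step~3. Once one knows the second-order expansion of $\log\mathcal{S}_{\rho^{2k-1}}(e^{v_1/\sqrt N},\dots,e^{v_k/\sqrt N})$, no contour-integral extraction of the density is needed: by HCIZ (Lemma~\ref{hciz}) the quantity $\EE\int_{U(k)}\exp\bigl[\tfrac{1}{\sqrt N}\mathrm{Tr}(PUQ_k^{(N)}U^*)\bigr]dU$ is, up to explicit Vandermonde-type factors, exactly this Schur generating function, and the paper concludes immediately from Lemma~\ref{lgue} (the Laplace-transform characterization of $\PP_{\GUE_k}$). Your proposed saddle-point route on a density formula would also work, but it is a detour, and the attribution to~\cite{bg} is misplaced (that paper controls moments, not pointwise densities).

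The substantive gap is the one you flag yourself at the end: upgrading the $o(N)$ control on $\log\mathcal{S}$ from Proposition~\ref{p25} to the $o(1)$ control needed at scale $u_j-1=O(N^{-1/2})$. You do not explain how to obtain this, and it is the whole crux. The paper does \emph{not} proceed by refining the analysis of $H_{\bm}$; instead it applies HCIZ a second time, on the $U(N)$ side, to write the ratio $s_\omega(e^{v/\sqrt N},1,\dots,1)/s_\omega(1,\dots,1)$ as a large-$N$ unitary matrix integral, and then invokes the topological/free-cumulant expansion of that integral from~\cite{JN14,ggn} (packaged as Lemma~\ref{lem:asympexpiz}). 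That lemma gives precisely the two-term expansion $\psi_1\sqrt{N}\sum v_i+\tfrac12(\psi_2-\psi_1^2)\sum v_i^2+o(1)$, after which the remaining $-\tfrac12$, $-\tfrac{1}{12}$ and $y_i$-corrections come from elementary Taylor expansions of the explicit prefactors. Without an analogue of this lemma, your argument stalls exactly where you say it might.
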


The convergence of the
distribution of certain present edges near the boundary to the GUE minor process was proved in
\cite{JN06} in the case of uniform perfect matching on the Aztec diamond,
and in \cite{OR06} for plane partitions.
In the case
of the uniform perfect matching on a hexagon lattice, the result was proved in
\cite{VP15,JN14}. In the case of $q$-distributed perfect matching on a hexagon
lattice with $q=e^{\frac{-\gamma}{N}}$, the result was proved in \cite{MP17}.

\begin{proof}[Proof of Theorem~\ref{tgue}]
In order to prove Theorem~\ref{tgue}, we shall apply the following
characterization of $\PP_{\GUE_k}$, which follows directly from the definition.
See e.g.\@\cite{VP15,JN14}.

\begin{lemma}
  \label{lgue}
  Let $(q_1,\dotsc,q_k)\in\RR^k$ be a random vector with distribution $\PP$ and
  $Q=\operatorname{diag}[q_1,\ldots,q_k]$ the $k\times k$ diagonal matrix
  obtained by putting the $q_i$ on the diagonal.

  Then $\PP$ is $\PP_{\GUE_k}$ if and only if for any matrix $P$,
\begin{equation*}
\EE \int_{U(k)}
\exp[\mathrm{Tr}(PUQU^*)]dU=
\exp\left(\frac{1}{2}\mathrm{Tr}P^2\right).
\end{equation*}
It is in fact enough to check the case when $P$ is diagonal, with real
coefficients.
\end{lemma}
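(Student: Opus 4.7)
The plan is to reinterpret the identity as a matching of moment generating functions on the real vector space $\mathcal{H}_k$ of $k\times k$ Hermitian matrices. Introduce the pushforward law $\tilde{\mathbb{P}}$ on $\mathcal{H}_k$ of the random matrix $UQU^*$, where $Q=\mathrm{diag}(q_1,\dotsc,q_k)$ has law $\mathbb{P}$ and $U$ is an independent Haar element of $U(k)$. By construction,
\begin{equation*}
  \mathbb{E}\int_{U(k)}\exp[\mathrm{Tr}(PUQU^*)]\,dU
  =\mathbb{E}_{\tilde{\mathbb{P}}}\bigl[\exp(\mathrm{Tr}(PH))\bigr],
\end{equation*}
so the displayed identity in the lemma is precisely the statement that the moment generating function of $\tilde{\mathbb{P}}$ at $P$ equals $\exp(\tfrac{1}{2}\mathrm{Tr}P^2)$, i.e.\ coincides with the moment generating function of the standard GUE law on $\mathcal{H}_k$.

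For the forward direction ($\mathbb{P}=\mathbb{P}_{\mathrm{GUE}_k}\Rightarrow$ identity), I would invoke the unitary invariance of the GUE: if the diagonal $Q$ has the eigenvalue law $\mathbb{P}_{\mathrm{GUE}_k}$ and $U$ is an independent Haar unitary, then $UQU^*$ has the law of a full GUE random matrix $H$. Consequently $\tilde{\mathbb{P}}$ is the GUE matrix law on $\mathcal{H}_k$, and the identity reduces to the standard computation of its MGF. For diagonal real $P=\mathrm{diag}(p_1,\dotsc,p_k)$ only the diagonal entries of $H$ contribute: $\mathrm{Tr}(PH)=\sum p_iH_{ii}$ is a linear combination of independent centered Gaussians whose variance is fixed by the normalization chosen in the eigenvalue density above the lemma, and the product of one-dimensional Gaussian MGFs yields $\exp(\tfrac{1}{2}\mathrm{Tr}P^2)$. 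The extension to general $P$ follows by splitting $P$ into its Hermitian and anti-Hermitian parts and using linearity of the trace.

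For the reverse direction, assume the identity holds for every diagonal real $P$. The key tool is the uniqueness of the moment generating function on $\mathcal{H}_k$. First I would upgrade the identity from diagonal real $P$ to all Hermitian $P$ via the spectral theorem: for a Hermitian $P'=VDV^*$ with $D$ diagonal real, the substitution $U\mapsto V^*U$ and the invariance of Haar measure show that the left hand side at $P'$ equals the left hand side at $D$, while $\mathrm{Tr}(P'^2)=\mathrm{Tr}(D^2)$ keeps the right hand side unchanged. This yields agreement of the MGFs of $\tilde{\mathbb{P}}$ and of the GUE matrix law on all of $\mathcal{H}_k$. Standard uniqueness of MGFs (when finite in a neighborhood of the origin of the real vector space $\mathcal{H}_k$) then forces $\tilde{\mathbb{P}}$ to be the GUE matrix law. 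Finally, since the unordered eigenvalue map sends $\tilde{\mathbb{P}}$ to the symmetrization of $\mathbb{P}$, one recovers $\mathbb{P}=\mathbb{P}_{\mathrm{GUE}_k}$.

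The main obstacle is the integrability hypothesis underlying MGF-uniqueness: one needs the left hand side to be finite in a neighborhood of the origin of $\mathcal{H}_k$, which in turn requires a mild tail condition on $\mathbb{P}$. This is harmless in the application to Theorem~\ref{tgue}, where the rescaled quantities $\tilde{b}_{kl}^{(N)}$ will be tight. The final assertion of the lemma---that it suffices to test against diagonal real $P$---is exactly the content of the spectral reduction already carried out in the reverse direction, combined with analyticity in the entries of $P$ when extending to the complex setting.
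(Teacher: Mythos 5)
Your argument is correct and is essentially the one the paper itself defers to: the paper offers no proof of Lemma~\ref{lgue} beyond ``follows directly from the definition'' and a citation to \cite{VP15,JN14}, and your route (identify the left side as the moment generating function of the law of $UQU^*$ on Hermitian matrices, use unitary invariance of the GUE for the forward direction and MGF uniqueness plus the Haar substitution $U\mapsto V^*U$ for the converse and for the reduction to diagonal real $P$) is exactly the standard reasoning behind that citation. Two small remarks. First, the integrability worry you raise for MGF uniqueness is vacuous here: the hypothesis of the converse is that the identity holds for all diagonal real $P$, which already asserts finiteness of the MGF everywhere, so uniqueness applies without any extra tail condition. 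Second, if you carry out your forward computation literally with the eigenvalue density displayed above the lemma, $\prod_{i<j}(\epsilon_i-\epsilon_j)^2\exp(-\sum_i\epsilon_i^2)$, the diagonal entries of the corresponding Hermitian matrix have variance $\frac{1}{2}$ and the MGF comes out as $\exp(\frac{1}{4}\mathrm{Tr}P^2)$; the stated right-hand side $\exp(\frac{1}{2}\mathrm{Tr}P^2)$ corresponds to the normalization $\exp(-\frac{1}{2}\sum_i\epsilon_i^2)$. This is a normalization inconsistency in the paper rather than a flaw in your approach, but your proof as written asserts the $\frac{1}{2}$ constant without checking it against the stated density, so make the convention explicit. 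Finally, note that the left-hand side is invariant under permuting the $q_i$, so the converse only pins down the symmetrization of $\PP$; this is harmless since in the application the tuple is ordered, and your closing sentence already acknowledges it.
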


Let
\begin{equation*}
  P_{k}=\mathrm{diag}\left[v_1,\ldots,v_k\right],\quad
  Q_k^{(N)}=\mathrm{diag}\left[b_{k1}^{(N)},\ldots,b_{kk}^{(N)}\right].
\end{equation*}

By Lemma~\ref{hciz}, we have for any $k$:
\begin{equation}
  \label{lg0}
  \int_{U(k)}\exp\left[\mathrm{Tr}\left(P_k U Q_k^{(N)}U^*\right)\right]dU=
  \left[\prod_{1\leq i<j\leq k}\frac{e^{v_i}-e^{v_j}}{v_i-v_j}\right]
  \frac{s_{\lambda^{k}(N)}(e^{v_1},\ldots,e^{v_k})}{s_{\lambda^k(N)}(1,\ldots,1)}
\end{equation}
Let $\rho^k_N$ be the distribution of dimer configurations restricted on the
$(N+k-1)$th row of white vertices on $\mathcal{R}(\Omega(N),\check{a})$, and let 
\begin{equation*}
X_k^{(N)}=\left(x_{\ol{N-k+1}}^{(N)},\ldots,x_{\ol{N}}^{(N)}\right).
\end{equation*}
Then
\begin{multline}
\label{lg1}
\sum_{\lambda_k(N)\in \GT_k^+}\rho^k_N(\lambda_k(N))\int_{U(k)}\exp\left[\frac{1}{\sqrt{N}}\mathrm{Tr}\left(P_kUQ_k^{(N)}U^*\right)\right]dU\\
=\prod_{1\leq i<j\leq k}
\left[\sqrt{N}\frac{e^{v_i/\sqrt{N}}-e^{v_j/\sqrt{N}}}{v_i-v_j}\right] \times
\sum_{\lambda_k(N)} \rho_N^k(\lambda_k(N))
\frac{s_{\lambda_k(N)}(e^{v_1/\sqrt{N}},\dotsc e^{v_k
  /\sqrt{N}})}{s_{\lambda_k(N)}(1,\dotsc,1)}\\
  =
  \prod_{1\leq i<j\leq k}
\left[\sqrt{N}\frac{e^{v_i/\sqrt{N}}-e^{v_j/\sqrt{N}}}{v_i-v_j}\right] \times
  \mathcal{S}_{\rho^k_N,X_k^{(N)}}(e^{v_1/\sqrt{N}},\dotsc,e^{v_k/\sqrt{N}})
  \\
  \times\frac{%
\sum_{\lambda_k(N)} \rho_N^k(\lambda_k(N))
\frac{s_{\lambda_k(N)}(e^{v_1/\sqrt{N}},\dotsc e^{v_k
  /\sqrt{N}})}{s_{\lambda_k(N)}(1,\dotsc,1)}
}{%
\sum_{\lambda_k(N)} \rho_N^k(\lambda_k(N))
\frac{s_{\lambda_k(N)}(e^{v_1/\sqrt{N}},\dotsc e^{v_k
  /\sqrt{N}})}{s_{\lambda_k(N)}(x_1,\dotsc,x_k)}
},
\end{multline}
the denominator of the last fraction being exactly 
$\mathcal{S}_{\rho^k_N,X_k^{(N)}}(e^{v_1/\sqrt{N}},\dotsc,e^{v_k/\sqrt{N}})$
by Definition~\ref{df21}.
First, this fraction is converging to 1 as $N$ goes to infinity. Indeed, all
partitions $\lambda_k(N)\in \GT_k$ that contribute to the sum must have
parts bounded by a constant times $N$ by hypothesis. Since
$|x_i-1|=o(N^{-2})$, one has:
\begin{equation*}
  \left| \frac{s_{\lambda_k(N)}(x_1,\dotsc,x_k)-s_{\lambda_k(N)}(1,\dotsc,1)}{%
    s_{\lambda_k(N)}(1,\dotsc,1)}
    \right| \leq C
    o(N^{-2}) |\lambda_k(N)| C^{o(N^{-2})|\lambda_k(N)|}
   = o(1)
\end{equation*}
uniformly in $\lambda_k(N)$, which implies that the difference between the
fraction and 1 is negligible as $N$ goes to $\infty$.

We then use Lemma~\ref{lm212} to re-express the Schur generating function:
\begin{equation}
\label{lg2}
  \mathcal{S}_{\rho^k_N,X_k^{(N)}}(\zeta_1,\dotsc,\zeta_k)
=
\frac{%
s_{\omega}\left(x_{\ol{1}}^{(N)},\ldots,x_{\ol{N-k}}^{(N)},\zeta_1,\ldots,\zeta_k\right)
}{%
s_{\omega}\left(x_{\ol{1}}^{(N)},\ldots,x_{\ol{N}}^{(N)}\right)}
\!\!\!\!\!\!\!\!\!\!
\prod_{\substack{{1\leq i\leq k} \\{j\in I_2\cap \{1,2,\ldots,N-k\}}}}
\!\!\!\!\!\!\!
\left(\frac{1+y_{\ol{j}}\zeta_i}{1+y_{\ol{j}}x_{\ol{N-k+i}}}\right),
\end{equation}
where $\omega=(\omega_1\geq \omega_2\geq\cdots\geq \omega_N)\in \GT_N^+$ is the
signature corresponding to the first row. 

The same estimate for Schur functions we used before to compare
$s_{\lambda_k(N)}$ evaluated at $(1,\dotsc,1)$ and $(x_1^{(N)},\dotsc,
x_k^{(N)})$ can be used this time for $s_\omega$, using the fact that
$|\omega|=O(N^2)$ by hypothesis.
We have then that
\begin{equation*}
  \left|\frac{%
    s_\omega(x_1^{(N)},\dotsc x_{N}^{(N)})-s_\omega(1,\dotsc,1)
  }{%
s_\omega(1,\dotsc,1)
} \right| \leq \epsilon(N)N^{-2}|\omega|C^{|\omega|\epsilon(N)N^{-2}}=o(1),
\end{equation*}
with $\lim_{N\to\infty} \epsilon(N)=0$,
so we can, up to a negligible correction, replace in the denominator
$s_\omega(x_1^{(N)},\dotsc x_{N}^{(N)})$ by 
$s_\omega(1,\dotsc,1)$ in Equation~\eqref{lg2}.

This ratio of Schur functions
appears also when using again Lemma~\ref{hciz} to rewrite the matrix
integral over $U(N)$ this time.
More precisely, let
\begin{align*}
R_N=\mathrm{diag}\left[v_1,\dotsc,v_k,t_{\ol{1}}^{(N)},\dotsc,t_{\ol{N-k}}^{(N)}\right],
\quad
Q_N=\mathrm{diag}\left[\omega_1+N-1,\omega_2+N-2\ldots,\omega_N\right].
\end{align*}
where $t_i^{(N)}=\sqrt{N}\log\left[x_i^{(N)}\right]$ fo $1\leq i\leq n$.
For $1\leq i\leq k$, let $\zeta_i=e^{\frac{v_i}{\sqrt{N}}}$.
Then,
\begin{multline}
  \label{lg2b}
\frac{%
s_{\omega}\left(x_{\ol{1}}^{(N)},\ldots,x_{\ol{N-k}}^{(N)},\zeta_1,\ldots,\zeta_k\right)
}{%
s_{\omega}\left(1,\dotsc,1\right)}
=
\int_{U(N)}\exp\left[\frac{1}{\sqrt{N}} \mathrm{Tr}\left(R_N U Q_N U^*\right)\right]dU \times\\
\left[\prod_{1\leq i<j\leq k}\!\!\!
\sqrt{N}\frac{e^{\frac{v_i}{\sqrt{N}}}-e^{\frac{v_j}{\sqrt{N}}}}{v_i-v_j}\right]^{-1}\!\!\!
\left[\prod_{\substack{{1\leq i\leq k}\\{1\leq j\leq N-k}}}\!\!\!
  \sqrt{N}\frac{%
    e^{\frac{v_i}{\sqrt{N}}}- x_{\ol{j}}^{(N)}
}{%
  v_i-t_{\ol{j}}^{(N)}
}\right]^{-1}\!\!\!
\left[\prod_{1\leq i<j\leq N-k}\!\!\!
  \sqrt{N}\frac{x_{\ol{i}}^{(N)}- x_{\ol{j}}^{(N)}
}{%
  t_{\ol{i}}^{(N)}-t_{\ol{j}}^{(N)}
}\right]^{-1}.
\end{multline}

Plugging the modified version of Equation~\eqref{lg2} and \eqref{lg2b}
into~\eqref{lg1}, one gets:
\begin{align*}
\log\sum_{\lambda^k(N)\in \GT_k^+}
\rho^k_N(\lambda^k(N))\int_{U(k)} 
\exp\left[\frac{1}{\sqrt{N}}\mathrm{Tr}\left(P_kUQ_k^{(N)}U^*\right)\right]dU\\
=\log\int_{U(N)}\exp\left[\frac{1}{\sqrt{N}}\mathrm{Tr}\left(R_NUQ_NU^*\right)\right]dU
-\sum_{\substack{{1\leq i\leq k}\\{1\leq j\leq N-k}}}
\log\left[
  \frac{%
    e^{v_i/\sqrt{N}}-e^{t_j/\sqrt{N}}
  }{%
    \frac{v_i}{\sqrt{N}}-\frac{t_j}{\sqrt{N}}
  }
\right]
\\
-\sum_{1\leq i<j\leq N-k}
\log\left[
  \frac{%
    e^{t_i\sqrt{N}}-e^{t_j\sqrt{N}}
  }{%
    \frac{t_i}{\sqrt{N}}-\frac{t_j}{\sqrt{N}}
  }
\right]
+
\sum_{\substack{{1\leq i \leq k}\\ {j\in I_2\cap\{1,\ldots,N-k\}}}}
\log\frac{1+y_j e^{v_i\sqrt{N}}}{1+y_j e^{t_{N-k+i}/\sqrt{N}}}
+O(N^{2-\alpha}).
\end{align*}

Expand the sums using the  two expansions
$\log\frac{e^u-e^v}{u-v}=\frac{u+v}{2}+\frac{(u-v)^2}{24}+O(|u|^3+|v|^3)$, and
$\log(\frac{1+ye^v}{1+y})=\frac{y}{1+y}v+\frac{y}{2(1+y)^2}v^2+O(|v|^3)$ as $u$ and $v$
tend to 0, to get that:
\begin{align*}
\sum_{\substack{{1\leq i\leq k}\\{1\leq j\leq N-k}}}
\log\left[
  \frac{%
    e^{v_i/\sqrt{N}}-e^{t_j/\sqrt{N}}
  }{%
    \frac{v_i}{\sqrt{N}}-\frac{t_j}{\sqrt{N}}
  }
\right]&=o(1),\\
\sum_{1\leq i<j\leq N-k}
\log\left[
  \frac{%
    e^{t_i\sqrt{N}}-e^{t_j\sqrt{N}}
  }{%
    \frac{t_i}{\sqrt{N}}-\frac{t_j}{\sqrt{N}}
  }
\right]&=(N-k)\sum_{i=1}^k \left(\frac{v_i}{2\sqrt{N}}+\frac{v_i^2}{24
N}+O(N^{-3/2})\right),\\
\sum_{\substack{{1\leq i \leq k}\\ {j\in I_2\cap\{1,\ldots,N-k\}}}}\!\!\!
\log\frac{1+y_j e^{v_i\sqrt{N}}}{1+y_j e^{\frac{t_{N-k+i}}{\sqrt{N}}}} 
&=\!\!\!\sum_{j\in I_2\cap\{1,\ldots N-k\}}\sum_{i=1}^k \frac{y_j}{1+y_j}
\frac{v_i}{\sqrt{N}}+\frac{y_j}{(1+y_j)^2}\frac{v_i^2}{2N}+O(N^{-\frac{3}{2}}).
\end{align*}

Then we use Lemma~\ref{lem:asympexpiz} below to get the asymptotic behavior of the
integral over $U(N)$, to obtain that
\begin{equation*}
  \log\int_{U(N)}\exp\left[\frac{1}{\sqrt{N}}\mathrm{Tr}\left(R_NUQ_NU^*\right)\right]dU=\psi_1
  \left(\sum_{i=1}^k v_i\right)+\frac{\psi_2-\psi_1^2}{2}\left(\sum_{i=1}^k
  v_i^2\right)+o(1),
\end{equation*}
where $\psi_1$ and $\psi_2$ are respectively the first and second moments of the
limiting measure $\mathrm{m}_\omega$, as $N$ goes to infinity.
Bringing all the pieces together, and defining
\begin{equation*}
  A = \psi_1-\frac{1}{2}+\frac{1}{n}\sum_{j\in
    I_2\cap\{1,\ldots,n\}}\frac{y_j}{1+y_j},\quad
    B = \psi_2-\psi_1^2-\frac{1}{12}+\frac{1}{n}\sum_{j\in
    I_2\cap\{1,\ldots,n\}}\frac{y_j}{1+y_j},
\end{equation*}
one finally obtain that
\begin{equation*}
  \int_{U(k)}\exp\left[\frac{1}{\sqrt{N}}\mathrm{Tr}\left(P_k U Q_k^{(N)}U^*\right)\right]dU=
  \exp\left(\sqrt{N}A\sum_i v_i+
    \frac{1}{2}B \sum_i v_i^2+o(1)\right).  
\end{equation*}
Therefore, according to Lemma~\ref{lgue}, the distribution of the diagonal
coefficients  of
\begin{equation*}
\frac{1}{\sqrt{N}B}(Q_k^{(N)}-N A \operatorname{Id}_k)
\end{equation*}
which are exactly the
$\tilde{b}_{kl}^{(N)}$, converges to $\PP_{\GUE_k}$.
\end{proof}

\begin{lemma}
  \label{lem:asympexpiz}
  Let $k\in \NN$ be fixed.
  For any $N$, let $\omega(N)\in \GT_N^+$ and
  \begin{align*}
    q_i^{(N)}&=\omega_i(N)+N-i,\quad
    Q_N=\operatorname{diag}\left[q_1^{(N)},\dotsc, q_N^{N}\right],\\
    P_N&=\mathrm{diag}\left[
    v_1,\ldots,v_k,\log\left(x_{\ol{1}}^{(N)}\right),\ldots,\log\left(x_{\ol{N-k}}^{(N)}\right)\right].
\end{align*}
Assume that 
\begin{itemize}
\item $\left(\omega(N)\right)_{N\in\NN}$ is a regular sequence of signatures,
\item as $N\rightarrow\infty$, the counting measures $\bm_{\omega(N)}$ converge
  weakly to $\bm_{\omega}$,
\item there exists a fixed positive integer $n$, such that for $1\leq i\leq N-k$, $x_{\ol{i}}^{(N)}$=$x_{i\mod n}^{(N)}$, and
\begin{equation*}
  \forall\ 1\leq j \leq n\ 
  \lim_{N\rightarrow\infty}\left|x_j^{(N)}-1\right|N^{2}=0.
\end{equation*}
\end{itemize}
 Then as $N\rightarrow\infty$, we have the following asymptotic expansion
\begin{equation}
\log \int_{U(N)} \exp\left[\frac{1}{\sqrt{N}}\mathrm{Tr}\left(P_NU Q_N U^*\right)\right]dU
= K_1p_1(v_1,\ldots,v_k) N^{1/2} + \frac{K_2}{2!}p_2(v_1,\ldots,v_k)+o(1)
\label{eq:asymptexpiz}
\end{equation}
where $K_d$ is $(d-1)!$ multiplying the $d$th free cumulant of the measure
$\bm_{\omega}$ (in particular we have $K_1=\psi_1, K_2=\psi_2-\psi_1^2$)
and the $p_d$ is the $d$th power sum:
\begin{equation*}
p_d(v_1,\ldots,v_k)=\sum_{i=1}^{k}v_i^d.
\end{equation*}
\end{lemma}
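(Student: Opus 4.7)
The plan is to apply Lemma~\ref{hciz} (the Harish-Chandra--Itzykson--Zuber formula) to rewrite the matrix integral as $e^{\Sigma_N+\Lambda_N}$, and then to identify the $N^{1/2}$ and $N^{0}$ contributions by Taylor expanding both factors around the origin. Setting $a_i=(P_N)_{ii}/\sqrt N$, so that $a_i=v_i/\sqrt N$ for $i\leq k$ and $|a_i|=o(N^{-5/2})$ for $i>k$ by the hypothesis $|x_j^{(N)}-1|N^{2}\to 0$, Lemma~\ref{hciz} gives
\begin{equation*}
\log\int_{U(N)}\exp\!\left[\tfrac{1}{\sqrt N}\mathrm{Tr}(P_N U Q_N U^{*})\right]dU=\Sigma_N+\Lambda_N,
\end{equation*}
where $\Sigma_N=\log\bigl(s_{\omega(N)}(e^{a_1},\dots,e^{a_N})/s_{\omega(N)}(1,\dots,1)\bigr)$ and $\Lambda_N=\sum_{i<j}\log\bigl((e^{a_i}-e^{a_j})/(a_i-a_j)\bigr)$.

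Expanding the $\Lambda_N$-summand via $\log\tfrac{e^{a}-e^{b}}{a-b}=\tfrac{a+b}{2}+\tfrac{(a-b)^{2}}{24}+O(|a|^{4}+|b|^{4})$ and splitting the sum over pairs according to whether both, one, or neither index lies in $\{1,\dots,k\}$, the pairs with both indices $>k$ contribute $o(1)$ thanks to the size estimate on these $a_i$, the cross-type pairs contribute $\tfrac{N-1}{2\sqrt N}\sum_{i=1}^{k}v_i+\tfrac{1}{24}\sum_{i=1}^{k}v_i^{2}+o(1)$, and the pairs with both indices $\leq k$ are $O(N^{-1/2})=o(1)$. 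Thus $\Lambda_N=\tfrac{\sqrt N}{2}\sum_{i=1}^{k}v_i+\tfrac{1}{24}\sum_{i=1}^{k}v_i^{2}+o(1)$.

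For $\Sigma_N$, first peel off the tiny variables: write $\Sigma_N=R_N+S_N$ with $R_N=\log\bigl(s_{\omega(N)}(e^{a_1},\dots,e^{a_N})/s_{\omega(N)}(e^{a_1},\dots,e^{a_k},1,\dots,1)\bigr)$ and $S_N=\log\bigl(s_{\omega(N)}(e^{a_1},\dots,e^{a_k},1,\dots,1)/s_{\omega(N)}(\mathbf 1)\bigr)$. A Cauchy-type derivative bound, combined with the uniform control on $\tfrac{1}{N}S_N$ provided by \eqref{nlc}, shows $R_N=O\bigl(N\cdot\sum_{i>k}|a_i|\bigr)=O(N\cdot(N-k)\cdot o(N^{-5/2}))=o(1)$. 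For $S_N$, apply the uniform convergence $\tfrac{1}{N}S_N=\sum_{i=1}^{k}H_{\bm_\omega}(e^{a_i})+o(1)$ from \eqref{nlc}, and upgrade it via Cauchy's estimates on the holomorphic functions of $u_1,\dots,u_k$ in a fixed complex neighborhood of $(1,\dots,1)$ to $o(1)$ error on each of the first- and second-order Taylor coefficients. Computing from \eqref{hmz} and the R-transform expansion $R_{\bm_\omega}(z)=K_1+K_2 z+O(z^{2})$, together with the elementary expansion $\ln\bigl(w/(e^{w}-1)\bigr)=-w/2-w^{2}/24+O(w^{3})$, gives $H_{\bm_\omega}(e^{w})=(K_1-\tfrac12)w+(\tfrac{K_2}{2}-\tfrac{1}{24})w^{2}+O(w^{3})$. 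Substituting $w=v_i/\sqrt N$, multiplying by $N$, and summing yields
\begin{equation*}
S_N=\sqrt N\,(K_1-\tfrac12)\sum_{i=1}^{k}v_i+\bigl(\tfrac{K_2}{2}-\tfrac{1}{24}\bigr)\sum_{i=1}^{k}v_i^{2}+o(1).
\end{equation*}

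Combining $\Sigma_N=R_N+S_N$ with $\Lambda_N$, the $\sqrt N\sum v_i$-coefficients $(K_1-\tfrac12)+\tfrac12=K_1$ and the $\sum v_i^{2}$-coefficients $(\tfrac{K_2}{2}-\tfrac{1}{24})+\tfrac{1}{24}=\tfrac{K_2}{2}$ combine to give the claimed expansion \eqref{eq:asymptexpiz}. The main obstacle will be the justification of the Cauchy-estimate upgrade: the convergence \eqref{nlc} from Proposition~\ref{p25} is uniform in a complex neighborhood of $\mathbf 1$ with an unspecified $o(1)$ rate, and one needs to ensure that this rate (and hence, via Cauchy's formula, the rate on the derivatives with respect to the $u_i$) remains $o(1)$ after multiplication by $N$, so that the degree-$\leq 2$ Taylor coefficients of $S_N$ at $\mathbf u=\mathbf 1$ are pinned down within $o(1)$ precision by the analogous expansion of $\sum_{i=1}^{k} H_{\bm_\omega}(u_i)$.
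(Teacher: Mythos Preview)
Your approach via Lemma~\ref{hciz} and the Schur-function expansion is genuinely different from the paper's, and the obstacle you flag at the end is real and, as stated, unresolved. The uniform convergence \eqref{nlc} gives only $\tfrac{1}{N}S_N=\sum_i H_{\bm_\omega}(u_i)+o(1)$, so by Cauchy the first Taylor coefficient $\partial_i f_N(\mathbf 1)$ of $f_N:=\tfrac{1}{N}S_N$ is $H'_{\bm_\omega}(1)+o(1)$. When you evaluate at $u_i=e^{v_i/\sqrt N}$ and multiply back by $N$, the linear term contributes $\sqrt N\,(H'_{\bm_\omega}(1)+o(1))\sum_i v_i$, and the $\sqrt N\cdot o(1)$ piece need not vanish. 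In other words, you would need the derivatives of $f_N$ at $\mathbf 1$ to converge at rate $o(N^{-1/2})$, not just $o(1)$; nothing in Proposition~\ref{p25} or Theorem~3.6 of~\cite{bk} supplies that rate. Without it, the $N^{1/2}$ coefficient in your expansion is not pinned down.

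The paper avoids this difficulty by a different decomposition: it writes $P_N=P_N^0+P_N'$ with $P_N^0=\operatorname{diag}(v_1,\dots,v_k,0,\dots,0)$, invokes directly the asymptotic expansion of the HCIZ integral from \cite[Corollary~6]{JN14} (see also~\cite{ggn}) for the $P_N^0$ part to get $\sum_{d\ge 1}\tfrac{K_d}{d!}p_d(v)N^{1-d/2}$, and then shows that the perturbation $P_N'$ contributes $o(1)$ by the elementary pointwise bound $\tfrac{1}{\sqrt N}\mathrm{Tr}(P_N' U Q_N U^*)=o(N^{-1/2})$ uniformly in $U$ (using $|\log x_i^{(N)}|=o(N^{-2})$, $q_j^{(N)}=O(N)$, and $\sum_j |U_{ij}|^2=1$). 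This sidesteps any need for a refined rate in \eqref{nlc}: the precision comes from the cited result rather than from the Schur-function asymptotics. If you want to make your route work, you would effectively have to reprove the content of \cite[Corollary~6]{JN14} with the required $o(N^{-1/2})$ control on the derivatives.
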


\begin{proof}
  We write $P_N$ as $P^0_N+P'_N$ where
  $P^0_N=\mathrm{diag}(v_1,\cdots,v_k,0,\cdots,0)$ and $P'_N$ is diagonal, with
  the first $k$ coefficients equal to 0 and the others are the $x_i$s.

  From \cite[Corollary~6]{JN14} (see also
  \cite{ggn}), there is an asymptotic expansion (at any order) of
  the left hand side of~\eqref{eq:asymptexpiz}
  \begin{equation*}
    \log \int_{U(N)} \exp\left[\frac{1}{\sqrt{N}}\mathrm{Tr}\left(P^0_N U Q_N \right)U^*\right]dU
\sim \sum_{d=1}^{\infty} \frac{K_d}{d!}p_d(v_1,\ldots,v_k)N^{1-\frac{d}{2}},
  \end{equation*}
  the first two orders giving the right hand side of \eqref{eq:asymptexpiz}.

  We now show that the additional perturbation coming from $P'_N$ does not
  change the first two orders as long as the $x_i$s are close enough to 1. For
  this, we rewrite the trace of $P'_N U Q U^*$ as:
  \begin{equation*}
    \mathrm{Tr}(P'_N U Q_N U^*)=
    \sum_{i=k+1}^N\sum_{j=1}^N \log(x^{(N)}_{i-k}) q_j^{(N)} |U_{i,j}|^2.
  \end{equation*}
  Then $|\log(x^{(N)}_{i})|=o(N^{-2})$, $q_j^{(N)}=O(N)$ uniformly in $i$
  and $j$, and the sum $\sum_j |U_{i,j}|^2$ is equal to 1, as $U$ is unitary.
  Therefore, this trace is $o(1)$, and
  \begin{equation*}
    \exp\left(\frac{1}{\sqrt{N}} \mathrm{Tr}(P'_N U Q_NU^*)\right)
    =1+o(N^{-\frac{1}{2}}),
  \end{equation*}
  uniformly in $U$. Thus the correction induced by $P'_N$ to the integral is of
  lower order than the two first terms of the asymptotic expansion above, thus
  yielding the result.
\end{proof}

\section{Gaussian Free Field}\label{sec:gff}

In this section, we show that under a homeomorphism from the liquid region to
the upper half plane, the (non-rescaled) height function of the dimer
configurations on a contracting square-hexagon lattice converges to the Gaussian
free field (GFF). The relationship between the dimer height function and GFF has
been studied extensively in the past few decades. Here is an incomplete list of
contributions to this question:
the convergence of height functions to GFF in distribution for uniform perfect
matchings on a simply-connected domain with Temperley boundary condition was
proved in~\cite{RK00,RK01}; for random perfect matchings on the isoradial double
graph with an analogous Temperley boundary condition was proved in~\cite{ZL17};
for uniform perfect matchings on a  hexagonal lattice with sawtooth boundary was
proved in~\cite{BF14,LP12}; for uniform perfect matchings on a square grid with
sawtooth boundary (rectangular Aztec diamond) was proved in~\cite{bk}; for interacting particle systems with
non-flat boundary was proved in ~\cite{MD1,MD2}.

\subsection{Mapping from the liquid region to the upper half plane}

By Theorem~\ref{tm38} and Definition~\ref{df41}, $(\chi,\kappa)$ is in the liquid region if and only if the following system of equations
\begin{equation*}
  \begin{cases}
    F_{\kappa}(z,t)=\frac{\chi}{1-\kappa}\\
    \mathrm{St}_{\mathbf{m}_{\omega}}(t)=\log(z)
  \end{cases}
\end{equation*}
 has non-real roots, where
\begin{equation*}
F_{\kappa}(z,t)=\frac{z}{1-\kappa}\left(\frac{t}{z}-\frac{1}{z-1}+\frac{\kappa}{n}\sum_{i\in I_2\cap\{1,2,\ldots,n\}}\frac{1}{z+c_i}\right)+\frac{z}{z-1}.
\end{equation*}

By explicit computation, one obtains the following:
\begin{lemma}Let $t\in \HH$, where $\HH$ is the upper half plane. Then $(z(t),t)$ is the solution of~\eqref{es} if and only if the following equation holds
\begin{multline}
p^{\chi,\kappa}(t):=\label{pck}\\
\chi-\left(t+\kappa\left(\frac{1}{\exp(-\mathrm{St}_{\mathbf{m}_{\omega}}(t))-1}+\frac{1}{n}\sum_{i\in I_2\cap \{1,2,\ldots,n\}}\frac{1}{c_i\exp(-\mathrm{St}_{\mathbf{m}_{\omega}}(t))+1}\right)\right)=0
\end{multline}
 \end{lemma}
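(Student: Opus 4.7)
The proof is a direct algebraic substitution. The plan is to use the second equation of~\eqref{es} to eliminate $z$ in favor of $t$, then simplify the first equation to recover exactly $p^{\chi,\kappa}(t)=0$.

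First, I would solve the second equation of~\eqref{es} for $z$, obtaining $z = \exp(\mathrm{St}_{\mathbf{m}_{\omega}}(t))$, so that $z^{-1}=\exp(-\mathrm{St}_{\mathbf{m}_{\omega}}(t))$. Substituting this into the first equation $F_{\kappa}(z,t)=\frac{\chi}{1-\kappa}$ and multiplying through by $(1-\kappa)$, the term $\frac{z}{1-\kappa}\cdot\frac{t}{z}$ contributes $t$, and the remaining terms collapse after combining the coefficient of $\frac{z}{z-1}$: one gets
\begin{equation*}
  t \;-\; \frac{\kappa z}{z-1} \;+\; \frac{\kappa z}{n}\sum_{i\in I_2\cap\{1,\dots,n\}}\frac{1}{z+c_i} \;=\; \chi.
\end{equation*}

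Next, I would rewrite each $z$-dependent factor in terms of $w:=z^{-1}=\exp(-\mathrm{St}_{\mathbf{m}_{\omega}}(t))$. The two elementary identities are
\begin{equation*}
  \frac{z}{z-1}=\frac{1}{1-w}=-\frac{1}{w-1},
  \qquad
  \frac{z}{z+c_i}=\frac{1}{1+c_i w},
\end{equation*}
so the displayed equation becomes
\begin{equation*}
  t+\kappa\!\left(\frac{1}{\exp(-\mathrm{St}_{\mathbf{m}_{\omega}}(t))-1}+\frac{1}{n}\sum_{i\in I_2\cap\{1,\dots,n\}}\frac{1}{c_i\exp(-\mathrm{St}_{\mathbf{m}_{\omega}}(t))+1}\right)=\chi,
\end{equation*}
which is precisely $p^{\chi,\kappa}(t)=0$. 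Conversely, starting from $p^{\chi,\kappa}(t)=0$ and defining $z(t)$ via $z(t)=\exp(\mathrm{St}_{\mathbf{m}_{\omega}}(t))$, the same manipulations run in reverse and show that $(z(t),t)$ solves~\eqref{es}.

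There is essentially no obstacle here beyond bookkeeping; the only minor point of care is the sign when converting $\frac{z}{z-1}$ into $\frac{1}{\exp(-\mathrm{St}_{\mathbf{m}_{\omega}}(t))-1}$ (the two differ by a minus sign, which is absorbed into the overall sign in front of $\kappa$). One should also note that the substitution $z=\exp(\mathrm{St}_{\mathbf{m}_{\omega}}(t))$ is well-defined for $t\in\HH$ away from $\mathrm{Support}(\mathbf{m}_\omega)$, and that $z\neq 0,1,-c_i$ generically, so none of the denominators appearing in the manipulation vanish.
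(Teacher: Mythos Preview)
Your proof is correct and is exactly the explicit computation the paper alludes to; the paper itself merely states ``By explicit computation, one obtains the following'' without writing out the details, and your substitution $z=\exp(\mathrm{St}_{\mathbf{m}_\omega}(t))$ followed by the rewriting via $w=z^{-1}$ is precisely that computation.
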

 
 Note that there exists a unique solution $t\in\HH$ of~\eqref{pck}.
 Let $S(t):=\mathrm{St}_{\mathbf{m}_{\omega}}(t)$.
 
 \begin{proposition}
   Let $\mathcal{L}$ denote the liquid region, and $T_\mathcal{L}$ the mapping
 \begin{equation*}
 T_{\mathcal{L}}: \mathcal{L}\rightarrow \HH
 \end{equation*}
 sending $(\chi,\kappa)\in \mathcal{L}$ to the corresponding unique root of~\eqref{pck} in $\HH$.
 Then, $T_{\mathcal{L}}$ is a homeomorphism with inverse
 $t\mapsto(\chi_{\mathcal{L}}(t),\kappa_{\mathcal{L}}(t))$ for all $t\in \HH$, given by
 \begin{align}
 \chi_{\mathcal{L}}(t)&=t-\frac{(t-\ol{t})\zeta(t)}{\zeta(t)-\zeta(\ol{t})}\label{chi}\\
 \kappa_{\mathcal{L}}(t)&=-\frac{t-\ol{t}}{\zeta(t)-\zeta(\ol{t})},\label{ka}
 \end{align}
 where
 \begin{equation*}
 \zeta(t)=-\frac{\exp(S(t))}{\exp(S(t))-1}+\frac{\exp(S(t))}{n}\sum_{i\in
 I_2\cap\{1,\ldots,n\}}\frac{1}{\exp(S(t))+c_i}.
 \end{equation*}
 \end{proposition}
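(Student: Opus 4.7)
The plan is to reduce the defining equation of $T_{\mathcal{L}}$ to a single complex equation that is \emph{affine} in $(\chi,\kappa)$, and then simply invert by separating real and imaginary parts. To do this, I would first rewrite~\eqref{pck} in a form that exhibits $\zeta(t)$ explicitly: multiplying numerator and denominator of each summand in the parenthesis of~\eqref{pck} by $e^{S(t)}$ gives
\begin{equation*}
\frac{1}{e^{-S(t)}-1} + \frac{1}{n}\sum_{i\in I_2\cap\{1,\ldots,n\}}\frac{1}{c_ie^{-S(t)}+1}
= -\frac{e^{S(t)}}{e^{S(t)}-1}+\frac{e^{S(t)}}{n}\sum_{i\in I_2\cap\{1,\ldots,n\}}\frac{1}{e^{S(t)}+c_i}=\zeta(t),
\end{equation*}
so that $p^{\chi,\kappa}(t)=0$ is equivalent to the single complex equation
\begin{equation*}
t+\kappa\,\zeta(t)=\chi.
\end{equation*}

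Next, I would use that $S(t)=\mathrm{St}_{\mathbf{m}_\omega}(t)$ is a Stieltjes transform of a real measure, hence satisfies $\overline{S(t)}=S(\ol t)$, which propagates to $\overline{\zeta(t)}=\zeta(\ol t)$ since the coefficients in the definition of $\zeta$ are all real. Given a point $t\in\HH$, the equation $t+\kappa\zeta(t)=\chi$ with real unknowns $\chi,\kappa$ is two real equations; taking imaginary parts yields
\begin{equation*}
(t-\ol t)+\kappa(\zeta(t)-\zeta(\ol t))=0,
\end{equation*}
which solves directly for $\kappa=\kappa_{\mathcal{L}}(t)=-\frac{t-\ol t}{\zeta(t)-\zeta(\ol t)}$ (formula~\eqref{ka}). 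Substituting this back gives $\chi_{\mathcal{L}}(t)=t+\kappa_{\mathcal{L}}(t)\zeta(t)$, which rearranges to~\eqref{chi}. Conversely, starting from $(\chi,\kappa)\in\mathcal{L}$, the non-real root $t$ of $p^{\chi,\kappa}$ in $\HH$ exists and is unique by the discussion preceding the proposition (and ultimately by Lemma~\ref{l34} and Theorem~\ref{tm38}), so $T_{\mathcal{L}}$ is well defined and the inversion formulas above identify it as a set-theoretic bijection onto its image.

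For the homeomorphism statement, I would argue separately for continuity in each direction. Continuity of $t\mapsto(\chi_{\mathcal{L}}(t),\kappa_{\mathcal{L}}(t))$ reduces to continuity of $\zeta$ on $\HH$ (which is immediate since $S$ is holomorphic off $\mathrm{Support}(\mathbf{m}_\omega)\subset\RR$) together with non-vanishing of the denominator $\zeta(t)-\zeta(\ol t)=2i\,\Im\zeta(t)$ on $\HH$. Continuity of $T_{\mathcal{L}}$ itself follows from the holomorphic implicit function theorem applied to $p^{\chi,\kappa}(t)=0$, provided $\partial_t p^{\chi,\kappa}(t)\neq 0$ at the distinguished non-real root; this is guaranteed because the real roots of~\eqref{es} are simple (and the non-real roots come in a complex-conjugate pair) in the interior of the liquid region, as established in Section~\ref{s3}.

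The main obstacle I expect is the non-vanishing $\Im\zeta(t)\neq 0$ for $t\in\HH$, which is exactly what guarantees that the inverse formulas~\eqref{chi}--\eqref{ka} are well-defined and that the image lies in $\mathcal{L}$ (rather than on the frozen boundary). I would handle this by connecting it back to the fact that $(\chi_{\mathcal{L}}(t),\kappa_{\mathcal{L}}(t))\in\mathcal{L}$ forces $p^{\chi_{\mathcal{L}}(t),\kappa_{\mathcal{L}}(t)}$ to have a genuinely non-real root (namely $t$ itself), so the degeneracy $\Im\zeta(t)=0$ can only occur on $\partial\mathcal{L}$, where by Theorem~\ref{fb} the equation develops a double real root; on the interior this exclusion is automatic. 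The remaining bookkeeping is the routine check, already present in~\cite{bk} for the square grid, that the two inverse formulas indeed parametrize $\mathcal{L}$ and not some larger subset of the plane.
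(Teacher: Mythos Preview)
Your algebraic derivation of the inverse formulas is correct and in fact more explicit than the paper, which simply asserts that ``explicit computations show that $(\chi_{\mathcal{L}}(t),\kappa_{\mathcal{L}}(t))$ satisfies \eqref{pck}.'' The rewriting $p^{\chi,\kappa}(t)=0 \Leftrightarrow t+\kappa\,\zeta(t)=\chi$ and the use of the symmetry $\overline{\zeta(t)}=\zeta(\ol t)$ to solve for $(\chi,\kappa)$ is exactly the right mechanism.

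Where your argument falls short is in the topological part, and specifically in two places that are intertwined. First, your proposed handling of $\Im\zeta(t)\neq 0$ is circular: you invoke ``$(\chi_{\mathcal{L}}(t),\kappa_{\mathcal{L}}(t))\in\mathcal{L}$'' to rule out the degeneracy, but if $\Im\zeta(t)=0$ the formulas \eqref{chi}--\eqref{ka} are undefined and there is no point of $\mathcal{L}$ to speak of. You are also conflating the two domains when you say ``on the interior this exclusion is automatic'': $t$ lives in $\HH$, not in $\mathcal{L}$, and the relevant question is whether the degeneracy can occur for some $t\in\HH$, not for some $(\chi,\kappa)$ in the interior of $\mathcal{L}$. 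Second, you never verify that the inverse formulas actually land in $\mathcal{L}$---in particular that $\kappa_{\mathcal{L}}(t)\in(0,1)$ for every $t\in\HH$---which is what surjectivity $T_{\mathcal{L}}(\mathcal{L})=\HH$ really requires. Calling this ``routine bookkeeping'' understates it: without it you have only shown that $T_{\mathcal{L}}$ is a bijection onto its image, not onto $\HH$.

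The paper organizes the proof differently, as six separate claims ($\mathcal{L}\neq\emptyset$; $\mathcal{L}$ open; $T_{\mathcal{L}}$ continuous; injective; with continuous inverse on its image; surjective onto $\HH$), and carries out one of them in full: it expands $S(t)=t^{-1}+\alpha t^{-2}+\beta t^{-3}+O(|t|^{-4})$ at infinity and computes that $\kappa_{\mathcal{L}}(t)=1-(\beta-\alpha^2-\tfrac{1}{12}+\tfrac{1}{n}\sum_{i}(1+c_i)^{-2})|t|^{-2}+O(|t|^{-3})$ with a strictly positive coefficient (using $\beta-\alpha^2\geq\tfrac{1}{12}$ for measures dominated by Lebesgue), and that $\chi_{\mathcal{L}}(t)$ stays in the correct horizontal range. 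This anchors the connectedness/invariance-of-domain argument borrowed from \cite{bk} that upgrades ``bijection onto image'' to ``bijection onto $\HH$.'' That asymptotic computation is the substantive ingredient missing from your sketch.
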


 \begin{proof}
   The proof is an adaptation of Proposition~6.2 of~\cite{bk}; see also Theorem~2.1 of~\cite{DM15}.
 
 It suffices to show all the following statements:
 \begin{enumerate}
 \item $\mathcal{L}$ is nonempty.
 \item $\mathcal{L}$ is open.
 \item $T_{\mathcal{L}}: \mathcal{L}\rightarrow \HH$ is continuous.
 \item $T_{\mathcal{L}}: \mathcal{L}\rightarrow \HH$ is injective.
 \item $T_{\mathcal{L}}: \mathcal{L}\rightarrow T_{\mathcal{L}}(\mathcal{L})$ has continuous inverse for all $t\in T_{\mathcal{L}}(\mathcal{L})$.
 \item $T_{\mathcal{L}}(\mathcal{L})=\HH$.
 \end{enumerate}
 
 We first prove (1). Explicit computations show that
 $(\chi_{\mathcal{L}}(t),\kappa_{\mathcal{L}}(t))$ satisfies \eqref{pck}. Since
 $\mathbf{m}_{\omega}$ is a measure on $\RR$ with compact support, assume that
 $\mathrm{Support}(\mathbf{m}_{\omega})\subset[a,b]$ where $a,b\in\RR$. The
 Stieltjes transform satisfies
 \begin{equation*}
 S(t)=\frac{1}{t}+\frac{\alpha}{t^2}+\frac{\beta}{t^3}+O(|t|^{-4}),
\end{equation*}
where $\alpha$ and $\beta$ are the first two moments of the measure
$\bm_\omega$:
\begin{equation*}
\alpha=\int_a^b x\mathbf{m}_{\omega}(dx),\quad
\beta=\int_a^b x^2\mathbf{m}_{\omega}(dx).
\end{equation*}
After computations we have
\begin{align*}
\chi&=\alpha-\frac{1}{2}+\frac{1}{n}\sum_{i=I_2\cap\{1,2,\ldots,n\}}\frac{1}{1+c_i}+O(|t|^{-1}),\\
\kappa&=1+\left(-\beta-\frac{1}{n}\sum_{i\in I_2\cap\{1,2,\ldots,n\}}\frac{1}{(1+c_i)^2}+\frac{1}{12}+\alpha^2\right)\frac{1}{|t|^2}+O(|t|^{-3}).
\end{align*}

Let $\lambda$ be the Lebesgue measure on $\RR$. Recall that $\mathbf{m}_{\omega}$ is a probability measure supported in the interval $[a,b]$ and $\mathbf{m}_{\omega}\leq\lambda$. Hence we have $b-a\geq 1$. Then we infer
\begin{equation*}
a+\frac{1}{2}=\int_{a}^{a+1}xdx\leq\alpha\leq\int_{b-1}^{b}xdx=b-\frac{1}{2}.
\end{equation*}
Similarly,
\begin{equation*}
\beta-\alpha^2\geq \frac{1}{2}\int_{0}^1\int_{0}^1(x-y)^2dxdy=\frac{1}{12}.
\end{equation*}
As a result, $(\chi,\kappa)\in\left(a+\frac{1}{2},b-\frac{1}{2}\right)\times(0,1)$ whenever $|t|$ is sufficiently large. Then (1) follows.

The facts (2) and (3) follow from Rouch\'e's theorem by the same arguments as in
the proof of Proposition 6.2 in~\cite{bk}. The facts (4)-(6) can also be
obtained by the same arguments as in the proof of Proposition 6.2 in~\cite{bk}.
 \end{proof}

 \subsection{Gaussian Free Field} 
 
 Let $C_0^{\infty}$ be the space of smooth real-valued functions with compact
 support in the upper half plane $\HH$. The \emph{Gaussian free field} (GFF)
 $\Xi$ on $\HH$ with the zero boundary condition is a collection of Gaussian
 random variables $\{\Xi_{f}\}_{f\in C_0^{\infty}}$ indexed by functions in
 $C_0^{\infty}$, such that the covariance of two Gaussian random variables
 $\Xi_{f_1}$, $\Xi_{f_2}$ is given by
 \begin{equation}
 \mathrm{Cov}(\Xi_{f_1},\Xi_{f_2})=\int_{\HH}\int_{\HH}f_1(z)f_2(w)G_{\HH}(z,w)dzd\ol{z}dwd\ol{w},\label{cvf}
 \end{equation}
 where
 \begin{equation*}
 G_{\HH}(z,w):=-\frac{1}{2\pi}\ln\left|\frac{z-w}{z-\ol{w}}\right|,\qquad z,w\in \HH
 \end{equation*}
 is the Green's function of the Laplacian operator on $\HH$ with the Dirichlet
 boundary condition. The Gaussian free field $\Xi$ can also be considered as a
 random distribution on $C_0^{\infty}$,
 such that for any $f\in C_0^{\infty}$, we have
 \begin{equation*}
 \Xi(f)=\int_{\HH}f(z)\Xi(z)dz:=\xi_f.
 \end{equation*}
Here $\Xi(f)$ is the Gaussian random variable with respect to $f$, which has
mean 0 and variance given by \eqref{cvf}) with $f_1$ and $f_2$ replaced by $f$.
See~\cite{SS07} for more about the GFF.
 
 Consider a contracting square-hexagon lattice $\mathcal{R}(\Omega,\check{a})$.
 Let $\omega$ be a signature corresponding to the boundary row.
 
 By \eqref{pb}, we defined a probability measure on the set of signatures
 \begin{equation*}
 \mathcal{S}^N=(\mu^{(N)},\nu^{(N)},\ldots,\mu^{(1)},\nu^{(1)}).
 \end{equation*}
 By Proposition~\ref{p16}, this measure is exactly the
 probability measure on dimer configurations such that the probability of a
 perfect matching is proportional to the product of weights of present edges.
 
 Define a function $\Delta^N$ on $\RR_{\geq 0}\times \RR_{\geq 0}\times \mathcal{S}\rightarrow \NN$ as follows
 \begin{multline*}
\Delta^{N}: (x,y,(\mu^{(N)},\nu^{(N)},\ldots,\mu^{(1)},\nu^{(1)}))\mapsto \\
 \sqrt{\pi}|\{1\leq s\leq N-\lfloor y\rfloor:\mu_s^{N-\lfloor y\rfloor}+(N-\lfloor y\rfloor)-s\geq x\}|
 \end{multline*}
 This is, up to a deterministic shift, and an affine transformation, the height
 function defined in Section~\ref{subsec:height}, extended to a piecewise constant
 function.

Let $\Delta_{\mathcal{M}}^N(x,y)$ be the push-forward of the measure
$P_{\omega}^N$ on $\mathcal{S}^N$ with respect to $\Delta^N$. For $z\in \HH$,
define
\begin{equation*}
\mathbf{\Delta}_{\mathcal{M}}^N(z):=\Delta_{M}^N(N\chi_{\mathcal{L}}(z),N\kappa_{\mathcal{L}}(z)),
\end{equation*}
where $\chi_{\mathcal{L}}(z)$, $\kappa_{\mathcal{L}}(z)$ are defined by
\eqref{chi}, \eqref{ka}, respectively.

Here is the main theorem we shall prove in the section.

\begin{theorem}
  \label{tt73}
  Let $\mathbf{\Delta}_{\mathcal{M}}^{N}(z)$ be a random function corresponding
  to the random perfect matching of the contracting square-hexagon lattice, as
  explained above, with the following hypothesis on the weights
  $\beta_i^{(N)}$:
\begin{equation*}
\lim_{N\rightarrow\infty}\sup_{i}|\beta_i^{(N)}-1|N=0.
\end{equation*}
  
  Then
$\mathbf{\Delta}_{\mathcal{M}}^{N}(z)-\EE\mathbf{\Delta}_{\mathcal{M}}^{N}(z)$,
seen as a random field, 
converges as $N$ goes to $\infty$ to the Gaussian free field $\Xi$ in $\HH$ with
Dirichlet boundary conditions, in the following sense:
for $0<\kappa\leq 1$, $j\in \NN$, define:
\begin{equation*}
M_j^{\kappa}=\int_{-\infty}^{+\infty}\chi^j(\mathbf{\Delta}_{\mathcal{M}}^{N}(N\chi,N\kappa)-\mathbb{E}\mathbf{\Delta}_{\mathcal{M}}^{N}(N\chi,N\kappa))d\chi,
\end{equation*}
and
\begin{equation*}
\mathbf{M}_j^{\kappa}=\int_{z\in\HH;\kappa_{\mathcal{L}}(z)=\kappa}\left[\chi_{\mathcal{L}}(z)\right]^j\Xi(z)d\chi_{\mathcal{L}}(z).
\end{equation*}
Then:
\begin{equation*}
M_j^{\kappa}\rightarrow \mathbf{M}_j^{\kappa},\ \mathrm{as}\ N\rightarrow\infty.
\end{equation*}
\end{theorem}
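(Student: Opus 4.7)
My plan is to prove the convergence by the method of moments applied to a \emph{multilevel Schur generating function}, in the spirit of~\cite[Theorem~7.1]{bk} adapted to the periodic setting. Up to a deterministic shift and an affine change of variables, $M_j^\kappa$ is the $(j+1)$th power sum of the random counting measure $m(\rho^{k_N})$ with $k_N$ the row at ordinate $\kappa N$. Hence for any finite family $(j_r,\kappa_r)_{r=1}^\ell$, the joint moment $\EE\bigl[\prod_r M_{j_r}^{\kappa_r}\bigr]$ reduces, up to lower order corrections, to a joint moment of power sums of the counting measures of the signatures $\mu^{(N_{\kappa_r})}$ on those rows. To access these I introduce, for fixed rows $k_1<\cdots<k_\ell$, the object
\begin{equation*}
  \mathcal{S}^{\mathrm{multi}}(\mathbf{u}^{(1)},\ldots,\mathbf{u}^{(\ell)}) = \sum_{(\lambda^{(1)},\ldots,\lambda^{(\ell)})} \rho(\lambda^{(1)},\ldots,\lambda^{(\ell)}) \prod_{r=1}^\ell\frac{s_{\lambda^{(r)}}(\mathbf{u}^{(r)})}{s_{\lambda^{(r)}}(\beta^{(r)})},
\end{equation*}
where $\rho$ is the joint law of the signatures on those rows. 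Iterating the branching formula exactly as in Lemma~\ref{lm212}, this multilevel generating function collapses to an explicit ratio of Schur functions $s_\omega$ evaluated at merged lists of variables, times an explicit product of factors of the form $(1+y_\bullet u)/(1+y_\bullet x_\bullet^{(N)})$ coming from the $\Lambda$-rows. The differential operators $\mathcal{D}_j$ of~\eqref{dk}, acting separately on each block of variables $\mathbf{u}^{(r)}$ and evaluated at $\mathbf{u}^{(r)} = \beta^{(r)}$, then extract these joint power sums.

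The asymptotic behavior of $\log \mathcal{S}^{\mathrm{multi}}$ is governed by Proposition~\ref{p25}, whose application under hypothesis~\eqref{bes2} gives, uniformly in a complex neighborhood of $(1,\ldots,1)$, a limit of the form $\sum_{r,i}Q_{\kappa_r}(u_i^{(r)})$ with $Q_\kappa$ from~\eqref{eq:defQ}. A standard cumulant computation on this factorized logarithm then yields two consequences: joint cumulants of order $\ge 3$ of the renormalized power sums vanish, so the limit is jointly Gaussian; and the limiting covariance between the $j_1$th power sum on row $k_{N,1}$ and the $j_2$th power sum on row $k_{N,2}$ is given by a double contour integral on small loops around $1$ with universal kernel $(z_1-z_2)^{-2}$, independent of the periodic weights thanks to identity~\eqref{bn}. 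Establishing this multilevel CLT at the level of the Schur generating function is the main technical step, and requires a careful extension of the single-row moment control already implicit in Proposition~\ref{p213}.

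The final step, and the true geometric hurdle, is to identify this universal bilinear form with the Gaussian free field covariance $\iint f_1 f_2\, G_\HH\, dz\,d\bar z\,dw\,d\bar w$ under the homeomorphism $T_\mathcal{L}\colon\mathcal{L}\to\HH$. I would change variables inside the contour integrals by $z\mapsto t = \mathrm{St}^{(-1)}_{\bm_\omega}(\log z)$; the definitions of $\chi_\mathcal{L},\kappa_\mathcal{L}$ in~\eqref{chi}--\eqref{ka} are tailored precisely so that the small contour around $1$ in the $z$-plane is sent to the level curve $\{\kappa_\mathcal{L}(t)=\kappa_r\}\subset\HH$, and so that the kernel $(z_1-z_2)^{-2}\,dz_1\,dz_2$ pulls back, up to holomorphic terms that integrate to zero against polynomials in $\chi_\mathcal{L}$, to the second derivative of $-\tfrac{1}{2\pi}\log\bigl|(t_1-t_2)/(t_1-\bar t_2)\bigr|$ along those level curves. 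Comparing with~\eqref{cvf} then gives $M_j^\kappa\to \mathbf{M}_j^\kappa$ in the sense of moments. The main obstacle I anticipate is precisely this last identification: extracting the Dirichlet Green's function from the naive double integral requires carefully handling the reflected contour contribution (which supplies the $t_1-\bar t_2$ factor responsible for the Dirichlet boundary condition on $\partial\HH$) and showing that contributions from the other branches of the algebraic curve defining $T_\mathcal{L}$ do not spoil the GFF structure.
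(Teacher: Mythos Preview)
Your overall architecture matches the paper's: reduce $M_j^\kappa$ to power sums of the row signatures, control their joint distribution through a multilevel Schur generating function via the operators $\mathcal{D}_k$, establish asymptotic Gaussianity with an explicit covariance, and then identify that covariance with the pull-back of $G_\HH$ under $T_{\mathcal{L}}$ (the paper simply cites~\cite[Theorem~6.3]{bk} for this last step, while you sketch it). However, there is a real gap in the middle step.

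The issue is your appeal to Proposition~\ref{p25}. That proposition controls only the \emph{first-order} asymptotics $\frac{1}{N}\log\mathcal{S}\to\sum_i Q(u_i)$, which is the LLN input. A factorized leading term says nothing about joint cumulants at the CLT scale: if the logarithm were \emph{exactly} additive the covariance between different rows would be zero. What is actually needed is the ``CLT-appropriate'' condition (Definitions~\ref{dfa}--\ref{asm}): one must show that $\partial_i\partial_j\log\mathcal{S}_{\rho_N,B^{(N)}}$ converges (as $N\to\infty$, not after dividing by $N$) to an explicit $B_\rho(z,w)$, and that triple mixed partials vanish. When all $\beta_i^{(N)}=1$ this is~\cite[Theorem~2.8]{bg2}; the whole point of this section is to extend it to $\beta_i^{(N)}=1+o(N^{-1})$. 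The paper does this in Proposition~\ref{p515} by a route you do not mention: represent $s_{\lambda(N)}$ via the Harish-Chandra--Itzykson--Zuber integral (Lemma~\ref{hciz}), expand $\log I_{N,B,k}$ as a genus expansion with double Hurwitz number coefficients (\`a la~\cite{ggn}, Lemma~\ref{chg}), and then argue term-by-term that the perturbation $\beta_i^{(N)}-1=o(N^{-1})$ does not affect the limit of $\partial_1\partial_2\log$. This is the technical heart; ``standard cumulant computation on the factorized logarithm'' is not a substitute for it.

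A related symptom: the limiting covariance kernel is \emph{not} the bare $(z_1-z_2)^{-2}$. In Theorem~\ref{tt716} it is $C_\rho(z,w)=B_\rho(1+z,1+w)+\frac{1}{(z-w)^2}$, with $B_\rho$ coming from Lemma~\ref{l513}/Proposition~\ref{p515} and depending on $H'_{\mathbf{m}_\omega}$. It is true (and worth saying) that the $y_i$'s do not enter $B_\rho$---only the boundary measure does---but identity~\eqref{bn} concerns the $N^{-2}$-scale free energy and does not by itself justify this; you need the second-derivative statement of Proposition~\ref{p515}.
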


\subsection{Covariance}

To derive this statement, we use the technology developed by Bufetov and Gorin,
linking the asymptotic behavior of Schur generating functions to the
fluctuations for the related tiling/interlacing signature model~\cite{bg,bg2}.

\begin{definition}
  \label{dfa}
  A sequence of symmetric functions $\{H_N(u_1,\ldots,u_N)\}_{N\geq 1}$ is
  \emph{appropriate} (or \emph{CLT-appropriate}) if there exist two collections
  of real numbers $\{\gamma_k\}_{k\geq 1}$, $\{\lambda_{k,l}\}_{k,l\geq 1}$,
  such that the following properties are satisfied:
  \begin{itemize}
    \item for any $N$, the function $\mathbf{u}\mapsto\log H_N(\mathbf{u})$
      is holomorphic in an open complex neighborhood of
      $B^{(N)}=(\beta_{1}^{{N}},\ldots,\beta_{N}^{(N)})$, where
      $\mathbf{u}=(u_1,\ldots,u_N)$, and
      \begin{equation*}
	\lim_{N\rightarrow\infty}\sup_{1\leq i\leq N}|\beta_i^{(N)}-1|N=0.
      \end{equation*}
    \item for any index $i$ and $k\in \NN$, we have
      \begin{equation*}
	\lim_{N\rightarrow\infty}
	\left.\frac{\partial_i^{k}\log H_N(\mathbf{u})}{N}\right|_{\mathbf{u}=B^{(N)}}=\gamma_k.
      \end{equation*}
    \item For any distinct indices $i,j$ and any $k,l\in\NN$, we have
      \begin{equation*}
	\lim_{N\rightarrow\infty}
	\left.\partial_i^k\partial_j^l\log H_N(\mathbf{u})\right|_{\mathbf{u}=B^{(N)}}=\lambda_{k,l}
      \end{equation*}
    \item For any $s\in\NN$ and any indices $i_1,\ldots,i_s$, such that there
      are at least three distinct numbers in $i_1,\ldots,i_s$, we have
      \begin{equation*}
	\lim_{N\rightarrow\infty}
	\left.\partial_{i_1}\partial_{i_2}\ldots\partial_{i_s}
	\log H_N(\mathbf{u})\right|_{\mathbf{u}=B^{(N)}}=0.
      \end{equation*}
    \item The power series
      \begin{equation*}
	\sum_{k=1}^{\infty}\frac{\gamma_k}{(k-1)!}(z-1)^{k-1},\qquad \qquad 
	\sum_{k=1,l=1}^{\infty}\frac{\lambda_{k,l}}{(k-1)!(l-1)!}(z-1)^{k-1}(w-1)^{l-1}
      \end{equation*}
      converge in an open neighborhood of $z=1$ and $(z,w)=(1,1)$, respectively.
  \end{itemize}
\end{definition}

The definition of CLT-apprioriateness is extended to measures using their Schur
gererating functions as follows:
\begin{definition}
  \label{asm}
  Let as before
\begin{equation*}
B^{(N)}=(\beta_{1}^{{N}},\ldots,\beta_{N}^{(N)}).
\end{equation*}
satisfying the hypothesis
\begin{equation*}
\lim_{N\rightarrow\infty}\sup_{1\leq i\leq N}|\beta_i^{(N)}-1|N=0.
\end{equation*}
A sequence of measures $\rho=\{\rho_N\}_{N\geq 1}$ is \emph{appropriate} (or
\emph{CLT-appropriate}) if the sequence of Schur generating functions
$\{\mathcal{S}_{\rho_N,B^{(N)}}(u_1,\ldots,u_N)\}_{N\geq 1}$, defined as in
Definition~\ref{df21}, is appropriate in the sense of Definition~\ref{dfa}.

For such a sequence we define functions
\begin{align*}
A_{\rho}(z)&=\sum_{k=1}^{\infty}\frac{\gamma_k}{(k-1)!}(z-1)^{k-1}\\
B_{\rho}(z,w)&=\sum_{k=1,l=1}^{\infty}\frac{\lambda_{k,l}}{(k-1)!(l-1)!}(z-1)^{k-1}(w-1)^{l-1}\\
C_{\rho}(z,w)&=B_{\rho}(1+z,1+w)+\frac{1}{(z-w)^2}.
\end{align*}
Here $\{\gamma_k\}_{k\geq 1}$ and $\{\lambda_{k,l}\}_{k,l\geq 1}$ are obtained from the definition of CLT-appropriate symmetric functions with $H_N=\mathcal{S}_{\rho_N,B^{(N)}}$ as in Definition~\ref{dfa}.
\end{definition}

\begin{lemma}\label{l57}Assume that the Schur generating function of a sequence of probability measures $\{\rho_N\}_{N\geq 1}$ on $\GT_N$ satisfies the conditions
\begin{eqnarray*}
&&\lim_{N\rightarrow\infty}\frac{\partial_1\log\mathcal{S}_{\rho_N,B^{{N}}}(u_1,\ldots,u_k,\beta_{k+1}^{{(N)}},\ldots,\beta_N^{(N)})}{N}=Q_1(u_1),\qquad \forall\ k\geq 1,\\
&&\lim_{N\rightarrow\infty}\partial_1\partial_2\log\mathcal{S}_{\rho_N,B^{{N}}}(u_1,\ldots,u_k,\beta_{k+1}^{{(N)}},\ldots,\beta_N^{(N)})=Q_2(u_1,u_2),\qquad \forall\ k\geq 1,\\
&&\lim_{N\rightarrow\infty}\sup_{1\leq i\leq N}|\beta_i^{(N)}-1|N=0
\end{eqnarray*}
where $Q_1(z)$, $Q_2(z,w)$ are holomorphic functions, and the convergence is uniform in a complex neighborhood of unity. Then $\{\rho_N\}$ is a (CLT-)appropriate sequence of measures with 
\begin{eqnarray*}
A_{\rho}(z)=Q_1(z),\qquad B_{\rho}(z,w)=Q_2(z,w)
\end{eqnarray*}

\end{lemma}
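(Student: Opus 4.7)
The plan is to derive the five conditions in Definition~\ref{dfa} (for $H_N=\mathcal{S}_{\rho_N,B^{(N)}}$) from the two convergence hypotheses by a standard application of the Weierstrass convergence theorem for holomorphic functions: uniform convergence on compact subsets of a complex domain passes to all derivatives. Since by hypothesis the sequence $\frac{1}{N}\partial_1\log\mathcal{S}_{\rho_N,B^{(N)}}(u_1,\ldots,u_k,\beta_{k+1}^{(N)},\ldots,\beta_N^{(N)})$ converges to $Q_1(u_1)$ uniformly on a complex neighborhood of $(1,\ldots,1)$, and each function in the sequence is holomorphic there, Cauchy's integral formula gives uniform convergence of all further partial derivatives. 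In particular, at the base point $u_1=1$,
\begin{equation*}
\lim_{N\to\infty}\frac{\partial_1^k\log\mathcal{S}_{\rho_N,B^{(N)}}}{N}\bigg|_{\mathbf{u}=B^{(N)}}=Q_1^{(k-1)}(1).
\end{equation*}
By the symmetry of $\mathcal{S}_{\rho_N,B^{(N)}}$ in its variables (Schur polynomials are symmetric), the same holds with index $1$ replaced by any index $i$, provided we also use that $\beta_i^{(N)}\to 1$ as $N\to\infty$ (this is where the hypothesis $\sup_i|\beta_i^{(N)}-1|N\to 0$ enters, to move the evaluation point to $(1,\dots,1)$ up to a negligible error). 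Setting $\gamma_k=Q_1^{(k-1)}(1)$, the defining series for $A_\rho$ is exactly the Taylor expansion of $Q_1$ at $1$, so $A_\rho\equiv Q_1$ in a neighborhood of $1$.

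For mixed derivatives with exactly two distinct indices, the same Weierstrass argument applied to the second hypothesis yields, for any $k,l\geq 1$,
\begin{equation*}
\lim_{N\to\infty}\partial_1^k\partial_2^l\log\mathcal{S}_{\rho_N,B^{(N)}}\bigg|_{\mathbf{u}=B^{(N)}}=\partial_{u_1}^{k-1}\partial_{u_2}^{l-1}Q_2(1,1),
\end{equation*}
and by symmetry the result is independent of which pair of indices is used. Setting $\lambda_{k,l}=\partial_{u_1}^{k-1}\partial_{u_2}^{l-1}Q_2(1,1)$, the double series defining $B_\rho$ is again precisely the bivariate Taylor expansion of $Q_2$ at $(1,1)$, giving $B_\rho\equiv Q_2$.

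For the condition on mixed derivatives involving at least three distinct indices, the key observation is that the hypothesized limit $Q_2(u_1,u_2)$ depends only on the two variables that are differentiated, not on the other variables left free. Concretely, by symmetry we may relabel any three indices as $1,2,3$ and apply the second hypothesis with $k=3$: the quantity $\partial_1\partial_2\log\mathcal{S}_{\rho_N,B^{(N)}}(u_1,u_2,u_3,\beta_4^{(N)},\ldots,\beta_N^{(N)})$ converges uniformly on a complex neighborhood of $(1,1,1)$ to the function $Q_2(u_1,u_2)$, which is constant in $u_3$. Differentiating once more in $u_3$ and applying Weierstrass, $\partial_1\partial_2\partial_3\log\mathcal{S}_{\rho_N,B^{(N)}}$ converges to $0$ uniformly. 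Iterating by taking further holomorphic derivatives (and again using symmetry to handle arbitrary indices $i_1,\dots,i_s$ with at least three distinct values) gives the required vanishing. The convergence of the two power series in Definition~\ref{dfa} is automatic from the holomorphy of $Q_1$ near $1$ and of $Q_2$ near $(1,1)$.

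The only real subtlety — the main obstacle, to the extent there is one — is ensuring that the evaluation point $B^{(N)}=(\beta_1^{(N)},\ldots,\beta_N^{(N)})$ can be replaced by $(1,\dots,1)$ without disturbing the limits of derivatives. The hypothesis $\sup_i|\beta_i^{(N)}-1|N\to 0$ handles this: the first derivatives contribute $N$ powers of $\beta_i^{(N)}-1=o(N^{-1})$ and hence cause a shift of size $o(1)$ in quantities of order $N$, while higher derivatives and the mixed ones are uniformly bounded on the neighborhood, so evaluation at $B^{(N)}$ differs from evaluation at $(1,\dots,1)$ by a quantity tending to zero. Once this is checked, the rest is a direct application of complex-analytic differentiation of uniformly convergent sequences.
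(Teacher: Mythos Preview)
Your argument is correct and is the natural one; the paper itself states this lemma without proof, treating it as an immediate consequence of Definition~\ref{dfa} together with the Weierstrass convergence theorem (uniform limits of holomorphic functions may be differentiated term by term, with the derivatives again converging uniformly). Your handling of the three-or-more-distinct-indices condition --- observing that the limit $Q_2(u_1,u_2)$ is independent of the remaining free variables, so one further differentiation in $u_3$ yields zero --- is exactly the point.

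One small remark: the ``subtlety'' you highlight about the evaluation point $B^{(N)}$ versus $(1,\dots,1)$ is milder than you suggest. Since the convergence of $\tfrac{1}{N}\partial_1^k\log\mathcal{S}_{\rho_N,B^{(N)}}$ to $Q_1^{(k-1)}$ (and similarly for the mixed derivatives) is uniform on a complex neighborhood of $1$, and $\beta_j^{(N)}\to 1$, one may simply set $u_j=\beta_j^{(N)}$ in that uniform statement and then use continuity of $Q_1^{(k-1)}$ at $1$; no quantitative rate on $|\beta_i^{(N)}-1|$ is needed for this step. The hypothesis $\sup_i|\beta_i^{(N)}-1|N\to 0$ enters only because it appears verbatim as the first bullet of Definition~\ref{dfa}, not because the remaining verifications require it.
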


Let $t$ be a positive integer, and $a_t\in (0,1]$ be a real number. Let $\rho_N$ be a probability measure on $\GT_N$. Define the random variables
\begin{eqnarray*}
p_{k,t}^{(N)}:=\sum_{i=1}^{[a_tN]}(\lambda_i^{(t)}+[a_tN]-i)^k,
\end{eqnarray*}
where $k=1,2,\ldots$, and $\lambda=(\lambda_1,\ldots,\lambda_N)$ is $\rho_N$-distributed.

\begin{theorem}\label{gff1}Let $\rho=\{\rho_N\}_{N\geq 1}$ be an appropriate sequence of measures on signatures with limiting functions $A_{\rho}(z)$ and $C_{\rho}(z,w)$, as defined in Definition~\ref{asm}. Then the collection of random variables 
\begin{eqnarray*}
\{N^{-k}(p_{k,t}^{(N)}-\mathbb{E}p_{k,t}^{(N)})\}_{k\in \NN;t=1,\ldots,m}
\end{eqnarray*}
converges, as $N\rightarrow\infty$, in the sense of moments, to the Gaussian vector with zero mean and covariance
\begin{eqnarray*}
\lim_{N\rightarrow\infty}\frac{\mathrm{cov}(p_{k_1,t_1}^{(N)},p_{k_2,t_2}^{(N)})}{N^{k_1+k_2}}=\frac{a_{t_1}^{k_1}a_{t_2}^{k_2}}{(2\pi\mathbf{i})^2}&&\oint_{|z|=\epsilon}\oint_{|w|=2\epsilon}\left(\frac{1}{z}+1+(1+z)A_{t_1}(1+z)\right)^{k_1}\\
&&\times\left(\frac{1}{w}+1+(1+w)A_{t_2}(1+w)\right)^{k_2}C_{t_2}(z,w)dzdw,
\end{eqnarray*}
where 
\begin{itemize}
  \item for $i=1,2$, $A_{t_i}(z)$ and $C_{t_i}(z)$ are analytic functions in a
    complex neighborhood of $1$ obtained by Definition~\ref{asm} and Lemma~\ref{l57} from
    the induced measure on signatures of the $2[a_t N]$-th row of the
    contracting square-hexagon lattice.
\item the $z$- and $w$-contours of integration are counter-clockwise and $\epsilon\ll 1$.
\item $1\leq t_1\leq t_2\leq m$.
\end{itemize}
\end{theorem}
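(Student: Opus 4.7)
The plan is to use the differential-operator technology developed by Bufetov--Gorin~\cite{bg,bg2}, combined with residue asymptotics. The central observation is that by~\cite[Proposition~4.3]{bg}, the Schur functions $s_\lambda(u_1,\ldots,u_N)$ are eigenfunctions of the operators $\mathcal{D}_k$ defined in~\eqref{dk} with eigenvalue $\sum_i (\lambda_i + N - i)^k$. Consequently, for the marginal measure $\rho_N^{(t)}$ describing the $2[a_t N]$-th row, the joint moments of the power sums $p_{k,t}^{(N)}$ can be recovered from iterated application of the differential operators $\mathcal{D}_{k}$ to the corresponding Schur generating function evaluated at $\mathbf{u} = B^{(N)}$.

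First I would establish, for each fixed $t$, that the marginal measure $\rho_N^{(t)}$ is itself CLT-appropriate in the sense of Definition~\ref{asm}. The limiting functions $A_{t}(z)$ and $C_{t}(z,w)$ are obtained via Lemma~\ref{l57} by computing the limits of $\frac{1}{N}\partial_1 \log \mathcal{S}_{\rho_N^{(t)}, B^{(N)}}$ and $\partial_1 \partial_2 \log \mathcal{S}_{\rho_N^{(t)}, B^{(N)}}$ respectively at $\mathbf{u} = B^{(N)}$, which are in turn controlled by $A_\rho$ and $C_\rho$ together with an explicit analytic transformation coming from the multilevel Markovian structure expressed in Lemma~\ref{l212} and Lemma~\ref{lm212}. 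Next, using the eigenfunction property one writes, for a single row,
\begin{equation*}
\mathbb{E}\!\left[\prod_{j=1}^{m} p_{k_j,t}^{(N)}\right]
= \left.\bigl(\mathcal{D}_{k_1}\cdots\mathcal{D}_{k_m} \,
\mathcal{S}_{\rho_N^{(t)}, B^{(N)}}\bigr)\right|_{\mathbf{u}=B^{(N)}}.
\end{equation*}
For the joint case $t_1 < t_2$, one inserts an additional summation over intermediate signatures and exploits that the marginal on row $t_2$ together with the transition from $t_2$ down to $t_1$ can be encoded as a two-level Schur generating function; the key identity is that the operator $\mathcal{D}_{k_1}$ applied at level $t_1$ commutes with the transition and produces the \emph{same} limiting function $C_{t_2}(z,w)$ as source of fluctuation, which is why the stated covariance involves only $C_{t_2}$.

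The main technical step is the asymptotic analysis of these differential-operator expressions. Expanding $\mathcal{D}_k$ via the Vandermonde identity, one rewrites its action on $\log \mathcal{S}$ as a finite sum of products of partial derivatives; the CLT-appropriate hypothesis guarantees that, up to a deterministic centering, only terms with exactly two distinct indices contribute to leading order (terms with three or more distinct indices vanish by the third bullet of Definition~\ref{dfa}). This produces the vanishing of all cumulants of order $\geq 3$ for the normalized variables $N^{-k}(p_{k,t}^{(N)} - \mathbb{E} p_{k,t}^{(N)})$, so the limit is jointly Gaussian. The variance/covariance can then be expressed as a double contour integral by encoding $(1+z)A_t(1+z)$ and $C_t(z,w)$ as generating functions of the first and second log-derivatives of $\mathcal{S}_{\rho_N^{(t)}, B^{(N)}}$, and applying the residue calculus formula from~\cite[\S 5]{bg}, which yields exactly the stated contour integral after the change of variable $u \mapsto 1+z$, $v \mapsto 1+w$ and the substitution $(\lambda_i+[a_tN]-i)^k \leftrightarrow (\frac{1}{z}+1+(1+z)A_t(1+z))^k$ coming from the eigenvalue formula combined with Stirling-type cancellations at $\mathbf{u}=B^{(N)}$.

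The main obstacle will be the joint convergence for $t_1 \neq t_2$: one must track carefully how a differential operator applied at the lower level $t_1$ interacts, through the (co)interlacement and stochastic links $\mathrm{pr}_{C_i}$ and $\mathrm{st}_{B_i}$ from~\eqref{d3}--\eqref{d4}, with the Schur generating function of the higher level $t_2$, and verify that this produces the asymmetric appearance of $C_{t_2}$ rather than a symmetric average of $C_{t_1}$ and $C_{t_2}$. A secondary concern is that the evaluation point $B^{(N)} = (\beta_1^{(N)},\ldots,\beta_N^{(N)})$ differs from $(1,\ldots,1)$; however, by the same comparison argument used in the proof of Proposition~\ref{p25}, the hypothesis $\sup_i |\beta_i^{(N)}-1|\, N \to 0$ makes the resulting perturbation of derivatives of order $o(1)$, so that the limit is unchanged. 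Once the covariance formula is established on polynomial test functions via power sums, standard density arguments extend the convergence to the claimed class of observables.
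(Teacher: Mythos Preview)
Your approach is correct and essentially coincides with the paper's: both rely on the Bufetov--Gorin differential-operator machinery (Lemma~\ref{l59} and Proposition~\ref{p510} here), the eigenfunction property of Schur functions for $\mathcal{D}_k$, and the observation that the CLT-appropriate hypothesis kills all higher cumulants so that only the pairwise functions $G_{l_1,l_2;a_q^{\tau_1},a_s^{\tau_2}}$ survive in the limit, giving Gaussianity with the stated covariance.

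One point of divergence worth flagging: for the perturbation from $(1,\ldots,1)$ to $B^{(N)}$, you invoke the Schur-function comparison argument of Proposition~\ref{p25}. The paper instead reduces the whole theorem to the known case $\beta_i^{(N)}\equiv 1$ (which is \cite[Theorem~2.8]{bg2}) by showing directly that
\[
\lim_{N\to\infty}\left.\frac{G_{l_1,l_2;a_1^{\tau_1},a_2^{\tau_2}}(\mathbf{u})}{N^{l_1+l_2}}\right|_{\mathbf{u}=B^{(N)}}
=\lim_{N\to\infty}\left.\frac{G_{l_1,l_2;a_1^{\tau_1},a_2^{\tau_2}}(\mathbf{u})}{N^{l_1+l_2}}\right|_{\mathbf{u}=1^N},
\]
and this equality is read off immediately from the \emph{definition} of an appropriate sequence (Definitions~\ref{dfa} and~\ref{asm}), since the limits of $\frac{1}{N}\partial_i\log S$ and $\partial_i\partial_j\log S$ at $B^{(N)}$ and at $1^N$ coincide by hypothesis, and $G_{l_1,l_2}$ is built from exactly these quantities (Lemma~\ref{l510}, Lemma~\ref{lsmn}). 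The Proposition~\ref{p25} estimate controls the \emph{value} of $\log$-ratios of Schur functions, not their derivatives, so it does not by itself deliver what is needed here; the paper's route via the structure of $G_{l_1,l_2}$ is both shorter and avoids that issue.
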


\begin{definition}
  \label{df54}
Let $B=(\beta_1,\beta_2,\ldots,\beta_n,\ldots)$.
Let $m$ be a positive integer and $\epsilon>0$. Let $\Lambda_{\epsilon}^m$ be the space of analytic symmetric functions in the region
\begin{equation*}
\{(z_1,\ldots,z_m)\in\CC^m: z_i\in \cup_{i=1}^{\infty}O_{\epsilon}(\beta_i)\},
\end{equation*}
where
\begin{equation*}
O_{\epsilon}(\beta_i)=\{z:|z-\beta_i|<\epsilon\}.
\end{equation*}
The space $\Lambda_{\epsilon}^m$ can be considered as a topological space with
topology of uniform convergence in the region above.
Let
\begin{equation*}
\Lambda^m:=\cup_{\epsilon>0}\Lambda_{\epsilon}^m
\end{equation*}
the topological space endowed with the topology of direct limit.

Let $\mathbf{p}_{m,n,B}: \Lambda^m\rightarrow\Lambda^n$ be a map with the properties below
\begin{enumerate}
\item $\mathbf{p}_{m,n,B}$ is a continuous linear map.
\item For every $\lambda\in \GT_m$, 
\begin{eqnarray*}
\mathbf{p}_{m,n,B}\left(\frac{s_{\lambda}(u_1,\ldots,u_m)}{s_{\lambda}(\beta_1,\ldots,\beta_m)}\right)=\sum_{\mu\in\GT_n}c_{\lambda,\mu}^{\mathbf{p}_{m,n,B}}\frac{s_{\mu}(u_1,\ldots,u_n)}{s_{\mu}(\beta_1,\ldots,\beta_n)},\qquad c_{\lambda,\mu}^{\mathbf{p}_{m,n,B}}\geq 0.
\end{eqnarray*}
\item For any $f\in \Lambda^m$ we have
\begin{equation*}
f(\beta_1,\ldots,\beta_m)=\mathbf{p}_{m,n,B}(f)(\beta_1,\ldots,\beta_n).
\end{equation*}
\end{enumerate}
It follows from (2) and (3) that
\begin{equation*}
  \sum_{\mu\in \GT_n}c_{\lambda,\mu}^{\mathbf{p}_{m,n,B}}=1.
\end{equation*}
\end{definition}

Recall that we have a probability measure on dimer configurations of the
square-hexagon lattice defined by (\ref{pb}), where the square-hexagon lattice
has $(2N+1)$ rows and on the bottom row, the configuration has signature
$\omega$. Define a mapping $\phi: \{1,\ldots,2N+1\}\rightarrow
\{\mu^{(i)},\nu^{(j)}:i,j\in \{1,2,\ldots,N\}\}$ as follows
\begin{equation*}
\phi(n)=
  \begin{cases}
  \mu^{\left(\frac{n-1}{2}\right)}&\text{if $n$ is odd}\\
  \nu^{\left(\frac{n}{2}\right)}&\text{if $n$ is even}
\end{cases}
\end{equation*}

Let $1\leq n_1\leq n_2\leq\ldots \leq n_s=2N+1$ be positive row numbers of the square-hexagon lattice, counting from the top.
For $1\leq i\leq s$, let $\rho_{n_i}$ be the induced probability measure on dimer configurations of the $n_i$th row. Then the induced  probability measure on the state space
\begin{equation*}
\GT_{\lfloor\frac{n_1}{2}\rfloor}\times\GT_{\lfloor\frac {n_2}{2}\rfloor}\times \ldots\times \GT_{\lfloor \frac{n_s}{2}\rfloor}
\end{equation*}
by the measure \eqref{pb} can be expressed as follows:
\begin{equation}
\mathrm{Prob}\left(\phi(n_s),\ldots,\phi(n_1)\right)=\rho_{n_s}\left((\phi(n_s)\right)\prod_{i=2}^{k}c_{\phi(n_i),\phi(n_{i-1})}^{\mathbf{p}_{n_i,n_{i-1},B}},
\label{prb}
\end{equation}
where $c_{\phi(n_i),\phi(n_{i-1})}^{\mathbf{p}_{n_i,n_{i-1},B}}$ is the probability of $\phi(n_{i-1})$ conditional on $\phi(n_i)$. In particular, we have
\begin{equation*}
c_{\nu^{(t)},\mu^{(t-1)}}^{\mathbf{p}_{2t,2t-1,B}}=\mathrm{pr}_{B}(\nu^{(t)}\rightarrow\mu^{(t-1)}),\qquad
c_{\mu^{(t)},\nu^{(t)}}^{\mathbf{p}_{2t+1,2t,B}}=\mathrm{st}_{B}(\nu^{(t)}\rightarrow\mu^{(t-1)}),
\end{equation*}
where $B$ depends on the edge weights. Moreover, we have
\begin{eqnarray*}
\mathbf{p}_{2t,2t-1,B}\left(\frac{s_{\nu^{(t)}}(u_1,\ldots,u_{t-1},\beta_{t})}{s_{\nu^{(t)}}(\beta_1,\ldots,\beta_{t})}\right)&=&\frac{s_{\nu^{(t)}}(u_1,\ldots,u_{t-1},\beta_{t})}{s_{\nu^{(t)}}(\beta_1,\ldots,\beta_{t})};\\
\mathbf{p}_{2t+1,2t,B}\left(\frac{s_{\mu^{(t)}}(u_1,\ldots,u_{t-1},u_{t})}{s_{\mu^{(t)}}(\beta_1,\ldots,\beta_{t})}\right)&=&\left(\prod_{i=1}^{t}\frac{1+u_i}{1+\beta_i}\right)\cdot\left(\frac{s_{\mu^{(t)}}(u_1,\ldots,u_{t-1},u_{t})}{s_{\mu^{(t)}}(\beta_1,\ldots,\beta_{t})}\right);
\end{eqnarray*}
and for $2\leq i\leq s$
\begin{equation*}
\mathbf{p}_{n_{i},n_{i-1},B}=\mathbf{p}_{n_{i},n_{i}-1,B}\circ\mathbf{p}_{n_{i}-1,n_{i}-2,B}\circ\ldots\mathbf{p}_{n_{i-1}+1,n_{i-1},B}.
\end{equation*}

For simplicity, we will denote the Schur generating function
$\mathcal{S}_{\rho_N,B}$ from Definition~\ref{df21} simply by:
\begin{equation*}
S_{N,B^{(N)}}:=\mathcal{S}_{\rho_N,B^{(N)}}.
\end{equation*}
For $1\leq i\leq s$, let $S_{N,B^{(N)},n_i}$ be the Schur generating function for the induced measure $\rho_{n_i}$ on the $n_i$th row of the $\mathcal{R}(N,\Omega,m)$.

We use also the following notation $\mathbf{u}_a=(u_1,\ldots,u_a)$ for a
positive integer $a$.


\begin{lemma}
  \label{l59}
  Let $m_1,\ldots,m_k$ be positive integers.
  Let $n_1,\ldots,n_k,\mathbf{p}_{n_2,n_1,B},\ldots,\mathbf{p}_{n_k,n_{k-1},B}$
  be defined as in Definition~\ref{df54}. Assume that
  $(\phi(n_s),\ldots,\phi(n_1))$ has distribution \eqref{prb}.
Let $\mathcal{D}_{k}^{(m)}$ be the $k$-th order differential operator defined by
\eqref{dk} on functions in $\Lambda^m$. 
 Then
\begin{multline*}
\mathcal{D}_{m_1}^{(n_1)}\mathbf{p}_{n_2,n_1,B}\mathcal{D}_{m_2}^{(n_2)}\mathbf{p}_{n_3,n_2,B}\ldots\mathbf{p}_{n_k,n_{k-1},B}\mathcal{D}_{m_k}^{(n_k)}S_{N,B^{(N)}}(u_1,\ldots,u_{n_k})|_{(u_1,\ldots,u_{n_k})=(\beta_1,\ldots,\beta_{n_k})}\\
=\mathbb{E}\left(%
  \prod_{j=1}^k\left(%
    \sum_{i_j=1}^{\lfloor\frac{n_j}{2}\rfloor}
    (\phi(n_j)_{i_j} +\lfloor\frac{n_j}{2}\rfloor-i_j)^{m_j}
     \right)
   \right).
\end{multline*}
\end{lemma}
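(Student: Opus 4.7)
The plan is to peel off the operators one at a time, starting from the innermost (rightmost) $\mathcal{D}_{m_k}^{(n_k)}$ and working outward. Three facts will drive the proof. First, the Schur polynomial $s_\lambda(u_1,\ldots,u_\ell)$ is an eigenfunction of $\mathcal{D}_m$ (recall \eqref{dk} and \cite[Proposition~4.3]{bg}) with eigenvalue $\sum_{i=1}^\ell(\lambda_i+\ell-i)^m$. Second, by Property~(2) in Definition~\ref{df54}, the intertwiner $\mathbf{p}_{n_i,n_{i-1},B}$ sends the normalized Schur function $\frac{s_\lambda(\mathbf{u})}{s_\lambda(\beta_1,\ldots,\beta_{\lfloor n_i/2\rfloor})}$ to a mixture $\sum_\mu c^{\mathbf{p}_{n_i,n_{i-1},B}}_{\lambda,\mu}\frac{s_\mu(\mathbf{u})}{s_\mu(\beta_1,\ldots,\beta_{\lfloor n_{i-1}/2\rfloor})}$, and by \eqref{prb} these coefficients are precisely the one-step transitions of the Markov chain on rows. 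Third, by Property~(1), $\mathbf{p}_{n_i,n_{i-1},B}$ is a continuous linear map on $\Lambda^{\lfloor n_i/2\rfloor}$, which licenses interchanging it with the convergent series defining Schur generating functions.

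Carrying out the unfolding, I first expand
\begin{equation*}
S_{N,B^{(N)},n_k}(\mathbf{u})=\sum_{\phi(n_k)}\rho_{n_k}(\phi(n_k))\,\frac{s_{\phi(n_k)}(\mathbf{u})}{s_{\phi(n_k)}(\beta_1,\ldots,\beta_{\lfloor n_k/2\rfloor})}
\end{equation*}
and apply $\mathcal{D}_{m_k}^{(n_k)}$ termwise, producing the multiplicative factor $\sum_{i_k}(\phi(n_k)_{i_k}+\lfloor n_k/2\rfloor-i_k)^{m_k}$. Next, I push $\mathbf{p}_{n_k,n_{k-1},B}$ inside the sum and rewrite each summand as a mixture over $\phi(n_{k-1})$ with weight $c^{\mathbf{p}}_{\phi(n_k),\phi(n_{k-1})}$; by \eqref{prb}, the compounded weight $\rho_{n_k}(\phi(n_k))\,c^{\mathbf{p}}_{\phi(n_k),\phi(n_{k-1})}$ equals the marginal of $\mathrm{Prob}$ on $(\phi(n_k),\phi(n_{k-1}))$. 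Iterating alternately eigenvalue extraction by $\mathcal{D}_{m_{k-j}}^{(n_{k-j})}$ and transition intertwining by $\mathbf{p}_{n_{k-j},n_{k-j-1},B}$ builds up, step by step, both the joint probability and the accumulated product of power-sum factors. After all operators have been applied, one obtains
\begin{equation*}
\sum_{\phi(n_1),\ldots,\phi(n_k)}\mathrm{Prob}(\phi(n_k),\ldots,\phi(n_1))\,\prod_{j=1}^{k}\Bigl(\sum_{i_j=1}^{\lfloor n_j/2\rfloor}(\phi(n_j)_{i_j}+\lfloor n_j/2\rfloor-i_j)^{m_j}\Bigr)\frac{s_{\phi(n_1)}(\mathbf{u})}{s_{\phi(n_1)}(\beta_1,\ldots,\beta_{\lfloor n_1/2\rfloor})}.
\end{equation*}
Evaluating at $\mathbf{u}=(\beta_1,\ldots,\beta_{n_1})$ collapses the trailing Schur ratio to $1$ and yields exactly the right-hand side of the lemma.

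The only nontrivial point to check is the legitimacy of the interchanges. The eigenvalue identity for $\mathcal{D}_m$ is a pointwise identity on each Schur summand, so no analytic issue arises there. The critical swap is between $\mathbf{p}_{n_i,n_{i-1},B}$ and the infinite Schur series, and this is exactly what Property~(1) of Definition~\ref{df54} supplies: $\mathbf{p}_{n_i,n_{i-1},B}$ is continuous on $\Lambda^{\lfloor n_i/2\rfloor}$ for the topology of uniform convergence on a complex neighborhood of $(\beta_1,\ldots,\beta_{\lfloor n_i/2\rfloor})$, while under the hypothesis $\sup_i|\beta_i^{(N)}-1|N\to 0$ the Schur generating series are holomorphic on such a neighborhood (as used in Proposition~\ref{p25}). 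Once continuity is granted, the final substitution $\mathbf{u}=(\beta_1,\ldots,\beta_{n_1})$ is handled by Property~(3) of Definition~\ref{df54}, which guarantees that evaluating at $\beta$ commutes correctly through every $\mathbf{p}_{n_i,n_{i-1},B}$. I expect this continuity verification to be the only delicate step; the remainder is direct bookkeeping.
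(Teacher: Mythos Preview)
Your proposal is correct and follows essentially the same approach as the paper, which simply records that the identity follows from the eigenfunction property of Schur functions under $\mathcal{D}_m^{(n)}$ together with explicit computations as in~\cite[Proposition~4.3]{bg2}. You have written out those computations in detail, including the continuity justification for exchanging $\mathbf{p}_{n_i,n_{i-1},B}$ with the Schur series, which is exactly what the referenced argument entails.
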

\begin{proof}The theorem follows from the fact that Schur functions are
  eigenfunctions for the operators $\mathcal{D}_m^{(n)}$, and explicit
  computations as in the proof of Proposition 4.3 of~\cite{bg2}.
\end{proof}

Let $f$ be a function with $r$ complex variables. Define its symmetrization:
\begin{equation*}
\mathrm{Sym}_{u_1,\ldots,u_r}f(u_1,\ldots,u_r):=\frac{1}{r!}\sum_{\sigma\in S_r}f(u_{\sigma(1)},u_{\sigma(2)},\ldots,u_{\sigma(r)}),
\end{equation*}
where $S_r$ is the symmetric group of $r$ symbols.

Let $m$ be a positive integer and let $0<a_1\leq a_2\leq\ldots\leq a_m=1$ be real numbers.
For positive integers $l$ and $q$ satisfying $1\leq q\leq m$, we introduce the
notation
\begin{align*}
F_{l,a_q^{1}}(\mathbf{u}_{[a_qN]}):&=\frac{1}{S_{N,B^{(N)},2[a_q
N]-1}(\mathbf{u}_{[a_qN]})V_{[a_qN]}(\mathbf{u}_{[a_qN]})}\\
&\times\sum_{i=1}^{[a_q N]}(u_i\partial_i)^l \left[S_{N,B^{(N)},2[a_q
N]-1}(\mathbf{u}_{[a_qN]})V_{[a_qN]}(\mathbf{u}_{[a_qN]})\right]\\
&=\frac{1}{S_{N,B^{N},2[a_qN]-1}(\mathbf{x})}\mathcal{D}_{l}S_{N,B^{(N)},2[a_q
N]-1}(\mathbf{x}),\\
F_{l,a_q^{2}}(\mathbf{u}_{[a_qN]}):&=\frac{1}{S_{N,B^{(N)},2[a_q N]}(\mathbf{u}_{[a_qN]})V_{[a_qN]}(\mathbf{u}_{[a_qN]})}\\
&\times\sum_{i=1}^{[a_q N]}(u_i\partial_i)^l \left[S_{N,B^{(N)},2[a_q N]}(\mathbf{u}_{[a_qN]})V_{[a_qN]}(\mathbf{u}_{[a_qN]})\right]\\
&=\frac{1}{S_{N,B^{N},2[a_qN]}(\mathbf{x})}\mathcal{D}_{l}S_{N,B^{(N)},2[a_q N]}(\mathbf{x})
\end{align*}
where $\mathcal{D}_l$ is defined by \eqref{dk}, and $V_N$ is the Vandermonde
determinant on $N$ variables $x_1,\ldots,x_N$.

For a positive integer $s$, let $[s]=\{1,2,\ldots,s\}$.

\begin{lemma}
  \label{lsmn}
  Assume that for each $r=1,2,\ldots$ $\xi_r(\mathbf{u})$ is an analytic
  function of $\mathbf{u}$ in an open neighborhood of $1^r$. Then for any indices $b_1,\ldots,
  b_{q+1}$ the function
\begin{equation}
\mathrm{Sym}_{b_1,b_2,\ldots,b_{q+1}}\left(\frac{\xi_r(\mathbf{u})}{(u_{b_1}-u_{b_2})\cdots(u_{b_1}-u_{b_{q+1}})}\right)
\label{smn}
\end{equation}
is analytic in a (possibly smaller) open neighborhood of $1^r$. If the degree of
$r$ in $\xi_r(\mathbf{u})$ is at most $D$ (less than $D$), then the sequence
\eqref{smn} has $r$-degree at most $D$ (less than $D$).
\end{lemma}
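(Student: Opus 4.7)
The plan is to rewrite \eqref{smn} as a divided-difference-type expression and observe that the apparent poles on the diagonals cancel by symmetry. Write $v_k=u_{b_k}$ for brevity, treat the remaining $u_i$ ($i\notin\{b_1,\dotsc,b_{q+1}\}$) as inert parameters, and let $\tilde\xi_r(a;c_1,\dotsc,c_q)$ denote the average of $\xi_r$ over permutations of $c_1,\dotsc,c_q$ with $a$ held in the first symmetrized slot. Grouping the permutations $\sigma\in S_{q+1}$ in \eqref{smn} according to which $b_k$ is sent to the position $b_1$, the symmetrization equals
\begin{equation*}
\frac{1}{q+1}\sum_{k=1}^{q+1}\frac{\tilde\xi_r(v_k;v_1,\dotsc,\widehat{v_k},\dotsc,v_{q+1})}{\prod_{j\neq k}(v_k-v_j)},
\end{equation*}
whose only candidate singularities are simple poles on the diagonals $\{v_i=v_j\}$.

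The main step is to show these poles are removable. For fixed $i<j$, only the $k=i$ and $k=j$ summands carry a pole along $\{v_i=v_j\}$; putting them on common denominator, the singular contribution reads
\begin{equation*}
\frac{1}{v_i-v_j}\left[\frac{\tilde\xi_r(v_i;v_j,\text{rest})}{\prod_{m\neq i,j}(v_i-v_m)}-\frac{\tilde\xi_r(v_j;v_i,\text{rest})}{\prod_{m\neq i,j}(v_j-v_m)}\right].
\end{equation*}
Symmetry of $\tilde\xi_r$ in its last $q$ arguments makes the bracket vanish identically on $\{v_i=v_j\}$, so it is analytically divisible by $v_i-v_j$. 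The whole expression is thus locally bounded near each diagonal, and Riemann's removable-singularity theorem (applied variable by variable, with the others viewed as parameters) produces an analytic extension to a polydisk around $1^{q+1}$; reinstating the inert parameters gives analyticity in a neighborhood of $1^r$.

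For the $r$-degree claim, the whole construction is $\mathbb{C}$-linear in $\xi_r$ with coefficients independent of $r$: if $\xi_r=\sum_{k\leq D}r^k g_k(\mathbf{u})$ then applying the procedure term-by-term bounds the $r$-degree of the symmetrization by $D$, and strict inequality is inherited from a strict hypothesis. One can make the extension explicit as a single contour integral $\frac{1}{2\pi\mathbf{i}}\oint_\Gamma \frac{P(w)\,dw}{\prod_{j=1}^{q+1}(w-v_j)}$ with $P(w)$ the degree-$q$ Lagrange interpolant through the $q+1$ numerators, confirming the claim. The principal obstacle I anticipate is the removable-singularity step, which hinges on the \emph{exact} vanishing of the bracket above on the diagonal together with uniform control of the polydisks of analyticity of $\xi_r$ as the neighborhood around $1^r$ is shrunk; both are available from the standing hypothesis.
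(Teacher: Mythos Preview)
The paper states Lemma~\ref{lsmn} without proof; it is a technical lemma borrowed from Bufetov--Gorin~\cite{bg2} (where the analogous statement appears as Lemma~5.4). So there is no ``paper's own proof'' to compare against, and your argument must be judged on its own merits.

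Your approach is correct and is essentially the standard divided-difference argument. The regrouping of the symmetrization according to $\sigma(1)=k$ is right, the identification of the singular part along each diagonal $\{v_i=v_j\}$ is accurate, and the cancellation you exhibit is exactly what makes the pole removable. The linearity observation for the $r$-degree claim is also correct: nothing in the symmetrization or the analytic extension introduces additional dependence on $r$.

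Two minor comments. First, the phrase ``Riemann's removable-singularity theorem applied variable by variable'' is slightly informal in several complex variables; what you actually use is that an analytic function divided by $(v_i-v_j)$, when the numerator vanishes on $\{v_i=v_j\}$, extends analytically across that hyperplane --- this is elementary (Taylor expansion in $v_i-v_j$) and needs no appeal to Hartogs-type results. Second, your closing contour-integral remark is imprecise: the numerators $\tilde\xi_r(v_k;\dotsc)$ depend on \emph{all} the $v_j$, not just on $v_k$, so they are not the values of a single analytic function $g(w)$ at $w=v_k$, and the Lagrange-interpolant formulation does not directly yield analyticity. This does not affect your main argument, which is complete without it; I would simply drop that sentence.
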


\begin{lemma}\label{l510}Let $\{\rho_N\}_{N\geq 1}$ be an appropriate sequence of measures on $\GT_N$ with the corresponding Schur generating function $\mathcal{S}_{N,B^{(N)}}$. Let $\tau\in\{1,2\}$, then
\begin{multline*}
\partial_i F_{l,a_q^{\tau}}(\mathbf{u})=\partial_i\left[\sum_{r=0}^{l}\left(\begin{array}{c}l\\r\end{array}\right)(r+1)!\right.\\
\left.\times\sum_{\{b_1,\ldots,b_{r+1}\}\subset[N]}\mathrm{Sym}_{b_1,\ldots,b_{r+1}}\left(\frac{u_{b_1}^{l}(\partial_{b_1}[\log S_{N,B^{(N)},2[a_qN]+\tau-2}])^{l-r}}{(u_{b_1}-u_{b_2})\ldots(u_{b_1}-u_{b_{r+1}})}\right)\right]+\hat{T}_l(\mathbf{u})
\end{multline*}
where the degree of $N$ in $\partial_i F_{l,a_q^{\tau}}(\mathbf{u})$ is at most $l$, and the degree of $N$ in $\hat{T}(\mathbf{u})$ is less than $l$.
\end{lemma}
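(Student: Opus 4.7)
The plan is to expand the operator $(u_i\partial_i)^l$ applied to $S_{N,B^{(N)},2[a_qN]+\tau-2}(\mathbf{u})\cdot V_{[a_qN]}(\mathbf{u})$ via the Fa\`a di Bruno / exponential formula, then divide by $SV$ and separate the leading asymptotic contribution in $N$ from the subleading ones (which will form $\hat T_l$). Write $D_i=u_i\partial_i$ and $\phi=\log S+\log V$ (suppressing subscripts). The standard exponential identity gives
\[
\frac{D_i^{\,l}\,e^\phi}{e^\phi}
=\sum_{\pi\vdash[l]}\prod_{B\in\pi}D_i^{|B|}\phi,
\]
where the sum runs over set partitions of $\{1,\dots,l\}$. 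Expanding each factor $D_i^{|B|}\phi=D_i^{|B|}\log S+D_i^{|B|}\log V$ by binomial choice, summing over $i\in[a_qN]$, and using the Vandermonde identities $D_i\log V=\sum_{j\neq i}u_i/(u_i-u_j)$ (and their iterated analogues) produces a sum of rational expressions in $\mathbf{u}$ with coefficients built from derivatives of $\log S$.

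Next I would isolate the leading terms. They arise from the partition of $[l]$ into $l$ singletons, where every factor is a first-order derivative. Choosing $l-r$ of these factors to equal $D_{b_1}\log S$ and the remaining $r$ to equal $D_{b_1}\log V=\sum_{j\neq b_1}u_{b_1}/(u_{b_1}-u_j)$ introduces the binomial $\binom{l}{r}$ and $r$ auxiliary summation indices $b_2,\dots,b_{r+1}$. Symmetrizing in $\{b_1,\dots,b_{r+1}\}$ (which is legitimate by Lemma~\ref{lsmn}, since the apparent poles cancel) and replacing the sum over ordered tuples of distinct indices by a sum over unordered $(r+1)$-subsets yields the factor $(r+1)!$ and exactly reproduces the first summand on the right-hand side. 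Applying $\partial_i$ to this identity then gives the stated $\partial_i[\dots]$ term.

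It remains to estimate degrees in $N$. By the appropriateness hypothesis (Definition~\ref{asm}), each single-index derivative $\partial_{b_1}\log S$ contributes $O(N)$, whereas any derivative involving at least three distinct indices, and mixed second derivatives $\partial_i\partial_j\log S$, contribute only $O(1)$. The leading symmetric summand with parameters $(r, l-r)$ therefore has at most $l-r$ powers of $N$ from $(\partial_{b_1}\log S)^{l-r}$ together with at most $r+1$ from the outer set-summation, reduced by the pole cancellation in Lemma~\ref{lsmn}; a careful application of that lemma shows the net degree is at most $l$, and that $\partial_i$ preserves this bound. Every term coming from a non-singleton partition $\pi$ contains either a $D_i^{k}\log S$ with $k\geq 2$ (which by appropriateness is $O(1)$ rather than $O(N)$), or a higher $\log V$ derivative, which after symmetrization has the same number of indices but extra pole cancellations; in both cases the degree in $N$ strictly drops, so these contributions may be absorbed into $\hat T_l(\mathbf{u})$.

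The main obstacle will be the combinatorial bookkeeping of the prefactors: matching, with the correct signs and multinomial weights, the binomials $\binom{l}{r}$ and the factorials $(r+1)!$ produced by the binomial expansion and symmetrization against those in the statement, and verifying that every contribution from a non-trivial set partition is strictly lower order. This is handled by the same induction-on-$l$ argument as in~\cite[Proposition~4.3]{bg2} and the analogous lemmas of~\cite{bk}, adapted here to cover both $\tau\in\{1,2\}$, which only affects which Schur generating function $S_{N,B^{(N)},2[a_qN]+\tau-2}$ is differentiated and not the combinatorial structure of the expansion.
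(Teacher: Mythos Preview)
Your overall strategy is correct and matches what the paper does: the paper's proof is a one-line citation to Lemma~5.5 of \cite{bg2}, and your expansion via the exponential formula, identification of the all-singleton partition as the leading contribution, and absorption of the rest into $\hat T_l$ is exactly that argument. The citation at the end should be to Lemma~5.5 of \cite{bg2} rather than Proposition~4.3.

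There is, however, a concrete error in your degree-counting paragraph. You claim that for a non-singleton block, the factor $D_i^{k}\log S$ with $k\ge 2$ is $O(1)$ ``by appropriateness''. This is false: Definition~\ref{dfa} says
\[
\lim_{N\to\infty}\frac{\partial_i^{k}\log H_N(\mathbf u)}{N}\Big|_{\mathbf u=B^{(N)}}=\gamma_k
\]
for \emph{every} $k\ge 1$, so every single-variable derivative $\partial_i^{k}\log S$ is of order $N$, not $O(1)$. The $O(1)$ behaviour in the appropriateness definition concerns \emph{mixed} derivatives $\partial_i^{k}\partial_j^{l}\log S$ with $i\ne j$, which do not arise in the expansion of $D_i^{\,l}$ alone; they only enter after you apply the outer $\partial_i$.

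The correct reason non-singleton partitions contribute to $\hat T_l$ is simpler and purely combinatorial: a set partition of $[l]$ with $p<l$ blocks produces a product of only $p$ factors $D_i^{m_j}\phi$. Each such factor contributes at most one power of $N$ (from $\log S$) or one auxiliary summation index (from $\log V$), so after summing over $i$ the total degree in $N$ is at most $p+1\le l$, strictly below the $l+1$ coming from the all-singleton partition. After applying $\partial_i$, the leading term drops to degree $\le l$ and the non-singleton remainder to degree $<l$, which is the assertion of the lemma. If you rewrite the last paragraph with this block-counting argument (and invoke Lemma~\ref{lsmn} only to ensure analyticity of the symmetrized pieces, not to reduce degree), the proof goes through.
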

\begin{proof}This lemma follows from the same arguments as in the proof of Lemma 5.5 in~\cite{bg2}.
\end{proof}

For positive integers $l_1,l_2$, $q,s$ satisfying $1\leq q\leq s\leq m$, and $\tau_1,\tau_2\in\{1,2\}$, we define
\begin{multline}
  G_{l_1,l_2;a_q^{\tau_1},a_s^{\tau_2}}(\mathbf{u})=l_1\sum_{r=0}^{l_1-1}\binom{l_1-1}{r}\sum_{\{b_1,\ldots,b_{r+1}\}\subset[N]}(r+1)!\\
\times\mathrm{Sym}_{b_1,\ldots,b_{r+1}}\frac{x_{b_1}^{l_1}\partial_{b_1}[F_{l_2,a_s^{\tau_2}}](\partial_{b_1}[\log S_{N,B^{(N)},2[a_qN]]+\tau_1-2})^{l_1-1-r}}{(u_{b_1}-u_{b_2})\ldots(u_{b_1}-u_{b_{r+1}})}.\label{g12}
\end{multline}

For a subset $\{j_1,\ldots,j_p\}\subseteq[s]$, let
$\mathcal{P}_{j_1,\ldots,j_p}^s$ be the set of all pairings of the set
$[s]\setminus\{j_1,\ldots,j_p\}$. Note that
$\mathcal{P}_{j_1,\ldots,j_p}^s=\emptyset$ if $s-p$ is odd. For a pairing $P$, let $\prod_{(a,b)\in P}$ be the product over all pairs $(a,b)$ from this pairing.
Finally, for each positive integer $l$ let
\begin{equation*}
E_{l,B^{(N)},a_q^{\tau}}=F_{l,a_q^{\tau}}\left(\beta_1^{(N)},\ldots,\beta_N^{(N)}\right)
\end{equation*}

\begin{proposition}\label{p510}Let $\rho_N$ be an appropriate sequence of measures on $\GT_N$, $N=1,2,\ldots$ Then for any positive integer $m$, any positive integers $l_1,\ldots,l_m$, and $\tau_i\in\{1,2\}$, for $1\leq i\leq m$, we have
\begin{eqnarray*}
&&\lim_{N\rightarrow\infty}\frac{1}{N^{l_1+\ldots+l_m}}\frac{1}{V_{[a_mN]}}\left(\sum_{i_1=1}^{[a_1 N]}(u_{i_1}\partial_{i_1})^{l_1}-E_{l_1,B^{(N)},a_1^{\tau_1}}\right)\frac{S_{N,B^{(N)},2[a_1N]+\tau_1-2}}{S_{N,B^{(N)},2[a_2N]+\tau_2-2}}\\
&&\times \left(\sum_{i_1=1}^{[a_2 N]}(u_{i_2}\partial_{i_2})^{l_2}-E_{l_2,B^{(N)},a_{2}^{\tau_2}}\right)\frac{S_{N,B^{(N)},2[a_2N]+\tau_2-2}}{S_{N,B^{(N)},2[a_3N]+\tau_3-2}}\\
&&\times\cdots\times\left.\left(\sum_{i_m=1}^{[a_mN]}(u_{i_m}\partial_{i_m})^{l_m}-E_{l_m,B^{(N)},a_{m}^{\tau_m}}\right)V_{[a_m N]}S_{N,B^{(N)},2[a_mN]+\tau_m-2}\right|_{\mathbf{u}=B_N^{(N)}}\\
&=&\lim_{N\rightarrow\infty}\frac{1}{N^{l_1+\ldots+l_m}}\sum_{P\in \mathcal{P}_{\emptyset}^{s}}\left.\prod_{(s,t)\in P}G_{l_s,l_t;a_s^{\tau_s},a_t^{\tau_t}}(\mathbf{u})\right|_{\mathbf{u}=B_N^{(N)}}
\end{eqnarray*}
\end{proposition}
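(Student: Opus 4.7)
The plan is to adapt the proof of Proposition 5.6 of~\cite{bg2} to our setting, where the Schur generating functions involve ratios through periodic parameters $B^{(N)}$ rather than being evaluated purely at $\mathbf{1}$. The overall strategy is a Wick-type expansion: the product of centered differential operators applied to the telescoping product of Schur generating function ratios decomposes, in the $N\to\infty$ limit, into a sum over pairings, which is the combinatorial signature of asymptotic Gaussianity for the underlying random signatures.

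First, I would expand each factor $\left(\sum_i (u_i\partial_i)^{l_j} - E_{l_j,B^{(N)},a_j^{\tau_j}}\right)$ acting on $S_{N,B^{(N)},2[a_j N]+\tau_j-2}/S_{N,B^{(N)},2[a_{j+1}N]+\tau_{j+1}-2}$ times the remaining factors to its right, using the Leibniz rule repeatedly. After dividing by $V_{[a_m N]}$, each such operator splits into two contribution types: terms where all derivatives of a given operator land on the immediately adjacent Schur ratio (the \emph{marginal} part), and terms where at least one derivative crosses onto a later factor (the \emph{mixed} part). The marginal part of the $j$th operator is precisely $F_{l_j,a_j^{\tau_j}}(\mathbf{u})$; evaluated at $\mathbf{u}=B^{(N)}$ it equals $E_{l_j,B^{(N)},a_j^{\tau_j}}$, so subtracting the expectation exactly annihilates the marginal contribution and leaves only the mixed derivative terms.

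Next, I would classify which mixed terms survive at the leading order $N^{l_1+\cdots+l_m}$. By Lemma~\ref{l510}, each derivative $\partial_i F_{l,a_q^\tau}$ is a sum of symmetrized rational expressions of $N$-degree at most $l$; by Lemma~\ref{lsmn}, any genuine symmetrization strictly decreases the $N$-degree unless the derivative variables appearing in the denominator cancel pairwise. Combined with the appropriateness hypotheses of Definition~\ref{dfa}, which force $\partial_{i_1}\cdots\partial_{i_s}\log S_{N,B^{(N)}} = o(1)$ whenever at least three distinct indices occur, only configurations in which the cross-derivatives organize into a perfect pairing of the indices $\{1,\dots,m\}$ contribute at leading order. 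Each pair $(s,t)$ then produces exactly the factor $G_{l_s,l_t;a_s^{\tau_s},a_t^{\tau_t}}(\mathbf{u})|_{\mathbf{u}=B^{(N)}}$ from \eqref{g12}, via the mechanism that pairing derivative $l_s$ with $F_{l_t,a_t^{\tau_t}}$ produces the $\partial_{b_1}[F_{l_t,a_t^{\tau_t}}]$ factor there, while the remaining $(l_s-1-r)$ derivatives in the pair reproduce the $(\partial_{b_1}\log S)^{l_s-1-r}$ factor.

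The main obstacle will be the combinatorial bookkeeping to match the Leibniz expansion with the sum over pairings $P\in\mathcal{P}_\emptyset^s$, and to control the effect of the perturbation $B^{(N)}\to\mathbf{1}$: since $\sup_i|\beta_i^{(N)}-1|\,N\to 0$, corrections to each $F_{l,a_q^\tau}$ and its derivatives at $\mathbf{u}=B^{(N)}$ versus $\mathbf{u}=\mathbf{1}$ are negligible compared to the leading $N^l$ scaling, by the same perturbative estimate used in the proof of Proposition~\ref{p25}. Once this is established, the pairing identity follows by induction on $m$, precisely as in~\cite[Proposition~5.6]{bg2}, with the telescoping structure ensuring that the intermediate Schur ratios do not contribute additional leading-order terms.
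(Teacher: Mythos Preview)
Your proposal is correct and follows essentially the same approach as the paper, which simply states that the proposition follows by the same technique as Proposition~5.12 in~\cite{bg2} (you cite Proposition~5.6, likely a version discrepancy). Your sketch in fact supplies considerably more detail than the paper's one-line proof: the Leibniz expansion, the marginal/mixed decomposition with the centering killing the marginal part, and the use of Lemmas~\ref{lsmn} and~\ref{l510} together with the appropriateness axioms to isolate the pairing terms are exactly the ingredients of the Bufetov--Gorin argument.

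One small organizational remark: the perturbative comparison between evaluation at $\mathbf{u}=B^{(N)}$ and at $\mathbf{u}=\mathbf{1}$ that you fold into this argument is, in the paper, deferred to the proof of Theorem~\ref{gff1} (see~\eqref{cri}), where it is used to reduce to the already-known case $B^{(N)}=\mathbf{1}$. Proposition~\ref{p510} itself is stated and proved entirely at $\mathbf{u}=B^{(N)}$, and the adaptation of~\cite{bg2} works directly there without passing through $\mathbf{1}$. This is not a gap, just a difference in where the bookkeeping is placed.
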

\begin{proof}
  The proposition follows from the same technique as the proof of Proposition
  5.12 in~\cite{bg2}, although our definition of Schur generating functions
  $S_{N,B^{(N)}}$ is slightly different.
\end{proof}
\bigskip
\noindent\textbf{Proof of Theorem~\ref{gff1}}
The case when $\beta_i^{(N)}=1$ for all $N$ and $1\leq i\leq N$ was proved
in~\cite[Theorem~2.8]{bg2}. We will prove it under the assumption that
\begin{equation*}
\lim_{N\rightarrow\infty}\sup_{i}|\beta_i^{(N)}-1|N=0.
\end{equation*}
By Lemma~\ref{l59} and Proposition~\ref{p510}, it suffices to show that
\begin{equation}
\lim_{N\rightarrow\infty}\left.\frac{G_{l_1,l_2;a_1^{\tau_1},a_2^{\tau_2}}(\mathbf{u})}{N^{l_1+l_2}}\right|_{\mathbf{x}=B^{(N)}}=\lim_{N\rightarrow\infty}\left.\frac{G_{l_1,l_2;a_1^{\tau_1},a_2^{\tau_2}}(\mathbf{u})}{N^{l_1+l_2}}\right|_{\mathbf{u}=1^N},\label{cri}
\end{equation}
where by the result in~\cite{bg2}, the right hand side of \eqref{cri} is known to be 
\begin{multline*}
\lim_{N\rightarrow\infty}\left.\frac{G_{l_1,l_2,a_1^{\tau_1},a_2^{\tau_2}}(\mathbf{u})}{N^{l_1+l_2}}\right|_{\mathbf{x}=1^N}=\frac{a_1^{l_1}a_2^{l_2}}{(2\pi\mathbf{i})^2}\oint_{|z|=\epsilon}\oint_{|w|=2\epsilon}\left(\frac{1}{z}+1+(1+z)A_1(1+z)\right)^{k_1}\\
\times\left(\frac{1}{w}+1+(1+w)A_2(1+w)\right)^{k_2}C_2(z,w)dzdw
\end{multline*}
By definition of an appropriate sequence, see Definitions~\ref{dfa},~\ref{asm}, we have
\begin{eqnarray*}
&&\left.\lim_{N\rightarrow\infty}\frac{\partial_i[\log S_{N,B^{(N)},a_t^{\tau_t}}]}{N}\right|_{\mathbf{x}=B^{(N)}}=\left.\lim_{N\rightarrow\infty}\frac{\partial_i[\log S_{N,1^N},a_t^{\tau_t}]}{N}\right|_{\mathbf{x}=1^N}\\
&&\left.\lim_{N\rightarrow\infty}\partial_i\partial_j[\log S_{N,B^{(N)},a_t^{\tau_t}}]\right|_{\mathbf{x}=B^{(N)}}=\left.\lim_{N\rightarrow\infty}\partial_i\partial_j[\log S_{N,1^N,a_t^{\tau_t}}]\right|_{\mathbf{x}=1^N}
\end{eqnarray*}
From the expression of $G_{l_1,l_2}$ in \eqref{g12} and the expression of
$\partial_i F_l$ in Lemma~\ref{l510}, as well as Lemma~\ref{lsmn}, \eqref{cri} follows.
\hfill$\Box$

\begin{lemma}\label{l513}Assume that $\lambda(N)\in\GT_N$, $N=1,2,\ldots$ is a regular sequence of signatures such that
\begin{equation*}
\lim_{N\rightarrow\infty}[m(\lambda(N))]=\mathbf{m}.
\end{equation*}
Then
\begin{multline*}
\lim_{N\rightarrow\infty}\partial_1\partial_2\log\left(\frac{s_{\lambda(N)}(u_1,u_2,\ldots,u_k,1^{N-k})}{s_{\lambda(N)}(1^N)}\right)\\
=\partial_1\partial_2\left(1-(u_1-1)(u_2-1)\frac{u_1H'_{\mathbf{m}}(u_1)-u_2H'_{\mathbf{m}}(u_2)}{u_1-u_2}\right),
\end{multline*}
where the convergence is uniform over an open complex neighborhood of $(u_1,\ldots,u_k)=(1^k)$.
\end{lemma}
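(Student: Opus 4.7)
The starting point is the Harish--Chandra--Itzykson--Zuber integral representation of Lemma~\ref{hciz}. Substituting $u_i = e^{a_i}$ (with $a_{k+1}=\cdots=a_N=0$), one writes
\begin{equation*}
  R_N(u_1,\dots,u_k) := \frac{s_{\lambda(N)}(u_1,\dots,u_k,1^{N-k})}{s_{\lambda(N)}(1^N)}
  = \mathcal{V}_N(\mathbf{a})\,\mathcal{I}_N(\mathbf{a}),
\end{equation*}
where $\mathcal{V}_N(\mathbf{a}) = \prod_{1\leq i<j\leq N}\frac{a_i-a_j}{e^{a_i}-e^{a_j}}$ and $\mathcal{I}_N(\mathbf{a}) = \int_{U(N)} e^{\mathrm{Tr}(U^*AUB_N)}\,dU$ with $A=\mathrm{diag}(a_1,\dots,a_k,0,\dots,0)$ and $B_N=\mathrm{diag}(\lambda_i(N)+N-i)_{i=1}^N$. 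Regularity of $\lambda(N)$ guarantees weak convergence of the spectral measure of $B_N/N$ to $\bm$.

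By Theorem~3.6 of~\cite{bk} (invoked above as \eqref{nlc}), one already has $\frac{1}{N}\log R_N(u_1,\dots,u_k) \to \sum_{i=1}^k H_{\bm}(u_i)$, uniformly in a complex neighborhood of $(1,\dots,1)$. The key observation is that this leading term is separable in the $u_i$, so the operator $\partial_1\partial_2$ annihilates it entirely. Consequently, the claimed limit equals $\partial_1\partial_2$ applied to the \emph{subleading} $O(1)$ correction. The plan is therefore to establish an expansion of the form
\begin{equation*}
  \log R_N(u_1,u_2,1,\dots,1) = N\bigl(H_{\bm}(u_1)+H_{\bm}(u_2)\bigr) + \Psi_{\infty}(u_1,u_2) + o(1),
\end{equation*}
uniform on a complex neighborhood of $(1,1)$, and to identify the pairwise interaction $\Psi_\infty$, up to a separable correction, with the explicit expression $-(u_1-1)(u_2-1)\,\frac{u_1H'_\bm(u_1)-u_2H'_\bm(u_2)}{u_1-u_2}$.

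To extract $\Psi_\infty$, I would run a refined steepest-descent analysis on $\mathcal{I}_N$: the saddle-point equations decouple across the $k$ nontrivial eigenvalues of $A$ and reproduce the leading $N\sum H_\bm$ contribution; the Gaussian fluctuations around the saddle then produce an $O(1)$ two-point coupling whose covariance is controlled by the inverse Stieltjes transform $S_{\bm}^{(-1)}$. Combining this coupling with $\partial_1\partial_2\log\mathcal{V}_N$ (a fixed elementary function of $(a_1,a_2)$ coming from the Vandermonde cancellations) and simplifying via the identity $uH'_\bm(u) = \tfrac{1}{S_\bm^{(-1)}(\ln u)} - \tfrac{u}{u-1}$ recorded after \eqref{hmz} should match the stated divided-difference formula.

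The main obstacle is the uniformity of the subleading asymptotics on a complex neighborhood of $(1,\dots,1)$: Theorem~3.6 of~\cite{bk} only supplies the first-order term in this form, and pushing to the next order requires genuine additional control on the HCIZ integral. An alternative, possibly cleaner route is to verify directly that the sequence of Dirac measures $\rho_N := \delta_{\lambda(N)}$ is CLT-appropriate in the sense of Definition~\ref{asm} with $B_{\rho}(z,w)$ equal to the right-hand side of the claimed identity, and then to appeal to Lemma~\ref{l57}; this shifts the burden onto computing $\lim_{N}\partial_1\partial_2\log\mathcal{S}_{\rho_N,\mathbf{1}^{(N)}}$ by the same steepest-descent input but packaged in the framework of~\cite{bg,bg2}. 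Either way, the delicate point is the exact identification of the two-point coupling with the divided difference of $uH'_\bm(u)$.
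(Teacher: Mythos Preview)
The paper does not give an independent proof of this lemma: its entire argument is the single sentence ``See Theorem~6.7 of~\cite{bk}.'' So there is no proof in the paper to compare your sketch against beyond that citation.

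Your outline via the HCIZ representation and a subleading steepest-descent analysis is a reasonable heuristic for where the two-point interaction should come from, and you are candid that the hard step---uniform control of the $O(1)$ term in a complex neighborhood of $(1,\dots,1)$---is not supplied. That is the genuine gap: everything you wrote before ``The main obstacle'' is bookkeeping that follows from results already quoted in the paper, and the actual content of the lemma (the identification of $\lim_N \partial_1\partial_2\log R_N$ with the explicit divided-difference expression, uniformly) is precisely the step you defer. Your alternative route through Definition~\ref{asm} and Lemma~\ref{l57} is circular as stated, since in this paper Lemma~\ref{l57} is used to \emph{produce} $B_\rho$ from a known limit of $\partial_1\partial_2\log\mathcal{S}$, not the other way around.

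In short: what you have is a correct diagnosis of which term carries the answer and a plausible plan, but not a proof. The honest completion is exactly what the paper does---invoke Theorem~6.7 of~\cite{bk}, where this computation is carried out.
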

\begin{proof}See Theorem 6.7 of~\cite{bk}.
\end{proof}

\begin{proposition}\label{p515}Assume that $\lambda(N)\in\GT_N$, $N=1,2,\ldots$ is a regular sequence of signatures such that
\begin{equation*}
\lim_{N\rightarrow\infty}[m(\lambda(N))]=\mathbf{m}.
\end{equation*}
Let $B^{(N)}=(\beta_1^{(N)},\ldots,\beta_N^{(N)})$
such that
\begin{equation}
\lim_{N\rightarrow\infty}\sup_{1\leq i\leq N}|\beta_i^{(N)}-1|N=0.\label{lsz}
\end{equation}
Then
\begin{multline*}
\lim_{N\rightarrow\infty}\partial_1\partial_2\log\left(\frac{s_{\lambda(N)}(u_1,u_2,\ldots,u_k,\beta_{k+1}^{(N)},\ldots,\beta_N^{(N)})}{s_{\lambda(N)}\left(B^{(N)}\right)}\right)\\
=\partial_1\partial_2\left(1-(u_1-1)(u_2-1)\frac{u_1H'_{\mathbf{m}}(u_1)-u_2H'_{\mathbf{m}}(u_2)}{u_1-u_2}\right),
\end{multline*}
where the convergence is uniform over an open complex neighborhood of $(u_1,\ldots,u_k)=(1^k)$.
\end{proposition}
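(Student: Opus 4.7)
The plan is to reduce the statement to Lemma~\ref{l513} by showing that replacing the parameters $(\beta_{k+1}^{(N)},\ldots,\beta_N^{(N)})$ by $(1,\ldots,1)$ perturbs the second mixed partial only by a quantity that tends to $0$. First I would observe that since $s_{\lambda(N)}(B^{(N)})$ is independent of $\mathbf{u}=(u_1,\ldots,u_k)$,
\begin{equation*}
\partial_1\partial_2\log\frac{s_{\lambda(N)}(u_1,\ldots,u_k,\beta_{k+1}^{(N)},\ldots,\beta_N^{(N)})}{s_{\lambda(N)}(B^{(N)})}
=\partial_1\partial_2\log s_{\lambda(N)}(\mathbf{u},\boldsymbol{\beta})
\end{equation*}
and similarly for the $(1,\ldots,1)$ denominator. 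So by Lemma~\ref{l513} it suffices to prove that the difference $\partial_1\partial_2\bigl[\log s_{\lambda(N)}(\mathbf{u},\boldsymbol{\beta})-\log s_{\lambda(N)}(\mathbf{u},\mathbf{1})\bigr]$ tends to $0$ uniformly on a complex neighborhood of $(u_1,\ldots,u_k)=(1,\ldots,1)$.

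Next I would introduce the linear interpolation $\boldsymbol{\gamma}_s=\mathbf{1}+s(\boldsymbol{\beta}-\mathbf{1})$ between the two parameter values, $s\in[0,1]$. The fundamental theorem of calculus applied to the $s$-variable yields
\begin{equation*}
\partial_1\partial_2\bigl[\log s_{\lambda(N)}(\mathbf{u},\boldsymbol{\beta})-\log s_{\lambda(N)}(\mathbf{u},\mathbf{1})\bigr]
=\int_0^1\sum_{j=k+1}^N(\beta_j^{(N)}-1)\,\partial_1\partial_2\partial_j\log s_{\lambda(N)}(\mathbf{u},\boldsymbol{\gamma}_s)\,ds.
\end{equation*}
I would then bound this integrand by
\begin{equation*}
\Bigl|\sum_{j=k+1}^N(\beta_j^{(N)}-1)\,\partial_1\partial_2\partial_j\log s_{\lambda(N)}(\mathbf{u},\boldsymbol{\gamma}_s)\Bigr|
\leq N\cdot\sup_{j}|\beta_j^{(N)}-1|\cdot \sup_{j,s,\mathbf{u}}\bigl|\partial_1\partial_2\partial_j\log s_{\lambda(N)}(\mathbf{u},\boldsymbol{\gamma}_s)\bigr|,
\end{equation*}
so the hypothesis $\sup_j|\beta_j^{(N)}-1|\,N\to 0$ forces the right-hand side to be $o(1)$, provided the final $\sup$ is $O(1)$ as $N\to\infty$, uniformly for $\mathbf{u}$ in a (shrunk) complex neighborhood of $(1,\ldots,1)$ and $s\in[0,1]$.

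The hard part will be establishing this uniform boundedness of $\partial_1\partial_2\partial_j\log s_{\lambda(N)}(\mathbf{u},\boldsymbol{\gamma}_s)$. My approach is to apply Cauchy's integral formula for derivatives to a three-variable version of Lemma~\ref{l513}: using the full strength of Theorem~3.6 of~\cite{bk}, the normalized quantity $\frac{1}{N}\log\bigl(s_{\lambda(N)}(u_1,u_2,v_3,1,\ldots,1)/s_{\lambda(N)}(\mathbf{1})\bigr)$ converges to $H_\mathbf{m}(u_1)+H_\mathbf{m}(u_2)+H_\mathbf{m}(v_3)$ uniformly on a complex polydisc around $(1,1,1)$. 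Subtracting its leading $O(N)$ piece, the remainder enjoys an $O(1)$ bound on a smaller polydisc, and Cauchy's estimates on the triple derivative yield $\partial_1\partial_2\partial_3\log s_{\lambda(N)}(\mathbf{u},v_3,1,\ldots,1)=O(1)$ uniformly. By symmetry of $s_{\lambda(N)}$ in the last $N-2$ slots this estimate carries over to $\partial_1\partial_2\partial_j$ for every $j>2$, and a routine perturbation using $|\gamma_{s,i}-1|=o(N^{-1})$ extends it from evaluation at $(1,\ldots,1)$ to evaluation at the interpolating point $\boldsymbol{\gamma}_s$.

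Combining the three ingredients, the difference $\partial_1\partial_2\log s_{\lambda(N)}(\mathbf{u},\boldsymbol{\beta})-\partial_1\partial_2\log s_{\lambda(N)}(\mathbf{u},\mathbf{1})$ is uniformly $o(1)$, and Lemma~\ref{l513} identifies the limit of the second term. As an alternative path for the uniform bound in Step~3, one can express the Schur ratio via the Harish-Chandra--Itzykson--Zuber integral of Lemma~\ref{hciz}, differentiate under the integral sign, and use the boundedness of the unitary integral together with explicit control of the prefactor derivatives; this has the advantage of avoiding any three-variable refinement of Lemma~\ref{l513}.
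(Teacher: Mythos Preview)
Your reduction---killing the denominator and reducing to
\[
\partial_1\partial_2\bigl[\log s_{\lambda(N)}(\mathbf{u},\boldsymbol\beta)-\log s_{\lambda(N)}(\mathbf{u},\mathbf 1)\bigr]\to 0
\]
---is exactly how the paper starts. The divergence is in how you control this difference.

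The paper does \emph{not} interpolate and bound third derivatives. Instead it writes both $\log s_{\lambda(N)}(\mathbf u,\boldsymbol\beta)$ and $\log s_{\lambda(N)}(\mathbf u,\mathbf 1)$ via the HCIZ integral (Lemma~\ref{hciz}), so that after cancelling the common prefactor $\partial_1\partial_2\log\frac{\log u_1-\log u_2}{u_1-u_2}$ one must compare $\partial_1\partial_2\log I_{N,B,k}$ with $\partial_1\partial_2\log I_{N,k}$. Then it invokes the topological expansion of the HCIZ integral in double Hurwitz numbers (from~\cite{ggn}), giving explicit series \eqref{inbs}--\eqref{ins}. Lemma~\ref{chg} supplies a uniform radius of convergence, which lets one truncate the $d$-sum at some fixed $E$ with a uniformly small tail, and for $d\le E$ the difference between the $\boldsymbol\beta$- and $\mathbf 1$-versions is $o(N^{-3})$ directly from $\sup_i|\beta_i-1|=o(N^{-1})$.

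Your main route has a real gap at the ``routine perturbation'' step. Granting Lemma~\ref{l513} with three free variables and Cauchy, you do get $\partial_1\partial_2\partial_3\log s_{\lambda(N)}(u_1,u_2,u_3,1^{N-3})=O(1)$. But to pass from trailing variables $1^{N-3}$ to $\boldsymbol\gamma_s$, your own interpolation step would require a uniform bound on $\partial_1\partial_2\partial_3\partial_i\log s_{\lambda(N)}$ \emph{along the path}, i.e.\ the same kind of statement one order higher and already at perturbed points. That is circular: the sum has $\sim N$ terms, each weighted by $o(N^{-1})$, so you need the fourth derivative along the path to be $O(1)$, which you have no independent way to verify. (Separately, ``$\frac{1}{N}\log(\cdot)\to H$ uniformly, hence the remainder after subtracting $NH$ is $O(1)$'' is not a valid inference---Theorem~3.6 of~\cite{bk} alone only gives $o(N)$; you must go through Lemma~\ref{l513} for the $O(1)$ second partial, as you implicitly do.)

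Your alternative path through HCIZ is the right instinct and is essentially what the paper does, but ``differentiate under the integral and use boundedness of the unitary integral'' is not enough: the integrand grows like $e^{O(N)}$, so naive differentiation does not give uniform-in-$N$ bounds. The genuine content is the Hurwitz-number expansion and its finite radius of convergence (Lemma~\ref{chg}), which is what makes the term-by-term comparison go through.
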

\begin{proof}Note that
\begin{equation*}
  \partial_1\partial_2\log\Bigl(\frac{s_{\lambda(N)}(u_1,\ldots,u_k,\beta_{k+1}^{(N)},\ldots,\beta_N^{(N)})}{s_{\lambda(N)}\left(B^{(N)}\right)}\Bigr)=\partial_1\partial_2\log\Bigl(\frac{s_{\lambda(N)}(u_1,\ldots,u_k,\beta_{k+1}^{(N)},\ldots,\beta_N^{(N)})}{s_{\lambda(N)}\left(1^N\right)}\Bigr),
\end{equation*}
by Lemma~\ref{l513}, it suffices to show that
\begin{multline}
\lim_{N\rightarrow\infty}\partial_1\partial_2\log\left(\frac{s_{\lambda(N)}(u_1,u_2,\ldots,u_k,\beta_{k+1}^{(N)},\ldots,\beta_N^{(N)})}{s_{\lambda(N)}\left(1^N\right)}\right)\label{ib}\\
=\lim_{N\rightarrow\infty}\partial_1\partial_2\log\left(\frac{s_{\lambda(N)}(u_1,u_2,\ldots,u_k,1^{N-k})}{s_{\lambda(N)}(1^N)}\right).
\end{multline}
Indeed, it suffices to show that the left hand side of \eqref{ib} is independent of $B^{(N)}$.
Recall that by Lemma~\ref{hciz}, we have:
\begin{multline}
\frac{s_{\lambda(N)}\left(u_1,\ldots,u_k,\beta_{\ol{k+1}}^{(N)},\ldots,\beta_{\ol{N}}^{(N)}\right)}{s_{\lambda(N)}(1,\ldots,1)}\\
=\left[\prod_{1\leq i<j\leq k}\frac{\log(u_i)-\log(u_j)}{u_i-u_j}\prod_{1\leq i\leq k,k+1\leq j\leq N}\frac{\log\left(u_i\right)-\log\left(\beta_{\ol{j}}^{(N)}\right)}{u_i-\beta_{\ol{j}}^{(N)}}\right]\\
\left[\prod_{k+1\leq i<j\leq N}\frac{\log\left(\beta_{\ol{i}}^{(N)}\right)-\log\left(\beta_{\ol{j}}^{(N)}\right)}{\beta_{\ol{i}}^{(N)}-\beta_{\ol{j}}^{(N)}}\right]
\int_{U(N)}e^{z_NN\mathrm{tr}(U^*P_{N,B,k}UQ_N)}dU.
\label{in1}
\end{multline}

We can compute
\begin{multline*}
\partial_1\partial_2\log\left(\frac{s_{\lambda(N)}(u_1,u_2,\ldots,u_k,\beta_{k+1}^{(N)},\ldots,\beta_N^{(N)})}{s_{\lambda(N)}\left(1^N\right)}\right)\\
=\partial_1\partial_2\left[\frac{\log(u_1)-\log(u_2)}{u_1-u_2}\right]+\partial_1\partial_2\log I_{N,B,k}(z_N),
\end{multline*}
where

\begin{equation}
\label{inbk}
I_{N,B,k}(z):=\int_{U(N)}e^{z N\mathrm{tr}(U^*P_{N,B,k}UQ_N)}dU
=\int_{U(N)}e^{z N\sum_{i,j=1}^{N}P_{N,B,k}(i,i)Q_N(j,j)|U(i,j)|^2}dU,\\
\end{equation}


Hence it suffices to show that
\begin{equation*}
\lim_{N\rightarrow\infty}\partial_1\partial_2\log I_{N,B,k}(z_N)=\lim_{N\rightarrow\infty}\partial_1\partial_2\log I_{N,k}(z_N),
\end{equation*}
where $I_{N,k}(z)$ is given by
\begin{equation}
  \label{ink}I_{N,k}(z):=\int_{U(N)}e^{z N\mathrm{tr}(U^*P_{N,k}UQ_N)}dU
  =\int_{U(N)}e^{z N\sum_{i,j=1}^{N}P_{N,k}(i,i)Q_N(j,j)|U(i,j)|^2}dU.
\end{equation}

Using the same technique as in the proof of  Theorem 2.1 in~\cite{ggn}, we have
\begin{align}
\label{inbs}
\log
I_{N,B,k}(z_N)&=\sum_{g=0}^{\infty}\frac{1}{N^{2g-2}}\sum_{d=0}^{\infty}\frac{1}{d!}\sum_{|\alpha|=|\psi|=d}(-1)^{l(\alpha)+l(\psi)}\\
&\prod_{i=1}^{l(\alpha)}\left(\frac{\sum_{t=1}^k\left(\log
u_t\right)^{\alpha_i}+\sum_{t=k+1}^N(\log \beta_t)^{\alpha_i}}{N}\right)\notag\\
&\times\prod_{j=1}^{l(\psi)}\left(\frac{1}{N}\sum_{t=1}^N\left(\frac{\lambda_t+N-t}{N}\right)^{\psi_j}\right)H_g(\alpha,\psi)\notag
\\
\log
\label{ins}
I_{N,k}(z_N)&=\sum_{g=0}^{\infty}\frac{1}{N^{2g-2}}\sum_{d=0}^{\infty}\frac{1}{d!}\sum_{|\alpha|=|\psi|=d}(-1)^{l(\alpha)+l(\psi)}\\
&\prod_{i=1}^{l(\alpha)}\left(\frac{\sum_{t=1}^k\left(\log
u_t\right)^{\alpha_i}}{N}\right)\times\prod_{j=1}^{l(\psi)}\left(\frac{1}{N}\sum_{t=1}^N\left(\frac{\lambda_t+N-t}{N}\right)^{\psi_j}\right)H_g(\alpha,\psi)\notag
\end{align}
where the $H_g(\alpha,\beta)$ are the double Hurwitz number, see~\cite{AO00}.
Under the assumption that $\lambda(N)$ is a regular sequence of signatures and
\eqref{lsz}, we have:
\begin{multline*}
\left|\prod_{i=1}^{l(\alpha)}\left(\frac{\sum_{t=1}^k\left(\log u_t\right)^{\alpha_i}+\sum_{t=k+1}^N(\log \beta_t)^{\alpha_i}}{N}\right)
\times\prod_{j=1}^{l(\psi)}\left(\frac{1}{N}\sum_{t=1}^N\left(\frac{\lambda_t+N-t}{N}\right)^{\psi_j}\right)\right|\\
\leq  C_4^d[\max_{1\leq t\leq k}|\log u_t|]^d\left(\frac{k}{N}\right)^{l(\alpha)}.
\end{multline*}
Let 
\begin{multline*}
\Phi_{d,g,B,N}(u_1,\ldots,u_k)=\\
\frac{1}{d!}\sum_{|\alpha|=|\psi|=d}(-1)^{l(\alpha)+l(\psi)}
\prod_{i=1}^{l(\alpha)}\left(\frac{\sum_{t=1}^k\left(\log
u_t\right)^{\alpha_i}+\sum_{t=k+1}^N(\log \beta_t)^{\alpha_i}}{N}\right) \\
\times\prod_{j=1}^{l(\psi)}\left(\frac{1}{N}\sum_{t=1}^N\left(\frac{\lambda_t+N-t}{N}\right)^{\psi_j}\right)H_g(\alpha,\psi),
\end{multline*}
then $\Phi_{d,g,B,N}(u_1,\ldots,u_k)$ is an analytic function in an open complex neighborhood of $1^k$. Let 
\begin{equation*}
\Phi_{d,g,N}(u_1,\ldots,u_k)=\Phi_{d,g,1^N,N}(u_1,\ldots,u_k).
\end{equation*}

We will compute $\partial_1\partial_2\Phi_{d,g,N}(u_1,\ldots,u_k)$.
Note that only those signatures $\alpha$ satisfying $l(\alpha)\geq 2$
contributes to the derivative
$\partial_1\partial_2\Phi_{d,g,N}(u_1,\ldots,u_k)$.
In a sufficiently small complex neighborhood of $1^k$, and when $N$ is
sufficiently large, by Lemma~\ref{chg} below, we have
\begin{equation*}
|\partial_1\partial_2\Phi_{d,g,B,N}(u_1,\ldots,u_k)|\leq \frac{1}{N^2}\left(\frac{1}{2}\right)^d
\end{equation*}
Therefore for any $\epsilon>0$ there exists an integer $E\geq 1$, such that for
any $(u_1,\ldots,u_k)$ in an small open complex neighborhood of $1^k$, and for
any $B^{(N)}$ satisfying \eqref{lsz}, for any $g\geq 0$, we have
\begin{equation}
\sum_{d\geq E+1}N^2|\partial_1\partial_2\Phi_{d,g,B,N}(u_1,\ldots,u_k)|< \frac{\epsilon}{8}.\label{tle}
\end{equation}
By the uniform convergence of the derivative we have
\begin{equation*}
\partial_1\partial_2\log I_{N,B,k}(z_N)=\sum_{g=0}^{\infty}\frac{1}{N^{2g-2}}\sum_{d=0}^{\infty}\partial_1\partial_2\Phi_{d,g,B,N}(u_1,\ldots,u_k).
\end{equation*}
As a result,
\begin{multline}
\label{db1}
|\partial_1\partial_2\log I_{N,B,k}(z_N)-\partial_1\partial_2\log I_{N,k}(z_N)|\\
\leq \sum_{g=0}^{\infty}\frac{1}{N^{2g-2}}\sum_{d=1}^{E}|\partial_1\partial_2\left[\Phi_{d,g,B,N}(u_1,u_2,\ldots,u_k)-\Phi_{d,g,N}(u_1,u_2,\ldots,u_k)\right]|\\
+\sum_{g=0}^{\infty}\frac{1}{N^{2g-2}}\sum_{d\geq E+1}|\partial_1\partial_2\left[\Phi_{d,g,B,N}(u_1,u_2,\ldots,u_k)-\Phi_{d,g,N}(u_1,u_2,\ldots,u_k)\right]|
\end{multline}

By \eqref{tle}, we have
\begin{multline}
\label{db1l}
\sum_{g=0}^{\infty}\frac{1}{N^{2g-2}}\sum_{d\geq E+1}|\partial_1\partial_2\left[\Phi_{d,g,B,N}(u_1,u_2,\ldots,u_k)-\Phi_{d,g,N}(u_1,u_2,\ldots,u_k)\right]|\\
\leq \frac{\epsilon N^2}{4(N^2-1)}<\frac{\epsilon}{2},
\end{multline}
when $N$ is large. Moreover,
\begin{align}
&|\partial_1\partial_2\left[\Phi_{d,g,B,N}(u_1,u_2,\ldots,u_k)-\Phi_{d,g,N}(u_1,u_2,\ldots,u_k)\right]|\notag\\
&=\left|\frac{1}{d!}\sum_{|\alpha|=|\psi|=d}(-1)^{l(\alpha)+l(\psi)}\sum_{i=1}^{l(\alpha)}\sum_{1\leq j\leq l(\alpha),j\neq i}\frac{[\log u_1]^{\alpha_i-1}[\log u_2]^{\alpha_j-1}}{N^2u_1u_2}\right.\notag\\
&\times\left(\prod_{1\leq r\leq l(\alpha),r\neq i,r\neq j}\left(\frac{\sum_{t=1}^k(\log u_t)^{\alpha_r}+\sum_{t=k+1}^{N}(\log \beta_t)^{\alpha_r}}{N}\right)\right.\notag\\
&\left.-\prod_{1\leq r\leq l(\alpha),r\neq i,r\neq j}\left(\frac{\sum_{t=1}^k(\log u_t)^{\alpha_r}}{N}\right)\right)\notag\\
&\left.\times\prod_{j=1}^{l(\psi)}\left(\frac{1}{N}\sum_{t=1}^N\left(\frac{\lambda_t+N-t}{N}\right)^{\psi_j}\right)H_g(\alpha,\psi)\right|\notag\\
&=o\left(\frac{1}{N^3}\right),\label{db1s}
\end{align}
given that $B^{(N)}$ satisfies  \eqref{lsz}. By \eqref{db1}, \eqref{db1l}, \eqref{db1s}, we have
\begin{equation*}
\lim_{N\rightarrow\infty}|\partial_1\partial_2\log I_{N,B,k}(z_N)-\partial_1\partial_2\log I_{N,k}(z_N)|=0,
\end{equation*}
and the proof is complete.
\end{proof}

We now state the technical lemma about the generating function of the double
Hurwitz numbers:
\begin{lemma}
  \label{chg}
  Let $H_g(\alpha,\beta)$ be the double Hurwitz number as
  given in \eqref{inbs} and \eqref{ins}. For each $g\geq 0$, let
\begin{equation*}
\mathbf{H}_g(z)=\sum_{d=1}^{\infty}\frac{z^d}{d!}\sum_{|\alpha|=|\beta|=d}H_g(\alpha,\beta).
\end{equation*}
Then the series $\mathbf{H}_g(z)$ has a radius of convergence of at least $\frac{1}{54}$ and at most $\frac{2}{27}$. 
\end{lemma}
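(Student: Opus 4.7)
The plan is to prove the two-sided bound on the radius of convergence of $\mathbf{H}_g(z)$ using the explicit character-theoretic expression for double Hurwitz numbers, which enters the topological expansion of the Harish-Chandra-Itzykson-Zuber integral appearing in \eqref{inbs}--\eqref{ins}. First I would rewrite $H_g(\alpha,\beta)$ via the Frobenius (Burnside) formula as a sum over irreducible representations $\lambda\vdash d$ of the symmetric group $S_d$:
\begin{equation*}
H_g(\alpha,\beta)=\frac{|C_\alpha||C_\beta|}{(d!)^2}\sum_{\lambda\vdash d}\left(\frac{\binom{d}{2}f_2(\lambda)}{\dim\lambda}\right)^{\!r}\frac{\chi^\lambda(\alpha)\,\chi^\lambda(\beta)}{(\dim\lambda)^{2g-2}},
\end{equation*}
where $r=2g-2+\ell(\alpha)+\ell(\beta)$ is the number of simple branch points dictated by Riemann-Hurwitz, $f_2(\lambda)$ is the normalized character at a transposition, and $|C_\alpha|$ denotes the size of the conjugacy class of cycle type $\alpha$. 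Summing over all $\alpha,\beta\vdash d$ and using the column orthogonality of the character table reduces the computation of $d!\cdot[z^d]\mathbf{H}_g(z)=\sum_{|\alpha|=|\beta|=d}H_g(\alpha,\beta)$ to a weighted sum indexed only by $\lambda\vdash d$.

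For the lower bound on the radius of convergence (at least $1/54$), the key ingredient is the Kerov-Olshanski estimate $|f_2(\lambda)|/\dim\lambda\le 1$, together with the bound $\dim\lambda\le\sqrt{d!}$ (from Plancherel normalization) and the subexponential growth of $p(d)$. Combining these with a careful bookkeeping of the factorial factors coming from the $r!$ that counts orderings of simple branch points yields an estimate of the form $[z^d]\mathbf{H}_g(z)\le C_g\cdot 54^d$, so that the series converges for $|z|<1/54$. For the upper bound (at most $2/27$), I would exhibit an explicit family of pairs $(\alpha_d,\beta_d)$ whose Hurwitz numbers grow at least as $d!\,(27/2)^d/C_g$, preventing convergence beyond $2/27$. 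Natural candidates are those pairs that optimize $\ell(\alpha)+\ell(\beta)$, for which closed-form or recursive identities (cut-and-join) yield sharp lower bounds.

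The main obstacle is obtaining the sharp numerical constants $1/54$ and $2/27$. The value $2/27$ strongly suggests a critical point analysis of an implicit equation of the form $z=y(1-y)^2/2$, whose maximum on $[0,1]$ is $2/27$ at $y=1/3$; such an equation typically arises in the enumeration of rooted planar maps or trees, and the cut-and-join recursion for double Hurwitz numbers gives exactly this type of combinatorial generating function for the genus-zero sector. Pinning down the correct generating-function identity and performing the singularity analysis at its dominant singularity is where the bulk of the technical work lies, and it is the only step for which merely crude character bounds would be insufficient.
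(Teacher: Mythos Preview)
The paper does not prove this lemma at all: its entire proof reads ``See Theorem 3.4 of~\cite{ggn}.'' So there is nothing to compare your argument against in the paper itself; the result is imported wholesale from Goulden--Guay-Paquet--Novak.

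That said, your proposal is not a proof but an outline, and you yourself identify the gap. The Frobenius formula and the crude bounds $|f_2(\lambda)/\dim\lambda|\le 1$, $\dim\lambda\le\sqrt{d!}$ are correct ingredients, but they do not deliver the specific constants $1/54$ and $2/27$; you say as much when you write that ``merely crude character bounds would be insufficient'' and that ``pinning down the correct generating-function identity\ldots is where the bulk of the technical work lies.'' That bulk is the entire content of the lemma. The intuition about a critical point of $z=y(1-y)^2/2$ is suggestive but not carried out. If you want to give an independent proof rather than cite \cite{ggn}, you would need to actually execute the cut-and-join analysis (or an equivalent argument) that produces the algebraic equation governing the dominant singularity, and then locate that singularity; nothing in your write-up does this.
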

\begin{proof}See Theorem 3.4 of~\cite{ggn}.
\end{proof}

Let $\mathcal{R}(\Omega,\check{a})$ be a contracting square-hexagon lattice. Let
$[2\kappa N]$ be the row number of a row of $\mathcal{R}(\Omega,\check{a})$
counting from the bottom. Then the induced probability measure of dimer
configurations on that row is also a measure on Young diagrams given by
$[(1-\kappa)N]$-tuples. We define the moment function by
\begin{equation}
p_{j}^{\kappa}:=\sum_{i=1}^{(1-\kappa)N}\left(\lambda_i^{([(1-\kappa)N])}+[(1-\kappa)N]-i\right)^j.\label{mpjk}
\end{equation}

\begin{theorem}\label{tt716}The collection of random variables 
\begin{equation*}
\{N^{-j}(p_j^{\kappa}-\EE p_j^{\kappa})\}_{0<\kappa\leq 1;j\in N},
\end{equation*}
defined by~\eqref{mpjk}, is asymptotically Gaussian with limit covariance
\begin{multline*}
\lim_{N\rightarrow\infty}N^{-(j_1+j_2)}\mathrm{cov}(p_{j_1}^{\kappa_1},p_{j_2}^{\kappa_2})=\frac{(1-\kappa_1)^{j_1}(1-\kappa_2)^{j_2}}{(2\pi\mathbf{i})^2}\times\\
\oint_{|z|=\epsilon}\oint_{|w|=2\epsilon}
\Bigl(\frac{1}{z}+1+(1+z)A_{\kappa_1}(1+z)\Bigr)^{j_1}
\Bigl(\frac{1}{w}+1+(1+w)A_{\kappa_2}(1+w)\Bigr)^{j_2}Q(z,w)dzdw,
\end{multline*}
where $\epsilon\ll 1$ and $1\geq \kappa_1\geq\kappa_2>0$,
\begin{equation*}
A_{\kappa}(1+z)=\frac{1}{1-\kappa}H_{\mathbf{m}_{\omega}}'(1+z)+\frac{\kappa}{(1-\kappa) n}\sum_{l=1}^{n}\frac{1}{z+c_l+1}.
\end{equation*}
Here $c_l=\frac{1}{x_{2l-1}}$ is the reciprocal of edge weights. Moreover,
\begin{equation*}
Q(z,w)=\partial_z\partial_w\left(\log\left(1-\frac{zw[(1+z)H_{\mathbf{m}_{\omega}}'(1+z)-(1+w)H_{\mathbf{m}_{\omega}}'(1+w)]}{z-w}\right)\right)+\frac{1}{(z-w)^2}
\end{equation*}
\end{theorem}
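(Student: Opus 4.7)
The plan is to derive Theorem~\ref{tt716} as a direct application of the general CLT (Theorem~\ref{gff1}) to the induced measure $\rho_N^{2[\kappa N]}$ on signatures describing the dimer configuration at row $2[\kappa N]$ of the contracting square-hexagon lattice. In the notation of Theorem~\ref{gff1}, the truncation parameter is $a_t = 1-\kappa_t$, reflecting the fact that the $2[\kappa N]$-th row from the bottom contains approximately $(1-\kappa)N$ V-vertices, so the factor $(1-\kappa_t)^{j_t}$ in the claimed covariance will appear as $a_{t}^{j_t}$ from that theorem.

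First, I would verify that $\{\rho_N^{2[\kappa N]}\}$ is CLT-appropriate in the sense of Definitions~\ref{dfa} and~\ref{asm}. By Lemma~\ref{lm212}, the Schur generating function factors as the ratio $s_{\omega(N)}(u_1,\dots,u_{N-t},Y^{(t)})/s_{\omega(N)}(X^{(N)})$ times a product $\prod_{\ell,j}(1+y_{\bar\ell}u_j)/(1+y_{\bar\ell}x_{\overline{t+j}})$ whose factors depend on each $u_j$ separately. Since this second factor is a product of single-variable functions, it contributes nothing to mixed higher derivatives, so appropriateness reduces to the corresponding statements for the Schur function ratio, provided by Proposition~\ref{p25} and Proposition~\ref{p515} under the hypothesis $\sup_i|\beta_i^{(N)}-1|N\to 0$. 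By Lemma~\ref{l57}, it then suffices to identify the functions $A_\kappa(z)$ and $B_\kappa(z,w)$ arising from the first and mixed second log-derivatives of $\mathcal{S}_{\rho_N^{2[\kappa N]},B^{(N)}}$.

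For the first derivative, Proposition~\ref{p25} combined with direct differentiation of the $y$-factors yields
\[
A_\kappa(u)=\frac{1}{1-\kappa}H'_{\mathbf{m}_\omega}(u)+\frac{\kappa}{(1-\kappa)n}\sum_{\ell\in I_2\cap\{1,\dots,n\}}\frac{y_\ell}{1+y_\ell u},
\]
which, after rewriting via $c_\ell=1/y_\ell$ using $y_\ell/(1+y_\ell u) = 1/(u+c_\ell)$, is exactly the formula for $A_\kappa(1+z)$ in the theorem statement. For the mixed second derivative, the contribution of the separable $y$-factors vanishes, leaving only $\partial_1\partial_2\log\bigl(s_{\omega(N)}(u_1,u_2,\dots)/s_{\omega(N)}(B^{(N)})\bigr)$; Proposition~\ref{p515} identifies this limit in terms of $H'_{\mathbf{m}_\omega}$, and setting $C_\kappa(z,w)=B_\kappa(1+z,1+w)+(z-w)^{-2}$ recovers the function $Q(z,w)$ appearing in the statement.

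The main obstacle is the careful bookkeeping of normalizations: Proposition~\ref{p25} and Lemma~\ref{lm212} naturally normalize by $(1-\kappa)N$, the length of the signature at level $\kappa$, rather than by $N$, and this discrepancy must be tracked to produce the correct prefactor $1/(1-\kappa)$ in $A_\kappa$ and ultimately the factor $(1-\kappa_t)^{j_t}$ in the covariance. One must also verify that the hypothesis $\sup_i|\beta_i^{(N)}-1|N\to 0$ of Proposition~\ref{p515} follows from the periodicity Assumption~\ref{ap6p} together with the decay assumption on the $x_i^{(N)}$. Once these technical points are settled, substitution of the explicit $A_\kappa$ and $C_\kappa$ into the double contour integral of Theorem~\ref{gff1}, with $a_t=1-\kappa_t$, delivers precisely the claimed formula for $\lim_N N^{-j_1-j_2}\mathrm{cov}(p_{j_1}^{\kappa_1},p_{j_2}^{\kappa_2})$.
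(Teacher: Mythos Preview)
Your proposal is correct and follows essentially the same route as the paper, whose proof is the one-liner ``follows from Proposition~\ref{p515} and Theorem~\ref{gff1}.'' You have simply unpacked what that line means: use Lemma~\ref{lm212} to write the Schur generating function as a Schur-function ratio times a separable product in the $u_j$, apply Proposition~\ref{p25} and Proposition~\ref{p515} (via Lemma~\ref{l57}) to identify $A_\kappa$ and $B_\kappa$, and then feed these into the covariance formula of Theorem~\ref{gff1} with $a_t=1-\kappa_t$.
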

\begin{proof}The theorem follows from Proposition~\ref{p515} and Theorem~\ref{gff1}.
\end{proof}

\noindent\textbf{Proof of Theorem~\ref{tt73}} Theorem~\ref{tt73} follows from Theorem~\ref{tt716} in a similar way as the proof of Theorem 6.3 in~\cite{bk}.

\section{Examples}\label{sec:ex}

In this section, we study a few examples of contracting square-hexagon lattices.
By applying the theory developed in previous work and this paper, we explicitly
find the limit shapes and the frozen boundaries of perfect matching on these
lattices. The examples with parameters $x_i=1$ are studied in
Section~\ref{sec:ex1}; while the examples with general periodic parameters
$x_i\neq 1$ are studied in Section~\ref{sec:ex2}.

\subsection{Examples with $x_i=1$}\label{sec:ex1}

\subsubsection{An Aztec rectangle.} 
Figure~\ref{fig:aztec} shows a domino tiling of a large Aztec rectangle, sampled
from the Boltzmann measure with $1\times 4$ periodic weights given by
$x_1=x_2=1$, $y_1=4$, $y_2=\frac{1}{4}$. Here the number of distinct $y_i$'s in
a period is equal to $m=2$ and the number of distinct parts for the boundary
partition $\omega$ is equal to $s=4$.

This typical tiling, as explained in Section~\ref{sec:fb}, exhibits a spatial
phase separation between frozen regions close to the boundary, and a liquid
region in the middle. The \emph{frozen boundary} separating the phases converges
in probability to an algebraic curve which, as one can see from
Figure~\ref{fig:aztec}, has $(m+1)s-1=11$ tangent
points with the bottom boundary. In each interval $[a_j,b_j]$ for $1\leq j\leq
s=4$ of the bottom boundary, the frozen boundary has 2 tangent points, while in
each interval $(b_j,a_{j+1})$ for $1\leq j\leq s-1=3$, the frozen boundary has 1
tangent point. 

\begin{figure}[h!] 
\includegraphics[width=\textwidth]{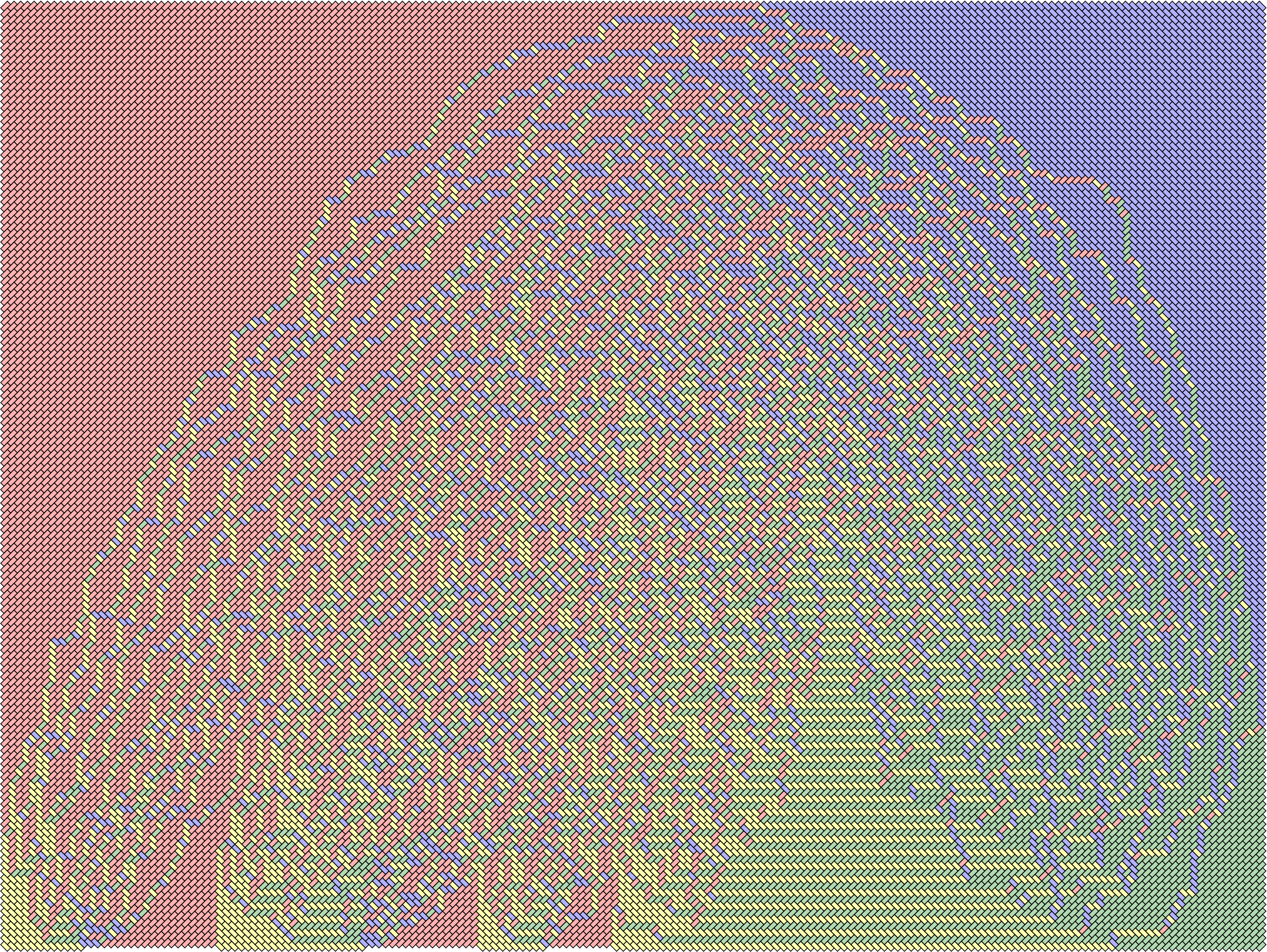}
\caption{A domino tiling of a large Aztec rectangle with $1\times 4$ periodic
weights, and a boundary partition with parts taking 4 distinct values. It
exhibits a frozen boundary with 11 tangent points on the boundary}\label{fig:aztec}
 \end{figure}

 \subsubsection{A simple square-hexagon lattice}
Consider a square-hexagon lattice $\SH(\check{a})$ in which 
 \begin{equation}
 a_i=
 \begin{cases}
   0 & \text{if $i$ is odd}\\
   1&\text{if $i$ is even}
 \end{cases}.
 \label{cdna}
 \end{equation}
See the left graph of Figure~\ref{fig:shtl} for a subgraph of the original
square-hexagon lattice, and the right graph Figure~\ref{fig:shtl} for a subgraph
by translating every row to start from the same vertical line.
 
  \begin{figure}[htbp] 
\includegraphics*[width=0.3\hsize]{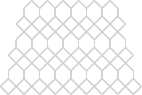}\qquad\qquad\includegraphics*[width=0.3\hsize]{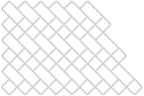}
\caption{A subgraph of a square hexagon lattice: the left graph represents a subgraph of the original lattice; the right graph represents the lattice obtained by letting all the rows of the left graph start from the same vertical line.}\label{fig:shtl}
 \end{figure}
 
 Assume that the contracting square-hexagon lattice
 $\mathcal{R}(\Omega,\check{a})$ with $\check{a}$ satisfying \eqref{cdna} has
 boundary condition specified by
 \begin{equation*}
 \Omega=(A_1,A_1+1,\ldots,B_1-1,B_1,A_2,A_2+1,\ldots,B_2-1,B_2,A_3,A_3+1,\ldots,B_3-1,B_3).
 \end{equation*}
 i.e.\@ the boundary partition $\omega$ has three distinct parts of macroscopic
 size, repeated a macroscopic number of times.
 
 We give two realizations of the dimer model on a large graph
 $\mathcal{R}(\Omega,\check{a})$ on Figures~\ref{fig:shu} and~\ref{fig:shw}.
 Instead of drawing dimers on the graph, we draw only the particles from the
 bijective correspondence with finite Maya diagrams of Section~\ref{sec:pyd}.

 Figure~\ref{fig:shu} shows the uniform dimer model (i.e.\ all
 the edge weights are 1) on a contracting square-hexagon lattice
 $\mathcal{R}(\Omega,\check{a})$, whereas Figure~\ref{fig:shw} shows a random
 dimer configuration of the same graph but with periodic weights with a
 fundamental domain consisting of 8 rows, and edge
 weights  $x_1=x_2=x_3=x_4=1$, $y_1=3,y_3=0.5$. We can see that in the periodic
 case, the frozen boundary has more tangent points with the horizontal line
 $\kappa=0$ compared to the uniform case.
 
 \begin{figure}[htbp] 
\includegraphics*[width=0.9\hsize]{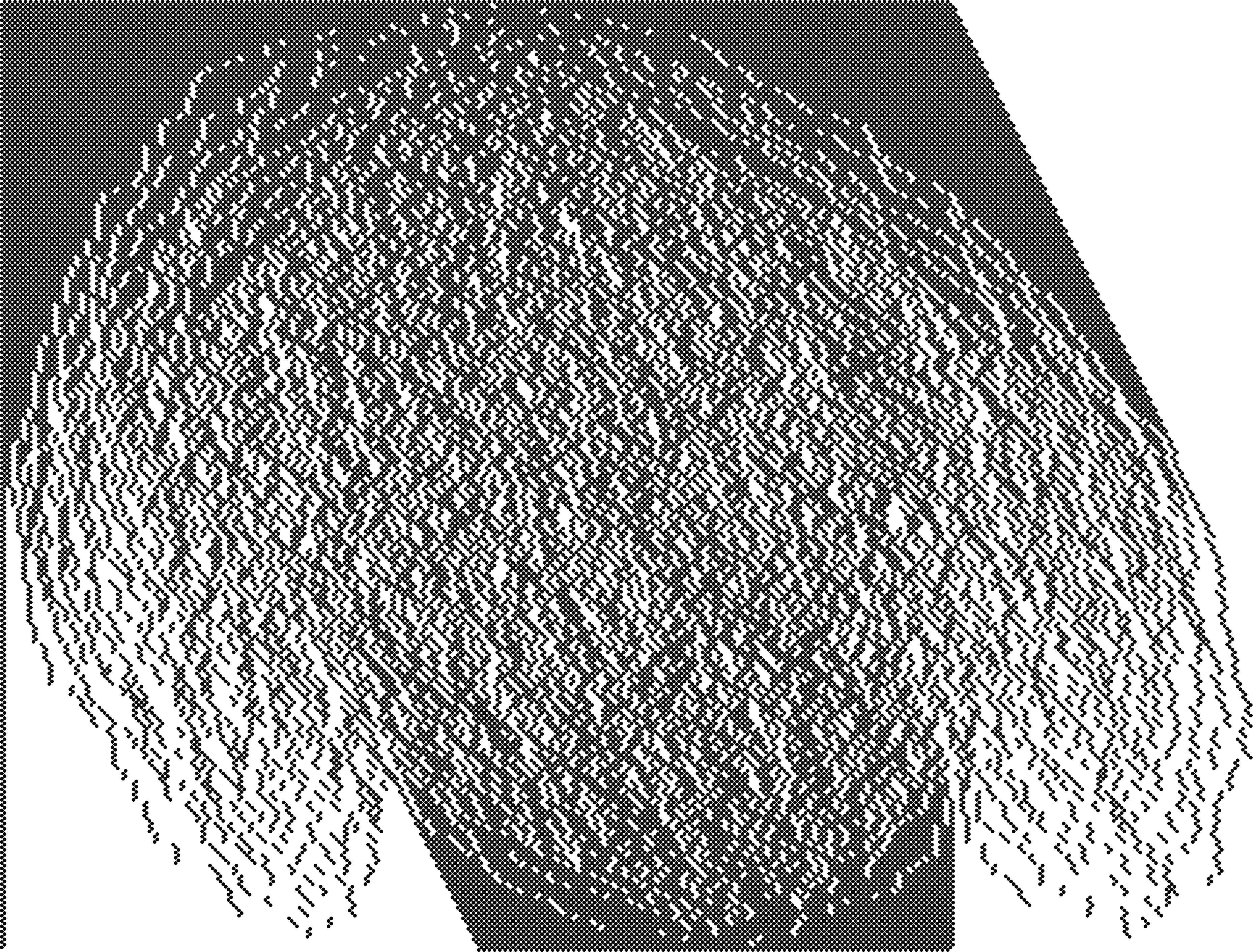}
\caption{Limit shape of perfect matchings on the square-hexagon lattice with
uniform weights $x_1=x_2=x_3=x_4=1=y_1=y_3$.}\label{fig:shu}
 \end{figure}
 
 \begin{figure}[htbp] 
\includegraphics*[width=0.9\hsize]{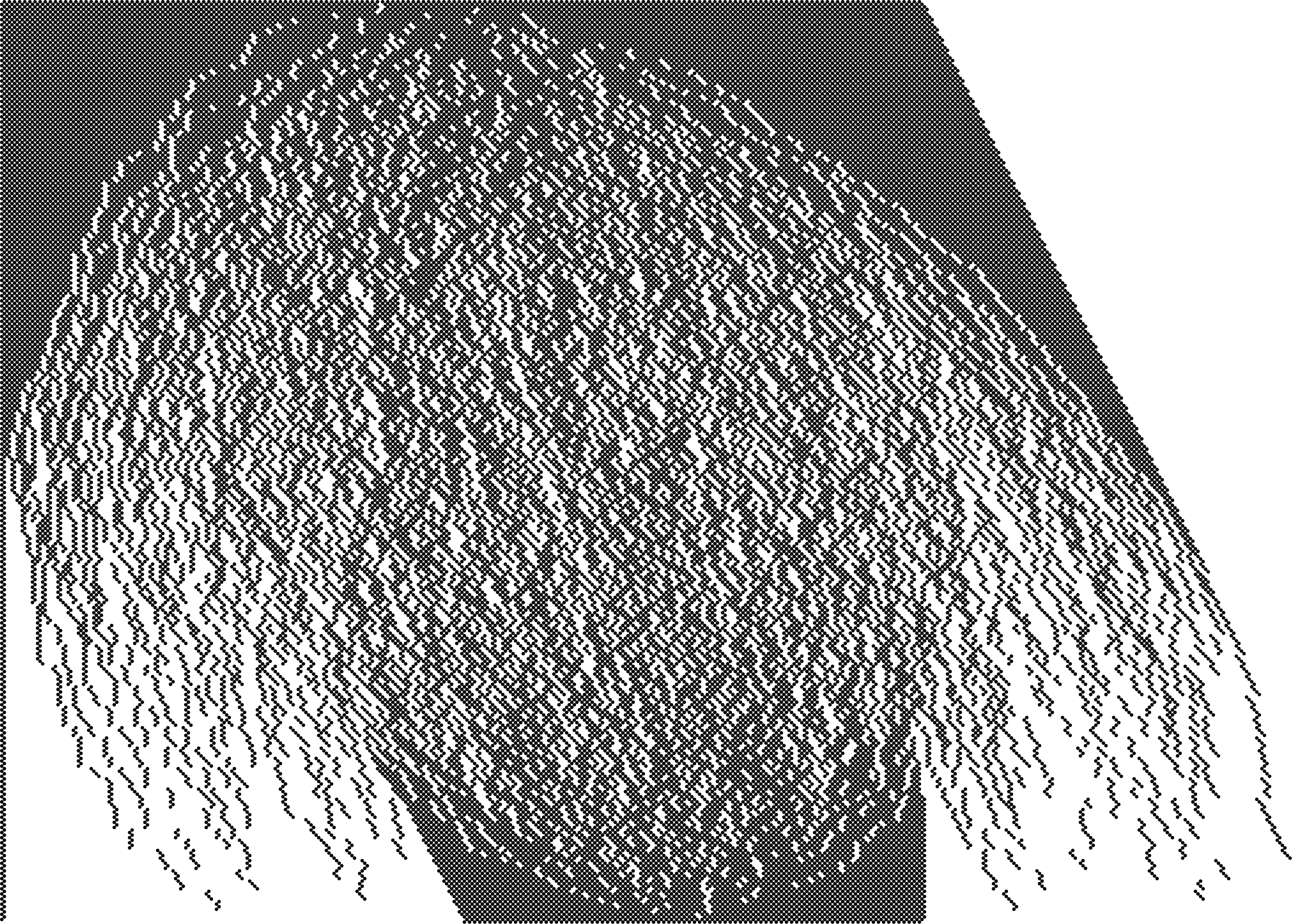}
\caption{Limit shape of perfect matchings on the square-hexagon lattice with
periodic weights $x_1=x_2=x_3=x_4=1,y_1=3,y_3=0.5$.}\label{fig:shw}
 \end{figure}
 
We may also consider contracting square hexagon lattices with $\check{a}$
satisfying \eqref{cdna} and other boundary conditions. For example, we may
consider the counting measures corresponding to the signatures on the bottom
boundary converge to a uniform measure $\mathbf{m}_{\omega}$ on $[0,2]$ as
$N\rightarrow\infty$. Assume that each fundamental domain consists of four rows,
$x_1=x_2=1$, and $c_1=\frac{1}{y_1}=1$. In this case we have
\begin{equation*}
\mathrm{St}_{\mathbf{m}_{\omega}}(t)=-\frac{1}{2}\log\left(1-\frac{2}{t}\right).
\end{equation*}
Then we solve the equation $\mathrm{St}_{\mathbf{m}_{\omega}}(t)=\log z$ for $t$
and substitute it into \eqref{es}, we have
\begin{eqnarray}
\frac{z[(4-\kappa)z-3\kappa]}{2(z+1)(z-1)}=\chi.\label{seex}
\end{eqnarray}
 Hence the frozen boundary is given by the condition that the discriminant of
 \eqref{seex} is 0, which gives 
 \begin{equation*}
 16\chi^2+9\kappa^2+8\chi\kappa-32\chi=0;\ \kappa\geq 0.
 \end{equation*}
See Figure~\ref{fig:fbu} for the frozen boundary in that case, which is not a
full closed real algebraic curve inscribed in the domain, contrary to the ``stepped
case'' above.

 \begin{figure}[htbp] 
\includegraphics*[width=0.7\hsize]{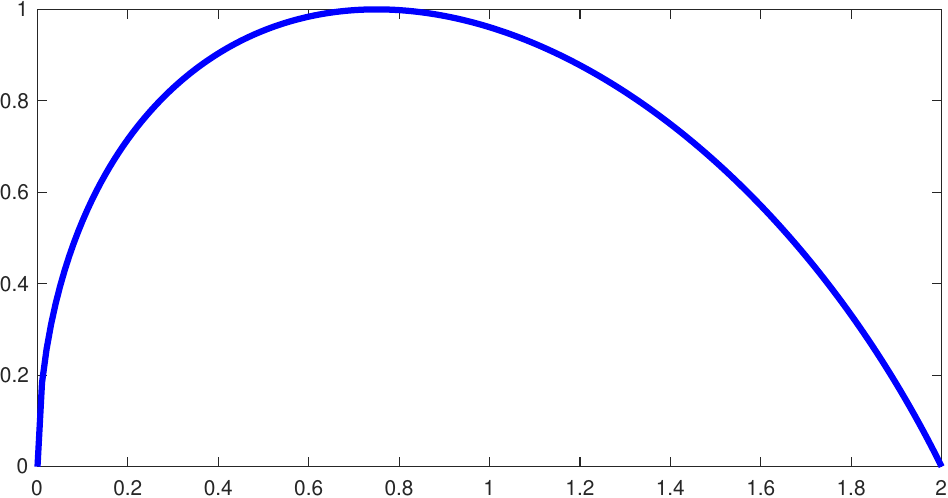}
\caption{Frozen boundary of contracting square-hexagon lattices with $x_1=x_2=y_1=1$, $a_{2i-1}$=0, $a_{2i}=1$, and uniform counting measure on $[0,2]$ for the signatures on the lower boundary}\label{fig:fbu}
 \end{figure}
 
 \subsection{An example with $\lim_{N\rightarrow\infty}x_i^{(N)}\neq 1$}\label{sec:ex2}
 We show on a particular example how computations above can be adapted to take
 into account situations when the weights $x_i^{(N)}$ do not go to 1. The
 boundary condition considered is given by a staircase partition where the steps
 have constant height, not depending on $N$. More precisely, 
 consider a contracting square-hexagon lattice $\mathcal{R}(\Omega,\check{a})$
 with edge weights assigned as in Proposition~\ref{p16}. Assume the
 configuration on the boundary row is given by the following very specific partition:
 \begin{equation}
 \lambda(N)=((M-1)(N-1),(M-1)(N-2),\ldots,(M-1),0),\label{scb}
 \end{equation}
 where $M\geq 1$ is a positive integer. In other words, there are $N$ vertices
 remaining in the boundary row in total; the leftmost vertex and the rightmost
 vertex are remaining vertices in the boundary row; between each pair of nearest
 remaining vertices on the boundary row, there are $(M-1)$ removed vertices.
 This distribution in the limit as $N$ goes to infinity converges to a uniform
 measure on the whole interval $[0,M]$. By~\cite[Example~1.3.7]{IGM15}, we have
 \begin{equation*}
 s_{\lambda(N)}(x_1,\ldots,x_N)=\prod_{1\leq i<j\leq N}\frac{x_i^M-x_j^M}{x_i-x_j}
 \end{equation*}
 when all the $x_i$'s are distinct, and extended by continuity when some $x_i$'s
 are equal.
 
We further assume that the edge weights are assigned periodically with a finite
period $n$. That is, for each $i\in \NN$,  and $\ol{i}=i\mod n$
\begin{equation}
x_i=x_{\ol{i}};\label{xp}
\end{equation}
and if $i\in I_2$,
\begin{equation}
y_i=y_{\ol{i}}.\label{yp}
\end{equation}
By Proposition~\ref{p16}, the partition function of dimer configurations on the
graph $\mathcal{R}(\Omega,\check{a})$ can be computed by the following formula
\begin{equation*}
Z_N=\left[\prod_{i\in I_2}\prod_{t=i+1}^{N}(1+y_ix_t)\right]
s_{\lambda(N)}(x_1,\ldots,x_N).
\end{equation*}

When the edge weights satisfy~\eqref{xp} and~\eqref{yp}, we may compute the
free energy as follows, distinguishing the cases where $x_i=x_j$ or not in the
expression above:
\begin{multline*}
\mathcal{F}:=\lim_{N\rightarrow\infty}\frac{1}{N^2}\log Z_N=\\
\frac{1}{n^2}\left[\sum_{1\leq i<j\leq
n}\!\!\log\left(\frac{x_i^M-x_j^M}{x_i-x_j}\right)+\frac{1}{2}\sum_{1\leq i\leq
n}\log(M x_i^{M-1})
+\frac{1}{2}\!\!\sum_{i\in
I_2\cap\{1,2,\ldots,n\}}\sum_{t=1}^n\log(1+y_ix_t)\right].
\end{multline*}

Here we assumed that all the weights $x_i$'s in a fundamental domain are
distinct. The case when some weights coincide is obtained by continuity.

We can also compute the Schur generating function for the random partition
corresponding to the random dimer configuration on  each row of the graph,
and obtain in this particular case an explicit analogue of
Proposition~\ref{p213} giving the moments of the limiting distribution of
particles $\mathbf{m}^{\kappa}$, at macroscopic height $\kappa\in[0,1]$.


Let $\rho^k$ be the distribution of the random partition corresponding to the random dimer configuration on the $k$th row. By Lemma~\ref{lm212}, we have
\begin{equation*}
\mathcal{S}_{\rho^k,X^{(N-t)}}(u_1,\ldots,u_{N-t})=
\frac{s_{\lambda(N)}(u_1,\ldots,u_{N-t},x_1,\ldots,x_t)}{s_{\lambda(N)}(x_1,\ldots,x_N)}
\!\!\!\!\!
\prod_{i\in\{1,2,\ldots,t/t+1\}\cap I_2}
\prod_{j=1}^{N-t}\Bigl(\frac{1+y_iu_j}{1+y_ix_{t+j}}\Bigr).
\end{equation*}
for $k=2t+1$ or $k=2t+2$.
When the edge weights are assigned periodically as in \eqref{xp} and
\eqref{yp}, and letting $N\to\infty$, $\frac{t}{N}\to \kappa\in[0,1)$, we have
\begin{multline*}
\lim_{(1-\kappa)N\rightarrow\infty}
\frac{1}{(1-\kappa)N} \log\mathcal{S}_{\rho^k,X^{(N-t)}}(u_1,\ldots,u_{\ell},x_{t+1+\ell},\ldots,x_{N-t})\\
=\sum_{1\leq i\leq \ell}\left[Q_{\kappa}(u_i)-Q_{\kappa}(x_{t+i})\right],
\end{multline*}
where
\begin{equation*}
  Q_{\kappa}(u)=\frac{1}{1-\kappa}\left[%
    \frac{1}{n}\sum_{1\leq j\leq n}
    \log\left(%
    \frac{u^M-x_j^M}{u-x_j}
    \right)
    +\frac{\kappa}{n}
    \sum_{i\in\{1,2,\ldots,n\}\cap I_2}\log(1+y_i u)
  \right].
\end{equation*}

Let $p\geq 1$ be a positive integer. Let $\rho_{\lfloor (1-\kappa )N\rfloor}$ be
the probability measure on the row of the square-hexagon lattice with $\lfloor
(1-\kappa) N \rfloor$ present $V$-edges, and let $\mathbf{m}_{\rho_{\lfloor
(1-\kappa) N\rfloor}}$ be the corresponding random counting measure. Let $\mathcal{N}=\lfloor (1-\kappa) N \rfloor$. Let $U=(u_1,\ldots,u_N)$ and $X=(x_1,\ldots,x_N)$ satisfying $x_{i\mod n}=x_i$.

Following similar computations as the proof of~\cite[Theorem~5.1]{bg}, we have
that the leading term for
\begin{equation*}
\mathcal{N}^{p+1}\int_{\RR}x^p\mathbf{m}_{\rho_{\lfloor(1-\kappa) N\rfloor}}
\end{equation*}
is given by
\begin{equation*}
\mathcal{M}_{p,\mathcal{N}}:=
\lim_{U\rightarrow X}\sum_{i=1}^{\mathcal{N}}\sum_{l=0}^{p}\mathcal{N}^{p-l}
\binom{p}{l}
u_i^p[Q'_{\kappa}(u_i)]^{p-l}\left(\sum_{j\in
  \{1,2,\ldots\mathcal{N}\}\setminus\{i\}}\frac{1}{u_i-u_j}\right)^{l}.
\end{equation*}

 First we assume that the edge weights $x_1,\ldots,x_n$ are pairwise distinct. Let
\begin{equation*}
S_{\mathcal{N}}(i)=\{j\in \{1,2,\ldots, \mathcal{N}\}: j\mod n=i\}=
\{an+i; 0\leq a \leq \lfloor \mathcal{N}/n\rfloor\}.
\end{equation*}
Then
  \begin{multline*}
\mathcal{M}_{p,\mathcal{N}}=\lim_{U\rightarrow X}\sum_{i=1}^{\mathcal{N}}\sum_{l=0}^{p}\mathcal{N}^{p-l}\left(\begin{array}{c}p\\l\end{array}\right)u_i^p[Q'_{\kappa}(u_i)]^{p-l}\\
\times\left[\sum_{k=0}^{l}\left(\begin{array}{c}l\\k\end{array}\right)\left(\sum_{j\in\{1,2,\ldots,\mathcal{N}\}\setminus
S_{\mathcal{N}}(i)}\frac{1}{u_i-u_j}\right)^{l-k}\left(\sum_{j\in
S_{\mathcal{N}}(i)\setminus\{i\}}\frac{1}{u_i-u_j}\right)^{k}\right],
\end{multline*}
and
\begin{multline*}
\lim_{N\rightarrow\infty}\frac{\mathcal{M}_{p,\mathcal{N}}}{\mathcal{N}^{p+1}}=\lim_{U\rightarrow
X}\frac{1}{n}\sum_{i=1}^{n}\sum_{k=0}^{p}\frac{p!}{k!(p-k)!}\frac{1}{n^k(k+1)!}\\
\times
\left.%
\frac{%
\partial^k
}{
\partial u^k
}
\left[%
  u^p\left(Q_{\kappa}'(u)+
  \frac{1}{n} \sum_{1\leq j\leq n,j\neq i}\frac{1}{u-x_j}\right)^{p-k}
\right]
\right|_{u=x_i}.
\end{multline*}


Note that
\begin{equation*}
Q_{\kappa}'(u)=\frac{1}{n(1-\kappa)}\sum_{1\leq j\leq n}\left(\frac{M u^{M-1}}{u^M-x_j^M}-\frac{1}{u-x_j}\right)+\frac{\kappa}{n(1-\kappa)}\sum_{i\in\{1,2,\ldots,n\}\cap I_n}\frac{y_i}{1+y_iu}
\end{equation*}
Using residue and following similar computations, we have
\begin{equation*}
\int_{\RR}x^{p}\textbf{m}^{\kappa}(dx)=
\frac{1}{2(p+1)\pi \mathbf{i}}\oint_{C_{x_1,\ldots,x_n}}\frac{dz}{z}\left(zQ_{\kappa}'(z)+\sum_{j=1}^{n}\frac{z}{n(z-x_{j})}\right)^{p+1},
\end{equation*}
where $C_{x_1,\ldots,x_n}$ is a simple, closed, positively oriented, contour containing only the poles $x_1,\ldots,x_n$ of the integrand, and no other singularities. 

In the case that some of the edge weights $x_1,\dots, x_n$ may be equal, one has
to be separate terms in $\mathcal{M}_{p,\mathcal{N}}$ and introduce instead of
$S_\mathcal{N}(i)$ the set
$T_{\mathcal{N}}(i)=\{j\in \{1,2,\ldots, \mathcal{N}\}: x_j=x_i\}$
but at the end, we arrive to the same expression for the moment of
$\mathbf{m}^\kappa$.

In \cite{zl3}, it is proved that when the boundary partition differs from (\ref{scb}) by at most one component at the beginning, the limit shape is the same as when the boundary partition is given by (\ref{scb}).

Define $F_{\kappa,M}(z)=zQ'_\kappa(z)+\sum_{j=1}^n \frac{z}{n(z-x_j)}$.
Adapting again the computations in~\cite{bg},
we can compute the Stieltjes transform of the measure $\mathbf{m}^{\kappa}$ when
$x$ is in a neighborhood of infinity by 
\begin{align*}
  \mathrm{St}_{\mathbf{m}^{\kappa}}(x)
  &=\sum_{j=0}^{\infty}x^{-(j+1)} \int_{\RR}y^j\mathbf{m}^{\kappa}(dy)\\
&=\sum_{j=0}^{\infty}
\frac{1}{2(j+1)\pi\mathbf{i}}\oint_{C_{x_1,\ldots,x_n}}\left(\frac{F_{\kappa,M}(z)}{x}\right)^{j+1}\frac{dz}{z}\\
&=-\frac{1}{2\pi\mathbf{i}}\oint_{C_{x_1,\ldots,x_n}}\log\left(1-\frac{F_{\kappa,M}(z)}{x}\right)\frac{dz}{z}.
\end{align*}
Integration by parts gives
\begin{equation*}
\mathrm{St}_{\mathbf{m}^{\kappa}}(x)=
\frac{1}{2\pi\mathrm{i}}\left[\oint_{C_{x_1,\ldots,x_n}}\!\!\!\!\!\!\log
z\frac{\frac{d}{dz}\left(1-\frac{F_{\kappa,M}(z)}{x}\right)}{1-\frac{F_{\kappa,M}(z)}{x}}dz
-\oint_{C_{x_1,\ldots,x_n}} \!\!\!\!\!\!
d\left(\log z\log \left(1-\frac{F_{\kappa,M}(z)}{x}\right)\right)\right].
\end{equation*}

Because $F_{\kappa,M}(z)$ has a Laurent series expansion in a neighborhood of $x_i$ given by
\begin{equation*}
F_{\kappa,M}(z)=\frac{x_j}{n(z-x_j)}+\sum_{k=0}^{\infty}\alpha_k(z-x_j)^k,
\end{equation*}
$F_{\kappa,M}(z)=x$ has exactly one root in a neighborhood of $x_i$ for $1\leq
i\leq n$, and thus, we can can find a unique composite inverse Laurent series
given by
\begin{equation*}
G_{\kappa,M,j}(w)=x_j+\sum_{i=1}^{\infty}\frac{\beta_i^{(j)}}{w^i},
\end{equation*}
such that $F_{\kappa,M}(G_{\kappa,M,j}(w))=w$ when $w$ is in a neighborhood of
infinity. Then
\begin{equation}
  \label{eq:rootFG}
z_i(x)=G_{\kappa,M,j}(x)
\end{equation}
is the unique root of $F_{\kappa,M}(z)=x$ in a neighborhood of $x_i$.

Since $1-\frac{F_{\kappa,M}}{x}$ has exactly one zero $z_i(x)$ and one pole $x_i$ in a neighborhood of $x_i$, we have
\begin{equation*}
\oint_{x_i}d\left(\log z\log \left(1-\frac{F_{\kappa,M}(z)}{x}\right)\right)=0;
\end{equation*}
and therefore
\begin{equation*}
\mathrm{St}_{\mathbf{m}_{\kappa}}(x)=\sum_{j=0}^{n}\log(z_j(x))\label{sjl}
\end{equation*}
when $x$ is in a neighborhood of infinity. By the complex analyticity of both
sides of \eqref{sjl}, we infer that \eqref{sjl} holds whenever $x$ is outside
the support of $\mathbf{m}_{\kappa}$.

\begin{lemma}
  Assume the liquid region is nonempty, and assume that for any $x\in \RR$,
  $F_{\kappa, M}(z)$ has at most one pair of complex conjugate roots. Then for
  any point $(\chi,\kappa)$ lying on the frozen boundary, the equation
  $F_{\kappa,M}=\frac{\chi}{1-\kappa}$ has double roots.
\end{lemma}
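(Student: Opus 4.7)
The plan is to exploit the characterization of the liquid region via Theorem~\ref{tm38} together with a continuity-of-roots argument at the boundary. Recall that the density of $\mathbf{m}^{\kappa}$ at $\frac{\chi}{1-\kappa}$ is $\frac{1}{\pi}\mathrm{Arg}\,\mathbf{z}_+^{\kappa}(\chi/(1-\kappa))$ where $\mathbf{z}_+^{\kappa}$ is a complex root of~\eqref{es} in the upper half plane, and by the reformulation using~\eqref{eq:rootFG} the system~\eqref{es} is equivalent (up to accounting for each pole $x_i$ of $F_{\kappa,M}$) to the scalar equation $F_{\kappa,M}(z)=\chi/(1-\kappa)$. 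Thus a point $(\chi,\kappa)$ lies in the liquid region $\mathcal{L}$ if and only if this equation admits a non-real solution, and by the standing hypothesis this non-real solution is unique up to complex conjugation.

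First I would translate the hypothesis in terms of multiplicities: since the polynomial equation obtained by clearing denominators in $F_{\kappa,M}(z)=\chi/(1-\kappa)$ has real coefficients and (by assumption) at most one pair of complex conjugate roots, all of its other roots are real, and its discriminant is a real polynomial in $\chi$ whose sign distinguishes ``one pair of complex roots'' ($\mathcal L$) from ``all roots real'' (frozen). The frozen boundary $\partial\mathcal{L}$ lies on the zero set of this discriminant, which is precisely the condition that two roots coincide.

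Next, to make this argument concrete without relying on an explicit discriminant computation, I would use continuity of the roots of $F_{\kappa,M}(z)=x$ as functions of $x$ (and of $\kappa$). Fix a boundary point $(\chi_0,\kappa_0)\in\partial\mathcal{L}$ and choose a sequence $(\chi_n,\kappa_n)\in\mathcal{L}$ with $(\chi_n,\kappa_n)\to(\chi_0,\kappa_0)$. For each $n$ there is a complex conjugate pair $z_n,\overline{z_n}$ solving $F_{\kappa_n,M}(z)=\chi_n/(1-\kappa_n)$, and the remaining roots are real. Since the roots of a polynomial depend continuously on its coefficients, and the total collection of roots of $F_{\kappa_0,M}(z)=\chi_0/(1-\kappa_0)$ is (by hypothesis) entirely real, the pair $z_n,\overline{z_n}$ must converge to a common real limit $z_\star\in\RR$. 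This common limit is a double real root of the limiting equation, proving the lemma.

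The only subtle point is to make sure the polynomial degree does not jump between $(\chi_n,\kappa_n)$ and $(\chi_0,\kappa_0)$, which is handled by noting that the denominator of $F_{\kappa,M}$ is independent of $\chi$ and has the same real pole structure (at $z=x_1,\dots,x_n$) throughout, so clearing denominators yields a polynomial whose degree is locally constant as long as the leading coefficient (a linear function of $\chi$ and $\kappa$) does not vanish; the exceptional case when the leading coefficient vanishes corresponds to sending one root to infinity and can be handled by working in the Riemann sphere, exactly as in the proof of Lemma~\ref{l32}.
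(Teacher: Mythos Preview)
Your argument is correct and follows essentially the same route as the paper: both identify the liquid region with the existence of a non-real root of $F_{\kappa,M}(z)=\chi/(1-\kappa)$ and conclude by continuity of roots that on $\partial\mathcal{L}$ the complex conjugate pair must collapse to a real double root (the paper compresses this last step into the phrase ``the lemma follows from the complex analyticity of the density''). One caveat: you invoke Theorem~\ref{tm38} and the system~\eqref{es}, but those belong to the $x_i^{(N)}\to 1$ setting of Sections~\ref{s3}--\ref{sec:fb}; here in Section~\ref{sec:ex2} the relevant density formula is the one derived from~\eqref{sjl} just above the lemma, so your characterization of $\mathcal{L}$ should be justified from that instead---and your phrase ``(by hypothesis) entirely real'' should read ``entirely real because $(\chi_0,\kappa_0)\notin\mathcal{L}$'', since this is a consequence of that characterization rather than a direct hypothesis of the lemma.
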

\begin{proof}
  By Lemma~\ref{l36}, the continuous density $f_{\mathbf{m}_{\kappa}}(x)$ of the
  measure $\mathbf{m}_{\kappa}(x)$ with respect to the Lebesgue measure is given
  by
  \begin{equation*}
    f_{\mathbf{m}_{\kappa}}(x)=-\lim_{\epsilon\rightarrow 0+}\frac{1}{\pi}\Im
    [\mathrm{St}_{\mathbf{m}_{\kappa}}(x+\mathbf{i}\epsilon)]
  \end{equation*}
  By~\eqref{sjl}, we have
  \begin{equation*}
    f_{\mathbf{m}_{\kappa}}(x)=-\lim_{\epsilon\rightarrow 0+}\frac{1}{\pi}\mathrm{Arg}(\prod_{i=1}^n z_i(x+\mathbf{i}\epsilon)).
  \end{equation*}
  If the liquid region is nonempty, and for any $x\in \RR$, $F_{\kappa,M}=x$ has
  at most one pair of complex conjugate roots, then for each point $(\chi,\kappa)$
  in the liquid region, there is exactly one of roots
  $z_{j}\left(\frac{\chi}{1-\kappa}+\mathbf{i}\epsilon\right)$
  from~\eqref{eq:rootFG} converging to a non-real root of
  $F_{\kappa,M}(z)=\frac{\chi}{1-\kappa}$; and all the others converge to real
  roots of $F_{\kappa,M}(z)=\frac{\chi}{1-\kappa}$. Then, the lemma follows from
  the complex analyticity of the density of the limit measure with respect to
  $(\chi,\kappa)$.
\end{proof}
Let us now be more specific with the particular cases when $M$ is equal to 1 or
2.

\subsubsection{$M=1$}
When $M=1$, all the vertices on the bottom row are $V$-vertices. Let
\begin{multline*}
  F_{\kappa}(z):=F_{\kappa,M=1}(z)=\frac{\kappa
  z}{n(1-\kappa)}\sum_{i\in\{1,2,\ldots,n\}\cap
I_2}\frac{y_i}{1+y_i z}+\sum_{j=1}^{n}\frac{z}{n(z-x_{j})}\\
= K- \frac{\kappa}{n(1-\kappa)} 
\sum_{i=1}^m \frac{n_i \gamma_i}{z+\gamma_i}+\frac{1}{n}\sum_{j=1}^n\frac{x_j}{z+x_j}
\end{multline*}
where $\gamma_1,\dots,\gamma_m$ are the distinct values of
$\frac{1}{y_i},i\in I_2\cap\{1,\dots,n\}$, with respective multiplicities
$n_1,\ldots,n_m$, and $K$ the constant
$\frac{1}{1-\kappa}-(n-r)\frac{\kappa}{1-\kappa}$.

Let us call $m'$ the number of distinct values of $x_j$'s for
$j\in\{1,\ldots,n\}$.
The equation
\begin{equation}
F_{\kappa}(z)=\frac{\chi}{1-\kappa}
\label{fk1}
\end{equation}
has at least a real solution for $z$ between two consecutive $-\gamma_i$, and
between two consecutive (distinct) $x_j$. This gives at least $m+m'-2$ real
roots. As it can be written as a polynomial equation in $z$ of degree $m+m'$, we
obtain then that Equation~\eqref{fk1} as at most a pair of complex conjugate
roots.

The frozen boundary $C$ is given by the condition that the two complex conjugate
roots of \eqref{fk1} merge to a double root. More precisely, let
\begin{equation*}
U(z)=\frac{z}{n}\sum_{i\in\{1,2,\ldots,n\}\cap I_2}\frac{y_i}{1+y_iz},
\qquad
V(z)=\sum_{j=1}^{n}\frac{z}{n(z-x_{j})}.
\end{equation*}
Then the frozen boundary has a parametric equation (with parameter $z$) as follows
\begin{equation*}
  \begin{cases}
\chi&=\kappa U(z)+(1-\kappa)V(z)\\
0&=\kappa U'(z)+(1-\kappa)V'(z).
\end{cases}
\end{equation*}
Hence we have
\begin{equation}
(\chi,\kappa)=\left(\frac{U(z)V'(z)-U'(z)V(z)}{V'(z)-U'(z)},\frac{V'(z)}{V'(z)-U'(z)}\right);
\label{fbm1}
\end{equation}
for $(\chi,\kappa)$ on the frozen boundary. The dual curve of the frozen boundary has parametric equation
\begin{equation}
(x,y)=\left(-\frac{1}{V(z)},\frac{U(z)-V(z)}{V(z)}\right).\label{dcm1}
\end{equation}
The values of $z$ for which the ratio $x/y$ is 0 correspond to horizontal
tangency points of the frozen boundary.

For $1\leq j\leq m$, $z=-\gamma_j$ corresponds to the point
$(V(-\gamma_j),0)$ of the frozen boundary, and the slope $x/y$
for the corresponding point on the dual curve is 0, because $U(-\gamma_j)=\infty$.
This gives thus $m$ points on the line $\kappa=0$ with horizontal tangent for
the frozen boundary.

When $z$ is one of the $x_i$'s, the corresponding point on the frozen boundary
is $(U(x_i),1)$ and the corresponding tangent line is also horizontal. If $m'$
is the number of distinct $x_i$'s in a fundamental domain, then we get $m'$
horizontal tangency points on the line $\kappa=1$ for the frozen boundary.

 By slightly adapting the proof of Proposition~\ref{prop:cloud1}, and checking
 the definition of cloud curve for this case, one obtains the following
 \begin{proposition}
   The frozen boundary given by the parametric equation \eqref{fbm1} is a cloud
   curve of rank $m+m'$.
 \end{proposition}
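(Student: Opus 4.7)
The plan is to mimic closely the proof of Proposition~\ref{prop:cloud1}, establishing that the dual curve $C^\vee$ given by~\eqref{dcm1} is winding of degree $m+m'$ in the sense of Definition~\ref{df43}. The parametrization~\eqref{dcm1} presents $C^\vee$ as a rational curve with parameter $z\in\mathbb{CP}^1$, so its degree and its intersection pattern with lines can be studied by substituting the parametrization into the line equation.

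First I would substitute $x=-1/V(z)$ and $y=U(z)/V(z)-1$ into a non-horizontal line equation $y=cx+d$. After clearing denominators, the intersection parameters satisfy
\begin{equation*}
(d+1)V(z)-U(z)=c.
\end{equation*}
The rational function $(d+1)V-U$ has $m'$ simple real poles (the distinct values among the $x_j$'s, coming from $V$) together with $m$ simple real poles (at $-\gamma_i=-1/y_i$, coming from $U$), for a total of $m+m'$ simple real poles; moreover, when $d+1>0$, the residues at all poles of $V$ are positive and the residues of $-U$ at its poles are positive as well. Since the $x_j$'s are positive and the $-\gamma_i$'s are negative, these two families of poles are automatically separated, so the interlacing hypothesis of Lemma~\ref{l33} holds trivially, and $(d+1)V-U$ is monotone on each of the $m+m'$ intervals into which the poles split $\mathbb{RP}^1$. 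Hence the equation has exactly one real solution per interval, yielding $m+m'$ distinct real intersections. After clearing denominators, the equation becomes a polynomial of degree $m+m'$ in $z$, so this simultaneously identifies the degree of $C^\vee$ as $m+m'$ and verifies the first condition of Definition~\ref{df43}.

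The second step is to identify a center $p_0\in\mathbb{RP}^2$ for which \emph{every} line through $p_0$ meets $C^\vee$ in $m+m'$ real points. For this I would treat the degenerate slopes and the exceptional line $d=-1$ in homogeneous coordinates. The $m'$ parameter values $z=x_j$ all map to the point $(0,-1)$, so this is an $m'$-fold point of $C^\vee$ corresponding to the tangent line $\kappa=1$ of the frozen boundary, and any line through $(0,-1)$ automatically accounts for $m'$ intersections there. The $m$ parameter values $z=-\gamma_i$ produce $m$ points at infinity with vertical asymptotes $x=-1/V(-\gamma_i)$, providing the missing intersections with vertical lines. I would then choose $p_0$ to be any point on a line $y=c_0x+d_0$ with $d_0+1>0$ that is in generic position (away from the multiple point $(0,-1)$ and from the vertical asymptotes), so that the monotonicity argument above applies uniformly to all lines through $p_0$.

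The main obstacle will be treating the boundary and degenerate cases carefully: verifying that the count of $m+m'$ real intersections persists when a line becomes horizontal, vertical, passes through the multiple point $(0,-1)$, or comes infinitesimally close to one of the asymptotes, and checking that no further complex intersections appear. Once the winding property of $C^\vee$ of degree $m+m'$ is established, the definition of cloud curve yields directly that the frozen boundary $C$, as the dual of $C^\vee$, is a cloud curve of rank $m+m'$.
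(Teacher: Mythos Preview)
Your approach is exactly the paper's intended one: the paper itself proves this proposition only by pointing to Proposition~\ref{prop:cloud1} and saying ``slightly adapt''. Your substitution into the line equation is correct, the pole structure of $(d+1)V-U$ is as you describe, and for $d+1>0$ the monotonicity argument gives $m+m'$ real roots and hence the degree. Two small corrections are worth making.

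First, Lemma~\ref{l33} is not the right reference: that lemma concerns functions of the form $\prod(z-u_i)/\prod(z-v_j)$ with interlacing zeros and poles, whereas $(d+1)V-U$ is a constant plus a sum of simple fractions. The monotonicity you need follows more directly, as in the proof of Lemma~\ref{l32}, from the fact that when $d+1>0$ every residue is positive, so the derivative is strictly negative on each interval of definition.

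Second, and more substantively, your description of the center $p_0$ does not work as written. Saying that $p_0$ lies on \emph{some} line with $d_0+1>0$ is no constraint at all, and lines through a generic point sweep out all values of the $y$-intercept $d$, including $d+1\le 0$, where your monotonicity argument breaks down. The clean fix is to take $p_0=(0,y_0)$ with $y_0>-1$: every non-vertical line through such a point has $y$-intercept $d=y_0$, so $d+1>0$ uniformly, and the vertical line $x=0$ meets $C^\vee$ in the $m'$-fold point $(0,-1)$ together with the $m$-fold point $[0:1:0]$ at infinity (coming from $z=-\gamma_i$), for a total of $m+m'$ real intersections. With this choice the winding property follows, and for arbitrary lines (needed for condition~(1) of Definition~\ref{df43}) the case $d+1<0$ still yields at least $m+m'-2$ real roots by the same residue analysis, since the sign pattern of residues then gives $m-1$ roots among the $-\gamma_i$'s, $m'-1$ among the $x_j$'s, and the two remaining intervals each contribute $0$ or $2$.
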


\begin{figure}[htbp]
  \centering
\includegraphics*[width=0.8\hsize]{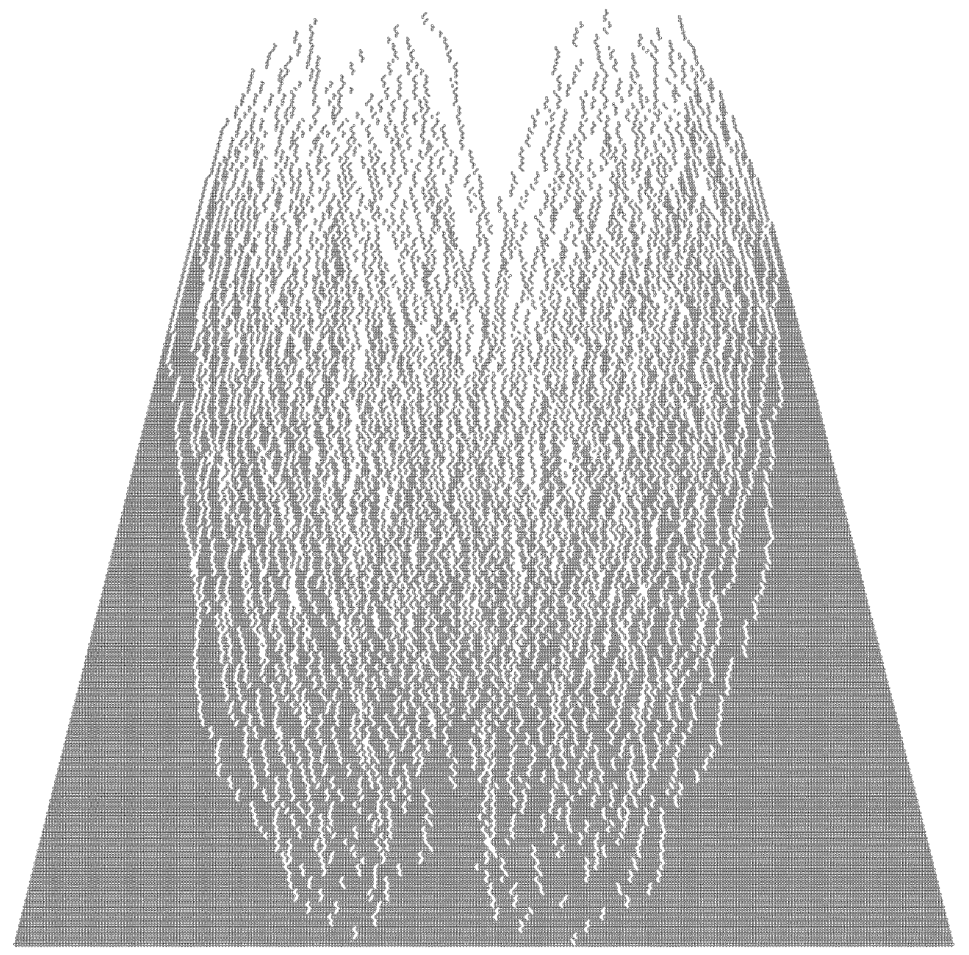}
\caption{Limit shape of perfect matchings on the square-hexagon lattice with
  weights $y_1=3,x_1=2ma,x_2=0.8,y_3=0.5,x_3=1.4,x_4=1.8$ and
$M=1$}\label{fig:shu1}
 \end{figure}


\subsubsection{$M=2$}
When $M=2$, the counting measure on the bottom row converges to the uniform
measure on $[0,2]$ when $N\rightarrow\infty$. Let
\begin{multline*}
  F_{\kappa}(z):=F_{\kappa,M=2}(z)=
  \frac{\kappa z}{n(1-\kappa)}\sum_{i\in\{1,2,\ldots,n\}\cap
  I_2}\frac{y_i}{1+y_iz}+\\
  \sum_{j=1}^{n}\frac{z}{n(z-x_{j})}+
  \frac{z}{n(1-\kappa)}\sum_{1\leq j\leq n}\frac{1}{z+x_j}.
\end{multline*}
As in the case $M=1$ discussed above, Equation~\eqref{fk1} for $M-2$ has at
most one pair of complex conjugate roots,
and
the parameters $z$ for which the two complex conjugate roots merge
to a double root correspond to the frozen boundary.
More precisely, let
\begin{equation*}
W(z)=\frac{z}{n}\sum_{1\leq j\leq n}\frac{1}{z+x_j};
\end{equation*}
then
\begin{equation*}
(\chi,\kappa)=\left(\frac{W'(z)U(z)+V'(z)U(z)-U'(z)V(z)-W'(z)V(z)}{V'(z)-U'(z)}+W(z),\frac{V'(z)+W'(z)}{V'(z)-U'(z)}\right);
\end{equation*}
for $(\chi,\kappa)$ on the frozen boundary. The dual curve of the frozen
boundary has parametric equation
\begin{equation*}
(x,y)=\left(-\frac{1}{V(z)+W(z)},\frac{U(z)-V(z)}{V(z)+W(z)}\right).
\end{equation*}
If we have $m'$ distinct values of $x_i$'s in the fundamental domain,
then for $z=x_i$, we get that the points
$\left(U(x_j)+W(x_j),1\right)$ are
$m'$ tangent points of the frozen boundary
to the line $\kappa=1$.

 \begin{figure}[htbp] 
   \centering
\includegraphics*[width=0.8\hsize]{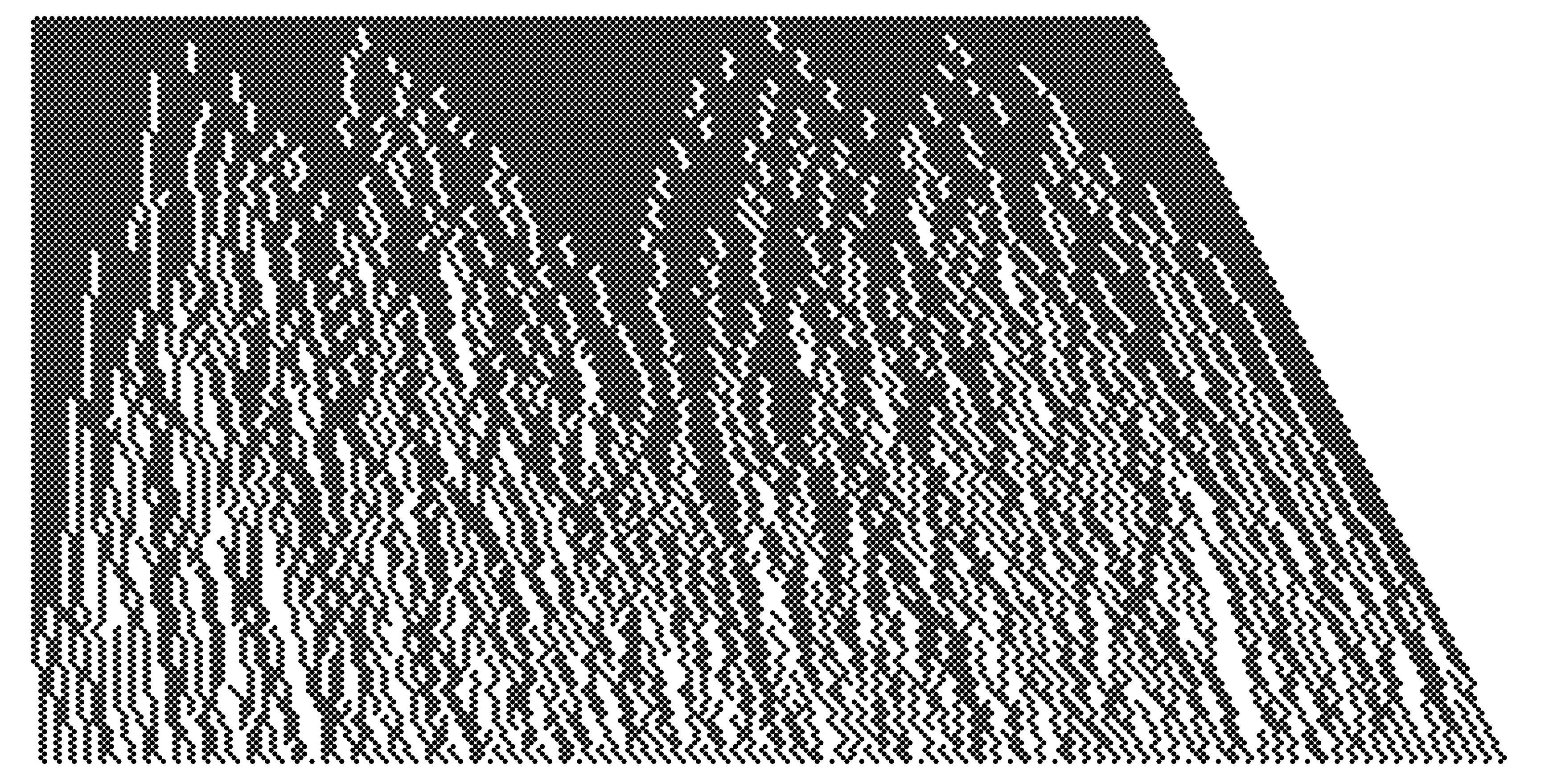}
\caption{Limit shape of perfect matchings on the square-hexagon lattice with
  weights $y_1=3,x_1=10,x_2=0.1,y_3=0.5,x_3=3.0,x_4=0.3$ and
$M=2$}\label{fig:shu2}
 \end{figure}
 
 \subsection{General Case.}
 In general, it is possible that the equation $f_{\kappa,M}(z)=\frac{\chi}{1-\kappa}$ has more than one pair of complex conjugate roots. For example, let $M=3$, $I_2=\emptyset$, $x_1=1$ and $x_2=2$. In this case, we have
 \begin{eqnarray*}
 F_{\kappa,3}(z)=\frac{z}{2}\left(\frac{1}{z-1}+\frac{1}{z-2}\right)+\frac{z}{2(1-\kappa)}\left(\frac{2z+1}{z^2+z+1}+\frac{2z+2}{z^2+2z+4}\right).
 \end{eqnarray*} 
For $\chi=1$ and $\kappa=0.5$, it is not hard to check that the equation
$F_{\kappa,3}=\frac{\chi}{1-\kappa}$ has 2 real roots and 4 non-real roots.
\bigskip

\bibliographystyle{cdraifplain}
\bibliography{sq}

\def\bysame{\leavevmode ---------\thinspace}
\makeatletter\if@francais\providecommand{\og}{<<~}\providecommand{\fg}{~>>}
\else\gdef\og{``}\gdef\fg{''}\fi\makeatother
\def\cdrandname{\&}
\providecommand\cdrnumero{no.~}
\providecommand{\cdredsname}{eds.}
\providecommand{\cdredname}{ed.}
\providecommand{\cdrchapname}{chap.}
\providecommand{\cdrmastersthesisname}{Memoir}
\providecommand{\cdrphdthesisname}{PhD Thesis}
\begin{thebibliography}{10}

\bibitem{AB07}
{\scshape A.~Borodin}, {\og Schur process and cylindrical partitions\fg},
  \emph{Duke. Math. J.} \textbf{140} (2007), p.~391-468.

\bibitem{AB11}
\bysame , {\og {S}chur dynamics of the {S}chur processes\fg}, \emph{Adv. Math.}
  \textbf{228} (2011), p.~2268-2291.

\bibitem{BF14}
{\scshape A.~Borodin {\normalfont \cdrandname}~P.~L. Ferrari}, {\og Anisotropic
  growth of random surfaces in 2 + 1 dimensions\fg}, \emph{Commun. Math. Phys.}
  \textbf{325} (2014), p.~603-684.

\bibitem{bf15}
{\scshape A.~{Borodin} {\normalfont \cdrandname}~P.~L. {Ferrari}}, {\og {Random
  tilings and Markov chains for interlacing particles}\fg}, \emph{{Markov
  Process. Relat. Fields}} \textbf{24} (2018), \cdrnumero 3, p.~419-451
  (English).

\bibitem{bbccr}
{\scshape C.~{Boutillier}, J.~{Bouttier}, G.~{Chapuy}, S.~{Corteel}
  {\normalfont \cdrandname}~S.~{Ramassamy}}, {\og {Dimers on rail yard
  graphs.}\fg}, \emph{{Ann. Inst. Henri Poincar\'e D, Comb. Phys. Interact.
  (AIHPD)}} \textbf{4} (2017), \cdrnumero 4, p.~479-539 (English).

\bibitem{bg}
{\scshape A.~Bufetov {\normalfont \cdrandname}~V.~Gorin}, {\og Representations
  of classical {L}ie groups and quantized free convolution\fg},
  \emph{Geom.Funct.Anal.} \textbf{25} (2015), p.~763-814.

\bibitem{bg2}
{\scshape A.~{Bufetov} {\normalfont \cdrandname}~V.~{Gorin}}, {\og
  {Fluctuations of particle systems determined by Schur generating
  functions.}\fg}, \emph{{Adv. Math.}} \textbf{338} (2018), p.~702-781
  (English).

\bibitem{bk}
{\scshape A.~{Bufetov} {\normalfont \cdrandname}~A.~{Knizel}}, {\og
  {Asymptotics of random domino tilings of rectangular Aztec diamonds.}\fg},
  \emph{{Ann. Inst. Henri Poincar\'e, Probab. Stat.}} \textbf{54} (2018),
  \cdrnumero 3, p.~1250-1290 (English).

\bibitem{CJ14}
{\scshape S.~Chhita {\normalfont \cdrandname}~K.~Johansson}, {\og Domino
  statistics of the two-periodic aztec diamond\fg}, \emph{Adv. Math.}
  \textbf{294} (2016), p.~37-149.

\bibitem{ckp00}
{\scshape H.~Cohn, R.~Kenyon {\normalfont \cdrandname}~J.~Propp}, {\og A
  variational principle for domino tilings\fg}, \emph{J. Amer. Math. Soc.}
  \textbf{14} (2000), p.~297-346.

\bibitem{PS14}
{\scshape P.~Di~Francesco {\normalfont \cdrandname}~R.~Soto-Garrido}, {\og
  Arctic curves of the octahedron equation\fg}, \emph{Journal of Physics A:
  Mathematical and Theoretical} \textbf{47} (2014), p.~285204(34pp).

\bibitem{MD1}
{\scshape M.~Duits}, {\og The {G}aussian free field in an interlacing particle
  system with two jump rates\fg}, \emph{Communications on Pure and Applied
  Mathematics} \textbf{66} (2013), p.~600-643.

\bibitem{MD2}
\bysame , {\og On global fluctuations for non-colliding processes\fg},
  \emph{Ann. Probab.} \textbf{46} (2018), p.~1279-1350.

\bibitem{DM15}
{\scshape E.~{Duse} {\normalfont \cdrandname}~A.~{Metcalfe}}, {\og {Asymptotic
  geometry of discrete interlaced patterns. I.}\fg}, \emph{{Int. J. Math.}}
  \textbf{26} (2015), \cdrnumero 11, p.~66 (English), Id/No 1550093.

\bibitem{DM17}
\bysame , {\og {Universalit\'e au bord pour la fluctuation de syst\`emes
  discrets de particules entrelac\'ees.}\fg}, \emph{{Ann. Math. Blaise Pascal}}
  \textbf{25} (2018), \cdrnumero 1, p.~75-197 (English).

\bibitem{VP15}
{\scshape V.~Gorin {\normalfont \cdrandname}~G.~Panova}, {\og Asymptotics of
  symmetric polynomials with applications to statistical mechanics and
  representation theory\fg}, \emph{Ann. Probab.} \textbf{43} (2015),
  p.~3052-3132.

\bibitem{ggn}
{\scshape I.~Goulden, M.~Guay-Paquet {\normalfont \cdrandname}~J.~Novak}, {\og
  Monotone {H}urwitz numbers and the {HCIZ} integral\fg}, \emph{Annales
  mathematiques Blaise Pascal} \textbf{21} (2014), p.~71-89.

\bibitem{GZ02}
{\scshape A.~Guionnet {\normalfont \cdrandname}~O.~Zeitouni}, {\og Large
  deviations asymptotics for spherical integrals\fg}, \emph{Journal of
  Functional Analysis} \textbf{188} (2002), p.~461-515.

\bibitem{hc}
{\scshape Harish-Chandra}, {\og Differential operators on a semisimple {L}ie
  algebra\fg}, \emph{Amer. J. Math.} \textbf{79} (1957), p.~87-120.

\bibitem{iz}
{\scshape C.~Itzykson {\normalfont \cdrandname}~J.~B. Zuber}, {\og The planar
  approximation. {II}.\fg}, \emph{J. Math. Phys.} \textbf{21} (1980),
  p.~411-421.

\bibitem{Joh05}
{\scshape K.~Johansson}, {\og The arctic circle boundary and the {A}iry
  process\fg}, \emph{Ann. Probab.} \textbf{33} (2005), p.~1-30.

\bibitem{JN06}
{\scshape K.~Johansson {\normalfont \cdrandname}~E.~Nordenstam}, {\og
  Eigenvalues of gue minors\fg}, \emph{Electron. J. Probab.} \textbf{11}
  (2006), p.~1342-1371.

\bibitem{JPS98}
{\scshape W.~Jokusch, J.~Propp {\normalfont \cdrandname}~P.~Shor}, {\og Random
  domino tilings and the arctic circle theorem\fg},  (1998),
  \href{https://arxiv.org/abs/math/9801068}{arXiv:math/9801068}.

\bibitem{PWK61}
{\scshape P.~Kasteleyn}, {\og The statistics of dimers on a lattice, i. the
  number of dimer arrangements on a quadratic lattice\fg}, \emph{Physica}
  \textbf{27} (1961), p.~1209-1225.

\bibitem{RK00}
{\scshape R.~Kenyon}, {\og Conformal invariance of domino tiling\fg},
  \emph{Ann. Probab.} \textbf{28} (2000), p.~759-795.

\bibitem{RK01}
\bysame , {\og Dominos and the {G}aussian free field\fg}, \emph{Ann. Probab.}
  \textbf{29} (2001), p.~1128-1137.

\bibitem{KO07}
{\scshape R.~Kenyon {\normalfont \cdrandname}~A.~Okounkov}, {\og Limit shapes
  and the complex {B}urgers equation\fg}, \emph{Acta Mathematica} \textbf{199}
  (2007), p.~263-302.

\bibitem{KOS06}
{\scshape R.~Kenyon, A.~Okounkov {\normalfont \cdrandname}~S.~Sheffield}, {\og
  Dimers and amoebae\fg}, \emph{Ann. Math.} \textbf{163} (2006), p.~1019-1056.

\bibitem{zl2}
{\scshape Z.~Li}, {\og Fluctuations of dimer heights on contracting
  square-hexagon lattices\fg},
  \href{https://arxiv.org/abs/arXiv:1809.08727}{arXiv:1809.08727}.

\bibitem{zl1}
\bysame , {\og Schur function at general points and limit shape of perfect
  matchings on contracting square hexagon lattices with piecewise boundary
  conditions\fg}, \href{https://arxiv.org/abs/1807.06175}{arXiv:1807.06175}.

\bibitem{ZL17}
\bysame , {\og Conformal invariance of dimer heights on isoradial double
  graphs\fg}, \emph{Ann. Inst. Henri Poinc\'are, Series D} \textbf{4} (2017),
  p.~273-307.

\bibitem{zl3}
\bysame , {\og Asymptotics of schur functions on almost staircase
  partitions\fg}, \emph{Electron. Commun. Probab.} \textbf{25} (2020), p.~13pp.

\bibitem{IGM15}
{\scshape I.~G. Macdonald}, \emph{Symmetric functions and hall polynomials},
  Oxford University Press, 1998.

\bibitem{MM04}
{\scshape M.~Mehta}, \emph{Random matrices}, Academic Press, 2004.

\bibitem{MP17}
{\scshape S.~{Mkrtchyan} {\normalfont \cdrandname}~L.~{Petrov}}, {\og {GUE
  corners limit of \(q\)-distributed lozenge tilings.}\fg}, \emph{{Electron. J.
  Probab.}} \textbf{22} (2017), p.~24 (English), Id/No 101.

\bibitem{JN14}
{\scshape J.~Novak}, {\og Lozenge tilings and {H}urwitz numbers\fg}, \emph{J.
  Stat. Phys.} \textbf{161} (2015), p.~509-517.

\bibitem{AO00}
{\scshape A.~Okounkov}, {\og The {T}oda equations for {H}urwitz numbers\fg},
  \emph{Mathematical Research letters} \textbf{7} (2000), p.~447-453.

\bibitem{OR01}
{\scshape A.~Okounkov {\normalfont \cdrandname}~N.~Reshetikhin}, {\og
  Correlation function of {S}chur process with application to local geometry of
  a random 3-dimensional {Y}oung diagram\fg}, \emph{J. Amer. Math. Soc.}
  \textbf{16} (2003), p.~581-603.

\bibitem{OR06}
{\scshape A.~{Okounkov} {\normalfont \cdrandname}~N.~{Reshetikhin}}, {\og {The
  birth of a random matrix.}\fg}, \emph{{Mosc. Math. J.}} \textbf{6} (2006),
  \cdrnumero 3, p.~553-566 (English).

\bibitem{OR07}
{\scshape A.~Okounkov {\normalfont \cdrandname}~N.~Reshetikhin}, {\og Random
  skew plane partitions and {P}earcey process\fg}, \emph{Commun. Math. Phys.}
  \textbf{269} (2007), p.~571-609.

\bibitem{Percus}
{\scshape J.~K. Percus}, {\og One more technique for the dimer problem\fg},
  \emph{J. Math. Phys.} \textbf{10} (1969), p.~1881.

\bibitem{LP12}
{\scshape L.~{Petrov}}, {\og {Asymptotics of uniformly random lozenge tilings
  of polygons. Gaussian free field.}\fg}, \emph{{Ann. Probab.}} \textbf{43}
  (2015), \cdrnumero 1, p.~1-43 (English).

\bibitem{SS07}
{\scshape S.~Sheffield}, {\og Gaussian free field for mathematicians\fg},
  \emph{Prob. Theory Relat. Fields} \textbf{139} (2007), p.~521-541.

\bibitem{Thurston}
{\scshape W.~P. Thurston}, {\og {Conway's tiling groups}\fg}, \emph{Amer. Math.
  Monthly} \textbf{97} (1990), \cdrnumero 8, p.~757-773.

\end{thebibliography}
\end{document}